\documentclass[amssymb,11pt]{amsart}
\usepackage[left=4cm,right=4cm,top=4cm,bottom=4cm]{geometry}

\usepackage{amssymb,amsfonts}
\usepackage[all,arc]{xy}
\usepackage{enumerate}
\usepackage{mathrsfs}
\usepackage{mathtools}
\usepackage{calligra}
\usepackage[hidelinks]{hyperref}
\usepackage{bbold}
\usepackage{enumitem}
\usepackage[dvipsnames]{xcolor}
\usepackage{tabularray}
\usepackage{float}
\UseTblrLibrary{amsmath}
\usepackage{fancyhdr}
\usepackage{stmaryrd}

\newtheorem{thm}{Theorem}[section]
\newtheorem{cor}[thm]{Corollary}
\newtheorem{prop}[thm]{Proposition}
\newtheorem{lem}[thm]{Lemma}

\newtheorem{defn}[thm]{Definition}

\theoremstyle{definition}

\newtheorem{exmp}[thm]{Example}

\newtheorem{notns}[thm]{Notations}

\newtheorem{assp}{Assumption}

\theoremstyle{remark}
\newtheorem{rem}[thm]{Remark}

\numberwithin{equation}{section}

\newcommand{\A}[1]{\alpha_{#1}}
\newcommand{\Ab}[1]{\alpha_{#1}^{-1/2}}
\newcommand{\C}{\mathbb{C}}
\newcommand{\R}{\mathbb{R}}
\newcommand{\N}{\mathbb{N}}
\newcommand{\Z}{\mathbb{Z}}

\newcommand{\T}{\mathbb{T}}
\newcommand{\F}[1]{\mc{F}_{\Tilde{h}}\pr{#1}}
\newcommand{\Prob}[1]{\mathbb{P}\pr{#1}}
\newcommand{\Rd}{\mathbb{R}^{2d}}

\newcommand{\m}{\mbox{ }}
\newcommand{\mm}{\m\m\m\m}

\newcommand{\mc}[1]{\mathcal{#1}}
\newcommand{\norme}[1]{\left\Vert #1\right\Vert}

\newcommand{\atc}[1]{\textit{\v{#1}}}
\newcommand{\ps}[2]{\left\langle #1,#2 \right\rangle}
\newcommand{\jap}[1]{\left\langle #1 \right\rangle}
\newcommand{\disc}[2]{\left\llbracket #1,#2\right\rrbracket}
\newcommand{\bigslant}[2]{{\raisebox{.2em}{$#1$}\left/\raisebox{-.2em}{$#2$}\right.}}
\newcommand{\Hd}{\mc{H}^d_{\Tilde{h},\alpha_1,\alpha_2}}

\newcommand{\pr}[1]{\!\left( #1 \right)}
\newcommand{\croch}[1]{\left[ #1 \right]}

\newcommand{\bb}[1]{\left| #1 \right|}
\newcommand{\acc}[1]{\left\{ #1 \right\}}

\newcommand{\Moy}{\sharp_{\Tilde{h}}}
\newcommand{\Schw}[1]{\mathscr{S}(\R^{#1})}
\newcommand{\Schwp}[1]{\mathscr{S}'(\R^{#1})}

\newcommand{\nocontentsline}[3]{}
\newcommand{\tocless}[2]{\bgroup\let\addcontentsline=\nocontentsline#1{#2}\egroup}

\definecolor{vert}{RGB}{30,206,32}
\definecolor{Violet}{RGB}{128,41,226}
\definecolor{bleu}{RGB}{28,153,212}
\definecolor{turquoise}{RGB}{38,242,208}
\definecolor{Orange}{RGB}{250,125,28}
\definecolor{fushia}{RGB}{220,28,250}
\definecolor{Bleu}{RGB}{4,42,171}
\definecolor{rouge}{RGB}{203,6,6}
\definecolor{nous}{RGB}{202,214,7}
\definecolor{marron}{RGB}{118,64,3}
\definecolor{pustule}{RGB}{150,22,218}
\definecolor{pequinois}{RGB}{42,209,235}
\definecolor{manteuse}{RGB}{232,186,17}
\definecolor{rausepal}{RGB}{226,133,194}

\usepackage[backend=biber,style=numeric,sorting=nyt]{biblatex}
\let\bibliography\addbibresource
\AtBeginBibliography{\footnotesize}

\title{Probabilistic Weyl law for twisted Toeplitz matrices with rough symbols}
\author{Lucas Noël}

\begin{document}
\maketitle 
\begin{abstract}
In this article, we study the convergence of the empirical spectral measure of twisted Toeplitz matrices subject to small random perturbations. We show that the empirical spectral measure converges weakly in probability to the push-forward of the Lebesgue measure by the symbol. The symbol of the twisted Toeplitz matrices is assumed to be smooth in frequency, and only piecewise Hölder continuous with respect to the position variable with discontinuities of jump type.
\end{abstract}
\tableofcontents

\section{Introduction and statement of the main result}
\mm We\label{Introduction} are interested in the following construction. Take a function $p:[0,1]^d_x\times \T^d_\xi\to \C$, which is smooth with respect to $\xi$ in $\T^d:=(\R/\Z)^d$. Using the partial Fourier transform with respect to $\xi$, we can write $p$ as
\begin{equation*}
    p(x,\xi):=\sum_{\nu\in \Z^d}p_\nu(x)e^{2i\pi\ps{\nu}{\xi}}
\end{equation*}
where 
\begin{equation*}
    p_\nu(x)=\int_{\T^d}p(x,\xi)e^{-2i\pi\ps{\nu}{\xi}}d\xi
\end{equation*}
for all $\nu\in \Z^d$. Let $0< h\leq 1$ be the semiclassical parameter. Then, we associate with $p$ the linear operator
\begin{equation}
    \mathrm{Op}_h(p):\left\{\begin{array}{ccccc}
         & \mathrm{PS} & \longrightarrow & \mathrm{PS} \\
         & c:=(c_n)_{n\in \Z^d} & \mapsto & \Bigl(\croch{\widehat{p}(2\pi h m)\ast c}(m)\Bigl)_{m\in \Z^d}
    \end{array}\right.
\label{Expression nouvelle procédure de quantif}
\end{equation}
where for $x\in \R^d$, $\widehat{p}(x)\in \C^{\Z^d}$ is such that for all $\nu\in \Z^d$,
\begin{equation*}
    [\widehat{p}(x)]_\nu:=\left\{\begin{array}{ccc}
         p_\nu(x) & \text{if} & x\in [0,1]^d \\
          0 & \text{otherwise} &
    \end{array}\right.
\end{equation*}
and $\mathrm{PS}$ refers to the set of elements of $\C^{\Z^d}$ that have at most polynomial growth, see \eqref{Suites croissance poly}. This quantization procedure \eqref{Expression nouvelle procédure de quantif} is explained in more details in Section \ref{Explanation of the quantization procedure}. \\

Let
\begin{equation}\label{Nombre de points dans l'intersection de [0,1] et 2pihZ}
    N:=\#\Bigl([0,1]\cap 2\pi h\Z\Bigl).
\end{equation}
Let us define the operator 
\begin{equation}\label{Définition de l'opérateur pour la comparaison avec BPZ}
    M_N(p):=\mathbb{1}_{\disc{1}{N}^d}\mathrm{Op}_h(p)\mathbb{1}_{\disc{1}{N}^d}
\end{equation}
which can be identified with an operator $\C^{N^d}\to \C^{N^d}$ and thus with its matrix in the canonical basis. \\
\mm For example, let us take $d=1$, and the function $p:[0,1]_x\times \T_\xi\to \C$ of the form
\begin{equation}\label{Forme symbole dans les travaux de BPZ}
    p(x,\xi)=\sum_{k=-N_-}^{N_+}p_k(x)e^{2i\pi k\xi}
\end{equation}
where $N_+,N_-\geq 0$ and $p_k:[0,1]\to \C$, $k=-N_-,\dots,N_+$ is a measurable function. Then for $N>\max(N_+,N_-)$, the previous construction leads to the matrix $M_N(p)$ given by\\
\begin{equation}\label{Matrice T_N(p) BPZ}
    \begin{pmatrix}
                p_0(x_1)&p_{1}(x_1)&\dots&p_{N_+}(x_1)&0&\dots&0\\
                p_{-1}(x_2)&p_0(x_2)& \ddots&\ddots &\ddots&\ddots &\vdots\\
                \vdots &\ddots&\ddots&\ddots&\ddots&\ddots&0\\
                p_{-N_-}(x_{N_-+1})&\ddots&\ddots&\ddots&\ddots&\ddots &p_{N_+}(x_{N-N_+})\\
                0&\ddots &\ddots&\ddots&p_{0}(x_{N-2})&\ddots&\vdots\\
                \vdots &\ddots&\ddots &\ddots &\ddots& p_0(x_{N-1})&p_1(x_{N-1})\\
                0&\dots &0&p_{-N_-}(x_N)&\dots &p_{-1}(x_N) &p_0(x_N)
            \end{pmatrix}
\end{equation}
where $x_j=j/N$, $j=1,\dots,N$.\\

\m\m\m\m In this paper, we consider the symbol class given by the functions $p:[0,1]^d_x\times\T^d_\xi\to \C$ that are smooth with respect to $\xi$ and piecewise $\varrho$-Hölder continuous with respect to $x$ for some $\varrho>0$. This symbol class will be denoted $\mathscr{C}^{0,\varrho}_{\mathrm{pw}}S_d$. More precisely:

\begin{defn}\label{Définition espace d'intérêt}
    Let $\varrho\in \hspace{0.1cm} ]0,1]$. We define $\mathscr{C}^{0,\varrho}_{\mathrm{pw}}S_d$ as the set of functions $f:[0,1]^d_x\times \T^d_\xi\to \C$ that are smooth with respect to $\xi$ and such that 
    \begin{enumerate}
        \item there exists a finite family of open connected disjoint sets $U_1,...,U_s$ of $[0,1]^d$ so that
        \begin{equation}\label{Propriété des ouverts dans la def des fonctions de l'espace d'intérêt}
            \overline{\bigsqcup_{j=1}^s U_j}=[0,1]^d,
        \end{equation}
        \item for all $j\in \disc{1}{s}$, there exists  $f_j:\overline{U_j}\times \T^d\to \C$ such that ${f_j}_{|U_j\times \T^d}={f}_{|U_j\times \T^d}$ and such that for all $\beta\in \N^d$, there exists $C_{j,\beta}>0$ so that for all $\xi \in \T^d$,
        \begin{equation}\label{Norme Hölder des dérivées partielles par rapport à xi}
            \norme{\partial_\xi^\beta f_j(\cdot,\xi)}_{\mathscr{C}^{0,\varrho}\pr{\overline{U_j}}}\leq C_{j,\beta}.
        \end{equation}
    \end{enumerate}
    Additionally, for $f\in \mathscr{C}^{0,\varrho}_{\mathrm{pw}}S_d$, we will denote 
    \begin{equation}
        \mathscr{U}_f:=\bigcup_{j=1}^s \partial U_j
    \label{Ensemble des singularités}
    \end{equation}
    the set of potential singularities of $f(\cdot,\xi)$ in $[0,1]^d$.
\end{defn}
\m\\
We will work in the regime $0<h\ll 1$ and therefore $N\gg 1$ via \eqref{Nombre de points dans l'intersection de [0,1] et 2pihZ}. Our main purpose is to obtain a result on the eigenvalue distribution of 
\begin{equation*}
    M_N(p)+\delta Q_N
\end{equation*}
for $p\in \mathscr{C}^{0,\varrho}_{\mathrm{pw}}S_d$, where $\delta$ decays polynomially in $N$ and where $Q_N$ is an $N^d\times N^d$ random matrix. To do so, we will work with the following assumptions on $p$.
\begin{assp}\label{Hypothèse théorème final - volume singularités}
    Let $p\in \mathscr{C}^{0,\varrho}_{\mathrm{pw}}S_d$ and let $\mathscr{U}_p$ be as in \eqref{Ensemble des singularités}. Suppose that there exists ${\kappa_1}\in \hspace{0.1cm} ]0,1]$ such that, when $0<r\ll 1$, 
    \begin{equation*}
        \mathrm{L}\Bigl(\pr{\mathscr{U}_p+B(0,r)}\cap[0,1]^d\Bigl)=\mc{O}(r^{{\kappa_1}}),
    \end{equation*}
    where $\mathrm{L}$ denotes the Lebesgue measure on $[0,1]^d_x$.
\end{assp}
Notice that
\begin{equation}\label{Fait que l'hypothèse sur les singularité soit toujours vraie en dimension 1}
    \text{when $d=1$, Assumption \ref{Hypothèse théorème final - volume singularités} always holds with $\kappa_1=1$.}
\end{equation}

\begin{assp}\label{Hypothèse théorème final - volume préimage}
    Let $p\in \mathscr{C}^{0,\varrho}_{\mathrm{pw}}S_d$. Suppose that there exists ${\kappa_2}\in \hspace{0.1cm} ]0,1]$ such that, uniformly for $z\in \C$, the following holds: for $0\leq t\ll 1$,
    \begin{equation*}
        \mathscr{V}_z(t):=\mathrm{L}\pr{\acc{\rho\in [0,1]_x^d\times \T_\xi^d\m ;\m \bb{p(\rho)-z}^2\leq t}}=\mc{O}(t^{\kappa_2}),
    \end{equation*}
    where $\mathrm{L}$ denotes the normalized Lebesgue measure on $[0,1]^d_x\times \T^d_\xi$.
\end{assp}
Assumptions \ref{Hypothèse théorème final - volume singularités} and \ref{Hypothèse théorème final - volume préimage} are satisfied by a wide range of functions in $\mathscr{C}^{0,\varrho}_{\mathrm{pw}}S_d$, as it is discussed in Section \ref{Comments sur les hypothèses du théorème}. \\

\mm We will also make the following assumption on the random perturbation $Q_N$.
\begin{assp}
    $Q_N$ is a random $N^d\times N^d$ matrix satisfying what follows:
    \begin{enumerate}
        \item \textbf{Norm bound:} There exists $\kappa_3>0$ such that 
        \begin{equation*}
            \mathbb{E}\pr{\norme{Q_N}}=\mc{O}(N^{\kappa_3}).
        \end{equation*}
        \item \textbf{Anti-concentration bound:} For each $\theta>0$, there exists $\beta>0$ such that for every sequence $(A_N)$ of matrices satisfying $\norme{A_N}=\mc{O}(N^\theta)$, where for all $N\geq 1$, $A_N$ is of size $N^d\times N^d$, it holds
        \begin{equation*}
            \Prob{s_{N^d}\pr{M+Q_N}\leq N^{-\beta}}\underset{N\to +\infty}{\longrightarrow} 0.
        \end{equation*}
        Here, for all $N^d\times N^d$ matrix $A_N$, $s_{N^d}(A_N)$ and $\norme{A_N}$ denote the smallest and greatest singular value of $A_N$ respectively.
    \end{enumerate}
\label{Hypothèse théorème final - matrice aléatoire}
\end{assp}
This Assumption regarding $Q_N$ is not restrictive. In Section \ref{Comments sur les hypothèses du théorème}, we provide numerous examples of random matrices that satisfy Assumption \ref{Hypothèse théorème final - matrice aléatoire}.  \\

\m\m\m\m Our main theorem is as follows.
\begin{thm}
    Let $p\in \mathscr{C}^{0,\varrho}_{\mathrm{pw}}S_d$ and $\mathscr{U}_p$ be given by \eqref{Ensemble des singularités}. Assume that $p$ satisfies Assumptions \ref{Hypothèse théorème final - volume singularités} and \ref{Hypothèse théorème final - volume préimage}. Let $Q_N$ be an $N^d\times N^d$ random matrix satisfying Assumption \ref{Hypothèse théorème final - matrice aléatoire} for some $\kappa_3>0$. Then, for every $\delta_0>0$, setting 
    \begin{equation}
        \delta:=N^{-(\kappa_3+\delta_0)},
    \label{Hypothèse expression de delta théorème final}
    \end{equation}
     the empirical spectral measure 
     \begin{equation}
        \mu_{N}:=\frac{1}{N^d}\sum_{\lambda\in \sigma(M_N(p)+\delta Q_N)}\delta_{\lambda}
    \label{Mesure empirique - théorème final}
    \end{equation}
     of $M_N(p)+\delta Q_N$ converges weakly in probability to the push-forward $p_*\mathrm{L}$ of the normalized Lebesgue measure on $[0,1]^d_x\times \T_\xi^{d}$ by $p$.
\label{Théorème final}
\end{thm}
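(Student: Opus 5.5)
The plan follows the Hermitization strategy of Girko and Tao–Vu, in the form used by Basak–Paquette–Zeitouni (BPZ) and Sjöstrand–Vogel. By the replacement principle it suffices to show, for Lebesgue-almost every $z\in\C$, that
\begin{equation*}
\frac{1}{N^d}\log\bb{\det\pr{M_N(p)+\delta Q_N-z}}\longrightarrow \int_{[0,1]^d\times\T^d}\log\bb{p(\rho)-z}\,d\rho
\end{equation*}
in probability, together with a uniform tightness bound. The right-hand side is the logarithmic potential of $p_*\mathrm{L}$, and it is finite thanks to Assumption \ref{Hypothèse théorème final - volume préimage}. The tightness part is easy: $\norme{M_N(p)}=\mc{O}(1)$ because the $p_\nu$ decay rapidly in $\nu$ (smoothness in $\xi$), and $\mathbb{E}\norme{\delta Q_N}=\mc{O}(N^{-\delta_0})$, so $\int|\lambda|^2d\mu_N$ is bounded in probability.

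Write $A=M_N(p)-z$ and $B=A+\delta Q_N$. We express $\frac{1}{N^d}\log|\det B|=\int_0^\infty\log t\,d\nu_B(t)$, where $\nu_B$ is the empirical singular value measure of $B$. The argument then has three steps.

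\textbf{Step 1 (deterministic limit).} Show that $\nu_A$ converges weakly to the law of $\bb{p(\rho)-z}$ under $\mathrm{L}$. Via moments of $A^*A$, this reduces to the convergence of normalized traces $N^{-d}\mathrm{tr}\,\bigl((A^*A)^k\bigr)$ to $\int\bb{p-z}^{2k}$. The key ingredient is an approximate symbolic calculus. First truncate $p$ in $\nu$ to $|\nu|\le K$, with an error of $\mc{O}(K^{-\infty})$ in operator norm. Then observe that on rows $m$ with $2\pi hm$ at distance at least $r$ from $\mathscr{U}_p\cup\partial[0,1]^d$, the composition of banded operators agrees with $\mathrm{Op}_h$ of the product symbol up to $\mc{O}(K(hK)^\varrho)$, by Hölder continuity. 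The remaining rows number $\mc{O}(N^d r^{\kappa_1})$ by Assumption \ref{Hypothèse théorème final - volume singularités}. Since they form a finite-rank perturbation of relative size $o(1)$, they do not affect the limit of the normalized traces. Finally, the normalized trace of $\mathrm{Op}_h(q)$ is a Riemann sum of $q_0(x)$ over $x\in 2\pi h\Z^d\cap[0,1]^d$. It converges to $\int q\,d\rho$ because $q_0$ is piecewise Hölder and its discontinuity set is controlled by Assumption \ref{Hypothèse théorème final - volume singularités}.

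\textbf{Step 2 (perturbation preserves the bulk).} Because $\norme{\delta Q_N}\to0$ in probability, Weyl's inequalities $|s_j(B)-s_j(A)|\le\norme{\delta Q_N}$ give $\nu_B\to$ the same limit. Hence $\int\log t\,\mathbb{1}_{t>\varepsilon}\,d\nu_B$ converges to the desired quantity, with an error that vanishes as $\varepsilon\to0$. The large-$t$ part is harmless since the norms are bounded.

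\textbf{Step 3 (small singular values, the main obstacle).} We must show
\begin{equation*}
\frac{1}{N^d}\sum_{s_j(B)\le\varepsilon}\bb{\log s_j(B)}\to0
\end{equation*}
in probability as $N\to\infty$ and then $\varepsilon\to0$. We split the indices into three ranges. (a) For the smallest singular value, Assumption \ref{Hypothèse théorème final - matrice aléatoire}(2), applied with $\delta^{-1}A$ of norm $\mc{O}(N^{\kappa_3+\delta_0})$, gives $s_{N^d}(B)\ge\delta N^{-\beta}$ with high probability. Hence any $o(N^d/\log N)$ singular values contribute $o(1)$. (b) For intermediate indices, with $j\ge N^d-n$ and $N^{d(1-\gamma)}\le n\le \eta N^d$, we need a lower bound $s_{N^d-n}(B)\ge c\,n/N^d$ (or polynomially in $n/N^d$). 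Assumption \ref{Hypothèse théorème final - matrice aléatoire} as stated only controls the least singular value, so for this range I would use a deterministic bound from $A$ instead. Using Step 1 quantitatively together with Assumption \ref{Hypothèse théorème final - volume préimage}, the counting function satisfies $\#\{j: s_j(A)\le\sqrt t\}\le CN^d(t^{\kappa_2}+o(1))$, and we take $t$ down to a small negative power of $N$ using the explicit rates $(hK)^\varrho$, $K^{-\infty}$ and $r^{\kappa_1}$. Weyl's inequality then transfers this bound to $B$ for $t\gg\delta^2$. (c) The remaining at most $o(N^d/\log N)$ indices, those with $s_j(B)\lesssim N^{-c}$, are each bounded below by $s_{N^d}(B)$ and are handled by (a).

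Summing these pieces shows that $\int_0^{\varepsilon}|\log t|\,d\nu_B=\mc{O}(\varepsilon^{2\kappa_2}|\log\varepsilon|)+o(1)$, which completes the argument. The delicate point is making Step 1 quantitative, with a rate that beats $1/\log N$. This is where the Hölder exponent $\varrho$ and $\kappa_1$ enter: I would choose $K=h^{-a}$ and $r=h^{b}$ with small $a,b$ to balance the error terms.
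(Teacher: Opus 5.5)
Your architecture is sound: Hermitization, the deterministic limit of the singular value law of $A=M_N(p)-z$, Weyl transfer, the least singular value from the anti-concentration bound, and a deterministic count for the intermediate range. The gap is in Step 3(b). It rests on a counting bound that Step 1, as you set it up, does not deliver, and this is the hard part of the theorem.

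\textbf{What is needed.} You need
\[
\#\acc{j:\ s_j(A)\le\sqrt t}\le CN^d\bigl(t^{\kappa_2}+\epsilon_N\bigr)
\]
uniformly for $t\in[t_0,\varepsilon^2]$, with $t_0=t_0(N)\to0$, $t_0^{\kappa_2}\log N\to0$ and $\epsilon_N\log N\to0$.

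\textbf{Why Step 1 does not give it.} Convergence of $N^{-d}\mathrm{tr}\bigl((A^*A)^k\bigr)$ for each fixed $k$, even with a rate, only controls $N$-independent test functions. Resolving the window $[0,t]$ requires polynomials of degree $k\gtrsim 1/t$, so every error must be uniform in the degree. Your single-composition rates $(hK)^\varrho$, $K^{-\infty}$, $r^{\kappa_1}$ say nothing about this. Naive iteration also loses exponentially in $k$, for two reasons:
\begin{itemize}
\item For rough symbols, $\norme{M_N(f)}$ is bounded by $\sum_\nu\sup_x\bb{f_\nu(x)}$ (Lemma \ref{Norme bornée des matrices pour la nouvelle procédure de quantification}), but it can exceed $\sup\bb{f}$ at the interfaces. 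So you cannot telescope through $M_N(f)^k$.
\item Converting moment bounds into a Lévy or Kolmogorov rate through monomial expansions costs $C^k$.
\end{itemize}

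\textbf{One way to close it.} Take $C_1\ge\max\bigl(\norme{A}^2,\sup\bb{p-z}^2\bigr)$, $X:=I-A^*A/C_1$, $f:=1-\bb{p-z}^2/C_1\in[0,1]$ and $k\simeq C_1/t$. Then $\#\acc{j:\ s_j(A)^2\le t}\le C\,\mathrm{tr}\,X^k$. Write, with $M_N(f^0)=I$,
\[
X^k-M_N(f^k)=\sum_{j=0}^{k-1}X^{j}\bigl(X\,M_N(f^{k-j-1})-M_N(f^{k-j})\bigr).
\]
This identity only uses:
\begin{itemize}
\item $0\le X\le I$;
\item $\norme{M_N(f^m)}=\mc{O}(m^{d+1})$, since $\partial_\xi^\beta f^m=\mc{O}(m^{\bb{\beta}})$;
\item the one-step comparisons $X-M_N(f)$ and $M_N(f)M_N(f^m)-M_N(f^{m+1})$, whose bad rows lie within $\mc{O}(Kh)$ of $\mathscr{U}_p\cup\partial[0,1]^d$ independently of $m$.
\end{itemize}
The resulting trace-norm error is $\mathrm{poly}(k)N^{d-c}$. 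Moreover, $N^{-d}\mathrm{tr}\,M_N(f^k)$ is a Riemann sum for $\int f^k\,d\rho$, and $\int f^k\,d\rho\le\int e^{-k\bb{p-z}^2/C_1}d\rho=\mc{O}(t^{\kappa_2})$ by Assumption \ref{Hypothèse théorème final - volume préimage}. This gives the count for $t\ge N^{-c'}$, and your (a)--(c) bookkeeping then closes.

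\textbf{A smaller correction.} The Weyl transfer needs $\sqrt t\gg\delta\norme{Q_N}$, and $\delta\norme{Q_N}\le N^{-\delta_0+\varepsilon_0}$ holds only with probability $1-\mc{O}(N^{-\varepsilon_0})$. So the condition is $t\gg N^{-2\delta_0+2\varepsilon_0}$, not $t\gg\delta^2$. This ties the admissible scale to $\delta_0$, exactly as the paper's choice $\alpha=N^{-\omega}$ with $\omega\le\delta_0$ does.

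\textbf{Comparison with the paper.} With that repair, your argument is a genuinely different route.
\begin{itemize}
\item The paper gets the same small-scale count (Corollary \ref{Nombre de valeurs propres plus petites que alpha - résultat}) by mollifying $p$ at scale $h^\eta$. It then dilates phase space so that $\alpha^{-1}\check{q}$ lies in a symbol class with order function $\check{m}$, and applies Helffer--Sjöstrand functional calculus to $\psi(\widetilde q_N/\alpha)$ (Theorem \ref{Théorème central}).
\item It obtains the deterministic log-determinant by integrating these trace formulas over $t\in[\alpha,\beta]$, and absorbs $\delta Q_N$ with a Grushin problem.
\item It returns to $M_N(p)$ through the bounded low-rank plus small-norm splitting of Proposition \ref{Expression P_N en fonction de P_N tilde} and the Allahverdiev and Ky Fan inequalities.
\end{itemize}
You instead work on $M_N(p)$ directly with an elementary banded composition calculus, and replace the Grushin problem by Weyl's inequality. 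You use the same two probabilistic inputs: Markov's bound on $\norme{Q_N}$ and the least singular value bound. This avoids exotic symbol classes, the torus quantization and the mollified comparison operator. The price is that the growing-degree trace estimate must be done by hand, whereas the paper's functional calculus yields it uniformly in the scale $\alpha$, with explicit rates for the logarithmic potentials.
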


\m\m\m\m The proof of this result also leads to the following
\begin{cor}
    Let $p$, $\delta$ and $Q_N$ be as in Theorem \ref{Théorème final}. Let $R_N$ be an $N^d\times N^d$ complex deterministic matrix such that there exists $0<\kappa_4\leq d$ so that
    \begin{equation}
        \mathrm{rank}(R_N)=\mc{O}(N^{d-\kappa_4})\m\m\m\m\text{and}\m\m\m\m \norme{R_N}=\mc{O}(1) .
    \label{Hypothèse matrice de perturbation}
    \end{equation}
    Then, the empirical spectral measure $\mu_N$ of $M_N(p)+R_N+\delta Q_N$ converges weakly in probability to $p_*\mathrm{L}$.
\label{Corollaire du théorème final}
\end{cor}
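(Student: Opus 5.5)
The plan is to treat the corollary as a perturbation of the proof of Theorem \ref{Théorème final}. I will follow the Girko/Hermitization scheme, which I expect that proof to use. Weak convergence in probability of $\mu_N$ to $p_*\mathrm{L}$ reduces to showing, for almost every $z\in\C$, convergence in probability of
\begin{equation*}
    \frac{1}{N^d}\log\bb{\det\pr{M_N(p)+R_N+\delta Q_N-z}}
\end{equation*}
to $\int\log\bb{p(\rho)-z}\,d\mathrm{L}(\rho)$. I will also need uniform integrability of $\log$ of the small singular values. Writing
\begin{equation*}
    \log\bb{\det(A)}=\sum_j\log s_j(A),
\end{equation*}
this amounts to controlling the singular values of $A_z:=M_N(p)+R_N+\delta Q_N-z$.

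\textbf{Step 1: the bulk of the singular values.} Let $\nu_{A}$ denote the empirical measure of singular values. Singular value interlacing for finite-rank perturbations gives
\begin{equation*}
    s_{j+r}(A+R)\leq s_j(A)\leq s_{j-r}(A+R),\qquad r=\mathrm{rank}(R_N)=\mc{O}(N^{d-\kappa_4}).
\end{equation*}
Hence the Kolmogorov distance between the singular value distributions of $A_z$ and $A_z-R_N$ is at most $\mathrm{rank}(R_N)/N^d=\mc{O}(N^{-\kappa_4})\to 0$. The theorem's proof presumably shows that the singular value distribution of $M_N(p)+\delta Q_N-z$ (or of $M_N(p)-z$) converges to the push-forward of $\mathrm{L}$ by $\rho\mapsto\bb{p(\rho)-z}$. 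The same limit then holds with $R_N$ added. For the log against this measure, truncated at $[\epsilon,K]$, convergence follows immediately. The upper cutoff is harmless because $\norme{R_N}=\mc{O}(1)$ and $\mathbb{E}\norme{\delta Q_N}=\mc{O}(N^{-\delta_0})$, so $\norme{A_z}=\mc{O}(1)$ with high probability.

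\textbf{Step 2: small singular values.} This is the main obstacle. I must show that
\begin{equation*}
    \frac{1}{N^d}\sum_{s_j(A_z)<\epsilon}\bb{\log s_j(A_z)}
\end{equation*}
is small in probability as $\epsilon\to0$, uniformly in $N$. I will split this sum into two parts.

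First, the smallest singular value. Apply Assumption \ref{Hypothèse théorème final - matrice aléatoire}(2) with the deterministic sequence $\delta^{-1}(M_N(p)+R_N-z)$, whose norm is $\mc{O}(N^{\kappa_3+\delta_0})$. This yields $s_{N^d}(A_z)\geq \delta N^{-\beta}$ with probability tending to one. So each term is $\mc{O}(\log N)$, which controls the last $o(N^d/\log N)$ singular values.

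Second, the intermediate range. Here I will use interlacing again: $s_{j+r}(A_z)\leq s_j(A_z-R_N)$. The count of singular values of $A_z$ below $t$ therefore exceeds that of $A_z-R_N$ by at most $r$. The theorem's proof supplies the bound for $A_z-R_N$: a count of order $N^d\,\mathscr{V}_z(t^2)+o(N^d)$ via Assumption \ref{Hypothèse théorème final - volume préimage}, together with a lower bound $s_j\gtrsim$ (a polynomial in $j/N^d$) for $j$ outside the last $o(N^d)$ indices.

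Shifting indices by $r$ costs at most $r\cdot\mc{O}(\log N)/N^d=\mc{O}(N^{-\kappa_4}\log N)\to0$. The indices affected are those in the last $r$ positions, where only the lower bound $\delta N^{-\beta}$ is available. The remaining indices inherit the theorem's estimate with $j$ replaced by $j-r$. The required integrability is unchanged because $r/N^d\to0$ polynomially.

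\textbf{Step 3: conclusion.} Combining Steps 1 and 2 gives, for almost every $z$,
\begin{equation*}
    \frac{1}{N^d}\log\bb{\det(A_z)}\longrightarrow\int\log\bb{p-z}\,d\mathrm{L}\quad\text{in probability}.
\end{equation*}
Girko's lemma (in the form used for Theorem \ref{Théorème final}) then yields the weak convergence in probability of $\mu_N$ to $p_*\mathrm{L}$.

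I expect the only delicate point to be making sure the intermediate-singular-value estimate from the theorem's proof is stated in an index-robust form, that is, as a counting function bound rather than a bound tied to exact indices. In that form a shift by $\mc{O}(N^{d-\kappa_4})$ indices is absorbed.
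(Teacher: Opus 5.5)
Your decisive estimate is the right one, and it is the paper's mechanism. You use interlacing $s_{j+r}(A)\le s_j(A+R_N)\le s_{j-r}(A)$ with $r=\mathrm{rank}(R_N)$, and control the $r$ displaced singular values above by the $\mc{O}(1)$ norms and below by the anti-concentration bound $\delta N^{-\beta}$. This costs $\mc{O}(r\log N)$ in $\log\bb{\det}$.

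The gap is in the scaffolding of Steps 1 and 2, which rests on results the proof of Theorem \ref{Théorème final} does not contain.
\begin{itemize}
\item That proof never shows that the singular value distribution of $M_N(p)+\delta Q_N-z$ converges.
\item Nor does it give a counting bound, or a lower bound polynomial in $j/N^d$, for the intermediate singular values of that matrix.
\end{itemize}
Instead it controls $\log\bb{\det}$ directly through a Grushin problem. Its eigenvalue-counting estimates (Theorem \ref{Théorème central}, Corollary \ref{Nombre de valeurs propres plus petites que alpha - résultat}) concern only the regularized matrix $\widetilde{p}_N-z$, and the latter only at scales $\alpha$ bounded below by a negative power of $N$. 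Producing your inputs for $M_N(p)+\delta Q_N-z$ would take additional work: Theorem \ref{Théorème central} at fixed $\alpha$, then a transfer through Proposition \ref{Expression P_N en fonction de P_N tilde}. As written, the bulk/small-singular-value split is unsupported.

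The split is also unnecessary. The proof of the theorem does give $\phi_{\mu_N^0}(z)\to\phi_\mu(z)$ in probability for every $z$ (see \eqref{Démo du théorème final - 3}), where $\mu_N^0$ is the empirical spectral measure of $M_N(p)+\delta Q_N$. Set $A:=M_N(p)+\delta Q_N-z$ and $A':=A+R_N$. Interlacing alone yields
\[
\Bigl|\log\bb{\det A'}-\log\bb{\det A}\Bigr|\le r\Bigl(\log\max\bigl(\norme{A},\norme{A'}\bigr)-\log\min\bigl(s_{N^d}(A),s_{N^d}(A')\bigr)\Bigr).
\]
Both smallest singular values exceed $\delta N^{-\beta}$ with probability $1-o(1)$. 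This follows from Assumption \ref{Hypothèse théorème final - matrice aléatoire}(2) applied to the deterministic sequences $\delta^{-1}(M_N(p)-z)$ and $\delta^{-1}(M_N(p)+R_N-z)$, whose norms are $\mc{O}(N^{\kappa_3+\delta_0})$. You need this bound for both matrices, not only for $A'$, to get the two-sided estimate. Both norms are $\mc{O}(1)$ with high probability, so $\bb{\phi_{\mu_N}(z)-\phi_{\mu_N^0}(z)}=\mc{O}(N^{-\kappa_4}\log N)$ with probability $1-o(1)$. Together with the norm bound, which gives the support condition, Theorem \ref{Théorème cvgence du potentiel loga implique celle de la mesure empirique} concludes.

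The paper argues the same way, one level deeper. It absorbs $R_N$ into the finite-rank term of Proposition \ref{Expression P_N en fonction de P_N tilde}, writing $M_N(p)+R_N=\widetilde{p}_N+(T_N+R_N)+N^{-\varrho\eta/2}B_N$. Here $\mathrm{rank}(T_N+R_N)=\mc{O}(N^{d-\min(\vartheta,\kappa_4)})$ and $\norme{T_N+R_N}=\mc{O}(1)$. Proposition \ref{Différence des potentiels logaritmiques} uses only the rank and norm of that term, so it runs unchanged.
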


\mm Theorem \ref{Théorème final} shows that for $N\gg 1$, the spectrum of small random perturbations of $M_N(p)$ roughly equidistributes in $\Sigma:=\overline{p([0,1]^d\times \T^d)}$. More precisely, it provides the limiting spectral distribution of $M_N(p)+\delta Q_N$ which is the leading part of the empirical spectral measure $\mu_N$ when $N$ is large.  Additionally, Corollary \ref{Corollaire du théorème final} states that this leading part of the empirical spectral measure remains unchanged if we add to $M_N(p)$ a matrix $R_N$ that can have a large rank but a controlled norm.\\

\mm The aim of this paper is to extend Theorem 4.1 in \cite{BasakPaquetteZeitouni01} and Theorem 1.2 in \cite{BPZ2019}, both proved by Basak, Paquette and Zeitouni. In these papers the authors considered one-dimensional symbols $p:[0,1]_x\times \T_\xi\to \C$ of the form \eqref{Forme symbole dans les travaux de BPZ} where $N_-,N_+\geq 0$ are finite integers. On the one hand, in \cite{BasakPaquetteZeitouni01}, they considered the particular case where $N_-=0$ and for all $k\in \disc{0}{N_+}$, $p_k:[0,1]\to \C$ is $\alpha_k$-Hölder continuous where $\alpha_k\in \hspace{0.1cm}]0,1]$, $k\geq 1$ and with the restriction that $1/2<\alpha_0\leq 1$. On the other hand, in \cite{BPZ2019}, they took $N_-,N_+\geq 0$ and treated the case where $p_k$ is constant for all $k\in \disc{-N_-}{N_+}$, so that $p$ in \eqref{Forme symbole dans les travaux de BPZ} becomes a so-called \textit{Laurent polynomial}. \\
\mm Since functions $p_k$ in \eqref{Forme symbole dans les travaux de BPZ} do not have any singularity, symbols considered in \cite{BasakPaquetteZeitouni01} are in $\mathscr{C}^{0,\alpha}_{\mathrm{pw}}S_1$ for $\alpha:=\min_k\alpha_k>0$, and those in \cite{BPZ2019} are in $\mathscr{C}^{0,\alpha}_{\mathrm{pw}}S_1$ for $\alpha=1$. In particular, as mentioned in \eqref{Fait que l'hypothèse sur les singularité soit toujours vraie en dimension 1}, for such symbols, Assumption \ref{Hypothèse théorème final - volume singularités} always holds with $\kappa_1=1$. Furthermore, Proposition \ref{Généralisation BPZ quat} in the Appendix shows that under very mild assumptions, for symbols of the form \eqref{Forme symbole dans les travaux de BPZ}, Assumption \ref{Hypothèse théorème final - volume préimage} always holds for an explicit $\kappa_2$ (see Proposition \ref{Généralisation BPZ quat}). Moreover, the authors associated with the symbol \eqref{Forme symbole dans les travaux de BPZ} the matrix \eqref{Matrice T_N(p) BPZ} so that, in view of definition \eqref{Définition de l'opérateur pour la comparaison avec BPZ} above, our construction coincides with theirs for such symbols. \\
\mm In \cite{BasakPaquetteZeitouni01} (resp. in \cite{BPZ2019}), they proved that, for $\gamma>1/2$, the empirical spectral measure of $M_N(p)+N^{-\gamma}Q_N$ weakly converges in probability to $p_*\mathrm{L}$, the push-forward of the normalized Lebesgue measure on $[0,1]\times \mathbb{S}^1$ (resp. $\mathbb{S}^1$) by $p$ (see \cite[Theorem 4.1]{BasakPaquetteZeitouni01} and \cite[Theorem 1.2]{BPZ2019}). Note that in \cite{BPZ2019}, what they assumed on the random matrix $Q_N$ is quite close to Assumption \ref{Hypothèse théorème final - matrice aléatoire} (see \cite[Assumption 1.1]{BPZ2019}).  \\
\mm As we can observe, the matrix $M_N(p)$ given in \eqref{Matrice T_N(p) BPZ} for one-dimensional symbols $p$ of the form \eqref{Forme symbole dans les travaux de BPZ} is always \textit{finite-banded}. This means that the number of non zero diagonals of $M_N(p)$ (here equals to $N_-+N_++1$) is finite, independent of $N$. In \cite{BasakPaquetteZeitouni01}, more specifically, $M_N(p)$ is exclusively upper triangular. \\
\mm In this paper, we extend their construction to the new symbol class $\mathscr{C}^{0,\alpha}_{\mathrm{pw}}S_d$; that is, in contrast to their results, $M_N(p)$ can now be infinite-banded, defined for general $d$-dimensional symbols $p:[0,1]^d_x\times \T^d_\xi\to \C$, which are also allowed to have singularities of jump type as described in Definition \ref{Définition espace d'intérêt}. \\

\mm In the proof of \cite[Theorem 4.1]{BasakPaquetteZeitouni01} and \cite[Theorem 1.2]{BPZ2019}, Basak, Paquette and Zeitouni involve methods relying on random matrix theory, which are quite different from ours. In this paper, the methods involved are based on tools developed especially in the articles of Vogel \cite{Vogel_2020}, Christiansen-Zworski \cite{Christiansen_2010}, and Hager-Sjöstrand \cite{hager2007eigenvalue}, using semiclassical analysis.

\begin{exmp}
    To illustrate Theorem \ref{Théorème final}, we consider the one-dimensional symbol $p$ defined on $[0,1]_x\times \T_\xi$ by
    \begin{equation}\label{Exemple de symbole pour la simulation}
        p(x,\xi)=f(x)+i\cos(2\pi\xi)
    \end{equation}
    where 
    \begin{equation}\label{Expression de la fonction HPM}
        f(x):=\begin{cases}
         \sqrt{x}-5/2 & \text{if}\m\m\m\m  0\leq x<1/3, \\
         \bb{12x-6}-1 & \text{if} \mm 1/3\leq x<2/3,\\
         1/2+e^{3(1-x)} & \text{if} \mm 2/3\leq x\leq 1,
        \end{cases}
    \end{equation}
\end{exmp}
is a piecewise $1/2$-Hölder continuous function and whose graph has been plotted in Figure \ref{fig:graphe de la fonction HPM}. We also plot in Figure \ref{fig:graphe de la fonction HPM} the red dotted lines to emphasize the gaps in the numerical range of $f$.

\begin{figure}[H]
    \centering
    \includegraphics[width=0.5\linewidth]{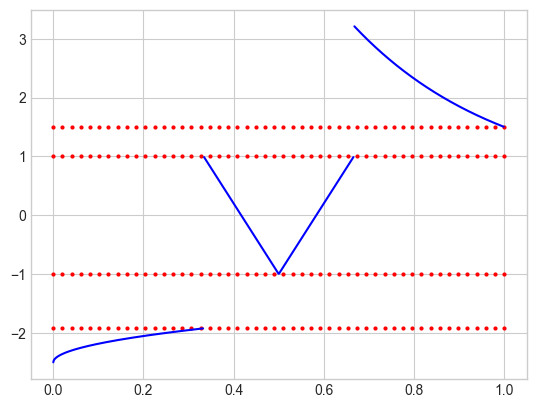}
    \caption{Graph of $f$ given by \eqref{Expression de la fonction HPM}.}
    \label{fig:graphe de la fonction HPM}
\end{figure}

The symbol $p$ defined in \eqref{Exemple de symbole pour la simulation} belongs to $\mathscr{C}^{0,1/2}_{\mathrm{pw}}S_1$. As mentioned in \eqref{Fait que l'hypothèse sur les singularité soit toujours vraie en dimension 1}, $p$ verifies Assumption \ref{Hypothèse théorème final - volume singularités}. Since $f$ is piecewise Hölder continuous on $[0,1]$, Proposition \ref{Généralisation BPZ quat} ensures that Assumption \ref{Hypothèse théorème final - volume préimage} holds for $p$, with $\kappa_2=1/4$. Theorem \ref{Théorème final} then applies.\\

Figure \ref{fig:Simu spectre fonction HPM} shows a numerical simulation of the spectrum of $M_N(p)$ and $M_N(p)+\delta Q_N$. In particular, on the right hand side, we can see that the spectrum fills up the numerical range of the symbol $p$. The gaps in the spectrum come from the gaps in the numerical range of $f$. \\

\begin{figure}[H]
    \centering
    \includegraphics[width=1\linewidth]{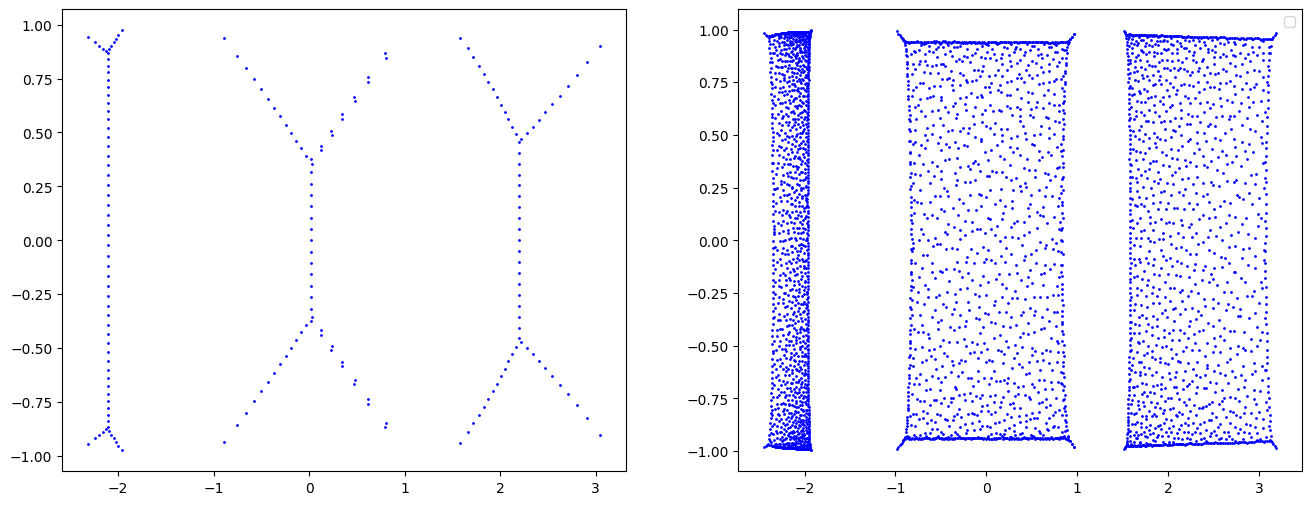}
    \caption{The left hand side presents the spectrum of $M_N(p)$ without any randomness, for $p$ given by \eqref{Exemple de symbole pour la simulation}, and on the right hand side, the spectrum of $M_N(p)+\delta Q_N$ where $Q_N$ is a random Gaussian matrix, $N=4000$ and $\delta=N^{-1.6}$.}
    \label{fig:Simu spectre fonction HPM}
\end{figure}

\begin{exmp}\label{Exemple bloc de Jordan pertubé}
    We now turn to illustrate Corollary \ref{Corollaire du théorème final}. We consider the perturbative Jordan bloc, say $\widetilde{J}_N$ given by
    \begin{equation*}
        \widetilde{J}_N:=\begin{pmatrix}
            0&1&0&0&\cdots&0\\
            0&0&1&0&\ddots&\vdots\\
            0&0&0&1&\ddots&\vdots\\
            \vdots&\ddots&\ddots&\ddots&\ddots&0\\
            0&\ddots&\ddots&\ddots&\ddots&1\\
            2&0 &\cdots&\cdots&0&0
        \end{pmatrix}=J_N+2E_{N,1}
    \end{equation*}
    where $J_N$ is the actual Jordan bloc 
    \begin{equation}\label{Matrice bloc de Jordan}
        J_N=\begin{pmatrix}
            0&1&0&0&\cdots&0\\
            0&0&1&0&\ddots&\vdots\\
            0&0&0&1&\ddots&\vdots\\
            \vdots&\ddots&\ddots&\ddots&\ddots&0\\
            \vdots&\ddots&\ddots&\ddots&\ddots&1\\
            0&\cdots &\cdots&\cdots&0&0
        \end{pmatrix}
    \end{equation}
    and $E_{N,1}$ is the matrix such that for all $j,k\in \disc{1}{N}$,
    \begin{equation*}
        \Bigl(E_{N,1}\Bigl)_{j,k}=\left\{\begin{array}{ccccc}
             & 1 & \text{if} & j=N, \m k=1\\
             & 0 & \text{otherwise}
        \end{array}\right..
    \end{equation*}
    In particular, we have
    \begin{equation*}
        \mathrm{rank}(2E_{N,1})=1\m\m\m\m\text{and}\m\m\m\m \norme{2E_{N,1}}=2.
    \end{equation*}
    One can observe, that $J_N=M_N(p)$ where $p$ is the one-dimensional symbol defined on $[0,1]_x\times\T_\xi$ by $p(x,\xi)=e^{2i\pi\xi}$. Since $p$ does not depend on $x$ and is smooth with respect to $\xi$, we have $p\in \mathscr{C}^{0,1}_{\mathrm{pw}}S_1$.\\
    \mm First, since $d=1$ here, $p$ satisfies Assumption \ref{Hypothèse théorème final - volume singularités} with $\kappa_1=1$. Second, Proposition \ref{Généralisation BPZ quat} in the Appendix shows that, for such a symbol $p$, Assumption \ref{Hypothèse théorème final - volume préimage} holds with $\kappa_2=1/2$. \\
    
    \mm Figure \ref{fig:spectre bloc de jordan perturbé} is a numerical simulation of the spectrum of $\widetilde{J}_N+\delta Q_N$. We can see that the spectrum fills up the numerical range of $p$, that is $\mathbb{S}^1$, even with the deterministic perturbation $E_{N,1}$. We decided to take $2E_{N,1}$ instead of $E_{N,1}$ to show that the eigenvalue $2$ seems still present in the spectrum of $\widetilde{J}_N+\delta Q_N$, even if most of its eigenvalues are on $\mathbb{S}^1$ as predicted.

    \begin{figure}[H]
        \centering
        \includegraphics[width=0.5\linewidth]{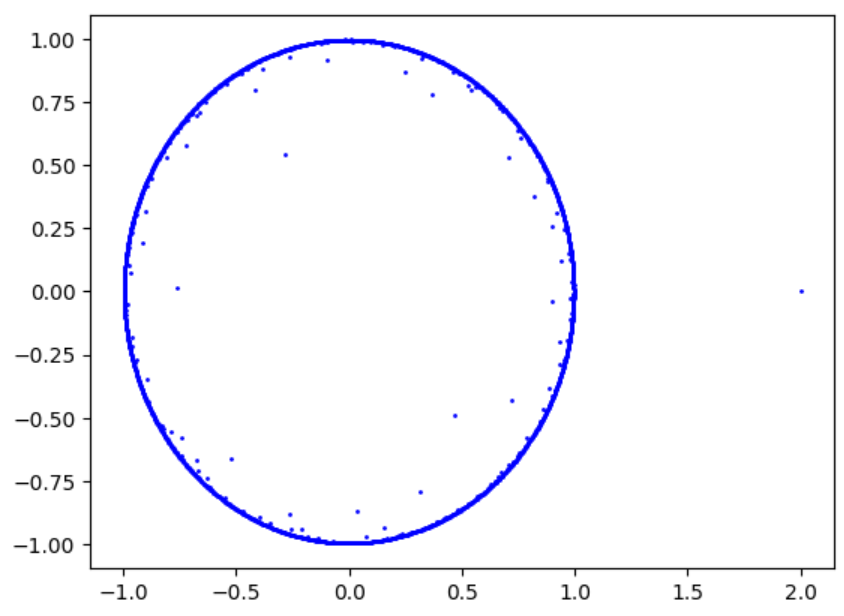}
        \caption{Spectrum of $\widetilde{J}_N+\delta Q_N$, where $Q_N$ is a Gaussian random matrix, $N=2000$ and $\delta=N^{-1.6}$.}
        \label{fig:spectre bloc de jordan perturbé}
    \end{figure}
\end{exmp}

In addition, we can see from Example \ref{Exemple bloc de Jordan pertubé} how important it is to add a random perturbation to obtain the conclusion of Theorem \ref{Théorème final}. In fact, it is clear that the spectrum of $\widetilde{J}_N$ is $\acc{0,2}$ for all $N$. 

\subsection*{Singularities and boundary conditions}
In symbol class $\mathscr{C}^{0,\alpha}_{\mathrm{pw}}S_d$, symbols are allowed to have singularities of jump type. This is a way to tackle the non periodicity of a symbol $p:[0,1]^d_x\times \T^d_\xi\to \C$ with respect to $x$. To apply the semiclassical analysis tools developed in the papers mentioned above (\cite{Vogel_2020,Christiansen_2010,hager2007eigenvalue}), one of the main steps of the proof of Theorem \ref{Théorème final} will be to transform the elements of $\mathscr{C}^{0,\alpha}_{\mathrm{pw}}S_d$ into periodic functions. Let us illustrate this in the one-dimensional case. If we want to transform $p\in \mathscr{C}^{0,\alpha}_{\mathrm{pw}}S_1$ into a periodic function without losing too much information, we can remove its values for $x=0$ and assign at $x=0$ the values it takes at $x=1$. That is, we construct from $p$ the function
\begin{equation*}
    \widetilde{p}:\left\{\begin{array}{ccccc}
         & [0,1]\times \T & \longrightarrow & \C\\
         & (x,\xi) & \mapsto & \left\{\begin{array}{ccccc}
              & p(x,\xi) & \text{if} & x\in\hspace{0.1cm} ]0,1] \\
              & p(1,\xi) & \text{if} & x=0
         \end{array}\right.
    \end{array}\right.
\end{equation*}
which now satisfies $\widetilde{p}(0,\cdot)=\widetilde{p}(1,\cdot)$. However, $\widetilde{p}$ now has a singularity of jump type at $x=0$ but it can be extended by $1$-periodicity to $\R\times \T$. Since we will force an element of $\mathscr{C}^{0,\alpha}_{\mathrm{pw}}S_1$ to have a singularity of jump type through this construction, it is quite natural to allow them to have many singularities of this type along the whole interval $[0,1]$, as long as the volume of the singularities is not too large (see Assumption \ref{Hypothèse théorème final - volume singularités}). 

\section{Related results and comments}
\subsection{Related results}
In the history of Toeplitz matrices, numerous results analogous to Theorem \ref{Théorème final} have been proven. Among them, there are many articles in which the Jordan matrix has been the subject. As mentioned in Example \ref{Exemple bloc de Jordan pertubé}, it is a Toeplitz matrix associated with the one-dimensional symbol $p$ defined on $[0,1]_x\times \T_\xi$ by $p(x,\xi)=e^{2i\pi \xi}$. Davies and Hager \cite{Davies_Hager} and Sjöstrand \cite{Sjöstrand_livre} showed under different assumptions on the coupling constant, that most of the eigenvalues of $J_N+\delta Q_N$ is close to the circle $\mathbb{S}^1$ with probability close to $1$. These results are supplemented by Sjöstrand and Vogel in \cite{sjoestrand2014interioreigenvaluedensityjordan} who provide an accurate description of the eigenvalue distribution of $J_N+\delta Q_N$ in the interior of the disc $D(0,1)$. Moreover, in \cite{Sniadi}, \'{S}niadi made a breakthrough regarding the convergence of empirical spectral measures of randomly perturbed matrices. These results have been enhanced by Guionnet, Wood and Zeitouni in \cite{guionnet2011convergencespectralmeasurenon}, who especially obtain the weak convergence in probability of the empirical spectral measure of $J_N+\delta Q_N$ towards the uniform measure on $\mathbb{S}^1$. In \cite{guionnet2011convergencespectralmeasurenon}, the authors also obtain a similar result to Corollary \ref{Corollaire du théorème final} for the convergence of the empirical measure of $J_N+R_N+\delta Q_N$, but when $\norme{R_N}$ decays polynomially. \\
\mm Toeplitz matrices associated with more general symbols of the form 
\begin{equation}\label{Symbole à partir d'un poly de Laurent}
    p(x,\xi)=\sum_{\nu=-N_-}^{N_+}p_\nu e^{2i\pi\nu\xi}
\end{equation}
where $N_-,N_+\geq 0$, have been studied. In \cite{trefethen2005spectra}, Trefethen and Embree studied the particular case of bidiagonal matrices (that is, $p(x,\xi)=ae^{2i\pi\xi}+be^{-2i\pi\xi}$ with $a,b\in \C$). They emphasized the spectral instability of these matrices under random perturbations with numerical simulations (see \cite[Figures 3.2 and 3.3]{trefethen2005spectra}). Later, in \cite{sjoestrand2015largebidiagonalmatricesrandom}, under the hypothesis $0<\bb{b}<\bb{a}$, Sjöstrand and Vogel proved that the eigenvalue distribution of $M_N(p)+\delta Q_N$ is governed by a Weyl law, so in particular that the eigenvalues tend to be uniformly distributed on the ellipse $p(\T)$ with high probability. They also showed a similar result for $M_N(p)+\delta Q_N$ in \cite{sjoestrand2019toeplitzbandmatricessmall} where $p$ is of the form \eqref{Symbole à partir d'un poly de Laurent}. As mentioned in Section \ref{Introduction}, Basak, Paquette and Zeitouni also obtained in \cite{BPZ2019} the convergence of the empirical spectral measure of $M_N(p)+\delta Q_N$ to $p_*\mathrm{L}$. Their study was strengthened by the article of Bordenave, Capitaine and Chapon \cite{Bordenave_Capitaine_Chapon} in which they provide an accurate and detailled analysis of stable and unstable outliers of such randomly perturbed Toeplitz matrices. This article follows the strategies initiated in \cite{Bordenave_Capitaine} by Bordenave and Capitaine. Finally, Sjöstrand and Vogel showed in \cite{sjoandvogel_general_2021} that for infinite symbols (that is $N_-=N_+=+\infty$ in \eqref{Symbole à partir d'un poly de Laurent}), under mild assumptions on $p(\mathbb{S}^1)$, the eigenvalue distribution of $M_N(p)+\delta Q_N$ is governed by a Weyl law, with probability exponentially close to one. They also show the almost sure convergence of the empirical measure of $M_N(p)+\delta Q_N$ to $p_*\mathrm{L}$.\\
\mm In a similar setting to the one in the present paper, results on the Toeplitz quantization of symbols have been proven. If $p\in \mathscr{C}^\infty(\T^{2d})$ satisfies a condition similar to Assumption \ref{Hypothèse théorème final - volume préimage} with ${\kappa_2}\in \hspace{0.1cm} ]1/2,1]$, then Christiansen and Zworski \cite{Christiansen_2010} proved that the number of eigenvalues in complex domains follows a Weyl law in expectation. They conjectured the almost sure convergence of the associated empirical measure in that setting, which was proven by Vogel in \cite{Vogel_2020}. In this paper, Vogel obtained a Weyl law with high probability, also showed that the result of Christiansen and Zworski remained true when ${\kappa_2} \in \hspace{0.1cm} ]0,1]$ and for a more general class of random perturbations (see \cite[Theorem 8]{Vogel_2020} for further details). In a recent result Oltman \cite{Oltman} extended the result of Vogel to the case of Berezin-Toeplitz operators $p_N$ on a $d$-dimensional Kähler manifold $X$. 

\begin{rem}\label{Remarque inclusion des classes de symboles}
    In \cite{Vogel_2020} and \cite{Christiansen_2010}, the authors make use of symbols $p\in \mathscr{C}^\infty(\T^{2d})$ that are naturally embedded in $\mathscr{C}^{0,1}_{\mathrm{pw}}S_d$. Indeed, such a symbol $p$ can be extended by periodicity to $\R^d\times \T^d$ (through the natural projection $\R^d\to \T^d$), and then restricted to $[0,1]^d\times \T^d$. In this case, we can see that $p\in \mathscr{C}^{0,1}_{\mathrm{pw}}S_d$. 
\end{rem}

\subsection{Comments on the hypotheses of Theorem \ref{Théorème final}}
Assumption \ref{Hypothèse théorème final - volume singularités} is a mild assumption. \label{Comments sur les hypothèses du théorème} As noticed in \eqref{Fait que l'hypothèse sur les singularité soit toujours vraie en dimension 1}, for $d=1$, Assumption \ref{Hypothèse théorème final - volume singularités} is always satisfied with $\kappa_1=1$. For $d>1$, it is well-known that if $A\subset \R^d$ is a set with Lipschitz boundary, then there exists a constant $C>0$ such that for $0<r\ll 1$, 
\begin{equation*}
    L\Bigl(\partial A+B(0,r)\Bigl)\leq Cr.
\end{equation*}
In that case, if $p\in \mathscr{C}^{0,\varrho}_{\mathrm{pw}}S_d$ with $\mathscr{U}_p=\cup_{j=1}^s \partial U_j$ as in Definition \ref{Définition espace d'intérêt}, it is enough that $\partial U_j$ is Lipschitz for all $j$, for $p$ to satisfy Assumption \ref{Hypothèse théorème final - volume singularités}.\\
\mm Otherwise, Assumption \ref{Hypothèse théorème final - volume singularités} is strongly linked to the Minkowski dimension (also called the box-counting dimension) of $\mathscr{U}_p$, which allows $\mathscr{U}_p$ to have a fractal structure. In particular, if $\mathscr{U}_p$ has a Minkowski dimension, say $\mathfrak{s}$, then Assumption \ref{Hypothèse théorème final - volume singularités} is fulfilled with $\kappa_1=d-\mathfrak{s}>0$ (see \cite{Federer,Evans_Gariepy,Mattila} for further details).\\

\mm Assumption \ref{Hypothèse théorème final - volume préimage} was similarly made in \cite{Christiansen_2010,hager2007eigenvalue,Oltman,Vogel_2020}. As for symbols $p\in \mathscr{C}^\infty(\T^{2d})$, which are in $\mathscr{C}^{0,1}_{\mathrm{pw}}S_d$ by Remark \ref{Remarque inclusion des classes de symboles}, Christiansen and Zworki \cite{Christiansen_2010}, and Vogel \cite{Vogel_2020} provide criteria for $p$ to satisfy Assumption \ref{Hypothèse théorème final - volume préimage}. We refer to \cite{Christiansen_2010} and \cite{Vogel_2020} for further details. In addition, Proposition \ref{Généralisation BPZ quat} in the Appendix establishes a criterion for higher-dimensional analog of \eqref{Forme symbole dans les travaux de BPZ} to satisfy Assumption \ref{Hypothèse théorème final - volume préimage}. In fact, if $p$ is of the form 
\begin{equation}
    p(x,\xi)=\sum_{k\in \Lambda}p_k(x)e^{2i\pi \ps{k}{\xi}}, \quad(x,\xi)\in [0,1]^d\times \T^d,
\end{equation}
where $\Lambda=\prod_{j=1}^d\disc{-n_j}{m_j}$, $n_j,m_j\geq 0$, and $p_k:[0,1]^d\to \C$, $k\in \Lambda$ is measurable, then it requires that: there exists $m>0$ such that for almost every $x\in [0,1]^d$, 
\begin{equation*}
    \sum_{\substack{k\in \Lambda\\k\neq 0}}^{N_+}\bb{p_k(x)}\geq m>0.
\end{equation*} 
In this case, Assumption \ref{Hypothèse théorème final - volume préimage} is fulfilled with an explicit $\kappa_2>0$ (see Proposition \ref{Généralisation BPZ quat}).\\

\m\m\m\m Furthermore, Assumption \ref{Hypothèse théorème final - matrice aléatoire} on the random perturbation appears in \cite{vogel2020deterministicequivalencenoisyperturbations} and holds for a large class of random matrices. Vogel and Zeitouni gave in \cite[Remark 4]{vogel2020deterministicequivalencenoisyperturbations} and Basak, Paquette and Zeitouni in \cite[Remark 1.3]{BPZ2019}  examples of random matrices $Q_N$ fulfilling Assumption \ref{Hypothèse théorème final - matrice aléatoire}. We give here their examples for the reader's convenience.
\begin{enumerate}
    \item When the entries of $Q_N$ are independent copies of a random variable $Z$ with expectation $0$ and finite variance.
    \item When the entries of $Q_N$ are independent and controlled by a single distribution $Z$ in the Fourier-analytic sense that has a ${\kappa}$-controlled second moment for some ${\kappa}>0$ (see \cite[Definition 2.2 and Remark 2.8]{Tao_2009}).
    \item When $Q_N=\sqrt{N}U_N$ when $U_N$ is a Haar distributed unitary matrix. This is a result by Rudelson and Vershynin in \cite[Theorem 1.1]{Rudelson_Vershynin}.
    \item When the entries $q^N_{j,k}$ of $Q_N$ are such that there exists $a>0$ so that 
    \begin{equation*}
        \min_{1\leq j,k\leq N^d}\mathbb{P}\pr{\bb{q^N_{j,k}}\leq a}>0 \mm\mm\text{and}\mm\mm \min_{1\leq j,k\leq N^d}\mathrm{Var}\pr{q^N_{j,k}\mathbb{1}_{\acc{\bb{q^N_{j,k}}\leq a}}}>0,
    \end{equation*}
    (see \cite[Lemma A.1]{Bordenchafai_potentielloga} for further details).
    \item When $Q_N=A\cdot X+B$, where $A$ is an $N^d\times N^d$ deterministic matrix whose entries are elements of $[0,1]$ satisfying a super-regularity condition (see \cite[Definition 1.23]{Cook}), $B$ is a deterministic complex matrix. Here, $\cdot$ denotes the Hadamard product of matrices. This result is attributed to Cook in \cite[Theorem 1.24]{Cook}.
\end{enumerate}

\subsection{Outline of the proof}\label{Outline}
We will start by recalling basic notions of semiclassical analysis in Section \ref{tools}, which is an essential tool for this paper. \\
\mm Moreover, in Section \ref{Quantif}, we define and study the quantization procedure for rough symbols, which justifies the construction of the operator \eqref{Expression nouvelle procédure de quantif}. We also discuss another quantization procedure mapping smooth periodic symbols $p$ to the $N^d\times N^d$ matrices $p_N$, for which we develop a functional calculus. The following result is obtained; we present here a simplified version.

\begin{prop}
    Let $N\gg 1$ and let $0<\A{1},\A{2}\leq 1$ be such that $N^{-1}\ll  \pr{\A{1}\A{2}}^{1/2}$. Let $p:\R^d_x\times \R^d_\xi\to \C $ be smooth and such that $p(\cdot,\xi)$ is $\alpha_1^{-1/2}\Z^d$ periodic and $p(x,\cdot)$ is $\alpha_2^{-1/2}\Z^d$ periodic. Assume that $p\geq 0$, $p+i$ is elliptic and admitting an asymptotic expansion. Then, for all $\psi\in \mathscr{C}^{\infty}_c(\R)$, there exists a smooth periodic $f$ on $\R^d_x\times \R^d_\xi$ (with same periodicity as that of $p$) such that
    \begin{equation*}
         \psi(p_{N})=f_{N}.
    \end{equation*}
\end{prop}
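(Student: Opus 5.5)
We prove the statement by the Helffer--Sjöstrand formula combined with a parametrix construction for the resolvent of $p_N$ inside the periodic (torus) symbol calculus. The matrices $p_N$ act on $\C^{N^d}\simeq \ell^2\bigl((\Z/N\Z)^d\bigr)$. This space can be identified with the space of distributions on $\R^d$ that are $\alpha_1^{-1/2}\Z^d$-periodic and whose Fourier transform is supported on the dual lattice, which is consistent with the $\alpha_2^{-1/2}\Z^d$-periodicity in $\xi$. The effective semiclassical parameter is $\tilde h\sim (N(\alpha_1\alpha_2)^{1/2})^{-1}\ll 1$. In this picture $p_N$ is the Weyl quantization $\mathrm{Op}^w_{\tilde h}(p)$ restricted to this finite-dimensional space. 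The first step is therefore to check that symbols with the double periodicity form an algebra under the Moyal product $\Moy$, and that the corresponding quantization is an algebra morphism onto $N^d\times N^d$ matrices, with the usual composition formula $a_N b_N=(a\Moy b)_N$. This is exact, because quantization of doubly periodic symbols is exactly the twisted convolution on the finite Heisenberg group. Norms are bounded by finitely many seminorms (Calderón--Vaillancourt on the torus).

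Next, fix $z\in\C\setminus\R$. Since $p\ge 0$ and $p+i$ is elliptic with an asymptotic expansion, $p-z$ is elliptic in the symbol class uniformly for $z$ in compact sets, with bounds of size $\bb{\mathrm{Im}\, z}^{-k}$. We build a symbol $r_z\sim \sum_j \tilde h^j r_{z,j}$ with $r_{z,0}=(p_0-z)^{-1}$, where $p_0$ is the principal part, by solving $(p-z)\Moy r_z=1$ order by order. Each $r_{z,j}$ is a finite sum of terms of the form $a(p_0-z)^{-k}$ with $a$ periodic and smooth. The main technical step is to show that the asymptotic sum can be made exact, so that $(p_N-z)^{-1}=(r_z)_N$ holds exactly. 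This follows from Beals' characterization: a bounded operator on the periodic space commuting appropriately with the lattice shifts, and all of whose iterated commutators with $x$ and $\tilde hD$ are bounded, is the quantization of a smooth periodic symbol. The commutators of $(p_N-z)^{-1}$ are expressed through commutators of $p_N$ and powers of the resolvent, giving bounds of the form $\mathcal O(\bb{\mathrm{Im}\, z}^{-M})$. Alternatively, one shows that $(r_z)_N(p_N-z)=I+(e_z)_N$ with $e_z=\mathcal O(\tilde h^\infty\bb{\mathrm{Im}\,z}^{-M})$ and inverts $I+(e_z)_N$ by a Neumann series, which is again the quantization of a symbol by the algebra property.

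Finally, let $\tilde\psi$ be an almost analytic extension of $\psi$ with $\bar\partial\tilde\psi=\mathcal O(\bb{\mathrm{Im}\, z}^\infty)$. Then
\begin{equation*}
\psi(p_N)=-\frac1\pi\int_\C \bar\partial\tilde\psi(z)\,(p_N-z)^{-1}\,L(dz)=f_N,\qquad f:=-\frac1\pi\int_\C\bar\partial\tilde\psi(z)\,r_z\,L(dz).
\end{equation*}
The integral defining $f$ converges in every seminorm, because the polynomial blow-up of the seminorms of $r_z$ is absorbed by $\bar\partial\tilde\psi$. Hence $f$ is smooth and has the same periodicity as $p$. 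Moreover, $f\sim \psi(p_0)+\tilde h(\dots)$, which gives the expected principal symbol.

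The main obstacle is the exactness claim $(p_N-z)^{-1}=(r_z)_N$ with seminorm control polynomial in $\bb{\mathrm{Im}\,z}^{-1}$. This requires a Beals-type lemma in the finite periodic setting, uniform in $N$, $\alpha_1$ and $\alpha_2$. I would establish it first, using the condition $N^{-1}\ll (\alpha_1\alpha_2)^{1/2}$ to guarantee $\tilde h\ll1$.
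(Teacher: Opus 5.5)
Your architecture matches the paper's: the Helffer--Sj\"ostrand formula, the exact identity $(p_N-z)^{-1}=(r_z)_N$ for a periodic symbol $r_z$ with seminorms $\mathcal{O}(|\mathrm{Im}\,z|^{-k})$, and integration against $\bar\partial\tilde\psi$. However, you leave open the step you yourself call the main obstacle, and neither of your two ways of closing it works as stated.

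\emph{The finite-dimensional Beals lemma.} A Beals lemma on the $N^d$-dimensional space $\mathcal{H}^d_{\tilde h,\alpha_1,\alpha_2}$, phrased through iterated commutators with $x$ and $\tilde hD$, is not well posed. Those operators do not preserve that space. Moreover, an operator on it has no unique symbol, because the Fourier modes $(n,m)$ and $(n+Nk,m+N\ell)$ alias. Such a lemma would therefore have to assert the existence of a good representative symbol, which is a different and harder statement than the classical lemma.

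\emph{The Neumann-series alternative.} The algebra property only shows that each partial sum $\sum_{k\le K}(-1)^k e_z^{\sharp k}$ is a symbol. Convergence of the full series in symbol seminorms does not follow, since each Moyal product costs a fixed number of derivatives; this is exactly why Beals' lemma is needed for inverses even on $\R^d$. In addition, $\|(e_z)_N\|<1$ holds only for $|\mathrm{Im}\,z|\ge\tilde h^{\epsilon}$ with some fixed $\epsilon>0$. The region $|\mathrm{Im}\,z|<\tilde h^{\epsilon}$, which meets the support of $\bar\partial\tilde\psi$, is not covered, and there you would still have to show that its $\mathcal{O}(\tilde h^\infty)$ contribution is the quantization of a symbol.

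The missing idea is that no finite-dimensional Beals lemma is needed: apply the standard one on $L^2(\R^d)$.
\begin{itemize}
\item Since $p$ is real and $p+i$ is elliptic, $p^w=\mathrm{Op}^w_{\tilde h}(p)$ is self-adjoint. Beals' lemma then gives $(p^w-z)^{-1}=r_z^w$ with $r_z\in S(1)$ and seminorms $\mathcal{O}(|\mathrm{Im}\,z|^{-k})$. This is uniform in $N,\alpha_1,\alpha_2$, because the rescaled symbols have $\tilde h$-independent bounds.
\item Periodicity of $r_z$ is then automatic. Let $M_{\gamma,\mu}=\tau_\gamma e^{\frac{i}{\tilde h}\langle\cdot,\mu\rangle}$ with $(\gamma,\mu)\in\alpha_1^{-1/2}\Z^d\times\alpha_2^{-1/2}\Z^d$. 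Then $M_{\gamma,\mu}p^wM_{\gamma,\mu}^{-1}=p^w$, hence $(\tau_{(\gamma,-\mu)}r_z)^w=r_z^w$. Uniqueness of the Weyl symbol (Schwartz kernel theorem) gives $\tau_{(\gamma,-\mu)}r_z=r_z$. This is what ``commuting with the lattice shifts'' should mean, but on $L^2(\R^d)$ rather than on the finite space.
\item Consequently $r_z^w$ preserves $\mathcal{H}^d_{\tilde h,\alpha_1,\alpha_2}$. The identity $(p^w-z)r_z^w=r_z^w(p^w-z)=I$ on $\mathscr{S}'(\R^d)$ then restricts to $(p_N-z)^{-1}=(r_z)_N$ exactly.
\end{itemize}
The paper runs the whole Dimassi--Sj\"ostrand functional calculus this way on $\R^d$. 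It obtains $\psi(p^w)=f^w$ with $f$ periodic because $r_z$ is, and restricts to the finite space only at the end.
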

\mm In Section \ref{Résu funda}, we will approximate rough symbols in $\mathscr{C}^{0,\varrho}_{\mathrm{pw}}S_d$ with smooth $h$-dependent periodic symbols in $\mathscr{C}^\infty(\T^{2d})$. The idea is to restrict $p\in\mathscr{C}^{0,\varrho}_{\mathrm{pw}}S_d$ to $]0,1]^d_x\times\T^d_\xi$ and to extend it by $\Z^d$ periodicity with respect to $x$, on $\R^d_x\times \T^d_\xi$ to a function still denoted $p$. Taking an $h$-dependent regularizing function $\psi_h$, we set 
\begin{equation*}
    \widetilde{p}(x,\xi):=\Bigl(p(\cdot,\xi)\ast \psi_h\Bigl)(x), \mm x\in \R^d\m
\end{equation*}
and
\begin{equation*}
    \widetilde{q}:=\overline{\widetilde{p}}\sharp_h\widetilde{p},
\end{equation*}
which belongs to a slightly exotic symbol class (see Section \ref{tools} for the definition of $\sharp_h$). We obtain the following result, a simplified version of which is provided here:

\begin{thm}
    Let $0<\eta<1/4$ and $\varrho\in \hspace{0.1cm} ]0,1]$. Let $p\in \mathscr{C}^{0,\varrho}_{\mathrm{pw}}S_d$ and let $\mathscr{U}_p$ be given in \eqref{Ensemble des singularités}. We assume that there exist ${\kappa_2}\in \hspace{0.1cm} ]0,1]$ such that, for $0\leq t\ll 1$,
    \begin{equation*}
        \mathrm{Vol}\pr{\acc{\rho\in \T^{2d}\m|\m \bb{p(\rho)}^2\leq t}}=\mc{O}(t^{\kappa_2})
    \end{equation*}
    and ${\kappa_1}\in \hspace{0.1cm} ]0,1]$ such that for $0\leq t\ll 1$,
    \begin{equation*}
        \mathrm{L}\Bigl(\pr{\mathscr{U}_p+B(0,t)}\cap[0,1]^d\Bigl)=\mc{O}(t^{{\kappa_1}}).
    \end{equation*}
    Let $N\gg 1$ and $\alpha$ be such that
    \begin{equation*}
        N^{-\eta\min(\varrho,{\kappa_1})}\ll\alpha\ll 1.
    \end{equation*}
    Then, for all $\psi\in \mathscr{C}^\infty_c(\R)$,
    \begin{equation*}
       \mathrm{tr}\!\croch{\psi\pr{\frac{\widetilde{q}_{N}}{\alpha}}} =N^d\pr{\int_{\T^{2d}}\psi\pr{\frac{\bb{p(\rho)}^2}{\alpha}}d\rho+o(1)}.
    \end{equation*}
    If $\chi\in \mathscr{C}^\infty_c(\R,[0,+\infty[)$, such that $\chi(0)>0$ then,
    \begin{equation*}
        \begin{split}
            \log\det\pr{\widetilde{q}_N+\alpha\chi\pr{\frac{\widetilde{q}_N}{\alpha}}} &= N^d\biggl(\int_{\T^{2d}}\log\pr{\bb{p(\rho)}^2}d\rho+o(1)\biggl).
        \end{split}
    \end{equation*}
\end{thm}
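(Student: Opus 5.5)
The plan is to reduce both statements to the functional calculus Proposition quoted just above. The reduction goes through a two-step comparison: first the regularised symbol $\widetilde q$ is compared with its smooth periodic surrogate, and then that surrogate is compared with $\bb{p}^2$ at the level of integrals.

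\emph{Regularisation.} First extend $p$ by $\Z^d$-periodicity in $x$. Then mollify, $\widetilde p=(p(\cdot,\xi)\ast\psi_h)$, with $\psi_h$ supported at scale $h^{\eta}$. This makes $\widetilde p$ a symbol in an exotic class $S_\eta$: each $x$-derivative costs $h^{-\eta}$, while $\xi$-derivatives are harmless. Since $\eta<1/4$, the composition $\widetilde q=\overline{\widetilde p}\sharp_h\widetilde p$ satisfies
\begin{equation*}
\widetilde q=\bb{\widetilde p}^2+\mc{O}(h^{1-2\eta}).
\end{equation*}
Moreover, $\widetilde q\ge 0$ up to such errors, and $\widetilde q+i$ is elliptic.

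\emph{Trace formula.} I apply the functional calculus proposition to $\widetilde q/\alpha$, after rescaling the periods so that its hypotheses hold. This gives $\psi(\widetilde q_N/\alpha)=f_N$ with
\begin{equation*}
f=\psi(\widetilde q/\alpha)+\mc{O}\pr{h^{1-2\eta}\alpha^{-2}},
\end{equation*}
where the remainder comes from the Helffer--Sjöstrand formula: each resolvent inverse costs a factor $\alpha^{-1}$. Taking the trace of $f_N$ and using the exact trace formula for the periodic quantization, I get $N^d\int f\,d\rho+\mc{O}(N^{-\infty})$. The remaining task is to show
\begin{equation*}
\int\psi(\bb{\widetilde p}^2/\alpha)\,d\rho=\int\psi(\bb{p}^2/\alpha)\,d\rho+o(1).
\end{equation*}
I split the $x$-domain into two parts. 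On the set at distance at least $h^\eta$ from $\mathscr U_p\cup\{\text{faces of }[0,1]^d\}$, Hölder continuity gives $\bb{\widetilde p-p}=\mc{O}(h^{\eta\varrho})$. There the integrand difference is $\mc{O}(h^{\eta\varrho}\alpha^{-1})$, which is $o(1)$ since $\alpha\gg N^{-\eta\min(\varrho,\kappa_1)}$. The complementary set has measure $\mc{O}(h^{\eta\kappa_1})$ by Assumption \ref{Hypothèse théorème final - volume singularités}, the boundary faces included. Because $\psi$ is bounded, its contribution is $o(1)$.

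\emph{Log-determinant.} Set $F_\alpha(t)=\log(t+\alpha\chi(t/\alpha))$. I write
\begin{equation*}
\log\det(\cdot)=\mathrm{tr}\,F_\alpha(\widetilde q_N).
\end{equation*}
Next I decompose $F_\alpha(t)=\log t+G(t/\alpha)$ for $t\gtrsim\alpha$, with $G$ compactly supported after adjusting by $\log\alpha$, and use a dyadic partition of unity in $t$ at scales $2^k\alpha$ up to $\norme{\widetilde q_N}=\mc{O}(1)$. Each dyadic piece is handled by the trace formula above. The large-$t$ part, where $\widetilde q$ is not small, is a standard smooth function of $\widetilde q_N$ whose trace is again $N^d\int$ plus $o(N^d)$.

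\emph{Main obstacle.} The hardest step is controlling the small-$t$ region, i.e. showing
\begin{equation*}
\int\bb{F_\alpha(\bb{\widetilde p}^2)-\log\bb{p}^2}\,d\rho=o(1).
\end{equation*}
Here I would use Assumption \ref{Hypothèse théorème final - volume préimage}: the set $\{\bb{p}^2\le\alpha\}$ has measure $\mc{O}(\alpha^{\kappa_2})$, and layer-cake integration gives
\begin{equation*}
\int_{\bb{p}^2\le\alpha}\bb{\log\bb{p}^2}\,d\rho=\mc{O}(\alpha^{\kappa_2}\log\alpha^{-1})\to0.
\end{equation*}
The difficulty is that this bound must be transferred to $\widetilde p$. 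Near singularities this is done by the measure bound $h^{\eta\kappa_1}$ together with $\bb{F_\alpha}\le\bb{\log\alpha}$, which is still $o(1)$ since $\alpha$ is only polynomially small in $h$. Elsewhere it follows from the $\mc{O}(h^{\eta\varrho})$ closeness combined with the $\alpha$-scale cutoff. The summation over the dyadic scales must also be checked for uniformity, which works because each scale contributes errors bounded by a power of $2^k\alpha$.
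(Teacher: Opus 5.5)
Your proposal is correct, and the trace formula is argued as in the paper. You mollify at scale $h^\eta$ and apply an anisotropic dilation: the paper takes $\alpha_1=h^{2\eta}\alpha$, $\alpha_2=\alpha$, so that $\alpha^{-1}\check q$ lands in $S(\check m,\alpha_1,\alpha_2)$ with $\check m=1+\widetilde q_0/\alpha$, which is what your ``rescaling the periods'' has to produce. You then use the exact trace formula and split into an $h^\eta$-neighbourhood of $\mathscr U_p\cup\partial[0,1]^d$ and its complement. Your remainder $\mathcal{O}(h^{1-2\eta}\alpha^{-2})$ is cruder than the true $\mathcal{O}(\tilde h)=\mathcal{O}(h^{1-\eta}\alpha^{-1})$, but it is still $\mathcal{O}(N^{-(1-4\eta)})$ because $\alpha\gtrsim N^{-\eta}$.

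The log-determinant is organised differently. The paper uses $\frac{d}{dt}\log\det\bigl(\widetilde q_N+t\chi(\widetilde q_N/t)\bigr)=\mathrm{tr}\,\frac1t\psi(\widetilde q_N/t)$ with $\psi(x)=(\chi(x)-x\chi'(x))/(x+\chi(x))$, integrated over $t\in[\alpha,\beta]$. It treats the fixed scale $\beta$ by a separate elliptic log-det lemma (interpolating $s\widehat q+1-s$), and it compares $\widetilde q_0$ with $q$ at every scale $t$. That per-scale comparison is what forces the $\mathscr E_t,\mathscr F_t$ case analysis and the estimates of the $\tilde h$-corrections against $d\mathscr V$.

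Your dyadic partition is the discrete counterpart of this $t$-integration, and your top piece replaces the elliptic lemma by the trace formula at a fixed scale. The genuinely cleaner point is the separation of the two comparisons. You pass from the operator to $\bb{\widetilde p}^2$ scale by scale, where the error is a negative power of $2^k\alpha$. You compare $\bb{\widetilde p}^2$ with $\bb{p}^2$ only once, globally, for $F_\alpha$, using $\bb{F_\alpha'}\lesssim 1/\alpha$, the $h^{\eta\kappa_1}$ measure bound and the layer-cake estimate.

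This separation is not cosmetic. Suppose you attached the weights $\log(2^k\alpha)$ to the dyadic pieces and also summed the comparison error $\mathcal{O}(N^{-\varrho\eta}(2^k\alpha)^{-1})$ with them. You would then lose a factor $\bb{\log\alpha}$, which is not $o(1)$ near the lower end of the admissible range of $\alpha$. Telescoping instead, $F_{2^{k+1}\alpha}-F_{2^k\alpha}=G(x/2^k\alpha)$ with one fixed smooth compactly supported $G$, removes the weights entirely and is the exact discrete analogue of the paper's $t$-derivative.

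Both routes give the same error $\mathcal{O}(N^{-\eta\kappa_1}\bb{\log\alpha})+\mathcal{O}(\alpha^{\kappa_2}\bb{\log\alpha})+\mathcal{O}(N^{-\varrho\eta}\alpha^{-1})$, and yours needs less case analysis. You still have to verify, as the paper does, that the functional-calculus constants stay uniform when $\alpha$ is replaced by any scale $t\in[\alpha,1]$.
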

Under suitable hypotheses on the parameters, and as a consequence of this result, we can obtain an estimate on the number $\mathfrak{m}$ of singular values of $\widetilde{q}_N$ that are smaller than $\alpha$.

\mm In Section \ref{Grushin}, we set up a Grushin problem for $\widetilde{p}_N-z$. For $z\in \C$, the operator to be examined is
\begin{equation*}
    \mc{P}(z):=\begin{pmatrix}
        \widetilde{p}_N-z&R_-(z)\\
        R_+(z)&0
    \end{pmatrix}:\C^{N^d}\times \C^\mathfrak{m}\longrightarrow\C^{N^d}\times\C^\mathfrak{m}
\end{equation*}
where $R_+(z)$ and $R_+(z)$ are suitably chosen so that $\mc{P}(z)$ is invertible. We show that 
\begin{equation*}
    \begin{split}
        \log\pr{ \bb{\det\pr{\mc{P}(z)}}} &= N^d\biggl(\int_{\T^2}\log\pr{\bb{p(\rho)-z}^2}d\rho+o(1)\biggl).
    \end{split}
\end{equation*}
\mm In Section \ref{log_pot}, we give the proper definition of the logarithmic potential $\phi_\mu$ associated with a measure $\mu$. We will be interested in the empirical spectral measure $\mu_N$ of
\begin{equation*}
    M_N(p)+\delta Q_N
\end{equation*}
where $\delta$ is polynomially decreasing with respect to $N$, $M_N(p)$ is given in \eqref{Définition de l'opérateur pour la comparaison avec BPZ} and $Q_N$ is an $N^d\times N^d$ random matrix. Taking
\begin{equation*}
    \mu:=p_*\mathrm{L}
\end{equation*}
where $\mathrm{L}$ is the Lebesgue measure on $[0,1]_x^d\times \T^d_\xi$ and using the estimates developed in Section \ref{Grushin}, we show that for all $z\in \C$,
\begin{equation*}
    \bb{\phi_{\mu_N}(z)-\phi_\mu(z)}=o(1)
\end{equation*}
with high probability as $N\to +\infty$.\\
\mm We finally provide in Section \ref{Démo du théorème final} a proof of Theorem \ref{Théorème final} using a criterion for weak convergence of measures (see e.g. \cite{taotopics}) and the estimates of the Section \ref{log_pot}.

\subsection*{Notations}
Throughout this paper, we will use the following notations: $\ps{\cdot}{\cdot}$ represents the canonical inner product on $\C^m$, for all $m\in \N^*:=\N\setminus\{0\}$ and the associated $\ell^2$-norm will be denoted $\norme{\cdot}$, as well as for the $\ell^2\to \ell^2$ norm of $m\times m$ complex square matrices. We will also frequently use the Hilbert-Schmidt norm of a matrix defined by $\norme{A}_{\mathrm{HS}}:=\mathrm{tr}(A^*A)^{1/2}$. The Japanese bracket of $x\in \R^m$ will be written as
\begin{equation*}
    \jap{x}:=\sqrt{1+\norme{x}^2}.
\end{equation*}
\mm If $A$ is a measurable set and $p\in [1,+\infty]$, we will denote $\norme{f}_{L^p(A)}$ the $L^p$-norm of $f$ on $A$ defined by
\begin{equation*}
    \norme{f}_{L^p(A)}:=\pr{\int_{A}\bb{f(x)}^p dx}^{1/p}
\end{equation*}
if $1\leq p<\infty$, and $\norme{f}_{L^{\infty}(A)}:=\mathrm{ess\m sup}_{A}(f)$. In this paper, unless otherwise stated, $\mathrm{L}$ will always denote the Lebesgue measure, whether on $\C$, $\R^d$ or even $\T^d$ without distinction. However, when integrating with respect to the Lebesgue measure, we will use the notations $d\rho$, $dx$, $d\xi$, $\mathrm{L}(dz)$ whenever convenient. \\
\mm When we write $a_N=\mc{O}(b_N)$ (as $N\to +\infty$), we mean that there exists a constant $C>0$ independent of $N$ such that $\bb{a_N}\leq C\bb{b_N}$. If we want to emphasize that the constant depends on a parameter $t$, then, we will write it $C_t$. We adopt the same idea for the big-$\mc{O}$ notation denoting $\mc{O}_t(b_N)$. For another comparison, we use the notation $a_N=o(b_N)$ (as $N\to +\infty$) when $\frac{a_N}{b_N}\to 0$ as $N\to +\infty$.\\
The notation $a\ll b$ means that $Ca\leq b$ for some sufficiently large constant $C>0$.\\ 
\mm Finally, for all $a,b\in \R$ with $a\leq b$, we denote $\disc{a}{b}:=\Z\cap [a,b]$.

\subsection*{Acknowledgment}
I am deeply grateful to Martin Vogel for many enlightening discussions and for his insightful comments on early drafts of this paper. I also thank Raphaël Côte for his helpful suggestions.\\
\mm I would also like to thank the Mittag-Leffler Institute (Sweden) for providing a warm, welcoming, and fruitful environment, where the core ideas of this work were developed. 

\section{Semiclassical calculus}
\mm In \label{tools} this section, we introduce some notions of semiclassical calculus that are needed for the rest of the paper. For more details, the reader can see \cite{dimassi1999spectral,martinez,Zworski}.\\

\mm Let $h\in \hspace{0.1cm} ]0,1]$ denote the semiclassical parameter and let $d\geq 1$ be an integer. We will often use in the following the semiclassical Fourier transform 
\begin{equation*}
    \mc{F}_h(f)(\xi):=\int_{\R^d}e^{-\frac{i}{h}\ps{x}{\xi}}f(x)dx,\mm f\in \mathscr{S}(\R^d).
\end{equation*}
It maps bijectively the Schwartz functions space $\mathscr{S}(\R^d)$ into $\Schw{d}$, $\Schwp{d}$ into $\Schwp{d}$ by duality and is a bijective isometry (up to an $h$-dependent constant) of $L^2(\R^d)$.\\

\mm As mentioned in Section \ref{Outline}, a symbol $p\in \mathscr{C}^{0,\varrho}_{\mathrm{pw}}S_d$ will be extended on $\R^d_x\times \T^d_\xi$, and regularized using a mollifier. To study such symbols, we introduce the basic tools we will be needed below. 

\mm We say that a continuous function $m:\R^{2d}\to \hspace{0.1cm}]0,+\infty[$ in an \textit{order function} if there exist $C_0>0$ and $N_0\geq 0$ such that for all $\rho,\mu\in \R^{2d}$,
\begin{equation}
    m(\rho)\leq C_0\jap{\rho-\mu}^{N_0}m(\mu).
\label{Recall d'analyse semicl - def fonction d'ordre}
\end{equation}
Notice that the set of order functions is closed under multiplication and taking the multiplicative inverse. For $\eta_1,\eta_2\in [0,1/2]$, we define the following symbol class associated with $m$, 
\begin{equation}\label{Recall d'analyse semicl - classe de symbole}
    S_{\eta_1,\eta_2}(m)
\end{equation}
by the set of functions $p\in \mathscr{C}^{\infty}(\R^{2d})$ such that for all $\beta,\gamma\in \N^d$, there exists $C_{\beta,\gamma}>0$ such that
\begin{equation}\label{Recall d'analyse semicl - classe de symbole version pour les constantes}
    \bb{\partial^\beta_x\partial^\gamma_\xi p(x,\xi)}\leq C_{\beta,\gamma}h^{-\eta_1\bb{\beta}-\eta_2\bb{\gamma}}m(x,\xi).
\end{equation}
A symbol $p$ in $S_{\eta_1,\eta_2}(m)$ is allowed to depend on $h$, but in this case, we require that the constants $C_{\beta,\gamma}$ in  \eqref{Recall d'analyse semicl - classe de symbole version pour les constantes} are uniform with respect to $h$. We denote $S(m):=S_{0,0}(m)$ and $S_\xi(m)$ the symbols in $S(m)$ that are $\Z^d$-periodic with respect to $\xi$, that is for all $(x,\xi)\in \R^{2d}$, $\gamma\in\Z^d$,
\begin{equation}\label{Def Z^d-périodicité en xi}
    p(x,\xi+\gamma)=p(x,\xi).
\end{equation}
\m\\
For a given symbol $p\in S_{\eta_1,\eta_2}(m)$, we write
\begin{equation}
    p\sim p_0+hp_1+\dots \mm\text{in}\mm S_{\eta_1,\eta_2}(m), \mm p_\nu\in S_{\eta_1,\eta_2}(m)
\label{Recall d'analyse semicl - devpt asymptotique}
\end{equation}
if for all $N\in \N^*$, $p-\sum_{\nu=0}^{N-1}h^\nu p_\nu\in h^NS_{\eta_1,\eta_2}(m)$. In that case, $p_0$ is said to be the \textit{principal symbol} of $p$. By Borel summation, we can show that for a given sequence $(p_\nu)_\nu$ of $S_{\eta_1,\eta_2}(m)$, there exists a symbol $p\in S_{\eta_1,\eta_2}(m)$ such that \eqref{Recall d'analyse semicl - devpt asymptotique} holds (see for example \cite[Theorem 4.15]{Zworski}).\\

For $t\in [0,1]$, we define the $t$-quantization of a symbol $p\in S_{\eta_1,\eta_2}(m)$ acting on $\mathscr{S}(\R^d)$ by
\begin{equation}
    \mathrm{Op}_{h,t}(p)u(x):=\frac{1}{(2\pi h)^d}\int_{\R^d}\int_{\R^d}e^{\frac{i}{h}\ps{x-y}{\xi}}p\Bigl(tx+(1-t)y,\xi\Bigl)u(y)dyd\xi\m
\label{Recall d'analyse semicl - quantification indice t}
\end{equation}
where the integral with respect to $\xi$ must be seen as an oscillatory integral. For the $1$-quantization, we will denote $\mathrm{Op}_h$ instead of $\mathrm{Op}_{h,1}$ and for the $1/2$-quantization, i.e. the Weyl quantization, $\mathrm{Op}_h^w$, $p^w(x,hD)$ or simply $p^w$ instead of $\mathrm{Op}_{h,1/2}$.\\
Via integration by parts, $\mathrm{Op}_{h,t}$ can be shown to map continuously $\mathscr{S}(\R^d)\to \mathscr{S}(\R^d)$ and $\mathscr{S}'(\R^d)\to \mathscr{S}'(\R^d)$ by duality. Furthermore, if $m$ is bounded, $\mathrm{Op}_{h,t}$ is bounded as an operator $L^2(\R^d)\to L^2(\R^d)$. We recall that he formal adjoint of $\mathrm{Op}_{h,t}(p)$ is
\begin{equation*}
    \mathrm{Op}_{h,t}(p)^*=\mathrm{Op}_{h,1-t}(\overline{p}).
\end{equation*}

\mm If $p,q\in S_{\eta_1,\eta_2}(m)$, then we have the following composition rule
\begin{equation}\label{Recall d'analyse semicl - formule de composition}
    \mathrm{Op}_h^w(p)\circ\mathrm{Op}_h^w(q)=\mathrm{Op}_h^w(p\sharp_h q)
\end{equation}
where for every order functions $m_1,m_2$ and $\eta_1,\eta_2,\delta_1,\delta_2\in [0,1/2]$,
\begin{equation}
    \begin{array}{ccccc}
         & S_{\eta_1,\eta_2}(m_1)\times S_{\delta_1,\delta_2}(m_2) & \to & S_{\omega_1,\omega_2}(m_1m_2) \\
         & (p,q) & \mapsto & p\sharp_h q:=e^{\frac{ih}{2}\sigma(D_x,D_\xi,D_y,D_\eta)}\big(p(x,\xi)q(y,\eta)\big)_{\Bigl|\substack{y=x\\\eta=\xi}}
    \end{array}
\label{Recall d'analyse semicl - expression produit de Moyal}
\end{equation}
with $\omega_j=\max(\eta_j,\delta_j)$, $j\in \{1,2\}$, $\sigma(x,\xi,y,\eta):=\ps{\xi}{y}-\ps{x}{\eta}$ and $D=\frac{1}{i}\partial$. Moreover, if $\omega_1+\omega_2<1$, then we also have the asymptotic expansion
\begin{equation}
    p\sharp_hq\sim \sum_{k\in \N}\frac{1}{k!}\pr{\frac{ih}{2}\sigma(D_x,D_\xi,D_y,D_\eta)}^k\big(p(x,\xi)q(y,\eta)\big)_{\Bigl|\substack{y=x\\\eta=\xi}}\m\m \text{in}\m\m S_{\omega_1,\omega_2}(m_1m_2).
\label{Recall d'analyse semicl - dvpt asymptotique du produit de Moyal}
\end{equation}
\m\\
Finally, a symbol $p\in S_{\eta_1,\eta_2}(m)$ is said to be \textit{elliptic} if there exists a constant $C>0$ independent of $h$ such that for all $\rho=(x,\xi)\in \R^{2d}$,
\begin{equation*}
    \bb{p(\rho)}\geq \frac{1}{C}m(\rho).
\end{equation*}

\section{Quantization procedures}\label{Quantif}
\subsection{The non periodic case}
In this section, \label{Explanation of the quantization procedure} we justifies the construction of the operator \eqref{Expression nouvelle procédure de quantif}.\\

\mm Let us consider $m$ an order function. In what follows, if $f$ is a function defined on $\R^d$, we will denote for all $\gamma\in \R^d$,
\begin{equation}\label{Translation par gamma}
    \tau_\gamma f:=f(\cdot-\gamma).
\end{equation}

\begin{defn}
    We define $\mathscr{S}'_p(\R^{d})$ as the set of tempered distributions that are $\Z^d$ periodic in frequency, that is, the set of $u\in \mathscr{S}'(\R^d)$ such that,
\begin{equation*}
    \forall n\in \Z^d, \m\m\m\tau_{n}\mc{F}_{h}(u)=\mc{F}_{h}(u).
\end{equation*}
\end{defn}

\begin{lem}
    Let $p\in S_{\xi}(m)$. Then, $\mathrm{Op}_h(p):\mathscr{S}'_p(\R^d)\to \mathscr{S}'_p(\R^d)$.
\end{lem}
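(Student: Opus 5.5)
The plan is to prove two things: that $\mathrm{Op}_h(p)$ is well defined on $\mathscr{S}'(\R^d)$ with values in $\mathscr{S}'(\R^d)$, and that it commutes, on the Fourier side, with the translations $\tau_n$, $n\in\Z^d$. The first part is already recalled in Section \ref{tools}. Since $p\in S_\xi(m)\subset S(m)$, the quantization $\mathrm{Op}_h(p)=\mathrm{Op}_{h,1}(p)$ maps $\mathscr{S}(\R^d)\to\mathscr{S}(\R^d)$ continuously, and by duality $\mathscr{S}'(\R^d)\to\mathscr{S}'(\R^d)$. Concretely, for $u\in\mathscr{S}'$ one sets $\ps{\mathrm{Op}_h(p)u}{\varphi}:=\ps{u}{\mathrm{Op}_h(p)^t\varphi}$, where the transpose $\mathrm{Op}_h(p)^t$ is again a pseudodifferential operator with symbol in $S(m)$ (it is the $0$-quantization of $p(x,-\xi)$), hence continuous on $\mathscr{S}$. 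So only periodicity in frequency remains to be checked.

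I will work with the Fourier-side representation. For $u\in\mathscr{S}$ we have $\mathrm{Op}_h(p)u(x)=(2\pi h)^{-d}\int e^{\frac{i}{h}\ps{x}{\xi}}p(x,\xi)\mc{F}_h u(\xi)\,d\xi$. Using the $\Z^d$-periodicity \eqref{Def Z^d-périodicité en xi}, I will expand $p(x,\xi)=\sum_{\nu\in\Z^d}p_\nu(x)e^{2i\pi\ps{\nu}{\xi}}$. Smoothness in $\xi$ with uniform bounds gives $|p_\nu(x)|\leq C_k\jap{\nu}^{-k}m(x,\xi_0)$-type decay. More precisely, $\partial_x^\beta p_\nu$ decays rapidly in $\nu$ with polynomial growth in $x$, since $m(x,\xi)\lesssim m(x,0)$ for $\xi\in[0,1]^d$ by \eqref{Recall d'analyse semicl - def fonction d'ordre}. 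Each term gives $p_\nu(x)\,(u(\cdot+2\pi h\nu))(x)$, because multiplying $\mc{F}_hu$ by $e^{2i\pi\ps{\nu}{\xi}}$ corresponds to translation by $-2\pi h\nu$. Hence
\begin{equation*}
    \mathrm{Op}_h(p)u=\sum_{\nu\in\Z^d}p_\nu\,\tau_{-2\pi h\nu}u,
\end{equation*}
with the series converging in $\mathscr{S}'$. By density and the continuity established above, this identity extends to all $u\in\mathscr{S}'$. The key point is that each $p_\nu$ is smooth with polynomially bounded derivatives, so multiplication by $p_\nu$ is continuous on $\mathscr{S}'$, with seminorm bounds summable in $\nu$.

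The main step, and the one I expect to require care, is the frequency-periodicity computation. The condition $\tau_n\mc{F}_hu=\mc{F}_hu$ for all $n\in\Z^d$ is equivalent to $(e^{-\frac{i}{h}\ps{x}{n}}-1)u=0$ for all $n$. Equivalently, $u$ is supported in $2\pi h\Z^d$ and is a sum of Dirac masses there: $u=\sum_{m}c_m\delta_{2\pi hm}$ with $(c_m)$ of polynomial growth, which is where $\mathrm{PS}$ comes from. Rather than using the structure theorem, I will argue directly. The multiplication operator $e_n:=e^{-\frac{i}{h}\ps{\cdot}{n}}$ commutes with $\tau_{-2\pi h\nu}$, since $e_n(x+2\pi h\nu)=e_n(x)e^{-2i\pi\ps{\nu}{n}}=e_n(x)$, and it clearly commutes with multiplication by $p_\nu$. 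Therefore
\begin{equation*}
    (e_n-1)\mathrm{Op}_h(p)u=\sum_\nu p_\nu\,\tau_{-2\pi h\nu}\bigl((e_n-1)u\bigr)=0,
\end{equation*}
which gives $\tau_n\mc{F}_h(\mathrm{Op}_h(p)u)=\mc{F}_h(\mathrm{Op}_h(p)u)$. The remaining delicate issue is justifying the convergence of the series in $\mathscr{S}'$ for general $u$, and the commutation of $e_n-1$ with the sum. Both follow from continuity of multiplication by smooth polynomially bounded functions on $\mathscr{S}'$, together with the rapid decay of the $p_\nu$ seminorms.
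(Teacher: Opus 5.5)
Your argument is correct, and its skeleton is the paper's: both of you read $\tau_n\mc{F}_h u=\mc{F}_h u$, $n\in\Z^d$, as invariance of $u$ under multiplication by $e^{\frac{i}{h}\ps{\cdot}{n}}$, and then show that $\mathrm{Op}_h(p)$ commutes with these multipliers. The difference is in how you get the commutation.

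\textbf{The paper's route.} It uses the exact conjugation identity $e^{\frac{i}{h}\ps{\cdot}{\mu}}\mathrm{Op}_h(p)e^{-\frac{i}{h}\ps{\cdot}{\mu}}=\mathrm{Op}_h(\tau_{(0,\mu)}p)$. On $\mathscr{S}(\R^d)$ this is a one-line change of variables $\xi\mapsto\xi+\mu$ in the oscillatory integral. It passes to $\mathscr{S}'(\R^d)$ by duality, and for $\mu\in\Z^d$ it reduces to $\mathrm{Op}_h(p)$ by periodicity. No expansion of $p$ is used, so no convergence question arises.

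\textbf{Your route.} You expand $p$ in its $\xi$-Fourier series, write $\mathrm{Op}_h(p)=\sum_\nu p_\nu\tau_{-2\pi h\nu}$, and use that $e_n$ is $2\pi h\Z^d$-periodic.
\begin{itemize}
\item \emph{What it costs.} You must control the series in $\mathscr{S}'(\R^d)$. The Schwartz seminorms of $\tau_{2\pi h\nu}(p_\nu\varphi)$ pick up a factor $\jap{\nu}^{r}$ from the translation. That factor has to be absorbed by the rapid decay in $\nu$ of $p_\nu$ and its $x$-derivatives. Your sketch covers this only implicitly, under ``seminorm bounds summable in $\nu$''; it should be stated.
\item \emph{What it buys.} It gives the explicit weighted-translation formula that the paper needs right afterwards to identify the action on $\mathrm{PS}$ and the matrix $M_\infty(p)$. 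The paper re-derives that formula separately via $\mathrm{Op}_{h,0}(p)\varphi=\sum_\nu(p_\nu\varphi)(\cdot+2\pi\nu h)$.
\end{itemize}
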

\begin{proof}
    First, we can notice that for all $\mu\in \R^d$, $e^{\frac{i}{h}\ps{\cdot}{\mu}}\mathrm{Op}_h(p)e^{-\frac{i}{h}\ps{\cdot}{\mu}}=\mathrm{Op}_h(\tau_{(0,\mu)}p)$. In particular, if $\mu\in \Z^d$, 
    \begin{equation}
        e^{\frac{i}{h}\ps{\cdot}{\mu}}\mathrm{Op}_h(p)e^{-\frac{i}{h}\ps{\cdot}{\mu}}=\mathrm{Op}_h(p).
    \label{Conjugation by M_gamma_mu bis}
    \end{equation}
    The result then follows by duality, using \eqref{Conjugation by M_gamma_mu bis} and standard properties of the Fourier transform.
\end{proof}

\m\m\m\m Until the end of the paper, we will denote $\mathrm{PS}$ the set of sequences with at most polynomial growth, that is
\begin{equation}
    \mathrm{PS}:=\bigcup_{k\in \N}\mathrm{PS}_k
\label{Suites croissance poly}
\end{equation}
where
\begin{equation*}
    \mathrm{PS}_k:=\acc{(c_n)_{n\in \Z^d},\m\left|\m \exists C>0, \m \forall n\in \Z, \m \bb{c_n}\leq C\jap{n}^k\right.}.
\end{equation*}

\mm We have the useful result: 
\begin{lem}
    Let $u\in \mathscr{S}'(\R^d)$. Then,
    \begin{equation*}
        u\in \mathscr{S}_p'(\R^d)\m\m\Longleftrightarrow \m\m\exists (c_n)\in \mathrm{PS}, \m u=\sum_{n\in \Z^d}c_n \delta_{2\pi n h}.
    \end{equation*}
    In that case, the sequence $(c_n)$ is unique. 
\label{Lemme peigne de Dirac}
\end{lem}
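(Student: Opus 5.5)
The plan is to reduce the statement to the classical fact that a distribution on $\R^d$ whose Fourier transform is $\Z^d$-periodic is a Dirac comb on the dual lattice. With the normalization $\mc{F}_h(u)(\xi)=\int e^{-\frac{i}{h}\ps{x}{\xi}}u(x)dx$, translating $\mc{F}_h(u)$ by $n\in\Z^d$ corresponds to multiplying $u$ by a phase. Indeed, $\tau_n\mc{F}_h(u)=\mc{F}_h\bigl(e^{\frac{i}{h}\ps{\cdot}{n}}u\bigr)$. Since $\mc{F}_h$ is injective on $\Schwp{d}$, the condition $u\in\mathscr{S}'_p(\R^d)$ is equivalent to
\begin{equation*}
    \bigl(e^{\frac{i}{h}\ps{x}{n}}-1\bigr)u=0\quad\text{for all } n\in\Z^d .
\end{equation*}
It is enough to take $n=e_1,\dots,e_d$. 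I would therefore argue entirely on the $x$-side. Note that $e^{\frac{i}{h}x_j}-1$ vanishes exactly on the hyperplanes $x_j\in 2\pi h\Z$, and the common zero set of these functions is the lattice $2\pi h\Z^d$.

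For the implication ($\Leftarrow$), take $(c_n)\in\mathrm{PS}_k$. The series $\sum_n c_n\delta_{2\pi nh}$ converges in $\Schwp{d}$, because pairing with $\varphi\in\Schw{d}$ gives terms $\bigl|c_n\varphi(2\pi nh)\bigr|\leq C\jap{n}^{k}\jap{n}^{-k-d-1}\sup_x\bigl|\jap{x}^{k+d+1}\varphi(x)\bigr|$ (up to $h$-dependent constants), which are summable. Each Dirac mass at a point of $2\pi h\Z^d$ is annihilated by the multipliers $e^{\frac{i}{h}x_j}-1$, so the sum is too. Equivalently, one can compute directly that $\mc{F}_h\bigl(\sum_n c_n\delta_{2\pi nh}\bigr)=\sum_n c_n e^{-2i\pi\ps{n}{\xi}}$, which is $\Z^d$-periodic.

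For the implication ($\Rightarrow$), the first step is to show that $\mathrm{supp}\,u\subset 2\pi h\Z^d$. Away from the lattice, some $e^{\frac{i}{h}x_j}-1$ is nonvanishing on a neighbourhood, so we can divide by it locally and conclude $u=0$ there. At a lattice point $a=2\pi h n$, the function $e^{\frac{i}{h}x_j}-1$ equals $(x_j-a_j)g_j(x)$ with $g_j(a)=\frac{i}{h}\neq 0$. Localizing with a cutoff gives $(x_j-a_j)u=0$ near $a$ for every $j$. The standard structure theorem for distributions supported at a point then forces $u=c_n\delta_a$ near $a$, since a derivative $\partial^\beta\delta_a$ with $\beta\neq0$ is not annihilated by $x_j-a_j$ for $\beta_j\geq1$. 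This gives $u=\sum_n c_n\delta_{2\pi nh}$ as distributions, and uniqueness of $(c_n)$ is immediate by testing against bump functions supported near a single lattice point.

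The main obstacle is showing that $(c_n)$ has polynomial growth, which is the only place where temperedness of $u$ enters. I would use the continuity estimate $|\langle u,\varphi\rangle|\leq C\sum_{|\alpha|,|\beta|\leq M}\sup_x\bigl|x^\alpha\partial^\beta\varphi(x)\bigr|$ and apply it to $\varphi=\chi(\cdot-2\pi nh)$. Here $\chi$ is a fixed bump equal to $1$ at $0$ and supported in a ball of radius $<\pi h$, so that $\langle u,\varphi\rangle=c_n$. The right-hand side is then $\mc{O}(\jap{n}^M)$, which gives $(c_n)\in\mathrm{PS}_M$ and completes the proof.
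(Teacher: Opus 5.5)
Your proof is correct, but it takes a different route from the paper's.

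The paper works on the Fourier side. It takes $\phi\in\mathscr{C}^\infty_c(\R^d)$ with $\sum_n\tau_n\phi=1$ and uses the $\Z^d$-periodicity of $\mc{F}_h(u)$ to move the translations onto $\mc{F}_h^{-1}(\psi)$. The Poisson summation formula then gives directly $u=\sum_n c_n(u)\delta_{2\pi hn}$ with $c_n(u)=\ps{\mc{F}_h(u)}{e^{2i\pi\ps{n}{\cdot}}\phi}$, which are the Fourier coefficients of the periodic distribution $\mc{F}_h(u)$.

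You instead recast periodicity as the multiplier equations $(e^{\frac{i}{h}x_j}-1)u=0$ and argue locally in $x$. Dividing by a nonvanishing factor gives $\mathrm{supp}\,u\subset 2\pi h\Z^d$. The structure theorem for point-supported distributions, together with the simplicity of the zeros of $e^{\frac{i}{h}x_j}-1$, then rules out derivatives of Dirac masses. To make that last step airtight for linear combinations, use $(x_j-a_j)\sum_\beta c_\beta\partial^\beta\delta_a=-\sum_\beta\beta_jc_\beta\partial^{\beta-e_j}\delta_a$ and the linear independence of the $\partial^\gamma\delta_a$.

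Your polynomial-growth and uniqueness steps are essentially the paper's. The only difference is that you test against compactly supported translates $\chi(\cdot-2\pi nh)$, whereas the paper uses the Schwartz functions $\mc{F}_h(\phi)(\cdot+2\pi nh)$.

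What each route buys:
\begin{itemize}
\item Yours is more elementary. It avoids Poisson summation and the accompanying interchanges of lattice sums with the pairing, and it shows transparently why only Dirac masses survive.
\item The paper's identifies $(c_n)$ as the Fourier coefficients of $\mc{F}_h(u)$. This is the natural viewpoint for the identification $\mathscr{S}'_p(\R^d)\cong\mathrm{PS}$, and the same Fourier-side machinery is reused in Lemma \ref{Lemme dimension et base de l'espace de travail}.
\end{itemize}

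One bookkeeping point: your identity $u=\sum_nc_n\delta_{2\pi nh}$ is at first only an identity in $\mc{D}'(\R^d)$, as a locally finite sum. It becomes an identity in $\Schwp{d}$ only after the growth bound, by density of $\mathscr{C}^\infty_c(\R^d)$ in $\Schw{d}$.
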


\begin{proof}
    1. Let us write $u=\sum_{n\in \Z^d}c_n \delta_{2\pi n h}$ for some $(c_n)\in \mathrm{PS}$. Then, the sum converges in $\Schwp{d}$. Indeed, let us take $C>0$ and $k\in \N$ satisfying the property of $\mathrm{PS}$ for $(c_n)$. If $\varphi\in \mathscr{S}(\R^d)$, there exists $C'>0$ such that for all $\rho\in \Rd$,
    \begin{equation*}
        \bb{\varphi(x)}\leq \frac{C'}{\jap{x}^{k+d+1}}\max_{\substack{\gamma\in \N^d\\\bb{\gamma}\leq k+d+1}}\norme{x^\gamma \varphi}_{L^\infty(\R^d)}
    \end{equation*}
    So, 
    \begin{equation}\label{Peigne de Dirac bien def}
        \begin{split}
            \bb{\ps{u}{\varphi}} &= \bb{\sum_{n\in \Z}c_n\varphi(2\pi n h)}\\
            &\leq \pr{\frac{1}{(2\pi h)^{d+1}}\sum_{n\in \Z^d}\frac{C}{\jap{n}^{d+1}}}\max_{\substack{\gamma\in \N^d\\\bb{\gamma}\leq k+d+1}}\norme{x^\gamma \varphi}_{L^\infty(\R^d)}<+\infty.
        \end{split}
    \end{equation}
    Inequality \eqref{Peigne de Dirac bien def} also ensures the continuity of $u$. Furthermore, $\mc{F}_h(u)$ is $\Z^d$ periodic since for all $\gamma\in \Z^d$,
    \begin{equation*}
        \tau_{\gamma}\mc{F}_h(u)=\tau_{\gamma}\pr{\sum_{n\in\Z}c_n e^{-2i\pi \ps{n}{\cdot}}}=\sum_{n\in \Z}c_n e^{-2i\pi \ps{n}{\cdot-\gamma}}=\mc{F}_h(u).
    \end{equation*}
    \mm 2. Conversely, we assume that $u\in \mathscr{S}'_p(\R^d)$. Take $\phi\in \mathscr{C}_c^{\infty}(\R^d,\R)$ such that $\sum_{n\in \Z^d}\tau_{n}\phi=1$ (see for example \cite[Theorem 1.4.6]{Hörmander_1}). By duality, for all $\psi\in \mathscr{S}(\R^d)$,
    \begin{equation*}
        \begin{split}
            \ps{u}{\psi} &= \ps{\mc{F}_h(u)}{1\times\mc{F}_h^{-1}(\psi)}=\ps{\mc{F}_h(u)}{\sum_{n\in \Z^d}\tau_{n}\phi\mc{F}_h^{-1}(\psi)}\\
            &=\sum_{n\in \Z^d}\ps{\mc{F}_h(u)}{\tau_{n}\phi\mc{F}_h^{-1}(\psi)}= \sum_{n\in \Z^d}\ps{\mc{F}_h(u)}{\phi\tau_{-n}\mc{F}_h^{-1}(\psi)}\\
            &=\ps{\phi \mc{F}_h(u)}{\sum_{n\in \Z^d}\tau_{-n}\mc{F}_h^{-1}(\psi)}
        \end{split}
    \end{equation*}
    where the fourth equality comes from the $\Z^d$ periodicity of $\mc{F}_h(u)$. Moreover, using Poisson's summation formula (see \cite[Section 7.2]{Hörmander_1}), we have for all $\xi\in \R^d$,
    \begin{equation*}
        \begin{split}
            \sum_{n\in \Z^d}\tau_{-n}\mc{F}_h^{-1}(\psi)(\xi) &= \sum_{n\in \Z^d}\tau_{-\xi}\mc{F}_h^{-1}(\psi)(n)=\sum_{n\in \Z^d}\mc{F}_h^{-1}\pr{e^{\frac{i}{h}\ps{\xi}{\cdot}}\psi}\!(n)\\
            &= \sum_{n\in \Z^d}e^{2i\pi \ps{\xi}{n}}\psi\pr{2\pi hn}.
        \end{split}
    \end{equation*}
    Finally, we obtain
    \begin{equation*}
        \ps{u}{\psi}=\sum_{n\in \Z^d}\ps{\mc{F}_h(u)}{e^{2i\pi \ps{n}{\cdot}}\phi}\psi\pr{2\pi hn}.
    \end{equation*}
    Thus, denoting $c_n(u):=\ps{\mc{F}_h(u)}{e^{2i\pi \ps{n}{\cdot}}\phi}$, we have in the space $\mathscr{S}'(\R^d)$,
    \begin{equation}
        u=\sum_{n\in \Z^d}c_n(u)\delta_{2\pi hn}.
    \end{equation}
    Furthermore, from the continuity of $u$, there exists $r\geq 0$ and $C_r>0$ such that 
    \begin{equation}\label{Continuité d'une distri tempérée}
        \bb{\ps{u}{\varphi}}\leq C_r \max_{\substack{\gamma,\beta\in \N^d\\\bb{\gamma},\bb{\beta}\leq r}}\norme{x^\gamma\partial ^\beta_x\varphi}_{L^\infty(\R^d)}.
    \end{equation}
    After noticing that for all $n\in \Z^d$, 
    \begin{equation*}
        c_n(u)=\ps{u}{\mc{F}_h(e^{2i\pi \ps{n}{\cdot}}\phi)}=\ps{u}{\mc{F}_h(\phi)(\cdot+2\pi n h)},
    \end{equation*}
    we have from \eqref{Continuité d'une distri tempérée}, that for all $n\in \Z^d$,
    \begin{equation*}
        \bb{c_n(u)}\leq C_r \max_{\substack{\gamma,\beta\in \N^d\\\bb{\gamma},\bb{\beta}\leq r}}\norme{(\cdot-2\pi h n)^\gamma\partial ^\beta_x\mc{F}_h(\phi)}_{L^\infty(\R^d)}\leq \widetilde{C}_r\jap{n}^r
    \end{equation*}
    for some constant $\widetilde{C}_r>0$. So $(c_n(u))\in \mathrm{PS}_r\subset \mathrm{PS}$.\\

    \mm To treat the uniqueness, let us first notice that the linearity of the mapping $\mathrm{PS}\ni (c_n)\mapsto \sum_{n\in \Z}c_n\delta_{2\pi n h}$ allows us to consider a sequence $(c_n)$ such that $\sum_{n\in \Z}c_n\delta_{2\pi n h}=0$ in $\mathscr{S}'(\R^d)$. Let us fix $m\in \Z^d$ and $\phi\in \mathscr{S}(\R^d)$ such that $\mathrm{supp}(\phi)\subset B(m,\pi h)$ and $\phi(m)=1$. Then $0=\ps{u}{\phi}=c_m$. Since $m$ has been arbitrarily chosen, $(c_n)_n=0$.
\end{proof}

\m\m\m\m Thanks to Lemma \ref{Lemme peigne de Dirac}, we can identify every $u\in \mathscr{S}_p'(\R^d)$ with its associated sequence $(c_n)_n$ so that we have: $\mathscr{S}'_p(\R^d)\cong \mathrm{PS}$. When $p\in S_{\xi}(m)$, this therefore allows us to consider that
\begin{equation*}
    \mathrm{Op}_h(p):\mathrm{PS}\to \mathrm{PS}.
\end{equation*}
\m\\
\mm Now fix $p\in S_{\xi}(1)$ and let us determine the behavior of $\mathrm{Op}_h(p)$ upon $\mathrm{PS}$. To do so, we decompose $p$ with its partial Fourier transform:
\begin{equation*}
    p(x,\xi)=\sum_{\nu\in \Z^d}p_\nu(x)e^{2i\pi\ps{\nu}{\xi}}
\end{equation*}
where for all $\nu\in \Z^d$, $p_\nu(x)=\int_{\T^d}p(x,\xi)e^{-2i\pi\ps{\nu}{\xi}}d\xi$. Furthermore, integration by part gives that for all $k\in \N$, $x,\xi\in \R^d$,
\begin{equation}
    p_\nu(x)=\mc{O}_k(\jap{\nu}^{-k}).
\label{Décroissance des coef de Fourier par rapport à l'indice}
\end{equation} 
\mm In what follows, we will denote for all $x\in \R^d$, $\widehat{p}(x):=(p_\nu(x))_{\nu}$ and $c=(c_n)_n$. Assume that $c\in \mathrm{PS}$ and take $k\in \N$ such that $c\in \mathrm{PS}_k$. We can then notice from \eqref{Décroissance des coef de Fourier par rapport à l'indice} that for all $n,\nu\in \Z^d$, $M\in \N$
\begin{equation*}
    \begin{split}
        \bb{p_\nu(2\pi nh)c_{n-\nu}} &\leq \mc{O}_M(1) \jap{\nu}^{-M}\times\jap{n-\nu}^k\leq \mc{O}_M(1)\jap{n}^{k}\jap{\nu}^{k-M}.
    \end{split}
\end{equation*}
Choosing $M=k+d+1$, we first obtain that for all $n\in \Z^d$, $\Bigl(p_\nu(2\pi nh)c_{n-\nu}\Bigl)_{\nu}$ is summable, which means that for all $n\in \Z^d$, the sum
\begin{equation*}
    \croch{\widehat{p}(2\pi h n)\ast c}\!(n):=\sum_{j\in \Z^d}p_j(2\pi nh)c_{n-j}
\end{equation*}
converges absolutely, and second that $\Bigl(\croch{\widehat{p}(2\pi h n)\ast c}\!(n)\Bigl)_n\in \mathrm{PS}_{k}\subset\mathrm{PS}$. In that case, if $u=\sum_{n\in \Z^d}c_n\delta_{2\pi n h}$ and $\varphi\in \mathscr{S}(\R^d)$, we have
\begin{equation}\label{Détermination, expression de l'opérateur - 1}
    \begin{split}
        \ps{\mathrm{Op}_h(p)u}{\varphi} &= \ps{u}{\mathrm{Op}_{h,0}(p)\varphi}=\sum_{n\in \Z^d}c_n\mathrm{Op}_{h,0}(p)\varphi(2\pi h n).
    \end{split}
\end{equation}
But a simple calculation shows that for all $x\in \R^d$, 
\begin{equation*}
    \mathrm{Op}_{h,0}(p)\varphi(x)=\sum_{\nu}(p_\nu\varphi)(x+2\pi \nu h)
\end{equation*}
so that \eqref{Détermination, expression de l'opérateur - 1} becomes
\begin{equation}
    \begin{split}
        \ps{\mathrm{Op}_h(p)u}{\varphi} &= \sum_{n,\nu\in \Z^d}c_n p_\nu(2\pi h (\nu+n))\varphi(2\pi h (\nu+n))\\
        &= \sum_{n,m\in \Z^d}c_n p_{m-n}(2\pi h m)\varphi(2\pi h m)\\
        &= \ps{\sum_{m\in \Z^d}\croch{\widehat{p}(2\pi h m)\ast c}\!(m)\delta_{2\pi h m}}{\varphi}.
    \end{split}
\end{equation}

Finally, with the previous identification $\mathscr{S}'_p(\R^d)\cong \mathrm{PS}$, we can write
\begin{equation}
    \mathrm{Op}_h(p):\left\{\begin{array}{ccccc}
         & \mathrm{PS} & \longrightarrow & \mathrm{PS} \\
         & c:=(c_n)_{n\in \Z^d} & \mapsto & \Bigl(\croch{\widehat{p}(2\pi h n)\ast c}\!(n)\Bigl)_{n\in \Z^d}
    \end{array}\right..
\label{Expression nouvelle procédure de quantif bis}
\end{equation}
In this case, we associate $p$ with an infinite matrix $M_\infty(p)$ defined for all $n,j\in \Z^d$ by
\begin{equation}
    M_\infty(p)_{n,j}=p_{n-j}(2\pi n h).
\label{Coef infinite matrix quantization}
\end{equation}
where $M_\infty$ acts on $\mathrm{PS}$ in a way that for all $c\in \mathrm{PS}$ and $n\in \Z^d$, 
\begin{equation*}
    \Bigl(M_\infty(p)c\Bigl)(n)=\sum_{j\in \Z^d}M_\infty(p)_{n,j}c_j=\croch{\widehat{p}(2\pi h n)\ast c}\!(n)=\Bigl[\mathrm{Op}_h(p)(c)\Bigl]\!(n).
\end{equation*}
In particular, $M_\infty:\mathrm{PS}\to \mathrm{PS}$, 
which is a way to attribute a matrix representation to $\mathrm{Op}_h(p)$.\\

\m\m\m\m Notice that for the expression \eqref{Expression nouvelle procédure de quantif bis} to be well defined, it only requires that $p$ satisfies \eqref{Décroissance des coef de Fourier par rapport à l'indice} and that $p(\cdot,\xi)$ can be evaluated. From this remark, the expression \eqref{Expression nouvelle procédure de quantif bis} can be extended to a more general symbol class, which is given by the two following definitions.

\begin{defn}\label{Définition de la classe de symboles avec la régularité la plus basse}
    We define $BS_\xi$ as the set of functions $p:\R^{2d}\to \C$ such that:
    \begin{enumerate}
        \item for all $x\in \R^d$, $p(x,\cdot)\in \mathscr{C}^\infty(\R^d)$ satisfies: for all $\gamma\in \N^d$, there exists $C_\gamma>0$ (independent of $h$ and $x$) such that
        \begin{equation*}
            \bb{\partial^\gamma_\xi p(x,\xi)}\leq C_\gamma,
        \end{equation*} 
        \item for all $x\in \R^d$, $p(x,\cdot)$ is $\Z^d$ periodic in the sense of \eqref{Def Z^d-périodicité en xi}.
    \end{enumerate}
\end{defn}

\begin{defn}
    Let $p\in BS_\xi$. Write $p$ with its partial Fourier transform 
\begin{equation*}
    p(x,\xi)=\sum_{\nu\in \Z^d}p_\nu(x)e^{2i\pi\ps{\nu}{\xi}}
\end{equation*}
where for all $\nu\in \Z^d$, $p_\nu(x)=\int_{\T^d}p(x,\xi)e^{-2i\pi\ps{\nu}{\xi}}d\xi$. We define $\mathrm{Op}_h(p)$ as the operator acting upon $\mathrm{PS}$ by \eqref{Expression nouvelle procédure de quantif bis} and associate it with the infinite matrix $M_\infty(p)$ given by \eqref{Coef infinite matrix quantization}.
\label{Nouvelle procédure de quantification}
\end{defn}

\begin{notns}\label{Notation matrice nouvelle procédure de quantif via la matrice infinie}
    Let $N\geq 1$ and $p\in BS_\xi$. We set $M_N(p):=\mathbb{1}_{\disc{1}{N}^d}M_\infty(p)\mathbb{1}_{\disc{1}{N}^d}$ the submatrix of $M_\infty(p)$ such that
\begin{equation}\label{Définition de la matrice avec la nouvelle procédure de quantif}
    \forall k,j\in {\disc{1}{N}^d}^d, \m\m\m M_N(p)_{k,j}=M_\infty(p)_{k,j}.
\end{equation}
In that case, we now see $M_N(p)$ as a matrix, that is $M_N(p):\C^{N^d}\to \C^{N^d}$.
\end{notns}

\begin{lem}\label{Norme bornée des matrices pour la nouvelle procédure de quantification}
    Let $p\in BS_\xi$. Then, there exists a constant $C$ such that for all $N\in \N^*$, 
    \begin{equation*}
        \norme{M_N(p)}\leq C.
    \end{equation*}
\end{lem}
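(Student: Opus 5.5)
The bound follows from the Schur test applied to the infinite matrix $M_\infty(p)$, using the decay of the Fourier coefficients $p_\nu$ uniformly in $x$. Since $M_N(p)=\mathbb{1}_{\disc{1}{N}^d}M_\infty(p)\mathbb{1}_{\disc{1}{N}^d}$ is a compression of $M_\infty(p)$, it suffices to show that $M_\infty(p)$ defines a bounded operator on $\ell^2(\Z^d)$. Then
\begin{equation*}
    \norme{M_N(p)}\leq \norme{M_\infty(p)}_{\ell^2\to\ell^2}=:C,
\end{equation*}
and $C$ is independent of $N$ (and of $h$). Alternatively, we can apply the Schur test directly to the finite matrix, restricting all sums below to $\disc{1}{N}^d$, which only makes them smaller.

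\textbf{Step 1: uniform decay of the Fourier coefficients.} Let $p\in BS_\xi$. For any $\nu\in\Z^d$ and any multi-index $\gamma$, integrate by parts on $\T^d$ in
\begin{equation*}
    p_\nu(x)=\int_{\T^d}p(x,\xi)e^{-2i\pi\ps{\nu}{\xi}}d\xi .
\end{equation*}
The periodicity of $p(x,\cdot)$ removes the boundary terms, and the uniform bounds $\bb{\partial_\xi^\gamma p(x,\xi)}\leq C_\gamma$ then give, for every $k\in\N$,
\begin{equation*}
    \sup_{x\in\R^d}\bb{p_\nu(x)}\leq C_k\jap{\nu}^{-k}.
\end{equation*}
Here $C_k$ depends only on the constants $C_\gamma$ with $\bb{\gamma}\leq k$. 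Concretely, one uses $(1-\frac{1}{4\pi^2}\Delta_\xi)^{m}e^{-2i\pi\ps{\nu}{\xi}}=\jap{2\pi\nu}^{2m}(2\pi)^{-2m}\cdots$, or equivalently multiplies by $\nu^\gamma$ componentwise. This is exactly \eqref{Décroissance des coef de Fourier par rapport à l'indice}, now stated uniformly in $x$.

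\textbf{Step 2: Schur test.} By \eqref{Coef infinite matrix quantization}, $\bb{M_\infty(p)_{n,j}}=\bb{p_{n-j}(2\pi nh)}\leq C_k\jap{n-j}^{-k}$. Take $k=d+1$. Then
\begin{equation*}
    \sup_n\sum_j\bb{M_\infty(p)_{n,j}}\leq C_{d+1}\sum_{\nu\in\Z^d}\jap{\nu}^{-d-1}=:A<\infty,
\end{equation*}
and the same bound $A$ holds for $\sup_j\sum_n\bb{M_\infty(p)_{n,j}}$. The point is that the bound on the entry depends on $(n,j)$ only through $n-j$, even though the entry itself depends on $n$ through the evaluation point $2\pi nh$. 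Schur's lemma then yields $\norme{M_N(p)}\leq A$ for all $N$.

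The only step that needs care is Step 1. The regularity of $p$ in $x$ plays no role, but the constants $C_\gamma$ in Definition \ref{Définition de la classe de symboles avec la régularité la plus basse} must be uniform in $x$ (and in $h$). Once that uniformity is noted, the rest is routine.
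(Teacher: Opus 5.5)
Your proof is correct and is essentially the paper's argument: both rest on the uniform decay $\sup_x\bb{p_\nu(x)}\leq C_K\jap{\nu}^{-K}$ and the summability of $\jap{\nu}^{-d-1}$. The paper extends $u$ by zero and bounds $\norme{M_N(p)u}$ with Minkowski's inequality (an $\ell^1\ast\ell^2$ Young-type bound), where you apply the Schur test to row and column sums. One minor slip: the identity you want in Step 1 is $(1-\frac{1}{4\pi^2}\Delta_\xi)^m e^{-2i\pi\ps{\nu}{\xi}}=\jap{\nu}^{2m}e^{-2i\pi\ps{\nu}{\xi}}$, not the expression with $\jap{2\pi\nu}^{2m}(2\pi)^{-2m}$.
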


\begin{proof}
    Let $u\in \C^{N^d}$. Let $\widetilde{u}\in \C^{\Z^d}$ be such that for all $j\in \Z^d$,
    \begin{equation*}
        \widetilde{u}_j=\left\{\begin{array}{ccccc}
             & u_j & \text{if} & j\in {\disc{1}{N}^d}^d\\
             & 0 & \text{otherwise}
        \end{array}\right..
    \end{equation*}
    Then, we have
    \begin{equation*}
        \begin{split}
            \norme{M_N(p)u} &= \pr{\sum_{n\in \disc{1}{N}^d}\bb{\sum_{k\in \disc{1}{N}^d}p_{n-k}\pr{\frac{n}{N}}u_k}^2}^{1/2}\\
            &\leq \pr{\sum_{n\in \disc{1}{N}^d}\pr{\sum_{k\in \disc{1}{N}^d}\bb{p_{n-k}\pr{\frac{n}{N}}}\bb{u_k}}^2}^{1/2}\\
            &\leq\pr{\sum_{n\in \Z^d}\pr{\sum_{k\in \Z^d}\bb{p_{k}\pr{\frac{n}{N}}}\bb{\widetilde{u}_{n-k}}}^2}^{1/2}\\
            &\leq \sum_{k\in\Z^d}\pr{\sum_{n\in\Z^d}\bb{p_{k}\pr{\frac{n}{N}}}^2\bb{\widetilde{u}_{n-k}}^2}^{1/2},
        \end{split}
    \end{equation*}
    where the third inequality comes from the Minkowski's inequality. But we also know from \eqref{Décroissance des coef de Fourier par rapport à l'indice} that for all $K\in \N^*$, there exists $C_K>0$ such that, for all $\nu\in\Z^d$, $x\in  \R^d$,
    \begin{equation*}
        \bb{p_\nu(x)}\leq C_K\!\jap{\nu}^{-K}.
    \end{equation*}
    Therefore, for $K\geq d+1$,
    \begin{equation*}
        \begin{split}
            \norme{M_N(p)u} &\leq C_K \sum_{k\in\Z^d}\jap{k}^{-K}\pr{\sum_{n\in\Z^d}\bb{\widetilde{u}_{n-k}}^2}^{1/2}=\widetilde{C}_K\pr{\sum_{n\in \Z^d}\bb{\widetilde{u}_n}^2}^{1/2}=\widetilde{C}_K\norme{u},
        \end{split}
    \end{equation*}
    which gives the result since $\widetilde{C}_K$ does not depend on $N$.
\end{proof}

\subsection{Quantization of symbols on a torus}
This section is highly inspired by the work of Vogel in \cite{Vogel_2020} who considered the case $\alpha_1=\alpha_2=\alpha$, and \cite{Christiansen_2010} and \cite{Nonnenmacher_2006} studied the case $\alpha_1=\alpha_2=1$ in the following definition.
\begin{defn}\label{Definition de l'espace des disctri tempérées périodiques}
    For $\alpha_1$ and $\alpha_2$ such that $h\leq \alpha_1,\alpha_2\leq 1$, we denote $\Tilde{h}:=\frac{h}{\sqrt{\alpha_1\alpha_2}}$. We define $\mc{H}^d_{\Tilde{h},\alpha_1,\alpha_2}$ as the set of $u\in \mathscr{S}'(\R^d)$ such that for all $n\in \Z^d$, 
    \begin{equation*}
        u(x+\alpha_1^{-1/2}n)=u(x)\m\m\m\m\m\text{and} \m\m\m\m\m\mc{F}_{\Tilde{h}}(u)(\xi+\alpha_2^{-1/2}n)=\mc{F}_{\Tilde{h}}(u)(\xi) .
    \end{equation*}
\end{defn}
In particular, it is clear that $\Hd$ is a complex vector space.
\begin{lem}
    With the same notations as in Definition \ref{Definition de l'espace des disctri tempérées périodiques}, we have :
    \begin{equation*}
        \Hd\neq \{0\}\Longleftrightarrow \exists N\in \N^*, \m h=\frac{1}{2\pi N}\m .
    \end{equation*}
    In that case, $\Hd$ is finite dimensional with $\mathrm{dim}\pr{\Hd}=N^d$.
    Furthermore, $\Hd=\mathrm{Span}\left\{Q_k^{\A{1}}, \m k\in \disc{0}{N-1}^d\right\}$ where 
    \begin{equation}
        Q_k^{\A{1}}=\pr{\A{1}^{1
        /2}N}^{-d/2}\sum_{n\in \Z^d}\delta_{\A{1}^{-1/2}(n+k/N)}.
    \label{Basis vectors}
    \end{equation}
    and we will denote 
    \begin{equation}\label{Def de la base de l'espace d'intérêt}
        \mc{B}^{\alpha_1}=\pr{Q_k^{\A{1}}}_{k\in \disc{0}{N-1}^d}
    \end{equation}
    the basis given by the elements of \eqref{Basis vectors}.
    \label{Lemme dimension et base de l'espace de travail}
\end{lem}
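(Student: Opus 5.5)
Fix $u\in\Hd$. The plan is to use the two periodicities one after the other: periodicity of $\mc{F}_{\Tilde{h}}(u)$ makes $u$ a Dirac comb on a lattice, and periodicity of $u$ then makes its coefficient sequence periodic.

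\textbf{Step 1: frequency periodicity gives a Dirac comb.} Rescale Lemma \ref{Lemme peigne de Dirac} so that the frequency period is $\alpha_2^{-1/2}$ instead of $1$. Set $v(\xi):=\mc{F}_{\Tilde{h}}(u)(\alpha_2^{-1/2}\xi)$, which is $\Z^d$-periodic. Undoing the dilation, the argument of Lemma \ref{Lemme peigne de Dirac} (partition of unity and Poisson summation) applies verbatim and gives
\begin{equation*}
u=\sum_{n\in\Z^d}c_n\,\delta_{2\pi\Tilde{h}\alpha_2^{1/2}n},
\end{equation*}
with a unique $(c_n)\in\mathrm{PS}$. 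Since $\Tilde{h}\alpha_2^{1/2}=h\alpha_1^{-1/2}$, the lattice spacing is $\epsilon:=2\pi h\,\alpha_1^{-1/2}$.

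\textbf{Step 2: space periodicity forces periodic coefficients.} Now impose $\tau_{\alpha_1^{-1/2}e_j}u=u$ for each $j$. Translating the comb moves the support $\epsilon\Z^d$ to $\epsilon\Z^d+\alpha_1^{-1/2}e_j$.

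If $u\neq0$, some $c_n\neq0$. Invariance of the support then forces $\alpha_1^{-1/2}e_j\in\epsilon\Z^d$, that is, $1/(2\pi h)=N\in\N^*$. Conversely, if $h\notin \frac{1}{2\pi}\{1/N\}$, the supports are disjoint and uniqueness of the coefficients forces all $c_n=0$, so $\Hd=\{0\}$.

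When $h=1/(2\pi N)$, we have $\epsilon=\alpha_1^{-1/2}/N$. Uniqueness of the coefficients turns invariance into $c_{n+Ne_j}=c_n$ for all $n$ and $j$, so $c$ is $N\Z^d$-periodic. Grouping the sum by residues $k\in\disc{0}{N-1}^d$ gives
\begin{equation*}
u=\sum_k c_k\sum_{n}\delta_{\alpha_1^{-1/2}(n+k/N)},
\end{equation*}
i.e.\ $u\in\mathrm{Span}\{Q_k^{\A{1}}\}$ up to the normalization constant.

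\textbf{Step 3: the reverse inclusion and the dimension count.} Each $Q_k^{\A{1}}$ is clearly $\alpha_1^{-1/2}\Z^d$-periodic. Its $\Tilde{h}$-Fourier transform is $\sum_n e^{-\frac{i}{\Tilde h}\ps{\alpha_1^{-1/2}(n+k/N)}{\xi}}$ up to a constant. Translating $\xi$ by $\alpha_2^{-1/2}m$ multiplies the phase by $e^{-2\pi i N\ps{n+k/N}{m}}=1$, since $\alpha_1^{-1/2}\alpha_2^{-1/2}/\Tilde h=1/h=2\pi N$. So each $Q_k^{\A{1}}\in\Hd$.

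Linear independence follows because the $Q_k^{\A{1}}$ have pairwise disjoint supports. This gives $\dim\Hd=N^d$.

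The only delicate point is Step 1: one must check that the proof of Lemma \ref{Lemme peigne de Dirac} transfers under the rescaling $\xi\mapsto\alpha_2^{-1/2}\xi$, with $\Tilde{h}$ in place of $h$. This is a routine change of variables, but the spacing constant $\epsilon$ must be tracked carefully, since it is what produces the condition $h=1/(2\pi N)$.
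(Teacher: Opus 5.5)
Your proof is correct, and it runs the paper's argument in the opposite order.

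\textbf{The paper's route.} It starts from the $\alpha_1^{-1/2}\mathbb{Z}^d$-periodicity of $u$. A partition of unity adapted to $\alpha_1^{-1/2}\mathbb{Z}^d$, together with Poisson summation, shows that $\mathcal{F}_{\Tilde{h}}(u)$ is a Dirac comb on $2\pi\Tilde{h}\alpha_1^{1/2}\mathbb{Z}^d$. The $\alpha_2^{-1/2}\mathbb{Z}^d$-periodicity then yields two things: the condition $\alpha_2^{-1/2}=2\pi\Tilde{h}\alpha_1^{1/2}N$, i.e.\ $h=1/(2\pi N)$, and the $N$-periodicity of the coefficients. Finally the paper applies $\mathcal{F}_{\Tilde{h}}$ once more. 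This brings in the reflection $\Tilde{u}(x)=u(-x)$ and a discrete Fourier transform of the coefficients before landing in $\mathrm{Span}\{Q_k^{\alpha_1}\}$.

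\textbf{Your route.} You start from the frequency periodicity. Lemma~\ref{Lemme peigne de Dirac}, applied with parameter $\Tilde{h}\alpha_2^{1/2}=h\alpha_1^{-1/2}$, makes $u$ itself a comb on $2\pi h\alpha_1^{-1/2}\mathbb{Z}^d$. You then impose the spatial periodicity.

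\textbf{What your route gains.}
\begin{itemize}
\item You reuse an already proved lemma instead of redoing the Poisson-summation argument.
\item The spanning statement comes out directly in the basis $Q_k^{\alpha_1}$, with no Fourier inversion and no bookkeeping of constants.
\item You check explicitly that each $Q_k^{\alpha_1}$ lies in $\Hd$ and that the family is linearly independent, by disjointness of supports. This is what actually gives $\dim\Hd=N^d$. The paper's proof only establishes spanning, and for the converse it exhibits only a multiple of $Q_0^{\alpha_1}$.
\end{itemize}

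One small wording point: the sentence beginning ``Conversely'' in your Step 2 is just the contrapositive of the implication you had already proved there. The genuine converse, $h=1/(2\pi N)\Rightarrow\Hd\neq\{0\}$, is supplied by your Step 3.
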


\begin{proof}
    1. Let us take $\phi\in \mathscr{C}_c^{\infty}(\R^d;\R)$ such that $\sum_{g\in \Z^d}\tau_{\Ab{1}g}\phi=1$ (see \cite[Theorem 1.4.6]{Hörmander_1}) and where $\tau_{\Ab{1}g}\phi$ is given in \eqref{Translation par gamma}. We fix $u\in \Hd\setminus\{0\}$. Then, by duality, for all $\psi\in \mathscr{S}(\R^d)$,
    \begin{equation*}
        \begin{split}
            \ps{\F{u}}{\psi} &= \ps{u}{1\times\F{\psi}}=\ps{u}{\sum_{g\in \Z^d}\tau_{\Ab{1}q}\phi\F{\psi}}\\
            &=\sum_{g\in \Z^d}\ps{u}{\tau_{\Ab{1}g}\phi\F{\psi}}= \sum_{g\in \Z^d}\ps{u}{\phi\tau_{-\Ab{1}g}\F{\psi}}\\
            &=\ps{\phi u}{\sum_{g\in \Z^d}\tau_{-\Ab{1}g}\F{\psi}}
        \end{split}
    \end{equation*}
    where the fourth equality comes from the $\Ab{1}\Z^d$ periodicity of $u$. Moreover, using the Poisson's summation formula (see \cite[Section 7.2]{Hörmander_1}), we have for all $\xi\in \R^d$,
    \begin{equation*}
        \begin{split}
            \sum_{g\in \Z^d}\tau_{-\Ab{1}g}\F{\psi}(\xi) &= \sum_{g\in \Z^d}\tau_{-\xi}\F{\psi}(\Ab{1}g)=\sum_{g\in \Z^d}\F{e^{-\frac{i}{\Tilde{h}}\ps{\xi}{\cdot}}\psi}(\Ab{1}g)\\
            &= \pr{2\pi \Tilde{h}\A{1}^{1/2}}^{d}\sum_{g\in \Z^d}e^{-2i\pi \A{1}^{1/2}\ps{\xi}{g}}\psi\pr{2\pi \Tilde{h}\A{1}^{1/2}g}.
        \end{split}
    \end{equation*}
    Finally, we obtain
    \begin{equation*}
        \ps{\F{u}}{\psi}=\pr{2\pi \Tilde{h}\A{1}^{1/2}}^{d}\sum_{g\in \Z^d}\ps{u}{e^{-2i\pi \A{1}^{1/2}\ps{g}{\cdot}}\phi}\psi\pr{2\pi \Tilde{h}\A{1}^{1/2}g}.
    \end{equation*}
    Denoting
    \begin{equation}\label{Expression des c_g(u)}
        c_g(u):=\ps{u}{e^{-2i\pi \A{1}^{1/2}\ps{g}{\cdot}}\phi},
    \end{equation}
    we have in the space $\mathscr{S}'(\R^d)$
    \begin{equation} \label{Expression_of_TF_in_tempered_distributions}
        \F{u}=\pr{2\pi \Tilde{h}\A{1}^{1/2}}^{d}\sum_{g\in \Z^d}c_g(u)\delta_{2\pi \Tilde{h}\A{1}^{1/2}g}.
    \end{equation}
    Furthermore, for all $y\in \Z^d$, by properties of the convolution of distributions, we have $\delta_{\Ab{2}y}\ast \F{u}=\tau_{-\Ab{2}y}\F{u}$ in $\mathscr{S}'(\R^d)$. Since $\F{u}$ is $\Ab{2}\Z^d$ periodic, we obtain that 
    \begin{equation}\label{Invariance par translation de la TF}
        \delta_{\Ab{2}y}\ast \F{u}=\F{u}
    \end{equation}
    in $\mathscr{S}'(\R^d)$. Using the equality \eqref{Expression_of_TF_in_tempered_distributions}, \eqref{Invariance par translation de la TF} provides that for all $y\in \Z^d$,
    \begin{equation*}
        \sum_{g\in \Z^d}c_g(u)\delta_{2\pi \Tilde{h}\A{1}^{1/2}g+\Ab{2}y}=\sum_{g\in \Z^d}c_g(u)\delta_{2\pi \Tilde{h}\A{1}^{1/2}g}.
    \end{equation*}
    These two distributions being equal implies the equality of their supports. But this is equivalent to saying that there exists $N\in \N^*$ such that $\Ab{2}=2\pi\Tilde{h}\A{1}^{1/2}N$, which is equivalent to $h=(2\pi N)^{-1}$ since $\Tilde{h}=h/\sqrt{\alpha_1\alpha_2}$.\\

    \m\m\m\m Conversely, if there exists $N\in \N^*$ such that $h=(2\pi N)^{-1}$, then $\sum_{g\in \Z^d}\delta_{\Ab{1}g}\in \Hd$. Indeed, $\sum_{g\in \Z^d}\delta_{\Ab{1}g}$ is $\Ab{1}\Z^d$ periodic and, as a consequence of the Poisson's summation formula, in $\mathscr{S}'(\R^d)$, 
    \begin{equation*}
        \F{\sum_{g\in \Z^d}\delta_{\Ab{1}g}}= \pr{2\pi\Tilde{h}\A{1}^{1/2}}^{d}\sum_{g\in \Z^d}\delta_{\Ab{2}N^{-1}g}.
    \end{equation*}
    This guarantees that $\F{\sum_{g\in \Z^d}\delta_{\Ab{1}g}}$ is $\Ab{2}\Z^d$ periodic and thus $\Hd\neq \{0\}$.\\

    \mm 2. Now, we focus on the second part of the proof. We assume that 
    \begin{equation}\label{Hypothèse expression de h comme l'inverse de N}
        h=(2\pi N)^{-1}
    \end{equation}
    for some $N\in \N^*$. Let $u\in \Hd$. Given the $\Ab{2}\Z^d$ periodicity of $\F{u}$, for all $\eta\in \Z^d$, $u=e^{-\frac{i}{\Tilde{h}}\Ab{2}\ps{\cdot}{\eta}}u$ in $\mathscr{S}'(\R^d)$. In view of \eqref{Expression des c_g(u)}, we therefore have for all $m$, $\ell\in \Z^d$, $c_{m+N\ell}(u)=c_m(u)$. Thus, expression \eqref{Expression_of_TF_in_tempered_distributions} can be rewritten as
    \begin{equation}\label{Expression_of_TF_in_tempered_distributions_v2}
        \begin{split}
            \F{u} &= \pr{2\pi \Tilde{h}\A{1}^{1/2}}^{d}\sum_{j\in \disc{0}{N-1}^d}c_j(u)\sum_{n\in \Z^d}\delta_{2\pi\Tilde{h}\A{1}^{1/2}(j+Nn)}\\
            &= \pr{2\pi \Tilde{h}\A{1}^{1/2}}^{d}\sum_{j\in \disc{0}{N-1}^d}c_j(u)\pr{\delta_{N^{-1}\Ab{2}j}\ast\sum_{n\in \Z^d}\delta_{\Ab{2}n}}.
        \end{split}
    \end{equation}
    Recall that $\F{\F{u}}=\pr{2\pi \Tilde{h}}^{d} \Tilde{u}$ where $\Tilde{u}(x)=u(-x)$ and that if $S$ is a compactly supported distribution and $T$ a tempered one, then $\F{S*T}=\F{S}\F{T}$. In that case, with \cite[Theorem 7.2.1]{Hörmander_1}, composing the last equality of \eqref{Expression_of_TF_in_tempered_distributions_v2} by $\mc{F}_{\Tilde{h}}$, we obtain
    \begin{equation*}
        \begin{split}
            \pr{2\pi \Tilde{h}}^{d}\Tilde{u} &= \pr{2\pi \Tilde{h}\A{1}^{1/2}}^{d}\sum_{j\in \disc{0}{N-1}^d}c_j(u)\F{\delta_{N^{-1}\Ab{2}j}}\pr{2\pi \Tilde{h}\A{2}^{1/2}}^{d}\sum_{n\in \Z^d}\delta_{N^{-1}\Ab{1}n}.\\
        \end{split}    
    \end{equation*}
    But for all $j\in \Z^d$, $\F{\delta_{N^{-1}\Ab{2}j}}=e^{-\frac{i}{\Tilde{h}}\ps{\cdot}{N^{-1}\Ab{2}j}}$. Recalling \eqref{Hypothèse expression de h comme l'inverse de N} and that $\Tilde{h}=\frac{h}{\sqrt{\A{1}\A{2}}}$, we have
    \begin{equation*}
        \begin{split}
            \Tilde{u} &= N^{-d}\sum_{j\in \disc{0}{N-1}^d}c_j(u)e^{-2i\pi \A{1}^{1/2}\ps{\cdot}{j}}\sum_{n\in \Z^d}\delta_{N^{-1}\Ab{1}n}\\
            &= N^{-d}\sum_{j\in \disc{0}{N-1}^d}c_j(u)\sum_{n\in \Z^d}e^{-\frac{2i\pi}{N}\ps{n}{j}}\delta_{N^{-1}\Ab{1}n}\\
            &= N^{-d}\sum_{j\in \disc{0}{N-1}^d}c_j(u)\sum_{s\in \disc{0}{N-1}^d}\sum_{r\in \Z^d}e^{-\frac{2i\pi}{N}\ps{s+Nr}{j}}\delta_{N^{-1}\Ab{1}(s+Nr))} \\
            &= \sum_{s\in \disc{0}{N-1}^d}\pr{\alpha_1^{d/4}\sum_{j\in \disc{0}{N-1}^d}c_j(u)e^{-\frac{2i\pi}{N}\ps{s}{j}}}Q^{\alpha_1}_s.
        \end{split}
    \end{equation*}
    This ensures that $\Tilde{u}\in \mathrm{Span}\pr{\left\{Q_k^{\alpha_1}, \m k\in \disc{0}{N-1}^d\right\}}$, then so does $u$ and the result follows.
\end{proof}

\mm Until further notice, we fix 
\begin{equation}\label{Hypothèse expression de h comme l'inverse de N - 2}
    h=(2\pi N)^{-1}    
\end{equation}
for some $N\in \N^*$.

\begin{notns}\label{Notation pour le tore biscornu}
    For $0<\alpha_1,\alpha_2\leq 1$ and $\T_{\alpha_j}:=\bigslant{\R}{\alpha_j^{-1/2}\Z}$, $j=1,2$, we denote 
\begin{equation*}
    \mc{T}_{\A{1},\A{2}}:=\T^d_{\A{1}}\times\T^d_{\A{2}}.
\end{equation*}
\end{notns}

\begin{defn}
    We say that a continuous function $m:\mc{T}_{\A{1},\A{2}}\to (0,+\infty)$ is an order function on $\mc{T}_{\A{1},\A{2}}$ if there exist $C_0>0$ and $N_0\geq 0$ such that for all $\rho$, $\mu\in \mc{T}_{\A{1},\A{2}}$
    \begin{equation*}
        m(\rho)\leq C_0\jap{\rho-\mu}_{\mc{T}_{\A{1},\A{2}}}^{N_0}m(\mu)
    \end{equation*}
    with $\jap{\rho}_{\mc{T}_{\A{1},\A{2}}}=\pr{1+\bb{\rho}_{\mc{T}_{\A{1},\A{2}}}}^{1/2}$, where for $\rho=(x,\xi)\in \mc{T}_{\A{1},\A{2}}$,
    \begin{equation}
        \bb{\rho}_{\mc{T}_{\A{1},\A{2}}}:=\inf\limits_{{\gamma_1,\gamma_2\in \Z^d}}\pr{\bb{x-\Ab{1}\gamma_1}^2+\bb{\xi-\Ab{2}\gamma_2}^2}^{\frac{1}{2}}.
    \label{Norme weird sur le tore biscornu}
    \end{equation}
\label{Def order function sur le tore biscornu}
\end{defn}
Using the natural projection $\R^{2d}\to \mc{T}_{\A{1},\A{2}}$, $m$ can be seen as a periodic function on $\Rd$ so that it becomes an order function in the sense of \eqref{Recall d'analyse semicl - def fonction d'ordre}.\\

\mm As mentioned in Section \ref{Outline}, in the next section, a symbol $p\in \mathscr{C}^{0,\varrho}_{\mathrm{pw}}S_d$ will be extended by periodicity, and regularized with respect to $x$ thanks to an $h$-dependent mollifier. But the singularities of jump type for the extended symbol will result in an $h$-degeneracy of the regularized one. To treat this new symbol, we introduce the following symbol class.

\begin{defn}
For $0<\A{1},\A{2}\leq 1$ such that $h\ll  \pr{\A{1}\A{2}}^{1/2}$, and $m$ an order function on $\mc{T}_{\A{1},\A{2}}$, $\eta_1,\eta_2\in [0,1/4]$, we set $\Tilde{h}=h/\sqrt{\alpha_1\alpha_2}$ and define
\begin{enumerate}\label{Definition des espaces de travail dans le cas rescaling}
    \item $S(m,\alpha_1,\A{2})$ as the set of $p\in \mathscr{C}^{\infty}(\mc{T}_{\A{1},\A{2}})$ such that for all $\beta,\gamma\in \N^d$, there exists $C_{\beta,\gamma}>0$ (uniform with respect to $\Tilde{h}$) satisfying: for all $(x,\xi)\in \mc{T}_{\A{1},\A{2}}$,
    \begin{equation*}
        \bb{\partial^\beta_x\partial^\gamma_\xi p(x,\xi)}\leq C_{\beta,\gamma}m(x,\xi),
    \end{equation*}
    \item $S_{\eta_1,\eta_2}(m,\alpha_1,\A{2})$ as the set of $p\in \mathscr{C}^{\infty}(\mc{T}_{\A{1},\A{2}})$ such that for all $\beta,\gamma\in \N^d$, there exists $C_{\beta,\gamma}>0$ (uniform with respect to $\Tilde{h}$) satisfying: for all $(x,\xi)\in \mc{T}_{\A{1},\A{2}}$,
    \begin{equation*}
        \bb{\partial^\beta_x\partial^\gamma_\xi p(x,\xi)}\leq C_{\beta,\gamma}\Tilde{h}^{-\bb{\beta}\eta_1-\bb{\gamma}\eta_2}m(x,\xi).
    \end{equation*}
\end{enumerate}
\end{defn}
Whenever convenient, tanks to the projection $\R^{2d}\to \mc{T}_{\A{1},\A{2}}$, we will use that $S_{\eta_1,\eta_2}(m,\alpha_1,\A{2})\subset S_{\eta_1,\eta_2}(m)$ (see \eqref{Recall d'analyse semicl - classe de symbole}).
\mm\\
\mm When $p\in S_{\eta_1,\eta_2}(m_1,\alpha_1,\A{2})$ and $q\in S_{\delta_1,\delta_2}(m_2, \A{1},\A{2})$, we directly obtain from \eqref{Recall d'analyse semicl - expression produit de Moyal} that $p\sharp_{\Tilde{h}} q$ gets the same periodicity as $p$ and $q$. In particular, in view of \eqref{Recall d'analyse semicl - formule de composition}, we have
\begin{equation}
    \mathrm{Op}_{\Tilde{h}}^w(p)\circ\mathrm{Op}_{\Tilde{h}}^w(q)=\mathrm{Op}_{\Tilde{h}}^w(p\sharp_{\Tilde{h}} q)\mm\text{where}\mm p\sharp_{\Tilde{h}} q\in S_{\omega_1,\omega_2}(m_1m_2,\A{1},\A{2})
\label{Expression compo symboles périodiques}
\end{equation}
and $\omega_j=\max(\eta_j,\delta_j)$, $j\in \{1,2\}$. In the same way as for \eqref{Recall d'analyse semicl - devpt asymptotique}, for $p\in S_{\eta_1,\eta_2}(m,\A{1},\A{2})$, we write
\begin{equation}
    p\sim p_0+{\Tilde{h}}p_1+\dots \mm\text{in}\mm S_{\eta_1,\eta_2}(m,\A{1},\A{2}), \mm p_\nu\in S_{\eta_1,\eta_2}(m,\A{1},\A{2})
\label{Recall d'analyse semicl - devpt asymptotique périodique}
\end{equation}
if for all $N\in \N^*$, $p-\sum_{\nu=0}^{N-1}{\Tilde{h}}^\nu p_\nu\in {\Tilde{h}}^NS_{\eta_1,\eta_2}(m,\A{1},\A{2})$.\\
\m\\
\begin{prop}
    If $p\in S_{\eta_1,\eta_2}(m,\alpha_1,\A{2})$, then for all $t\in [0,1]$, 
    \begin{equation*}
        \mathrm{Op}_{\Tilde{h},t}(p):\Hd\longrightarrow \Hd.
    \end{equation*}
\end{prop}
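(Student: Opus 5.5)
The argument mirrors the lemma showing that $\mathrm{Op}_h(p)$ preserves $\mathscr{S}'_p(\R^d)$: we show that $\mathrm{Op}_{\Tilde{h},t}(p)$ commutes with the two families of operations that define $\Hd$. Since $p\in S_{\eta_1,\eta_2}(m,\A{1},\A{2})\subset S_{\eta_1,\eta_2}(m)$ with $m$ an order function on $\R^{2d}$, the operator $\mathrm{Op}_{\Tilde{h},t}(p)$ already maps $\Schwp{d}$ continuously to itself. It therefore remains only to check that the image of $u\in\Hd$ is again $\Ab{1}\Z^d$-periodic and has $\Ab{2}\Z^d$-periodic $\Tilde{h}$-Fourier transform.

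For the periodicity in $x$, take $\gamma\in\Z^d$ and set $a=\Ab{1}\gamma$. We compute on $\varphi\in\Schw{d}$ via the oscillatory integral \eqref{Recall d'analyse semicl - quantification indice t}, substituting $x\mapsto x+a$, $y\mapsto y+a$. This gives
\begin{equation*}
    \tau_{-a}\,\mathrm{Op}_{\Tilde{h},t}(p)\,\tau_{a}=\mathrm{Op}_{\Tilde{h},t}\pr{p(\cdot+a,\cdot)}=\mathrm{Op}_{\Tilde{h},t}(p),
\end{equation*}
because $tx+(1-t)y$ is shifted by exactly $a$ and $p$ is $\Ab{1}\Z^d$-periodic in $x$. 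The identity extends to $\Schwp{d}$ by duality. Indeed, the transpose of $\mathrm{Op}_{\Tilde{h},t}(p)$ is $\mathrm{Op}_{\Tilde{h},1-t}(\check p)$, with $\check p(x,\xi)=p(x,-\xi)$, which has the same $x$-periodicity, and the same computation applies to it. Hence, if $\tau_a u=u$, then $\tau_a\mathrm{Op}_{\Tilde{h},t}(p)u=\mathrm{Op}_{\Tilde{h},t}(p)u$.

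For the periodicity of the Fourier transform, take $b=\Ab{2}\eta$ with $\eta\in\Z^d$. Periodicity of $\F{u}$ under translation by $b$ is equivalent to $e^{\frac{i}{\Tilde{h}}\ps{\cdot}{b}}u=u$. The conjugation identity used in the proof of the first lemma of this section gives
\begin{equation*}
    e^{\frac{i}{\Tilde{h}}\ps{\cdot}{b}}\,\mathrm{Op}_{\Tilde{h},t}(p)\,e^{-\frac{i}{\Tilde{h}}\ps{\cdot}{b}}=\mathrm{Op}_{\Tilde{h},t}\pr{p(\cdot,\cdot-b)}.
\end{equation*}
It follows from a change of variables $\xi\mapsto\xi+b$ in the $\xi$-integral and holds for every $t$, since the phase factor $e^{\frac{i}{\Tilde{h}}\ps{x-y}{b}}$ does not involve the base point $tx+(1-t)y$. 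The right-hand side equals $\mathrm{Op}_{\Tilde{h},t}(p)$ by the $\Ab{2}\Z^d$-periodicity of $p$ in $\xi$, and the identity again extends to $\Schwp{d}$ by duality, so the Fourier periodicity is preserved. Combining the two steps yields $\mathrm{Op}_{\Tilde{h},t}(p)u\in\Hd$.

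The only delicate point is justifying the changes of variables inside oscillatory integrals and the extension from $\Schw{d}$ to tempered distributions. I would handle this by verifying both identities first for the symbols $p\chi(\epsilon\rho)$ with $\chi\in\mathscr{C}^\infty_c$ and $\chi(0)=1$. For these truncated symbols all integrals converge absolutely. Letting $\epsilon\to0$ then gives convergence in $\Schw{d}$ uniformly on bounded sets, by the standard integration-by-parts bounds, and the transposes are treated the same way. Although this truncation breaks periodicity, the conjugation formulas hold for each truncated symbol before the limit is taken, with the translated symbol on the right-hand side, so the argument goes through.
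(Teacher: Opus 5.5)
Your proposal is correct and is essentially the paper's argument. The paper conjugates $\mathrm{Op}_{\Tilde{h},t}(p)$ by the combined operator $M_{\gamma,\mu}=\tau_\gamma e^{\frac{i}{\Tilde{h}}\ps{\cdot}{\mu}}$ and uses the periodicity of $p$ to get $M_{\gamma,\mu}\mathrm{Op}_{\Tilde{h},t}(p)M_{\gamma,\mu}^{-1}=\mathrm{Op}_{\Tilde{h},t}(p)$ for $\gamma\in\Ab{1}\Z^d$, $\mu\in\Ab{2}\Z^d$, which is your two conjugation identities treated at once; your handling of the transpose $\mathrm{Op}_{\Tilde{h},1-t}(\check p)$ and of the oscillatory integrals only supplies details the paper leaves implicit (your shift $p(\cdot,\cdot-b)$ has the right sign, whereas the paper's $\tau_{(\gamma,-\mu)}p$ is off by a sign in $\mu$, which is harmless because the lattice is symmetric).
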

\begin{proof}
    For $\gamma,\mu\in \R^d$, we denote the operator
    \begin{equation}\label{Def de l'opérateur M_gamma,mu}
        M_{\gamma,\mu}:=\tau_{\gamma}e^{\frac{i}{\Tilde{h}}\ps{.}{\mu}}    
    \end{equation}
    with $\tau_\gamma$ defined by \eqref{Translation par gamma}. Noticing that 
    \begin{equation}\label{Conjugaison par M_gamma,mu}
        M_{\gamma,\mu}\mathrm{Op}_{\Tilde{h},t}(p)M_{\gamma,\mu}^{-1}=\mathrm{Op}_{\Tilde{h},t}\pr{\tau_{(\gamma,-\mu)}p},
    \end{equation}
    if $p\in S_{\eta_1,\eta_2}(m,\A{1},\A{2})$, $\gamma\in \Ab{1}\Z^d$ and $\mu\in \Ab{2}\Z^d$, we get 
    \begin{equation}
        M_{\gamma,\mu}\mathrm{Op}_{\Tilde{h},t}(p)M_{\gamma,\mu}^{-1}=\mathrm{Op}_{\Tilde{h},t}\pr{p}.
    \label{Conjugation by M_gamma_mu}
    \end{equation}
    The result follows.
\end{proof}

\begin{notns}
    Until the end, for $p\in S_{\eta_1,\eta_2}(m,\A{1},\A{2})$ and $t\in [0,1]$, we will denote 
\begin{equation}
    p_{N,\A{1},\A{2}}^t:=\mathrm{Op}_{\Tilde{h},t}(p)_{\left|\Hd\right.}. 
\label{Definition de P_N_alpha_1_alpha_2}
\end{equation}
In the case $t=1/2$, we drop the "$1/2$" to shorten the notations. We will always identify $p_{N,\A{1},\A{2}}^t$ as its matrix in the basis $\mc{B}^{\A{1}}$ defined by \eqref{Def de la base de l'espace d'intérêt}. In the case $\alpha_1=\alpha_2=1$, we will simply write $p_{N}^t$ instead of $p_{N,1,1}^t$, and so $p_N$ instead of $p_{N,1,1}^{1/2}$.
\label{Notations pour les matrices des pseudo}
\end{notns}

\begin{lem}\label{Coefficients of the Toeplitz matrix}
    Let $f\in S_{\eta_1,\eta_2}(m,\A{1},\A{2})$ and let $t\in [0,1]$. Then, the matrix coefficients of $f_{N,\A{1},\A{2}}^t$ in the basis $\mc{B}^{\alpha_1}$ given by \eqref{Def de la base de l'espace d'intérêt} are
    \begin{equation}
        F^t_{s,j}=\sum_{n,r\in \Z^d}c_{n,j-s+rN}(f)e^{\frac{2i\pi}{N}(1-t)\ps{n}{j}+\frac{2i\pi}{N}t\ps{n}{s-rN}},\m\m\m\m  s,j\in \disc{1}{N}^d,
    \end{equation}
    where for all $n,k\in \Z^d$,
    \begin{equation}\label{Coeffs de Fourier en les deux variables}
        c_{n,k}(f)=(\A{1}\A{2})^d\int_{\mc{T}_{\A{1},\A{2}}}f(x,\xi)e^{-2i\pi(\alpha_1^{1/2}\ps{x}{n}+\alpha_2^{1/2}\ps{\xi}{k})}dxd\xi.
    \end{equation}
\end{lem}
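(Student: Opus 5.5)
The plan is to compute the action of $\mathrm{Op}_{\Tilde{h},t}(f)$ directly on a basis vector $Q_j^{\A{1}}$ and read off its coordinates in $\mc{B}^{\A{1}}$.

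\textbf{Step 1: Fourier expansion of $f$.} Since $f$ is smooth and $\mc{T}_{\A{1},\A{2}}$-periodic, we expand
\begin{equation*}
    f(x,\xi)=\sum_{n,k\in\Z^d}c_{n,k}(f)\,e^{2i\pi(\A{1}^{1/2}\ps{x}{n}+\A{2}^{1/2}\ps{\xi}{k})},
\end{equation*}
with $c_{n,k}(f)$ as in \eqref{Coeffs de Fourier en les deux variables}. The coefficients are rapidly decreasing in $(n,k)$ for fixed $\Tilde{h}$. Hence the series converges in the symbol topology, and $\mathrm{Op}_{\Tilde{h},t}(f)$ is the corresponding sum of quantized exponentials acting on $\mathscr{S}'$.

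\textbf{Step 2: the quantized exponentials.} For $e_{n,k}(x,\xi)=e^{2i\pi(\A{1}^{1/2}\ps{x}{n}+\A{2}^{1/2}\ps{\xi}{k})}$, formula \eqref{Recall d'analyse semicl - quantification indice t} gives the following. The $\xi$-integral yields $\delta(x-y+2\pi\Tilde{h}\A{2}^{1/2}k)$, so the operator is
\begin{equation*}
    u\mapsto e^{2i\pi\A{1}^{1/2}\ps{n}{x+(1-t)2\pi\Tilde{h}\A{2}^{1/2}k}}\,u(x+2\pi\Tilde{h}\A{2}^{1/2}k).
\end{equation*}
This is a twisted translation. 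By \eqref{Hypothèse expression de h comme l'inverse de N - 2} and $\Tilde{h}=h/\sqrt{\A{1}\A{2}}$, we have $2\pi\Tilde{h}\A{2}^{1/2}=\A{1}^{-1/2}/N$. So the shift moves a Dirac at $\A{1}^{-1/2}(m+j/N)$ to one at $\A{1}^{-1/2}(m+(j-k)/N)$. The phase becomes $e^{\frac{2i\pi}{N}\ps{n}{\cdot}}$ evaluated at the appropriate lattice point.

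\textbf{Step 3: apply to $Q_j^{\A{1}}$.} We write $Q_j^{\A{1}}$ as a sum of Diracs $\delta_{\A{1}^{-1/2}(m+j/N)}$. Applying the twisted translation and evaluating the multiplier $e^{2i\pi\A{1}^{1/2}\ps{n}{x}}$ at the support point gives a factor $e^{\frac{2i\pi}{N}\ps{n}{j-k}}$. The term $e^{2i\pi\ps{n}{m}}=1$ drops out, and the $(1-t)$-shift contributes $e^{\frac{2i\pi}{N}(1-t)\ps{n}{k}}$. We then reorganize each Dirac by writing $j-k=s-rN$ with $s\in\disc{1}{N}^d$ and $r\in\Z^d$. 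This exhibits the result as $\sum_s(\cdots)Q_s^{\A{1}}$ and forces $k=j-s+rN$.

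\textbf{Step 4: collect the coefficient.} Gathering the phases $\frac{2i\pi}{N}\bigl((1-t)\ps{n}{j-s+rN}+\ps{n}{s-rN}\bigr)$ and simplifying gives the claimed exponent $\frac{2i\pi}{N}\bigl((1-t)\ps{n}{j}+t\ps{n}{s-rN}\bigr)$.

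\textbf{Main obstacle.} The delicate point is bookkeeping of signs, since the index set $\disc{1}{N}^d$ appears where the basis was indexed by $\disc{0}{N-1}^d$. One must check that the normalizations and the exchange of the double sum with the action on $\mathscr{S}'$ are legitimate; the rapid decay of $c_{n,k}(f)$ justifies the latter.
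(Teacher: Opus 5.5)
Your proposal is correct and follows essentially the same route as the paper. Both arguments Fourier-expand $f$ on $\mathcal{T}_{\alpha_1,\alpha_2}$, recognize $\mathrm{Op}_{\tilde h,t}$ of each exponential as a twisted translation by $\alpha_1^{-1/2}k/N$, apply it to the Diracs making up $Q_j^{\alpha_1}$ (the paper does this by pairing against a test function), and reindex $k=j-s+rN$ using the $N$-periodicity of $Q_k^{\alpha_1}$ in $k$.

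The one point to fix is a normalization typo you inherit from the statement. For the inversion formula $f=\sum_{n,k}c_{n,k}(f)e_{n,k}$ that you use, the prefactor in $c_{n,k}(f)$ must be $(\alpha_1\alpha_2)^{d/2}=\mathrm{Vol}(\mathcal{T}_{\alpha_1,\alpha_2})^{-1}$, not $(\alpha_1\alpha_2)^{d}$. The paper itself uses $(\alpha_1\alpha_2)^{d/2}$ later, e.g.\ in the Fourier decay estimate and the trace formula.
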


\m\m\m\m Notice that the coefficients $F^t_{s,j}$ depend on $\alpha_1$ and $\A{2}$ because of the Fourier coefficients, although not explicitly denoted here.

\begin{proof}
    Write $\theta_j=2\pi \A{j}^{1/2}$, $j\in \{1,2\}$ and for $n$, $m\in \Z^d$, define $L_{n,m}(x,\xi)=\theta_1\!\ps{x}{n}+\theta_2\!\ps{\xi}{m}$. The periodicity of $f$ allows us to write
    \begin{equation}
        \begin{split}
           f(x,\xi)=&\sum_{n,m\in \Z^d}c_{n,m}(f)e^{iL_{n,m}(x,\xi)} \\
           \text{where}\m\m\m\m c_{n,m}(f)=(\A{1}\A{2})^d&\int_{\mc{T}_{\A{1},\A{2}}}f(x,\xi)e^{-iL_{n,m}(x,\xi)}dxd\xi.
        \end{split}
    \label{devpt en série de Fourier}
    \end{equation}
    In particular,
    \begin{equation*}
        \mathrm{Op}_{\Tilde{h},t}(f)=\sum_{n,m\in \Z^d}c_{n,m}(f)\mathrm{Op}_{\Tilde{h},t}\pr{e^{iL_{n,m}}}.
    \end{equation*}
    But for all $u\in \mathscr{S}(\R^d)$, from \cite[Chapter 7]{dimassi1999spectral}, we have
    \begin{equation*}
        \mathrm{Op}_{\Tilde{h},t}(\exp\pr{iL_{n,m}})u(x)=\exp(i\theta_1 t \ps{n}{x})\tau_{-\Tilde{h}\theta
        _2 m}\Bigl[\exp(i(1-t)\theta_1\ps{n}{\cdot})u(\cdot)\Bigl](x),
    \end{equation*}
    so by duality, we directly have for all $j\in \disc{0}{N-1}^d$,
    \begin{equation*}
        \begin{split}
            \mathrm{Op}&_{\Tilde{h},t}(\exp(iL_{n,m}))Q_j^{\alpha_1}\\
            &=\frac{1}{(\alpha_1^{1/2}N)^{d/2}}\sum_{k\in \Z^d}e^{it\theta_1\ps{n}{\cdot}}\tau_{-\Tilde{h}\theta_2 m}\croch{\exp(i(1-t)\theta_1\ps{n}{\cdot})\delta_{\alpha_1^{-1/2}(k+j/N)}}.
        \end{split}
    \end{equation*}

    For all $u\in \mathscr{S}(\R^d)$,
    \begin{equation*}
        \begin{split}
            &\left\langle e^{it\theta_1\ps{n}{\cdot}}\right.  \left.\tau_{-\Tilde{h}\theta_2 m} 
            \croch{\exp(i(1-t)\theta_1\ps{n}{\cdot})\delta_{\alpha_1^{-1/2}(k+j/N)}},u\right\rangle\\
            &= \ps{\delta_{\Ab{1}(k+j/N)}}{\exp(i(1-t)\theta_1\ps{n}{\cdot})\tau_{\Tilde{h}\theta_2 m}\croch{e^{it\theta_1\ps{n}{\cdot}}u(\cdot)}}\\
            &= \exp\biggl(i(1-t)\theta_1\Ab{1}\ps{n}{k+\frac{j}{N}}\\
            &\mm\mm+it\theta_1\ps{n}{\Ab{1}\pr{k+\frac{j}{N}}-\Tilde{h}\theta_2 m}\biggl)u\pr{\Ab{1}\pr{k+\frac{j}{N}}-\Tilde{h}\theta_2 m}\\
            &= \ps{\exp\pr{\frac{2i\pi}{N}\ps{n}{j}-\frac{2i\pi}{N}t\ps{n}{m}}\delta_{\Ab{1}(k+\frac{j-m}{N})}}{u}.
        \end{split}
    \end{equation*}
    We can deduce from this that,
    \begin{equation*}
        \mathrm{Op}_{\Tilde{h},t}(\exp(iL_{n,m}))Q_j^{\alpha_1}=\exp\pr{\frac{2i\pi}{N}\ps{n}{j}-\frac{2i\pi}{N}t\ps{n}{m}}Q_{j-m}^{\A{1}}
    \end{equation*}
    where the index is meant modulo $N\Z^d$. Thus, for all $n$, $m\in \Z^d$, $j\in \disc{0}{N-1}^d$,
    \begin{equation*}
        \begin{split}
            \mathrm{Op}&_{\Tilde{h},t}(f)Q_j^{\A{1}}= \sum_{n,m\in \Z^d}c_{n,m}(f)\exp\pr{\frac{2i\pi}{N}\ps{n}{j}-\frac{2i\pi}{N}t\ps{n}{m}}Q_{j-m}^{\A{1}}\\
            &=\sum_{s\in \disc{0}{N-1}^d} \pr{\sum_{n,r\in \Z^d}c_{n,j-s+rN}(f)\exp\pr{\frac{2i\pi}{N}(1-t)\ps{n}{j}+\frac{2i\pi}{N}t\ps{n}{s-rN}}}Q_s^{\A{1}},
        \end{split}
    \end{equation*}
    which gives the result.
\end{proof}

\begin{rem}\label{Expression des coefficients des matrices ancienne quantif, pour les valeurs de t qui nous intéressent}
    The two cases we are going to focus on are:
    \begin{itemize}
        \item $t=1/2$:
    \begin{equation}\label{Coeffs matriciels - cas t=1/2}
        F^{1/2}_{s,j}=\sum_{n,r\in \Z^d}c_{n,j-s+rN}(f)\exp\pr{\frac{i\pi}{N}\ps{n}{j+s}}(-1)^{\ps{n}{r}},
    \end{equation}
    \item $t=1$:
    \begin{equation}\label{Coeffs matriciels - cas t=1}
        F^{1}_{s,j}=\sum_{n,r\in \Z^d}c_{n,j-s+rN}(f)\exp\pr{\frac{2i\pi}{N}\ps{n}{s}}.
    \end{equation}
    \end{itemize}
\end{rem}

\begin{lem}\label{Comportement des coef de Fourier}
    Let $f\in \mathscr{C}^{\infty}(\mc{T}_{\A{1},\A{2}})$ and recall that $c_{n,m}(f)$ is defined by \eqref{Coeffs de Fourier en les deux variables}. Then, for all $k\in \N$, $n,m\in \Z^d$,
    \begin{equation}\label{Estimation des coeffs de Fourier en les deux variables - 1}
        c_{n,m}(f)=\mc{O}_k(1)(\A{1}\A{2})^{d/2}\jap{(n,m)}^{-k}\sum_{\bb{\beta}+\bb{\gamma}\leq k}\A{1}^{-\bb{\beta}/2}\A{2}^{-\bb{\gamma}/2}\norme{\partial_x^\beta\partial_\xi^\gamma f}_{L^1(\mc{T}_{\A{1},\A{2}})}.
    \end{equation}
    In particular, if $0<\A{1}\leq\A{2}\leq 1$, then \eqref{Estimation des coeffs de Fourier en les deux variables - 1} becomes
    \begin{equation}\label{Estimation des coeffs de Fourier en les deux variables - 2}
        c_{n,m}(f)=\mc{O}_k(1)\A{1}^{(d-k)/2}\A{2}^{d/2}\jap{(n,m)}^{-k}\sum_{\bb{\beta}+\bb{\gamma}\leq k}\norme{\partial_x^\beta\partial_\xi^\gamma f}_{L^1(\mc{T}_{\A{1},\A{2}})}.
    \end{equation}
\label{Décroissance poly des coef de Fourier sur le tore biscornu}
\end{lem}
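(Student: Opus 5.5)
The plan is the classical integration-by-parts argument for Fourier coefficients, with each derivative rescaled by the period of the torus. The key identity is
\[
\partial_{x_\ell}e^{-iL_{n,m}}=-i\theta_1 n_\ell\, e^{-iL_{n,m}},\qquad \partial_{\xi_\ell}e^{-iL_{n,m}}=-i\theta_2 m_\ell\, e^{-iL_{n,m}},
\]
where $\theta_j=2\pi\alpha_j^{1/2}$ and $L_{n,m}$ is as in the proof of Lemma \ref{Coefficients of the Toeplitz matrix}. Since $f$ is smooth on the compact manifold $\mc{T}_{\A{1},\A{2}}$, integrating by parts produces no boundary terms.

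\textbf{Step 1 (decay in one coordinate).} Fix a multi-index $(\beta,\gamma)$ with $\bb{\beta}+\bb{\gamma}=k$. Integrating by parts $\beta_\ell$ times in $x_\ell$ and $\gamma_\ell$ times in $\xi_\ell$ gives
\begin{equation*}
n^\beta m^\gamma c_{n,m}(f)=(\A{1}\A{2})^d\,(i\theta_1)^{-\bb{\beta}}(i\theta_2)^{-\bb{\gamma}}\int_{\mc{T}_{\A{1},\A{2}}}\partial_x^\beta\partial_\xi^\gamma f\, e^{-iL_{n,m}}\,dx\,d\xi .
\end{equation*}
Bounding the exponential by $1$, we get
\[
\bb{n^\beta m^\gamma c_{n,m}(f)}\le C_k(\A{1}\A{2})^d\,\A{1}^{-\bb{\beta}/2}\A{2}^{-\bb{\gamma}/2}\,\norme{\partial_x^\beta\partial_\xi^\gamma f}_{L^1}.
\]

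\textbf{Step 2 (the prefactor).} The statement has $(\A{1}\A{2})^{d/2}$ in front, whereas the normalization \eqref{Coeffs de Fourier en les deux variables} gives $(\A{1}\A{2})^{d}$. Since $\A{1},\A{2}\le1$, we have $(\A{1}\A{2})^{d}\le(\A{1}\A{2})^{d/2}$, so the claimed bound follows a fortiori. I suspect the intended normalization is $(\A{1}\A{2})^{d/2}$, because the volume of $\mc{T}_{\A{1},\A{2}}$ is $(\A{1}\A{2})^{-d/2}$. Either way the claim holds, and I would note this in the write-up.

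\textbf{Step 3 (assembling).} Use the elementary inequality
\[
\jap{(n,m)}^{k}\le C_k\sum_{\bb{\beta}+\bb{\gamma}\le k}\bb{n^\beta m^\gamma},
\]
which comes from $\jap{\rho}^{2}\le 1+2d\max_\ell\bb{\rho_\ell}^2$ together with comparing $\jap{\rho}^k$ to $1+\max_\ell\bb{\rho_\ell}^k$. Summing Step 1 over all $\bb{\beta}+\bb{\gamma}\le k$ (including the lower-order terms, which handle the constant $1$) yields \eqref{Estimation des coeffs de Fourier en les deux variables - 1}.

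\textbf{Step 4 (the case $\A{1}\le\A{2}$).} Each weight satisfies
\[
\A{1}^{-\bb{\beta}/2}\A{2}^{-\bb{\gamma}/2}\le \A{1}^{-(\bb{\beta}+\bb{\gamma})/2}\le \A{1}^{-k/2}.
\]
Inserting this into \eqref{Estimation des coeffs de Fourier en les deux variables - 1} gives the prefactor $\A{1}^{(d-k)/2}\A{2}^{d/2}$, which is \eqref{Estimation des coeffs de Fourier en les deux variables - 2}.

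The only delicate points are bookkeeping: tracking the $\theta_j^{-1}$ factors, and checking the prefactor discrepancy in Step 2 (which goes in the right direction). Nothing else is a real obstacle.
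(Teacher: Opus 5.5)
Your proof is correct and is essentially the paper's argument. Both proofs integrate by parts against $e^{-iL_{n,m}}$ and pick up exactly the factors $\theta_1^{-|\beta|}\theta_2^{-|\gamma|}$. The only cosmetic difference is that the paper iterates a first-order operator $\mathcal{L}$, whose transpose is $(1+\theta_1^{-1}\langle n,D_x\rangle+\theta_2^{-1}\langle m,D_\xi\rangle)/\langle (n,m)\rangle^{2}$, whereas you multiply by the monomials $n^\beta m^\gamma$ and use $\langle (n,m)\rangle^{k}\lesssim\sum_{|\beta|+|\gamma|\le k}|n^\beta m^\gamma|$.

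Your remark on the prefactor is also right. $\mathcal{T}_{\alpha_1,\alpha_2}$ has volume $(\alpha_1\alpha_2)^{-d/2}$, the paper's own proof writes $c_{n,m}(f)$ with the factor $(\alpha_1\alpha_2)^{d/2}$, and that normalization is what Proposition~\ref{Trace result} needs. So the $(\alpha_1\alpha_2)^{d}$ in \eqref{Coeffs de Fourier en les deux variables} is a typo, and it is harmless for this lemma either way.
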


\begin{proof}
    We still denote $\theta_j=2\pi \A{j}^{1/2}$, $j\in \{1,2\}$. Let $\mc{L}$ be the operator such that its formal adjoint (in the sense of the inner product $\ps{\cdot}{\cdot}_{L^2(\mc{T}_{\A{1},\A{2}})}$) is 
    \begin{equation*}
        \mc{L}^*=\frac{1+\theta_1^{-1}\!\ps{n}{D_x}+\theta_2^{-1}\!\ps{m}{D_\xi}}{1+\bb{n}^2+\bb{m}^2}.
    \end{equation*}
    Since
    \begin{equation*}
        \mc{L}\pr{e^{-2i\pi \left[\A{1}^{1/2}\ps{x}{n}+\A{2}^{1/2}\ps{\xi}{m}\right]}}=e^{-2i\pi \left[\A{1}^{1/2}\ps{x}{n}+\A{2}^{1/2}\ps{\xi}{m}\right]},
    \end{equation*}
    integration by parts shows that for all $M\in \N$,
    \begin{equation*}
        c_{n,m}(f)=(\A{1}\A{2})^{d/2}\int_{\mc{T}_{\A{1},\A{2}}} e^{-2i\pi \left[\A{1}^{1/2}\ps{x}{n}+\A{2}^{1/2}\ps{\xi}{m}\right]} (\mc{L}^*)^M(f)(x,\xi)dxd\xi.
    \end{equation*}
    However, we can show by induction that for all $M\in \N$,
    \begin{equation*}
        (\mc{L}^*)^M(f)(x,\xi)=\sum_{j=0}^M\binom{M}{j}\sum_{\bb{\beta}+\bb{\gamma}=j}\theta_1^{-\bb{\beta}}\theta_2^{-\bb{\gamma}}\m\frac{n^\beta m^\gamma D_x^\beta D_\xi^\gamma f(x,\xi)}{\jap{(n,m)}^{2M}}.
    \end{equation*}
    and since for all $\beta$, $\gamma\in \N^d$ satisfying $\bb{\beta}+\bb{\gamma}\leq M$, $\bb{n^\beta m^\gamma}\leq \jap{(n,m)}^M$, the result follows immediately by triangular inequality.
\end{proof}
Recall $f_{N,\A{1},\A{2}}:=f_{N,\A{1},\A{2}}^{1/2}$ given in Notations \ref{Notations pour les matrices des pseudo} and the basis $\mc{B}^{\alpha_1}$ given in \eqref{Def de la base de l'espace d'intérêt}.
\begin{lem}
    There exists a unique Hilbert space structure on $\Hd$ such that for all real-valued $f\in \mathscr{C}^{\infty}(\mc{T}_{\A{1},\A{2}})$, $f_{N,\A{1},\A{2}}:\Hd\to \Hd$ is selfadjoint and $\mc{B}^{\A{1}}$ is an orthonormal basis.
\label{Hilbert structure}
\end{lem}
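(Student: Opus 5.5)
Since $\mc{B}^{\A{1}}$ is a basis of $\Hd$ by Lemma \ref{Lemme dimension et base de l'espace de travail}, requiring it to be orthonormal determines the inner product: for $u=\sum_k u_kQ_k^{\A{1}}$ and $v=\sum_k v_kQ_k^{\A{1}}$ we must have $\ps{u}{v}_{\Hd}=\sum_{k\in\disc{0}{N-1}^d}u_k\overline{v_k}$. This settles uniqueness, and it leaves a single check for existence: with this inner product, $f_{N,\A{1},\A{2}}$ is selfadjoint for every real-valued smooth $f$ on $\mc{T}_{\A{1},\A{2}}$. In an orthonormal basis, selfadjointness of an operator is the same as its matrix being Hermitian. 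So the task reduces to proving
\begin{equation*}
    F^{1/2}_{j,s}=\overline{F^{1/2}_{s,j}},\mm s,j\in\disc{0}{N-1}^d,
\end{equation*}
where $F^{1/2}$ is the matrix given by Remark \ref{Expression des coefficients des matrices ancienne quantif, pour les valeurs de t qui nous intéressent}. The indices are read modulo $N$, as in the proof of Lemma \ref{Coefficients of the Toeplitz matrix}.

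The computation runs on formula \eqref{Coeffs matriciels - cas t=1/2} together with the symmetry of Fourier coefficients of a real function. Reading off \eqref{Coeffs de Fourier en les deux variables}, $f$ real-valued gives $\overline{c_{n,k}(f)}=c_{-n,-k}(f)$. Then
\begin{equation*}
    \overline{F^{1/2}_{s,j}}=\sum_{n,r\in\Z^d}c_{-n,-(j-s+rN)}(f)\,e^{-\frac{i\pi}{N}\ps{n}{j+s}}(-1)^{\ps{n}{r}}.
\end{equation*}
Substituting $n\mapsto -n$ and $r\mapsto -r$ turns the general term into $c_{n,s-j+rN}(f)\,e^{\frac{i\pi}{N}\ps{n}{s+j}}(-1)^{\ps{n}{r}}$, using $(-1)^{\ps{-n}{-r}}=(-1)^{\ps{n}{r}}$. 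This is exactly the general term of $F^{1/2}_{j,s}$, so the matrix is Hermitian. The rearrangement of the double sum is justified because the series converges absolutely, by the rapid decay of $c_{n,k}(f)$ in Lemma \ref{Comportement des coef de Fourier}.

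The only point that needs care is that $F^{1/2}_{s,j}$ is well defined once $s,j$ are taken modulo $N$. Replacing $s$ by $s+N\ell$ multiplies the factor $e^{\frac{i\pi}{N}\ps{n}{j+s}}$ by $(-1)^{\ps{n}{\ell}}$, and shifting $r\mapsto r-\ell$ changes $(-1)^{\ps{n}{r}}$ by the same sign, so the two cancel. Because of this, it does not matter whether the indices run over $\disc{0}{N-1}^d$ or $\disc{1}{N}^d$. Equivalently, one can argue without coordinates: $\mathrm{Op}^w_{\Tilde{h}}(f)$ is formally selfadjoint for real $f$ (from $\mathrm{Op}_{h,t}(p)^*=\mathrm{Op}_{h,1-t}(\overline{p})$ at $t=1/2$), and the inner product above is the periodic $L^2$ pairing of the Dirac combs. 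I would nevertheless present the explicit Hermitian check, since it is short and uses only formulas already established.
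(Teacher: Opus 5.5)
Your proposal is correct and follows the paper's proof. Orthonormality of $\mc{B}^{\A{1}}$ fixes the inner product, and Hermitian symmetry of $F^{1/2}$ comes from $\overline{c_{n,m}(f)}=c_{-n,-m}(f)$ together with the substitution $(n,r)\mapsto(-n,-r)$ in \eqref{Coeffs matriciels - cas t=1/2}. Your remarks on absolute convergence and on independence of the indices modulo $N$ supply details the paper leaves implicit.
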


\begin{proof}
    Let $\ps{\cdot}{\cdot}_0$ denote the inner product on $\Hd$ for which the basis $\mc{B}^{\A{1}}$ is orthonormal. Let $f\in \mathscr{C}^{\infty}(\mc{T}_{\A{1},\A{2}};\R)$. We are going to show that $f_{N,\A{1},\A{2}}$ is selfadjoint. Given the expression \eqref{Coeffs de Fourier en les deux variables} of $c_{n,m}(f)$ and the fact that $f$ is real-valued, we get $\overline{c_{n,m}(f)}=c_{-n,-m}(f)$. Using \eqref{Coeffs matriciels - cas t=1/2}, we easily obtain that for all $s,j\in \disc{1}{N}^d$, $\overline{F^{1/2}_{s,j}}=F^{1/2}_{j,s}$. 
\end{proof}

\begin{rem}
    From now on, we will always equip $\Hd$ with the inner product $\ps{\cdot}{\cdot}_0$, the induced norm $\norme{\cdot}_0$, and we drop the subscript $0$. Furthermore, using the basis $\mc{B}^{\alpha_1}$ given by \eqref{Def de la base de l'espace d'intérêt}, we will identify 
    \begin{equation}\label{Identification de l'espace d'intérêt avec C^N^d}
        \Hd\cong \ell^2\pr{\disc{1}{N}^d}\cong \C^{N^d}.
    \end{equation}
\end{rem}

\begin{prop}\label{Trace result}
    Let $0<\A{1},\A{2}\leq 1$ be such that $N^{-1}\ll  \pr{\A{1}\A{2}}^{1/2}$ and let $f\in S(m,\A{1},\A{2})$. Then,  
    \begin{equation*}
        \mathrm{tr}\pr{f_{N,\A{1},\A{2}}}=\pr{N\A{1}^{1/2}\A{2}^{1/2}}^d\int_{\mc{T}_{\A{1},\A{2}}}f(x,\xi)dxd\xi+r_N
    \end{equation*}
    where, for all $k\in \N$, 
    \begin{equation*}
        r_N=\mc{O}_k(1)N^{d-k}\A{1}^{(d-k)/2}\A{2}^{d/2}\sum_{\bb{\beta}+\bb{\gamma}\leq \max(2d+1,k)}\norme{\partial_x^\beta\partial_\xi^\gamma f}_{L^1(\mc{T}_{\A{1},\A{2}})}.
    \end{equation*}
\end{prop}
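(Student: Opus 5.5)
Using Lemma~\ref{Coefficients of the Toeplitz matrix} with $t=1/2$, i.e. formula \eqref{Coeffs matriciels - cas t=1/2}, I will compute the trace directly as the sum of the diagonal coefficients $F^{1/2}_{s,s}$, $s\in\disc{1}{N}^d$. On the diagonal $j=s$ the second index of the Fourier coefficient is $rN$, so
\begin{equation*}
    \mathrm{tr}\pr{f_{N,\A{1},\A{2}}}=\sum_{n,r\in\Z^d}c_{n,rN}(f)(-1)^{\ps{n}{r}}\sum_{s\in\disc{1}{N}^d}e^{\frac{2i\pi}{N}\ps{n}{s}}.
\end{equation*}
The inner sum is a geometric sum equal to $N^d$ if $n\in N\Z^d$ and $0$ otherwise. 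Writing $n=N\ell$ gives
\begin{equation*}
    \mathrm{tr}\pr{f_{N,\A{1},\A{2}}}=N^d\sum_{\ell,r\in\Z^d}(-1)^{N\ps{\ell}{r}}c_{N\ell,Nr}(f).
\end{equation*}
The term $\ell=r=0$ is $N^dc_{0,0}(f)$. By \eqref{Coeffs de Fourier en les deux variables}, $c_{0,0}(f)=(\A{1}\A{2})^{d/2}\int_{\mc{T}_{\A{1},\A{2}}}f$; I would check that the normalization is consistent with the proof of Lemma~\ref{Comportement des coef de Fourier}, where the factor used is $(\A{1}\A{2})^{d/2}$, the correct normalization for the volume $(\A{1}\A{2})^{-d/2}$ of the torus. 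This yields the main term $\pr{N\A{1}^{1/2}\A{2}^{1/2}}^d\int f$.

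It remains to bound the remainder
\begin{equation*}
    r_N:=N^d\sum_{(\ell,r)\neq(0,0)}(-1)^{N\ps{\ell}{r}}c_{N\ell,Nr}(f).
\end{equation*}
I would apply \eqref{Estimation des coeffs de Fourier en les deux variables - 1} with an exponent $K=\max(2d+1,k)$. Since $\jap{(N\ell,Nr)}\geq N\bb{(\ell,r)}$ for $(\ell,r)\neq 0$, each term is bounded by
\begin{equation*}
    \mc{O}_K(1)(\A{1}\A{2})^{d/2}N^{-K}\bb{(\ell,r)}^{-K}\sum_{\bb{\beta}+\bb{\gamma}\leq K}\A{1}^{-\bb{\beta}/2}\A{2}^{-\bb{\gamma}/2}\norme{\partial_x^\beta\partial_\xi^\gamma f}_{L^1}.
\end{equation*}
Because $K\geq 2d+1$, the lattice sum $\sum_{(\ell,r)\neq0}\bb{(\ell,r)}^{-K}$ over $\Z^{2d}$ converges. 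This gives $r_N=\mc{O}(1)N^{d-K}(\A{1}\A{2})^{d/2}\sum\A{1}^{-\bb{\beta}/2}\A{2}^{-\bb{\gamma}/2}\norme{\cdot}_{L^1}$.

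The main obstacle is matching the exact shape of the stated remainder: the power $N^{d-k}\A{1}^{(d-k)/2}\A{2}^{d/2}$ with the derivative sum running up to order $\max(2d+1,k)$. The factor $\A{1}^{-\bb{\beta}/2}\A{2}^{-\bb{\gamma}/2}$ is at most $\A{1}^{-K/2}$ when $\A{1}\leq\A{2}$, as in \eqref{Estimation des coeffs de Fourier en les deux variables - 2}. In the case $k\geq 2d+1$ we have $K=k$, which gives exactly $N^{d-k}\A{1}^{(d-k)/2}\A{2}^{d/2}$.

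In the case $k<2d+1$, I will use $K=2d+1$. Then the bound $N^{-K}\A{1}^{-K/2}=(N\A{1}^{1/2})^{-K}$ is at most $(N\A{1}^{1/2})^{-k}$, because $N\A{1}^{1/2}\geq N(\A{1}\A{2})^{1/2}\gg1$ by the hypothesis $N^{-1}\ll(\A{1}\A{2})^{1/2}$. This yields the claimed remainder.

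If $\A{2}<\A{1}$, the roles of the two parameters are symmetric, and the same argument applies with the smaller of the two powers. For the statement as written, I would simply assume $\A{1}\leq\A{2}$, as in Lemma~\ref{Comportement des coef de Fourier}, or else state the bound with the symmetric factor $\min(\A{1},\A{2})^{-K/2}$.
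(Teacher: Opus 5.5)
Your proposal is correct and follows the paper's proof essentially verbatim. You sum the diagonal of \eqref{Coeffs matriciels - cas t=1/2}, use the geometric sum to kill every $n\notin N\Z^d$, isolate $N^dc_{0,0}(f)$ as the main term, and bound the remaining lattice sum over $(\ell,r)\neq(0,0)$ with Lemma~\ref{Comportement des coef de Fourier}.

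Your side remarks are also accurate. The paper's proof tacitly uses the normalization $(\A{1}\A{2})^{d/2}$ rather than the $(\A{1}\A{2})^{d}$ written in \eqref{Coeffs de Fourier en les deux variables}. It also tacitly assumes $\A{1}\leq\A{2}$, which is needed for \eqref{Estimation des coeffs de Fourier en les deux variables - 2}. Finally, your treatment of the case $k<2d+1$ via $N\A{1}^{1/2}\gg 1$ spells out the step hidden behind the order $\max(2d+1,k)$ in the statement.
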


\begin{proof}
    From \eqref{Coeffs matriciels - cas t=1/2}, we get
    \begin{equation}\label{Trace result - 1}
        \begin{split}
            \mathrm{tr}&(f_{N,\A{1},\A{2}}) = \sum_{m\in\disc{1}{N}^d}\sum_{n,r\in \Z^d}c_{n,Nr}(f)e^{\frac{2i\pi}{N}\ps{m}{n}}(-1)^{\ps{n}{r}}\\
            &= \sum_{s\in\disc{0}{N-1}^d}\sum_{m\in \disc{1}{N}^d}\sum_{q,r\in \Z^d}c_{s+Nq,Nr}e^{\frac{2i\pi}{N}\ps{m}{s}}(-1)^{\ps{s+qN}{r}}\\
            &= \sum_{s\in\disc{0}{N-1}^d}\pr{\sum_{m\in\disc{1}{N}^d}e^{\frac{2i\pi}{N}\ps{m}{s}}}\pr{\sum_{q,r\in \Z^d}c_{s+Nq,Nr}(f)(-1)^{\ps{s+qN}{r}}}.
        \end{split}
    \end{equation}
    But notice that for all $s=(s_1,\dots,s_{d})\in \disc{0}{N-1}^d$,
    \begin{equation}
        \sum_{m\in\disc{1}{N}^d}e^{\frac{2i\pi}{N}\ps{m}{s}}=\left\{\begin{array}{ccccc}
             & N^d & \text{if} & s=0 \\
             & 0 & \text{otherwise} &
        \end{array}\right.
    \end{equation}
    in such a way that \eqref{Trace result - 1} becomes
    \begin{equation*}
        \begin{split}
            \mathrm{tr}(f_{N,\A{1},\A{2}}) &= \sum_{m\in \disc{1}{N}^d} F_{m,m}=N^d\sum_{q,r\in \Z^d}c_{qN,rN}(f)(-1)^{N\ps{q}{r}}\\
            &= \pr{N\A{1}^{1/2}\A{2}^{1/2}}^d\int_{\mc{T}_{\A{1},\A{2}}}f(x,\xi)dxd\xi+r_N
        \end{split}
    \end{equation*}
    where $r_N=N^d\sum_{(q,r)\neq (0,0)}c_{qN,rN}(f)(-1)^{N\ps{q}{r}}$. Applying \eqref{Estimation des coeffs de Fourier en les deux variables - 2} to $r_N$ concludes the proof.
\end{proof}

\begin{prop}\label{Chris and Zworski adapté}
    Let $0<\A{1}\leq \A{2}\ll 1$ be such that there exists $\omega\in \hspace{0.1cm} ]0,1[$ such that 
    \begin{equation}\label{Conditions entre alpha 1 et 2, et N - 1}
        N^{-\omega}\leq \pr{\alpha_1\alpha_2}^{1/2}.
    \end{equation}
    Let $p\in S(1,\A{1},\A{2})$. Then, as $N\to +\infty$,
    \begin{equation*}
        \norme{p_{N,\A{1},\A{2}}}\leq \sup_{\rho\in \mc{T}_{\A{1},\A{2}}}\bb{p(\rho)}+o(1).
    \end{equation*}
    In particular, there exists a constant $C>0$ independent of $N$, $\alpha_1$ and $\A{2}$ such that 
    \begin{equation*}
        \norme{p_{N,\A{1},\A{2}}}\leq C.
    \end{equation*}
\end{prop}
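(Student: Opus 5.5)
We will use the standard $C^*$ trick combined with a sharp Gårding-type positivity argument, in the spirit of \cite[Proposition 2.4]{Christiansen_2010} and \cite{Vogel_2020}, transported to the rescaled torus $\mc{T}_{\A{1},\A{2}}$. Set $M:=\sup_{\mc{T}_{\A{1},\A{2}}}\bb{p}$, which is $\mc{O}(1)$ uniformly since $p\in S(1,\A{1},\A{2})$. Fix $\varepsilon>0$ and consider the real-valued symbol $a:=(M+\varepsilon)^2-\bb{p}^2\geq 2M\varepsilon+\varepsilon^2>0$. Since $a\geq c_\varepsilon>0$ and $a\in S(1,\A{1},\A{2})$, the function $b:=a^{1/2}$ is smooth, real and belongs to $S(1,\A{1},\A{2})$, with constants depending on $\varepsilon$ but not on $N,\A{1},\A{2}$.

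The key steps, in order, are as follows. First, by the composition formula \eqref{Expression compo symboles périodiques} in $\tilde h$ (with $\eta_1=\eta_2=0$) together with the asymptotic expansion \eqref{Recall d'analyse semicl - dvpt asymptotique du produit de Moyal}, write $\overline p\sharp_{\tilde h}p=\bb{p}^2+\tilde h r_1$ and $b\sharp_{\tilde h}b=a+\tilde h r_2$ with $r_1,r_2\in S(1,\A{1},\A{2})$ periodic. Restricting to $\Hd$ (quantization commutes with restriction since these operators preserve $\Hd$), we obtain
\begin{equation*}
(M+\varepsilon)^2-(p_{N,\A{1},\A{2}})^*p_{N,\A{1},\A{2}}=(b_{N,\A{1},\A{2}})^*b_{N,\A{1},\A{2}}+\tilde h\,r_{N,\A{1},\A{2}},
\end{equation*}
where $r:=-r_1-r_2$, and where we use that $(\overline{p})_{N,\A{1},\A{2}}$ is the adjoint of $p_{N,\A{1},\A{2}}$ for the Hilbert structure of Lemma \ref{Hilbert structure} (from \eqref{Coeffs matriciels - cas t=1/2} and $\overline{c_{n,m}(p)}=c_{-n,-m}(\overline p)$, exactly as in the proof of that lemma), and that $b_{N,\A{1},\A{2}}$ is selfadjoint. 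Second, the first term is nonnegative, so for $u\in\Hd$ we get $\norme{p_{N,\A{1},\A{2}}u}^2\leq (M+\varepsilon)^2\norme{u}^2+\tilde h\norme{r_{N,\A{1},\A{2}}}\norme{u}^2$. Third, we need an a priori crude bound $\norme{r_{N,\A{1},\A{2}}}=\mc{O}_\varepsilon(N^{\omega'})$ for some $\omega'$ with $\tilde h N^{\omega'}\to0$. Since $\tilde h=(2\pi N)^{-1}(\A{1}\A{2})^{-1/2}\leq N^{\omega-1}$ by \eqref{Conditions entre alpha 1 et 2, et N - 1}, it suffices to show $\norme{r_{N,\A{1},\A{2}}}=o(N^{1-\omega})$. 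Letting $\varepsilon\to0$ then gives the claim, and the uniform bound $C$ follows from $M=\mc{O}(1)$.

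The main obstacle is this crude norm bound, uniform in $\A{1},\A{2}$. We would derive it from Schur's test on the matrix of Lemma \ref{Coefficients of the Toeplitz matrix}. Using $\bb{F^{1/2}_{s,j}}\leq\sum_{n,r}\bb{c_{n,j-s+rN}(f)}$, the row and column sums are bounded by $\sum_{n,m\in\Z^d}\bb{c_{n,m}(f)}$. By \eqref{Estimation des coeffs de Fourier en les deux variables - 2} with $k=2d+1$, and since $\norme{\partial^\beta\partial^\gamma f}_{L^1(\mc{T}_{\A{1},\A{2}})}=\mc{O}((\A{1}\A{2})^{-d/2})$, this sum is $\mc{O}(\A{1}^{-(2d+1)/2}\cdots)$, which is polynomial in $N$ but possibly too large. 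To fix this, we would replace Schur's test by a direct estimate exploiting that a sum over $n$ with the phase $e^{i\pi\ps{n}{j+s}/N}$ reconstructs $f$ evaluated at a lattice point. Concretely, $F_{s,j}$ equals $\sum_{m}\hat f_m\bigl(\A{1}^{-1/2}(j+s)/(2N)\bigr)$ up to the sign factors, where $\hat f_m(x)$ are the $\xi$-Fourier coefficients, which are bounded by $\mc{O}(\jap{m}^{-k})$ uniformly because $\xi$-derivatives of $f$ are bounded. Schur's test then gives $\norme{f_{N,\A{1},\A{2}}}=\mc{O}(1)$ directly for any $f\in S(1,\A{1},\A{2})$. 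This is the step we would verify first; if the sign factors $(-1)^{\ps{n}{r}}$ obstruct it, we would split $n$ according to parity, obtaining half-period shifts, which is harmless. With $\norme{r_{N,\A{1},\A{2}}}=\mc{O}_\varepsilon(1)$, the error $\tilde h\,\mc{O}_\varepsilon(1)=o(1)$ completes the argument.
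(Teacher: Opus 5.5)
Your first two steps are H\"ormander's square-root trick, which is also what the paper uses, and they are sound. This includes $(p_{N,\alpha_1,\alpha_2})^*=(\overline p)_{N,\alpha_1,\alpha_2}$ and the self-adjointness of $b_{N,\alpha_1,\alpha_2}$.

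The gap is in the third step. The claim that $|\hat f_m(x)|=\mathcal{O}(\langle m\rangle^{-k})$ uniformly is false on the dilated torus. The $\xi$-period is $\alpha_2^{-1/2}$ and the characters are $e^{2i\pi\alpha_2^{1/2}\langle \xi,m\rangle}$, so each integration by parts gains only $(2\pi\alpha_2^{1/2}|m|)^{-1}$. The correct bound is therefore $|\hat f_m(x)|\le C_k\min\bigl(1,(\alpha_2^{1/2}|m|)^{-k}\bigr)$. Since your Schur row and column sums are controlled by $\sum_{m\in\mathbb{Z}^d}\sup_x|\hat f_m(x)|$, you only get $\mathcal{O}(\alpha_2^{-d/2})$.

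The sign factors $(-1)^{\langle n,r\rangle}$ are harmless; they just shift the evaluation point by half a period. The loss is in the decay, and it is genuine. Take an $x$-independent symbol built from normalized Rudin--Shapiro polynomials of degree $\sim\alpha_2^{-1/2}$ in each $\xi_j$. By Bernstein's inequality it lies in $S(1,\alpha_1,\alpha_2)$ with uniform constants, and it gives a circulant matrix of norm $\mathcal{O}(1)$, yet its row sums are of size $\alpha_2^{-d/4}$. So Schur's test cannot produce a uniform $\mathcal{O}(1)$ bound, which would anyway already be the ``in particular'' half of the statement.

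With the bound you can actually prove, the first-order error $\tilde h\,\|r_{N,\alpha_1,\alpha_2}\|$ is only controlled by $\tilde h\,\alpha_2^{-d/2}$. For $\alpha_1=\alpha_2=N^{-\omega}$ this is $\asymp N^{-1+\omega(d+2)/2}$. It does not tend to $0$ once $\omega\ge 2/(d+2)$, a case the hypothesis allows.

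The missing idea is to push the square root to arbitrary order, as the paper does.

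First, take $a:=(M+\varepsilon)^2-\overline p\sharp_{\tilde h}p$ rather than $(M+\varepsilon)^2-|p|^2$. This $a$ is real and $\ge c_\varepsilon>0$ for $\tilde h$ small. Then $(M+\varepsilon)^2-(p_{N,\alpha_1,\alpha_2})^*p_{N,\alpha_1,\alpha_2}=a_{N,\alpha_1,\alpha_2}$ exactly, with no leftover first-order term $\tilde h r_1$.

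Next, set $b^0=\sqrt a$, $B^J=\sum_{j\le J}b^j$, $r^J=B^J\sharp_{\tilde h}B^J-a$, and recursively $b^{J+1}=-r^J/(2b^0)$. All $b^j$ are real, and $r^J\in\tilde h^{J+1}S(1,\alpha_1,\alpha_2)$ with constants independent of $N,\alpha_1,\alpha_2$.

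Any polynomial a priori bound now suffices. The paper uses $\|f_{N,\alpha_1,\alpha_2}\|\le\|f_{N,\alpha_1,\alpha_2}\|_{\mathrm{HS}}=\mathcal{O}(N^{d/2})$, which comes from Proposition~\ref{Trace result} applied to $\overline f\sharp_{\tilde h}f$. Combined with $\tilde h^{J+1}\le N^{-(J+1)(1-\omega)}$, it suffices to take $J$ with $(J+1)(1-\omega)>d/2$. Your corrected Schur bound would serve equally well: it gives $\mathcal{O}(\alpha_2^{-d/2})=\mathcal{O}(N^{\omega d/2})$, since $\alpha_2\ge(\alpha_1\alpha_2)^{1/2}\ge N^{-\omega}$.

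With $B^J$ in place of $b$, the rest of your argument, including $\varepsilon\to0$, goes through unchanged.
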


We essentially follow the proof of \cite[Proposition 2.7]{Christiansen_2010} which we present here an adapted version for the reader's convenience. It relies on Hörmander's idea for deriving $L^2$-boundedness from the semiclassical calculus.

\begin{proof}
    Recall that $\Tilde{h}=h/\sqrt{\alpha_1\alpha_2}$. First, from \eqref{Expression compo symboles périodiques} and Proposition \ref{Trace result}, we know that 
    \begin{equation}\label{Chris and Zworski adapté - 1}
        \begin{split}
            \norme{p_{N,\A{1},\A{2}}}_{HS}^2 &= \mathrm{tr}\pr{p_{N,\A{1},\A{2}}^*p_{N,\A{1},\A{2}}}=\mathrm{tr}\pr{\pr{\Bar{p}\sharp_{\Tilde{h}}p}_{N,\A{1},\A{2}}}\\
            &= (N\A{1}^{1/2}\A{2}^{1/2})^{d}\int_{\mc{T}_{\A{1},\A{2}}}\Bar{p}\Moy p (\rho)d\rho+r_N
        \end{split}
    \end{equation}
    where 
    \begin{equation}\label{Chris and Zworski adapté - 2}
        r_N=\mc{O}(1)N^{d-1}\A{1}^{(d-1)/2}\A{2}^{d/2}\sum_{\bb{\beta}+\bb{\gamma}\leq 2d+1}\norme{\partial_x^\beta\partial_\xi^\gamma \overline{p}\sharp_{\Tilde{h}}p}_{L^1(\mc{T}_{\A{1},\A{2}})}.\\
    \end{equation}
    Since $p\in S(1,\A{1},\A{2})$, so is $\Bar{p}\Moy p$ by \eqref{Expression compo symboles périodiques}. We therefore have for all $\beta,\gamma\in \N^d$,
    \begin{equation*}
        \int_{\mc{T}_{\A{1},\A{2}}}\partial_x^\beta\partial_\xi^\gamma\Bar{p}\Moy p (\rho)d\rho=\mc{O}_{\beta,\gamma}\pr{(\alpha_1\alpha_2)^{-d}}    
    \end{equation*}
    and it can be deduced from \eqref{Chris and Zworski adapté - 1} and \eqref{Chris and Zworski adapté - 2} that 
    \begin{equation}\label{Egalité intermédiaire Chris et Zwor}
        \norme{p_{N,\A{1},\A{2}}}\leq \norme{p_{N,\A{1},\A{2}}}_{HS}=\mc{O}\pr{N^{d/2}}.
    \end{equation}
    \m\\
    Let $M>\pr{\norme{p}_{L^\infty(\mc{T}_{\A{1},\A{2}})}}^2$ and define $a:=M-\Bar{p}\Moy p$. Then, $a$ is real-valued and for $\Tilde{h}$ small enough, there exists a global constant $C>0$ such that $a\geq 1/C>0$, that is, $a$ is elliptic.\\

    Now, we are going to construct by induction a sequence $(b_j)_{j\in \N}$ of real-valued functions such that for all $j\in \N$, $b^j\in \Tilde{h}^{j}S(1,\A{1},\A{2})$ and for all $J\in \N$, setting $B^J:=\sum_{j=0}^J b^j$, we have 
    \begin{equation}\label{Egalité démo Chris & Zwors}
        r_J:=B^J\Moy B^J-a\in \Tilde{h}^{(J+1)}S(1,\A{1},\A{2}).
    \end{equation}
    To this end, we set $b^0:=\sqrt{a}$ and the ellipticity of $a$ gives that $b^0\in S(1,\A{1},\A{2})$. In that case, we have $r^0:=b^0\Moy b^0-a\in \Tilde{h}S(1,\A{1},\A{2})$.\\
    We now assume that for $J\geq 0$, and for all $j\leq J$, $b^j$ is constructed such that $b^j\in \Tilde{h}^{(J+1)}S(1,\A{1},\A{2})$ and \eqref{Egalité démo Chris & Zwors} holds. \\
    Let us remark that for all function $b^{J+1}\in S(1,\A{1},\A{2})$, 
    \begin{equation*}
        \begin{split}
            r^{J+1}:&=(B^J+b^{J+1})\Moy(B^J+b^{J+1})-a\\
            &=r^J+B^J\Moy ^{J+1} +b^{J+1}\Moy B^j+b^{J+1}\Moy b^{J+1}\\
            &=r^J+b^0\Moy b^{J+1} +b^{J+1}\Moy b^0+R^J\Moy b^{J+1} +b^{J+1}\Moy R^J+b^{J+1}\Moy b^{J+1}
        \end{split}
    \end{equation*}
    where $R^J:=B^j-b^0\in \Tilde{h}S(1,\A{1},\A{2})$. This suggests to consider 
    \begin{equation*}
        b^{J+1}=-r^J/(2b^0).
    \end{equation*}
    Indeed, in that case, since $a$ is elliptic, $b^{J+1}\in \Tilde{h}^{(J+1)}S(1,\A{1},\A{2})$. Furthermore, $r^J$ and $b^0=\sqrt{a}$ being real-valued, the same is true for $b^{J+1}$. Finally, thanks to \eqref{Expression compo symboles périodiques}, we get that $r^{J+1}\in \Tilde{h}^{(J+2)}S(1,\A{1},\A{2})$.\\

    Let $J\geq 0$. Since $B^J$ real-valued, $B^J_N$ is selfadjoint. Therefore, remembering that $h=(2\pi N)^{-1}$, for all $u\in \Hd$, 
    \begin{equation}\label{Chris and Zworski adapté - 3}
        \begin{split}
            M\norme{u}^2-\norme{p_{N,\A{1},\A{2}}u}^2 &= \ps{a_{N,\A{1},\A{2}}u}{u}\\
            &=\ps{B^J_{N,\A{1},\A{2}}u}{B^J_{N,\A{1},\A{2}}u}-\ps{r^J_{N,\A{1},\A{2}}u}{u}\\
            &\geq -\norme{r^J_{N,\A{1},\A{2}}}\norme{u}^2.
        \end{split}
    \end{equation}
    Using \eqref{Egalité démo Chris & Zwors}, \eqref{Egalité intermédiaire Chris et Zwor} and \eqref{Conditions entre alpha 1 et 2, et N - 1}, it can be derived from \eqref{Chris and Zworski adapté - 3} that
    \begin{equation*}
        \begin{split}
            M\norme{u}^2-\norme{p_{N,\A{1},\A{2}}u}^2 &\geq -C_JN^{d/2}\pr{N\A{1}^{1/2}\A{2}^{1/2}}^{-(J+1)}\norme{u}^2\\
            &\geq -C_JN^{d/2-(J+1)(1-\omega)/2}.
        \end{split}
    \end{equation*}
    where $C_J$ does not depend on $N$, $\alpha_1$ or $\alpha_2$. Taking $J$ large enough such that $d/2-(J+1)(1-\omega)/2<0$ concludes the proof.
\end{proof}

\mm We now give an adapted version of some functional calculus presented in \cite[Chapter 8]{dimassi1999spectral} for the symbol class $S(m,\alpha_1,\alpha_2)$.
\begin{prop}
    Let $0<\A{1},\A{2}\leq 1$ be such that $N^{-1}\ll  \pr{\A{1}\A{2}}^{1/2}$ and let $m\geq 1$ be an order function on $\mc{T}_{\A{1},\A{2}}$. Let $p\in S(m,\A{1},\A{2})$ such that $p\geq 0$, $p+i$ is elliptic and admitting an asymptotic expansion $p\sim p_0+\Tilde{h}p_1+\dots$ in $S(m,\A{1},\A{2})$. Then, for all $\psi\in \mathscr{C}^{\infty}_c(\R)$, there exists
    \begin{equation}
        f\in S(1/m,\A{1},\A{2}) \m\m\m\m\m \text{such that}\m\m\m\m\m \psi(p_{N,\alpha_1,\A{2}})=f_{N,\alpha_1,\A{2}}
    \label{Proposition formule Hellfer-Sjös - formule 1}
    \end{equation}
    satisfying
    \begin{equation}
        f\sim \sum_{\nu\in \N}\Tilde{h}^\nu f_\nu \m\m \text{in}\m\m S(1/m,\A{1},\A{2}) \m\m\m\m\m\text{where} \m\m\m\m\m f_\nu\in S(1/m,\A{1},\A{2}).
    \label{Proposition formule Hellfer-Sjös - formule 2}
    \end{equation}
    In particular, we have that $f_0(x,\xi)=\psi\Bigl(p_0(x,\xi)\Bigl)$ and for all $\nu\geq 1$,
    \begin{equation}
        f_\nu(x,\xi)=\sum_{k=1}^{2\nu}g_k(x,\xi,\alpha)\psi^{(k)}\Bigl(p_0(x,\xi)\Bigl) \m\m\m\m\m\text{where} \m\m\m\m\m g_k\in S(1,\A{1},\A{2}).
    \label{Proposition formule Hellfer-Sjös - formule 3}
    \end{equation}
\label{Proposition formule Hellfer-Sjös}
\end{prop}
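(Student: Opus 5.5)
We follow the Helffer–Sjöstrand approach of \cite[Chapter 8]{dimassi1999spectral}, adapted to the periodic classes $S(m,\A{1},\A{2})$ and to the finite-dimensional space $\Hd$. Since $p$ is real-valued (as $p\geq 0$), $p_{N,\A{1},\A{2}}$ is selfadjoint on $\Hd$ by Lemma \ref{Hilbert structure}. Let $\widetilde{\psi}\in\mathscr{C}^\infty_c(\C)$ be an almost analytic extension of $\psi$, so that $\bar\partial\widetilde{\psi}=\mc{O}(\bb{\mathrm{Im}\,z}^\infty)$. The Helffer–Sjöstrand formula then gives
\begin{equation*}
    \psi(p_{N,\A{1},\A{2}})=-\frac{1}{\pi}\int_{\C}\bar\partial\widetilde{\psi}(z)\,(z-p_{N,\A{1},\A{2}})^{-1}\,\mathrm{L}(dz).
\end{equation*}
The task is therefore to show that, for $z\in\mathrm{supp}\,\widetilde{\psi}\setminus\R$, the resolvent is itself of the form $(r(z))_{N,\A{1},\A{2}}$. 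Here the symbol $r(z)$ must lie in $S(1/m,\A{1},\A{2})$, with seminorms growing at most like $\bb{\mathrm{Im}\,z}^{-K}$ for some $K$. The integral of these symbols against $\bar\partial\widetilde{\psi}$ then converges in $S(1/m,\A{1},\A{2})$ and defines $f$.

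\textbf{Step 1: parametrix.} Since $p\geq 0$ and $p+i$ is elliptic, $p_0-z$ is elliptic in $S(m,\A{1},\A{2})$ for $z\notin\R$. Quantitatively, $\bb{p_0-z}\geq c\,\bb{\mathrm{Im}\,z}\,m$ on bounded sets of $z$, which follows from $\bb{p_0-z}\geq \bb{\mathrm{Im}\,z}\,\bb{p_0+i}/C$. We build $r\sim\sum_\nu \Tilde{h}^\nu r_\nu$ with $r_0=(p_0-z)^{-1}\in S(1/m,\A{1},\A{2})$. The higher terms are solved inductively using the composition formula \eqref{Expression compo symboles périodiques} and the expansion of $\sharp_{\Tilde{h}}$, so that $r\sharp_{\Tilde{h}}(p-z)=1+e_J$ with $e_J\in\Tilde{h}^{J+1}S(1,\A{1},\A{2})$. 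Each $r_\nu$ is a finite sum of terms $g_{k}\,(p_0-z)^{-k-1}$ with $g_k\in S(1,\A{1},\A{2})$ independent of $z$ and $1\leq k\leq 2\nu$. All these symbols are periodic in $x$ and $\xi$, so everything stays within the periodic class. Borel summation then yields the full symbol $r$.

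\textbf{Step 2: exact inverse on $\Hd$.} Quantizing on $\Hd$ gives $r_{N,\A{1},\A{2}}(p-z)_{N,\A{1},\A{2}}=I+(e_J)_{N,\A{1},\A{2}}$. Proposition \ref{Chris and Zworski adapté}, applied after rescaling the symbol by powers of $\bb{\mathrm{Im}\,z}$, gives $\norme{(e_J)_{N,\A{1},\A{2}}}=\mc{O}(\Tilde{h}^{J+1}\bb{\mathrm{Im}\,z}^{-K_J})$ with $\Tilde{h}=(2\pi N\sqrt{\A{1}\A{2}})^{-1}\ll1$. This yields the resolvent up to an error that is $\mc{O}(\Tilde{h}^{\infty})$ only for $\bb{\mathrm{Im}\,z}\geq\Tilde{h}^{\epsilon}$. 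In the complementary region we use the trivial bound $\norme{(z-p_{N,\A{1},\A{2}})^{-1}}\leq\bb{\mathrm{Im}\,z}^{-1}$, and $\bar\partial\widetilde{\psi}=\mc{O}(\bb{\mathrm{Im}\,z}^\infty)$ absorbs it.

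The main obstacle is that the statement asks for an \emph{exact} equality $\psi(p_{N,\A{1},\A{2}})=f_{N,\A{1},\A{2}}$, not one modulo $\mc{O}(\Tilde{h}^\infty)$. We would resolve this with the following observation. By Lemma \ref{Coefficients of the Toeplitz matrix}, every operator on the $N^d$-dimensional space $\Hd$ is $g_{N,\A{1},\A{2}}$ for some trigonometric polynomial $g$ with Fourier modes $(n,k)\in\disc{0}{N-1}^{2d}$: the map from such $g$ to matrices is a bijective twisted discrete Fourier transform, and its inverse has norm controlled polynomially in $N$. Hence the smoothing remainder $R:=\psi(p_{N,\A{1},\A{2}})-f_{N,\A{1},\A{2}}$ is represented by a symbol whose seminorms are $\mc{O}(N^{C_k}\Tilde{h}^{\infty})=\mc{O}(\Tilde{h}^\infty)$, using $N\leq\Tilde{h}^{-1/(1-\omega)}$-type bounds from $N^{-1}\ll(\A{1}\A{2})^{1/2}$. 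Absorbing this symbol into $f$ does not affect the asymptotic expansion \eqref{Proposition formule Hellfer-Sjös - formule 2}.

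\textbf{Step 3: identify the terms.} Integrating $r_\nu$ against $\bar\partial\widetilde{\psi}$ and using $-\frac{1}{\pi}\int\bar\partial\widetilde{\psi}(z)(p_0-z)^{-k-1}\,\mathrm{L}(dz)=\psi^{(k)}(p_0)/k!$ (up to sign conventions) gives $f_0=\psi(p_0)$ and the form \eqref{Proposition formule Hellfer-Sjös - formule 3} for $f_\nu$. Since $\psi^{(k)}(p_0)$ is compactly supported in the region where $m\lesssim1$ (by ellipticity of $p+i$), we get $f_\nu\in S(1/m,\A{1},\A{2})$.
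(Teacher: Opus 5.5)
Your route differs from the paper's, and as written it proves the proposition only under a hypothesis the statement does not contain.

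\textbf{What is fine.} Steps 1 and 3 are the standard Dimassi--Sj\"ostrand parametrix computation. Your observation in Step 2 is also correct: $g\mapsto g_{N,\alpha_1,\alpha_2}$ is a bijection from trigonometric polynomials with modes in $\disc{0}{N-1}^{2d}$ onto all operators on $\Hd$. For each fixed $k$ it is a discrete Fourier transform in $n\leftrightarrow s$ times a phase.

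\textbf{The gap.} It lies in using this bijection to make the identity exact. The correcting symbol $g$ with $g_{N,\alpha_1,\alpha_2}=R$ has Fourier coefficients bounded by $\norme{R}$. Hence its $S(1,\alpha_1,\alpha_2)$ seminorms are only $\mathcal{O}(N^{2d+|\beta|+|\gamma|}\norme{R})$, and a further factor $\sup m$ is needed to land in $S(1/m,\alpha_1,\alpha_2)$. Absorbing these losses into $\mathcal{O}(\tilde h^\infty)$ requires $N\le\tilde h^{-C}$, and that does not follow from $N^{-1}\ll(\alpha_1\alpha_2)^{1/2}$. Since $\tilde h^{-1}=2\pi N(\alpha_1\alpha_2)^{1/2}$, the hypothesis only says that $\tilde h$ is small. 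It allows, for instance, $(\alpha_1\alpha_2)^{1/2}=N^{-1}\log N$, for which $N^{C}\tilde h^{M}\to+\infty$ for every $M$. In that regime the corrected $f$ has no uniform bounds in $S(1/m,\alpha_1,\alpha_2)$, and the asymptotic expansion is lost.

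The bound $N\le\tilde h^{-1/(1-\omega)}$ you quote is exactly the extra hypothesis $N^{-\omega}\le(\alpha_1\alpha_2)^{1/2}$ of the norm estimate adapted from Christiansen--Zworski. You also rely on that estimate in Step 2 to bound the parametrix error on $\Hd$, and it additionally assumes $\alpha_1\le\alpha_2\ll1$. Neither assumption is part of this statement.

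\textbf{How the paper avoids this.} The paper stays in the $L^2(\R^d)$ calculus until the last line, so $N$ never enters. It regards $p\in S(m,\alpha_1,\alpha_2)\subset S(m)$ and uses Beals' lemma to obtain the \emph{exact} inverse $(p^w-z)^{-1}=r_z^w$ with $r_z\in S(1/m)$. It shows that $r_z$ is $\alpha_1^{-1/2}\Z^d\times\alpha_2^{-1/2}\Z^d$-periodic by conjugating with $M_{\gamma,\mu}$ and using uniqueness of the Weyl symbol (Schwartz kernel theorem). The modified Beals estimates of Dimassi--Sj\"ostrand then give $\psi(p^w)=f^w$ together with the expansion. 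Finally, these exact identities on $\mathscr{S}'$ restrict to $\Hd$, where $(r_z)_{N,\alpha_1,\alpha_2}=(p_{N,\alpha_1,\alpha_2}-z)^{-1}$. The Helffer--Sj\"ostrand formula then yields $\psi(p_{N,\alpha_1,\alpha_2})=f_{N,\alpha_1,\alpha_2}$, with constants depending only on $\tilde h$.

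\textbf{How to repair your proof.} You can either:
\begin{itemize}
\item replace Step 2 by this exact-inverse argument; or
\item add the hypothesis $N^{-\omega}\le(\alpha_1\alpha_2)^{1/2}$ for a fixed $\omega\in\,]0,1[$.
\end{itemize}
The second option is harmless for every use in the paper, where $(\alpha_1\alpha_2)^{1/2}=h^{\eta}\alpha\gg N^{-(1-\eta)}$, and similarly with $t\geq\alpha$ in place of $\alpha$. Under that hypothesis your parametrix-plus-discrete-Fourier correction is a legitimate, more hands-on alternative to Beals' lemma.
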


\begin{proof}
    We fix $\psi\in \mathscr{C}^{\infty}_c(\R)$ and denote $\widetilde{\psi}$ an almost holomorphic extension of $\psi$, that is, $\widetilde{\psi}\in \mathscr{C}^{\infty}_c(\C)$, $\widetilde{\psi}_{\left|\R\right.}=\psi$ and for all $N\in \N$, there exists $C_N>0$ such that $\bb{\partial_{\Bar{z}}\Tilde{\psi}(z)}\leq C_N\bb{\mathrm{Im}(z)}^N$ (see for example \cite[Chapter 8]{dimassi1999spectral} or \cite[Theorem 3.6]{Zworski} for more details). The key of the proof is to use the Hellfer-Sjöstrand formula (see \cite[Theorem 8.1]{dimassi1999spectral}): for all selfadjoint operator $A$, 
    \begin{equation}\label{Formule Hellfer-Sjöstrand}
        \psi(A)=-\frac{1}{\pi}\int_{\C}(z-A)^{-1}\partial_{\Bar{z}}\widetilde{\psi}(z)\mathrm{L}(dz).
    \end{equation}
    
    \mm We extend $m$ and $p$ on $\R^{2d}$ by periodicity. The symbol $p+i$ is elliptic (in $S(m)$) by assumption, so $p-z$ is still elliptic when $\bb{z}\leq C$ and $\mathrm{Im}(z)\neq 0$. So by Beals' Lemma, $(p^w-z)^{-1}=r_z^w$ for some $r_z\in S(m^{-1})\subset S(1)$, where we denote $p^w:=\mathrm{Op}^w_{\Tilde{h}}(p)$. Furthermore, the symbol $r_z$ is not only in $S(1)$, but also in $S(1,\alpha_1,\A{2})$. Indeed, let us recall from \eqref{Def de l'opérateur M_gamma,mu} and \eqref{Conjugaison par M_gamma,mu} that, denoting $M_{\gamma,\mu}:=\tau_{\gamma}e^{\frac{i}{\Tilde{h}}\ps{\cdot}{\mu}}$ for all $\gamma$, $\mu\in \R^d$, we have $M_{\gamma,\mu}p^w M_{\gamma,\mu}^{-1}=\pr{\tau_{(\gamma,-\mu)}p}^w$. In that case, for $\gamma\in \Ab{1}\Z^d$ and $\mu\in \Ab{2}\Z^d$, the periodicity of $p$ gives us
    \begin{equation}
        \begin{split}
            \pr{\tau_{(\gamma,-\mu)}r_z}^w &= M_{\gamma,\mu}r_z^w M_{\gamma,\mu}^{-1}=M_{\gamma,\mu}\pr{p^w-z}^{-1} M_{\gamma,\mu}^{-1}\\
            &=\pr{M_{\gamma,\mu}p^wM_{\gamma,\mu}^{-1}-z}^{-1}=\pr{\pr{\tau_{(\gamma,-\mu)}p}^w-z}^{-1}=r_z^w.
        \end{split}
    \label{Egalité périodicité du symbole de la résolvante}
    \end{equation}
    This equality being true in the space of linear continuous maps $\mathscr{S}(\R^d)\to \mathscr{S}'(\R^d)$ and $r_z$ and $\tau_{(\gamma,-\mu)}r_z$ being in $S(1)$, we can deduce from the Schwartz's kernel theorem that $\tau_{(\gamma,-\mu)}r_z=r_z$. Since $\gamma$ and $\mu$ were arbitrarily chosen, this implies that $r_z\in S(1,\A{1},\A{2})$.\\
    
    \mm Next, apply the modified Beals' estimates as in \cite[Proposition 8.6]{dimassi1999spectral}, and proceed as in the proof of Theorem 8.7 and equation (8.16) of \cite[Chapter 8]{dimassi1999spectral}. We obtain that there exists
    \begin{equation}
        f\in S(1/m,\alpha_1,\alpha_2) \m\m\m\m\m \text{such that}\m\m\m\m\m \psi(p^w)=f^w
    \label{Proposition formule Hellfer-Sjös - formule 1 - bis}
    \end{equation}
    satisfying
    \begin{equation}
        f\sim \sum_{\nu\in \N}\Tilde{h}^\nu f_\nu \m\m \text{in}\m\m S(1/m,\alpha_1,\alpha_2) \m\m\m\m\m\text{where} \m\m\m\m\m f_\nu\in S(1/m,\alpha_1,\alpha_2).
    \label{Proposition formule Hellfer-Sjös - formule 2 - bis}
    \end{equation}
    We have in particular that $f_0(x,\xi)=\psi\Bigl(p_0(x,\xi)\Bigl)$ and for all $\nu\geq 1$,
    \begin{equation}
        f_\nu(x,\xi)=\sum_{k=1}^{2\nu}g_k(x,\xi,\alpha)\psi^{(k)}\Bigl(p_0(x,\xi)\Bigl)
    \label{Proposition formule Hellfer-Sjös - formule 3 - bis}
    \end{equation}
    where for all $k\in \N$, $g_k\in S(1,\alpha_1,\alpha_2)$.\\

    \mm It remains to show that $\psi(p_{N,\alpha_1,\A{2}})=f_{N,\alpha_1,\A{2}}$. But from the equality $(p^w-z)^{-1}=r_z^w$ we first obtain that $\pr{p_{N,\A{1},\A{2}}-z}^{-1}=(r_z)_{N,\A{1},\A{2}}$. Using the Helffer-Sjöstrand formula, we finally get
    \begin{equation*}
        \begin{split}
            f_{N,\A{1},\A{2}} &= \psi\pr{p^w}_{\left|\Hd\right.}= -\frac{1}{\pi}\int_{\C}(r_z)_{N,\A{1},\A{2}}\partial_{\Bar{z}}\widetilde{\psi}(z)\mathrm{L}(dz) \\
            &=-\frac{1}{\pi}\int_{\C}\pr{p_{N,\A{1},\A{2}}-z}^{-1}\partial_{\Bar{z}}\widetilde{\psi}(z)\mathrm{L}(dz)=\psi\pr{p_{N,\A{1},\A{2}}}\qedhere.
        \end{split}
    \end{equation*}
\end{proof}

\section{Phase space dilation and functional calculus}\label{Résu funda}
\subsection{Smoothing of rough symbols}
Recall the set $\mathscr{C}^{0,\varrho}_{\mathrm{pw}}S_d$ of functions $[0,1]^d_x\times \T^d_\xi\to \C$ that are smooth with respect to $\xi$ and piecewise Hölder continuous with respect to $x$, given in Definition \ref{Définition espace d'intérêt}. Let $p\in\mathscr{C}^{0,\varrho}_{\mathrm{pw}}S_d$ and let $\mathscr{U}_p$ be the set of potential singularities of $p$ given in \eqref{Ensemble des singularités}. We restrict $p$ to $]0,1]^d\times
\T^d$ and we extend it by $\Z^d$ periodicity with respect to $x$. We denote this extension by $p:\R^d_x\times\T^d_\xi\to \C$. Using the partial Fourier transform with respect to $\xi$, we get 
\begin{equation}\label{Developpement en série de Fourier par rapport à xi}
    p(x,\xi)=\sum_{\nu\in\Z^d}p_\nu(x)e^{2i\pi\ps{\nu}{\xi}}
\end{equation}
where 
\begin{equation}\label{Coeffs de Fourier en une seule variable}
    p_\nu(x)=\int_{\T^d}p(x,\xi)e^{-2i\pi\ps{\nu}{\xi}}d\xi.
\end{equation}

\begin{rem}\label{Remarque sur le fait que les matrices via la nouvelle procédure de quantif sont bornées}
    Extending such a $p\in \mathscr{C}^{0,\varrho}_{\mathrm{pw}}S_d$ in this way to $\R^d\times \T^d\to \C$ and seeing $p:\R^{2d}\to \C$ thanks to the natural projection $\R^d\to \T^d$ allows us to say that
    \begin{equation*}
        p\in BS_\xi
    \end{equation*}
    (see Definition \ref{Définition de la classe de symboles avec la régularité la plus basse}). In that case, from Lemma \ref{Norme bornée des matrices pour la nouvelle procédure de quantification}, there exists a constant $C>0$ such that for all $N>0$, 
    \begin{equation}
        \norme{M_N(p)}\leq C,
    \end{equation}
    where $M_N(p)$ was defined in Notations \ref{Notation matrice nouvelle procédure de quantif via la matrice infinie}.
\end{rem}

\mm Let $0<\eta<1/4$. We consider $\psi\in \mathscr{C}^\infty_c(\R^d;\R)$ such that 
\begin{equation}
    \int_{\R^d}\psi(x)dx=1,\mm \mm\mm  \mm\mm\mm \mathrm{supp}(\psi)\subset B(0,1)
\label{Proprios de la fonction mollifiante - 1}
\end{equation}
and denote 
\begin{equation}
    \psi_h:x\mapsto h^{-d\eta}\psi(h^{-\eta}x).
\label{Proprios de la fonction mollifiante - 2}
\end{equation}
\m\\
\mm We define 
\begin{equation}
    \widetilde{p}(x,\xi):=\Bigl(p(\cdot,\xi)\ast \psi_h\Bigl)(x),\mm x\in \R^d
\label{Definition de p tilde}
\end{equation}
which belongs to $\mathscr{C}^{\infty}(\T^{2d})$ by periodicity. More precisely, remembering Definition \ref{Definition des espaces de travail dans le cas rescaling}, we have
\begin{equation}\label{p mollifié est dans la bonne classe de symboles}
    \widetilde{p}\in S_{\eta,0}(1,1,1).
\end{equation}
Indeed, for all $\beta,\gamma\in \N^d$, it derives from the equality
\begin{equation*}
    \partial^\beta_x\psi_h(x)=h^{-(d+\bb{\beta})\eta}\partial_x^\beta\psi(h^{-\eta}x)
\end{equation*}
that
\begin{equation}\label{Vérifiaction que p mollifié est dans la bonne classe de symboles}
    \begin{split}
        \partial_x^\beta\partial_\xi^\gamma\widetilde{p}(x,\xi) &= \pr{\partial^\gamma_\xi p(\cdot,\xi)\ast\partial_x^\beta\psi_h}\!(x)\\
        &=h^{-(d+\bb{\beta})\eta}\pr{\partial^\gamma_\xi p(\cdot,\xi)\ast\partial_x^\beta\psi(h^{-\eta}\cdot)}\!(x)\\
        &= h^{-\bb{\beta}\eta}\int_{\R^d}\partial_\xi^\gamma p(x-h^{\eta}z,\xi)\partial_x^\beta\psi(z)dz.
    \end{split}
\end{equation}
But using \eqref{Norme Hölder des dérivées partielles par rapport à xi}, we obtain from \eqref{Vérifiaction que p mollifié est dans la bonne classe de symboles} that for all $(x,\xi)\in \R^{2d}$,
\begin{equation*}
    \bb{\partial_x^\beta\partial_\xi^\gamma\widetilde{p}(x,\xi)}\leq C_{\beta,\gamma}h^{-\eta \bb{\beta}},
\end{equation*}
which in view of Definition \ref{Definition des espaces de travail dans le cas rescaling} ensures that \eqref{p mollifié est dans la bonne classe de symboles} holds.\\

\mm Similar to \eqref{Developpement en série de Fourier par rapport à xi}, we get
\begin{equation}\label{Developpement en série de Fourier par rapport à xi - version mollifiée}
    \widetilde{p}(x,\xi)=\sum_{\nu\in\Z^d}\widetilde{p}_\nu(x)e^{2i\pi\ps{\nu}{\xi}},
\end{equation}
where
\begin{equation}
    \widetilde{p}_\nu(x)=\int_{\T^d}\widetilde{p}(x,\xi)e^{-2i\pi\ps{\nu}{\xi}}d\xi=\int_{\T^d}\pr{\int_{\R^d}p(z,\xi)\psi_h(x-z)dz}e^{-2i\pi\ps{m}{\xi}}d\xi.
\label{Interversion convolution-intégrale pour l'expression of p_m tilde}
\end{equation}
\m\\

\subsection{Phase space dilation}
Next, we strengthen our assumption on $\alpha_1$ and $\alpha_2$.\\
Let $\eta\in \hspace{0.1cm} ]0,1/4[$. Let $\alpha$ satisfy
\begin{equation}\label{Conditions entre alpha 1 et 2, et N - 2}
    h^{1-2\eta}\ll \alpha\ll 1
\end{equation}
and let 
\begin{equation}\label{Conditions entre alpha 1 et 2, et N - 3}
    \alpha_1:=h^{2\eta}\alpha,\m\m\m\m\m \A{2}:=\alpha \m\m\m\m\m \text{and}\m\m\m\m\m \Tilde{h}:=\frac{h}{\sqrt{\A{1}\A{2}}}=\frac{h^{1-\eta}}{\alpha} .
\end{equation}
Recall \eqref{Hypothèse expression de h comme l'inverse de N - 2} and let $N\gg1$. Recall \eqref{Definition de p tilde}, \eqref{p mollifié est dans la bonne classe de symboles} and \eqref{Expression compo symboles périodiques}, and define 
\begin{equation}\label{Dvpt de Borel pour le théorème central}
    \widetilde{q}:=\overline{\widetilde{p}}\sharp_{h}\widetilde{p}\sim \widetilde{q}_0+h\widetilde{q}_1+... \m\text{in}\m S_{\eta,0}(1,1,1)\m\m\text{with}\m \widetilde{q}_0=\bb{\widetilde{p}}^2.
\end{equation}
\m\\
\m\m\m\m Next, consider the transformation 
\begin{equation}
    U_{\A{1}}:\phi\mapsto \pr{U_{\A{1}}\phi:x\mapsto \A{1}^{d/4}\phi\pr{\A{1}^{1/2}x}}.
\end{equation}
It maps continuously $\mathscr{S}(\R^d)\to \mathscr{S}(\R^d)$ and $\mathscr{S}'(\R^d)\to \mathscr{S}'(\R^d)$ by duality, and it is a bijection in both cases. By density and thanks to the factor $\A{1}^{d/4}$, $U_{\A{1}}$ is a unitary operator $L^2(\R^d)\to L^2(\R^d)$ satisfying
\begin{equation*}
    U_{\alpha_1}^*=U_{\alpha_1}^{-1}=U_{\alpha_1^{-1}}.
\end{equation*}
A simple calculation shows that 
\begin{equation}\label{Proprios de mapping de la transformation unitaire U_alpha}
    U_{\alpha_1}:\mc{H}^d_{h,1,1}\to \Hd
\end{equation}
unitarily with respect to the inner product $\ps{\cdot}{\cdot}_0$ given in Lemma \ref{Hilbert structure}. Moreover, for all $j\in  \disc{0}{N-1}^d$, in $\mathscr{S}'(\R^d)$,
\begin{equation*}
    U_{\A{1}} Q_j^{1}=Q_j^{\A{1}}.
\end{equation*}
\m\\
\mm We perform the change of variables 
\begin{equation}
    \T^{2d}\ni (x,\xi)=(\alpha_1^{1/2}\widetilde{x},\alpha_2^{1/2}\widetilde{\xi})    
\end{equation}
for all $(\widetilde{x},\widetilde{\xi})\in \mc{T}_{\A{1},\A{2}}$. Let us define the scaled version of $\widetilde{q}$ by 
\begin{equation}\label{Scaled version of q atchek}
    \textit{\v{q}}\pr{\widetilde{x},\widetilde{\xi}}:=\widetilde{q}\pr{\alpha_1^{1/2}\widetilde{x},\alpha_2^{1/2}\widetilde{\xi}}.
\end{equation}
By \eqref{Conditions entre alpha 1 et 2, et N - 3}, we have for all $\gamma$, $\beta\in \N^d$, 
\begin{equation}\label{Vérifiaction que q atchek est dans la bonne classe de symboles}
    \partial^\gamma_{\widetilde{x}}\partial^\beta_{\widetilde{\xi}}\atc{q}(\widetilde{x},\widetilde{\xi})=\alpha^{(\bb{\gamma}+\bb{\beta})/2}h^{\eta\bb{\gamma}}\pr{\partial^\gamma_x\partial^\beta_\xi\widetilde{q}}\pr{\A{1}^{1/2}\widetilde{x},\A{2}^{1/2}\widetilde{\xi}}.
\end{equation}
But $\widetilde{q}\in S_{\eta,0}(1,1,1)$ and $\alpha\leq 1$, so we obtain from \eqref{Vérifiaction que q atchek est dans la bonne classe de symboles} that
\begin{equation*}
    \bb{\partial^\gamma_{\widetilde{x}}\partial^\beta_{\widetilde{\xi}}\atc{q}(x,\xi)}\leq h^{\eta\bb{\gamma}} C_{\gamma,\beta}h^{-\eta\bb{\gamma}}=C_{\gamma,\beta},
\end{equation*}
which ensures that 
\begin{equation}
    \atc{q}\in S(1,\A{1},\A{2}).
\label{Le fait que q atchek soit dans l'espace de symboles rescalés}
\end{equation}
Furthermore, setting for all $\nu\in \N$
\begin{equation}\label{q_nu_tilde en version rescalée pour donner q_nu atchek}
    \atc{q}_\nu(\widetilde{x},\widetilde{\xi}):=(h^\eta\alpha)^\nu \widetilde{q}_\nu(h^\eta\alpha^{1/2}\widetilde{x},\alpha^{1/2}\widetilde{\xi})\in S(1,\alpha_1,\A{2}),
\end{equation}
where $\widetilde{q}_\nu$ are given in \eqref{Dvpt de Borel pour le théorème central}, we have
\begin{equation}\label{Dvpt de Borel de q atchek}
    \atc{q}\sim \atc{q}_0+\Tilde{h}\atc{q}_1+...\mm \text{in}\mm S(1,\A{1},\A{2}).
\end{equation}
Indeed, for all $\gamma,\beta\in \N^d$,
\begin{equation}
    \begin{split}
        \partial_{\widetilde{x}}^\gamma\partial_{\widetilde{x}}^\beta\!&\pr{\atc{q}(\widetilde{x},\widetilde{\xi})-\sum_{k=0}^{N-1}\Tilde{h}^k\atc{q}_k(\widetilde{x},\widetilde{\xi})}\\
        &= \alpha^{(\bb{\gamma}+\bb{\beta})/2}h^{\eta\bb{\gamma}}\!\croch{\pr{\partial^\gamma_x\partial^\beta_\xi\widetilde{q}}\!-\sum_{k=0}^{N-1}\underbrace{\Tilde{h}^k\pr{h^\eta\alpha}^k}_{=h^k}\pr{\partial^\gamma_x\partial^\beta_\xi\widetilde{q}_k}\!}\pr{\A{1}^{1/2}\widetilde{x},\A{2}^{1/2}\widetilde{\xi}}\\
        &= \alpha^{(\bb{\gamma}+\bb{\beta})/2}h^{\eta\bb{\gamma}}\partial^\gamma_x\partial^\beta_\xi\!\croch{\widetilde{q}-\sum_{k=0}^{N-1}h^k\widetilde{q}_k}\pr{\A{1}^{1/2}\widetilde{x},\A{2}^{1/2}\widetilde{\xi}}.
    \end{split}
\label{Estimées pour mq q atchek est dans la bonne classe de symboles}
\end{equation}
But remembering that $\widetilde{q}$ satisfies \eqref{Dvpt de Borel pour le théorème central} and using that $\alpha\leq 1$, we deduce from \eqref{Estimées pour mq q atchek est dans la bonne classe de symboles} that
\begin{equation*}
    \begin{split}
        \bb{\partial_{\widetilde{x}}^\gamma\partial_{\widetilde{\xi}}^\beta\pr{\atc{q}(\widetilde{x},\widetilde{\xi})-\sum_{k=0}^{N-1}\Tilde{h}^k\atc{q}_k(\widetilde{x},\widetilde{\xi})}} &\leq \alpha^{(\bb{\gamma}+\bb{\beta})/2}h^{\eta\bb{\gamma}}C_{\gamma,\beta}h^{N-\eta\bb{\gamma}}\\
        &\leq C_{\gamma,\beta}h^N\leq C_{\gamma,\beta}\Tilde{h}^N,
    \end{split}
\end{equation*}
which proves that \eqref{Dvpt de Borel de q atchek} holds.

\mm Performing the phase space dilation thanks to $U_{\A{1}}$, and remembering that $\Tilde{h}$ is given in \eqref{Conditions entre alpha 1 et 2, et N - 3}, we have 
\begin{equation}
    \mathrm{Op}_h^w(\widetilde{q})=U_{\A{1}}^{-1}\mathrm{Op}_{\widetilde{h}}^w\pr{\textit{\v{q}}}U_{\A{1}}.
\label{Relation scaling q et q atchek première}
\end{equation}
In view of Notations \ref{Notations pour les matrices des pseudo}, \eqref{Proprios de mapping de la transformation unitaire U_alpha} and \eqref{Relation scaling q et q atchek première} provide us 
\begin{equation}
    \widetilde{q}_{N}={U_{\A{1}}^{*}}_{{|\mc{H}^d_{h,\A{1},\A{2}}}}\textit{\v{q}}_{N,\A{1},\A{2}}{U_{\A{1}}}_{|\mc{H}^d_{h,1,1}}
\label{Relation scaling q et q atchek}
\end{equation}
where we see ${U_{\A{1}}}_{|\mc{H}^d_{h,1,1}}$ and ${U_{\A{1}}^{*}}_{{|\mc{H}^d_{h,\A{1},\A{2}}}}$ as their matrices in the basis $\mc{B}^1$ and $\mc{B}^{\A{1}}$ respectively. In particular, thanks to Proposition \ref{Chris and Zworski adapté}, 
\begin{equation}
    \norme{\widetilde{q}_N}\leq \underbrace{\Bigl\|{U_{\A{1}}^{*}}_{{|\mc{H}^d_{h,\A{1},\A{2}}}}\Bigl\|}_{=1}\norme{\atc{q}_{N,\alpha_1,\alpha_2}}\underbrace{\norme{{U_{\A{1}}}_{|\mc{H}^d_{h,1,1}}}}_{=1}\leq C
\label{Majoration de la norme de la matrice de Toeplitz}
\end{equation}
with $C$ independent of $N$.\\

\m\m\m\m Next, we aim to use another order function adapted to the rescaled symbol $\alpha^{-1}\textit{\v{q}}$, with $\alpha$ as in \eqref{Conditions entre alpha 1 et 2, et N - 2}. To this end, we define 
\begin{equation}
    \textit{\v{m}}(\widetilde{\rho}):=1+\frac{\widetilde{q}_0\pr{h^\eta\alpha^{1/2}\widetilde{\rho}_1,\alpha^{1/2}\widetilde{\rho}_2}}{\alpha}\geq 1,\mm \widetilde{\rho}=(\widetilde{\rho}_1,\widetilde{\rho}_2)\in \mc{T}_{\alpha_1,\alpha_2}.
\label{Fonction d'ordre théorème central}
\end{equation}
Note that $\atc{m}\in \mathscr{C}^\infty(\mc{T}_{\A{1},\A{2}})$ since from \eqref{Dvpt de Borel pour le théorème central}, $\widetilde{q}_0\in S_{\eta,0}(1,1,1)$. Using that $\widetilde{q}_0=\bb{\widetilde{p}}$, we get for all $\gamma=(\gamma_1,\gamma_2)\in \N^{2d}$, that
\begin{equation}
    \begin{split}
        \text{if}\m\bb{\gamma}=1,& \mm \partial_{\widetilde{\rho}}^\gamma\atc{m}(\widetilde{\rho})=h^{\eta\bb{\gamma_1}}\alpha^{-1/2}\pr{\partial^{\gamma}_{{\rho}}\widetilde{q}_0}\pr{h^\eta\alpha^{1/2}\widetilde{\rho}_1,\alpha^{1/2}\widetilde{\rho}_2}=\mc{O}(1)\atc{m}^{1/2}(\widetilde{\rho}),\\
        \text{if}\m \bb{\gamma}\geq 2, & \mm \partial_{\widetilde{\rho}}^\gamma\atc{m}(\widetilde{\rho})=\mc{O}(1).
    \end{split}
\label{Estimées de symboles pour la fonction d'ordre}
\end{equation}
Then, applying Taylor's formula and the estimates \eqref{Estimées de symboles pour la fonction d'ordre} we obtain
\begin{equation}
    \begin{split}
        \atc{m}(\widetilde{\rho}) &\leq \atc{m}(\widetilde{\mu})+\bb{\nabla \atc{m}(\widetilde{\mu})}\cdot\bb{\widetilde{\mu}-\widetilde{\rho}}+\dfrac{1}{2}\left[\int_{0}^1\bb{\nabla^2\atc{m}(s\widetilde{\rho}+(1-s)\widetilde{\mu})}ds\right]\cdot\bb{\widetilde{\mu}-\widetilde{\rho}}^2\\
        &\leq \atc{m}(\widetilde{\mu})+C\atc{m}^{1/2}(\widetilde{\mu})\cdot\bb{\widetilde{\rho}-\widetilde{\mu}}+C\bb{\widetilde{\mu}-\widetilde{\rho}}^2\\
        &\leq \atc{m}(\widetilde{\mu})+\frac{C}{2}\pr{\atc{m}(\widetilde{\mu})+\bb{\widetilde{\rho}-\widetilde{\mu}}^2}+C\bb{\widetilde{\rho}-\widetilde{\mu}}^2\\
        &\leq \atc{m}(\widetilde{\mu})+\dfrac{C}{2}\pr{1+\bb{\widetilde{\rho}-\widetilde{\mu}}^2}\atc{m}(\widetilde{\mu})+C\bb{\widetilde{\mu}-\widetilde{\rho}}^2\leq \widetilde{C}\jap{\widetilde{\rho}-\widetilde{\mu}}^2\atc{m}(\widetilde{\mu}),
    \end{split}
\label{Inégalité fonction d'ordre}
\end{equation}
where we used the fact that $\atc{m}\geq 1$ for the fourth and fifth inequality. Let $n=(n_1,n_2)\in \Z^{2d}$. Then, using the last inequality of \eqref{Inégalité fonction d'ordre} with $\widetilde{\mu}-(\Ab{1}n_1,\Ab{2}n_2)$ instead of $\widetilde{\mu}$, and the periodicity of $\atc{m}$, it follows that
\begin{equation*}
    \atc{m}(\widetilde{\rho})\leq \widetilde{C}\jap{\widetilde{\rho}-\widetilde{\mu}+(\Ab{1}n_1,\Ab{2}n_2)}^2\atc{m}(\widetilde{\mu}).
\end{equation*}
Taking the infimum over $n\in \Z^{2d}$ shows that $\atc{m}$ is an order function on $\mc{T}_{\alpha_1,\alpha_2}$ (see Definition \ref{Def order function sur le tore biscornu}).\\
\mm Next, we show that $\alpha^{-1}\atc{q}\in S(\atc{m},\A{1},\A{2})$. In view of \eqref{Fonction d'ordre théorème central} and \eqref{Estimées de symboles pour la fonction d'ordre}, we have
\begin{equation*}
    \begin{split}
        \alpha^{-1}\atc{q}(\widetilde{\rho})&\leq \mc{O}(1)\atc{m}(\widetilde{\rho}),\\
        \alpha^{-1}\partial^\beta\atc{q}(\widetilde{\rho}) &= \mc{O}_\beta(1)\atc{m}(\widetilde{\rho})^{1/2} \mm\text{if}\mm \bb{\beta}=1,\\
        \alpha^{-1}\partial^\beta\atc{q}(\widetilde{\rho}) &= \mc{O}_\beta(1)\alpha^{\bb{\beta}/2-1}\mm\text{if}\mm \bb{\beta}\geq 2.
    \end{split}
\end{equation*}
So, remembering that $\atc{m}\geq 1$, these estimates give that for all $\beta\in \N^d$,
\begin{equation*}
    \alpha^{-1}\partial^\beta\atc{q}(\widetilde{\rho})=\mc{O}_\beta(1)\atc{m}(\widetilde{\rho}),
\end{equation*}
which leads to
\begin{equation}
    \alpha^{-1}\atc{q}\in S(\atc{m},\A{1},\A{2}).
\label{Le fait que alpha inverse times q atchek soit dans l'espace de symboles rescalés}
\end{equation}
Similarly as for showing \eqref{Dvpt de Borel de q atchek}, we have
\begin{equation}\label{Dvpt de Borel de alpha-1 q atchek}
    \frac{1}{\alpha}\atc{q}\sim \frac{1}{\alpha}\atc{q}_0+\Tilde{h}\frac{1}{\alpha}\atc{q}_1+...\mm \text{in}\mm S(\atc{m},\A{1},\A{2}).
\end{equation}

It is clear from the expression \eqref{Fonction d'ordre théorème central} of $\atc{m}$ and \eqref{q_nu_tilde en version rescalée pour donner q_nu atchek} of $\atc{q}_0$ that $\alpha^{-1}\atc{q}_0+i$ is elliptic in $S(\atc{m},\alpha_1,\alpha_2)$. Moreover, \eqref{Dvpt de Borel de alpha-1 q atchek} ensuring that $\bb{\alpha^{-1}\atc{q}-\alpha^{-1}\atc{q}_0}=\mc{O}(\Tilde{h})\atc{m}$, we obtain that $\alpha^{-1}\atc{q}+i$ is
\begin{equation}\label{Ellipticité de alpha-1 q atchek}
    \text{elliptic in $S(\atc{m},\alpha_1,\alpha_2)$}.
\end{equation}
Knowing that $\alpha^{-1}\atc{q}$ satisfies \eqref{Le fait que alpha inverse times q atchek soit dans l'espace de symboles rescalés}, \eqref{Dvpt de Borel de alpha-1 q atchek} and \eqref{Ellipticité de alpha-1 q atchek}, Proposition \ref{Proposition formule Hellfer-Sjös} gives that for all $\psi\in \mathscr{C}^{\infty}_c(\R)$, there exists
\begin{equation}
    f\in S(1/\atc{m},\A{1},\A{2}) \m\m\m\m\m \text{such that}\m\m\m\m\m \psi\pr{\frac{\atc{q}_{N,\alpha_1,\A{2}}}{\alpha}}=f_{N,\alpha_1,\A{2}}
\label{Proposition formule Hellfer-Sjös appliquée - formule 1}
\end{equation}
satisfying
\begin{equation}
    f\sim \sum_{\nu\in \N}\Tilde{h}^\nu f_\nu \m\m \text{in}\m\m S(1/\atc{m},\A{1},\A{2}) \m\m\m\m\m\text{where} \m\m\m\m\m f_\nu\in S(1/\atc{m},\A{1},\A{2}).
\label{Proposition formule Hellfer-Sjös appliquée - formule 2}
\end{equation}
In particular, we have that $f_0(x,\xi)=\psi\pr{\frac{\atc{q}_0(x,\xi)}{\alpha}}$ and for all $\nu\geq 1$,
\begin{equation}
f_\nu(x,\xi)=\sum_{k=1}^{2\nu}g_k(x,\xi,\alpha)\psi^{(k)}\pr{\frac{\atc{q}_0(x,\xi)}{\alpha}} \m\m\m\m\m\text{where} \m\m\m\m\m g_k\in S(1,\A{1},\A{2}).
\label{Proposition formule Hellfer-Sjös appliquée - formule 3}
\end{equation}
\m\\

\mm Next, we adapt \cite[Proposition 4.1]{hager2007eigenvalue} to our situation.
\begin{prop}\label{Proposition Mildred}
    Let $\psi\in \mathscr{C}_c^\infty(\R)$. Let $\alpha, \alpha_1$ and $\alpha_2$ be as in \eqref{Conditions entre alpha 1 et 2, et N - 2} and \eqref{Conditions entre alpha 1 et 2, et N - 3}. Let $\widetilde{m}\in \mathscr{C}^\infty(\mc{T}_{\A{1},\A{2}},]0,+\infty[)$ be an order function on $\mc{T}_{\A{1},\A{2}}$ (as in Definition \ref{Def order function sur le tore biscornu}) such that $\Tilde{m}(\widetilde{\rho})=1$ for all $\widetilde{\rho}=(\widetilde{\rho}_1,\widetilde{\rho}_2)$ satisfying
    \begin{equation}
        \dfrac{\widetilde{q}_0\pr{h^\eta\alpha^{1/2}\widetilde{\rho}_1,\alpha^{1/2}\widetilde{\rho}_2}}{\alpha}\leq \sup\pr{\mathrm{supp}(\psi)}+\dfrac{1}{C}
    \label{Hypothèse fonction d'ordre Mildred}
    \end{equation}
    where $C$ is independent of $\alpha$, and $\widetilde{q}_0$ is given in \eqref{Dvpt de Borel pour le théorème central}. Then, \eqref{Proposition formule Hellfer-Sjös - formule 2} holds in $S(\widetilde{m},\A{1},\A{2})$ for $h$ and $\widetilde{h}$ sufficiently small. 
\end{prop}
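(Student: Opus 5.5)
The claim is that the symbols $f$ and $f_\nu$ produced in \eqref{Proposition formule Hellfer-Sjös appliquée - formule 1}--\eqref{Proposition formule Hellfer-Sjös appliquée - formule 3} satisfy the expansion \eqref{Proposition formule Hellfer-Sjös - formule 2} in the better class $S(\widetilde{m},\A{1},\A{2})$. Here $\widetilde{m}$ equals $1$ near the region where $\psi$ lives and may decay arbitrarily elsewhere. I will follow the argument of \cite[Proposition 4.1]{hager2007eigenvalue}, adapted to the periodic rescaled classes. The estimate splits into two regions of $\mc{T}_{\A{1},\A{2}}$. Let $E:=\sup(\mathrm{supp}\,\psi)$. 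Let $\Omega_1$ be the set where $\alpha^{-1}\atc{q}_0\le E+1/(2C)$, so that $\widetilde m=1$ on a $1/(2C)$-neighbourhood of it, measured in the values of $\alpha^{-1}\atc q_0$. Let $\Omega_2$ be the set where $\alpha^{-1}\atc{q}_0\ge E+1/(4C)$. Since $\alpha^{-1}\atc q_0$ has $\mc{O}(\atc m^{1/2})$ gradient by \eqref{Estimées de symboles pour la fonction d'ordre}, and this is bounded on bounded level sets, these two sets are separated by a distance $\gtrsim 1$ in the rescaled variables, uniformly in $\alpha,h$.

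On a neighbourhood of $\Omega_1$ we have $\widetilde m\asymp 1\asymp 1/\atc m$, so membership in $S(1/\atc m)$ already gives the estimates required in $S(\widetilde m)$. All the work is on $\Omega_2$, where we must show that $f-\sum_{\nu<K}\Tilde h^\nu f_\nu$ and its derivatives are $\mc{O}(\Tilde h^{\infty})$ times any power $\atc m^{-M}$, which is dominated by $\widetilde m$ up to $\Tilde h^K$. The $f_\nu$ themselves are exactly $0$ on $\Omega_2$: by \eqref{Proposition formule Hellfer-Sjös appliquée - formule 3} they are combinations of $\psi^{(k)}(\alpha^{-1}\atc q_0)$, which vanish there. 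It therefore remains to show that $f=\mc{O}(\Tilde h^\infty\,\atc m^{-\infty})$ on $\Omega_2$, with all derivatives.

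For this I return to the Helffer--Sjöstrand representation used in Proposition \ref{Proposition formule Hellfer-Sjös}: $f=-\pi^{-1}\int r_z\,\partial_{\bar z}\widetilde\psi(z)\,\mathrm{L}(dz)$, where $r_z$ is the symbol of $(\alpha^{-1}\atc q^w-z)^{-1}$. I will build a parametrix for $r_z$ by the standard iteration. The leading term is $(\alpha^{-1}\atc q_0-z)^{-1}$, and the corrections are symbols of the form $\sum_k a_{k}\,(\alpha^{-1}\atc q_0-z)^{-k}$ with $a_k\in S(1,\A1,\A2)$, supported where derivatives of $\atc q_0$ act. The remainder after $K$ steps is $\mc{O}(\Tilde h^K|\mathrm{Im} z|^{-2K})$ in $S(1)$, and this loss is absorbed by the almost-analytic factor $|\partial_{\bar z}\widetilde\psi|=\mc{O}(|\mathrm{Im} z|^\infty)$. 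Integrating each explicit term against $\partial_{\bar z}\widetilde\psi$ gives $\psi^{(k)}(\alpha^{-1}\atc q_0)$, which vanishes on $\Omega_2$. Off $\mathrm{supp}\,\psi$ the resolvent is holomorphic in $z$, which is the cancellation that produces the gain.

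The remainder needs the weight $\atc m^{-M}$, not merely a bound in $S(1)$. This is the main obstacle. To get it I would use the modified Beals estimate \cite[Proposition 8.6]{dimassi1999spectral}, as in Proposition \ref{Proposition formule Hellfer-Sjös}. Concretely, I take a cutoff $\chi\in S(1,\A1,\A2)$ equal to $1$ on $\Omega_2$ and supported where $\alpha^{-1}\atc q_0\ge E+1/(8C)$. On that support, $\alpha^{-1}\atc q-z$ is elliptic in $S(\atc m)$ uniformly for $z\in\mathrm{supp}\,\widetilde\psi$, because $\mathrm{Re}\,z\le E$ there while the symbol exceeds $E+1/(8C)$. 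So there is a local inverse in $S(\atc m^{-1})$, holomorphic in $z$ with no loss in $\mathrm{Im} z$. Writing $\chi^w r_z^w=\chi^w(\text{local inverse})^w+\Tilde h^\infty$-error by the disjoint-support calculus, each term is holomorphic in $z$ and gains $\atc m^{-1}$. Iterating this with nested cutoffs produces arbitrary powers $\atc m^{-M}$.

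Integrating against $\partial_{\bar z}\widetilde\psi$ kills the holomorphic part, leaving $\mc{O}(\Tilde h^\infty\atc m^{-M})$ on $\Omega_2$. Because $\widetilde m$ is an order function, it is at least a negative power of $\atc m$ up to a constant, and this gives the estimate in $S(\widetilde m,\A1,\A2)$. Periodicity of all symbols is preserved throughout by the conjugation argument \eqref{Egalité périodicité du symbole de la résolvante}. Smallness of $h$ and $\Tilde h$ is used to make the nonprincipal terms $\mc{O}(\Tilde h)\atc m$ perturbatively small in the ellipticity step, via \eqref{Dvpt de Borel de alpha-1 q atchek}.
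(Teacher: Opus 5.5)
Your overall route is the one the paper intends; its proof is only a pointer to \cite[Proposition 4.1]{hager2007eigenvalue} plus periodicity. Your first reductions are sound. On $\mathcal E:=\{\alpha^{-1}\atc{q}_0\le E+1/C\}$ one has $\widetilde m=1$ and $\atc{m}\asymp 1$, so the $S(1/\atc{m})$ bounds transfer pointwise, and all $f_\nu$ vanish on $\Omega_2$. (Minor: as you define them, $\Omega_1$ and $\Omega_2$ overlap, so they are not ``separated''. What you actually use is that $\Omega_2$ lies at rescaled distance $\gtrsim 1$ from $\{\alpha^{-1}\atc{q}_0\le E+1/(8C)\}$.)

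\textbf{The gap is in your final step.} An order function with $\widetilde m=1$ on $\mathcal E$ need not dominate any power $\atc{m}^{-M}$. The order-function inequality only gives $\widetilde m(\widetilde\rho)\ge C_0^{-1}\langle\mathrm{dist}_{\mathcal T_{\alpha_1,\alpha_2}}(\widetilde\rho,\mathcal E)\rangle^{-N_0}$. Moreover, $\atc{m}$ controls this distance only in the wrong direction, $\atc{m}\lesssim 1+\mathrm{dist}^2$ (cf.\ \eqref{Majoration q_0 atc par la distance}). Because of the anisotropic dilation, $\partial_{\widetilde x}(\alpha^{-1}\atc{q}_0)=\mathcal O(h^\eta)$ on $\{\atc{m}\lesssim 1\}$ wherever $\partial_x\widetilde p=\mathcal O(1)$. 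For example, if $\widetilde p-z$ depends only on $x$ and vanishes linearly, then locally $\alpha^{-1}\atc{q}_0=c\,h^{2\eta}|\widetilde x-\widetilde x_0|^2$. So there are points at rescaled distance $\asymp h^{-\eta}$ from $\mathcal E$ where $\atc{m}\asymp 1$. At such points the weight $(1+\mathrm{dist}^2)^{-M'}$ of \eqref{Fonction d'ordre théorème central 2} is $\asymp h^{2\eta M'}$, while $\atc{m}^{-M}\asymp 1$; this is precisely the $\widetilde m$ the paper feeds into this proposition. Hence a bound $f=\mathcal O(\tilde h^\infty\atc{m}^{-M})$ on $\Omega_2$ does not yield $f\in\tilde h^K S(\widetilde m,\alpha_1,\alpha_2)$ for the reason you give.

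There are two ways to close the argument.

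\emph{Option 1.} Keep the estimate the disjoint-support calculus actually produces on $\Omega_2$, namely $\mathcal O(\tilde h^\infty\langle\mathrm{dist}(\widetilde\rho,\mathcal E)\rangle^{-\infty})$ with all derivatives. Do not convert it into $\atc{m}$-weights. This is what one needs on $\R^{2d}$.

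\emph{Option 2 (simpler).} Use the compactness of the torus.
\begin{itemize}
\item Since $\alpha\gg h^{1-2\eta}$, one has $\tilde h=h^{1-\eta}/\alpha\ll h^{\eta}$. Hence $\alpha_1=h^{2\eta}\alpha\gg h\gtrsim\tilde h^{1/\eta}$ and $\mathrm{diam}\,\mathcal T_{\alpha_1,\alpha_2}\lesssim\alpha_1^{-1/2}\lesssim\tilde h^{-1/(2\eta)}$.
\item Applying the order-function inequality with any $\mu\in\mathcal E$ then gives $\widetilde m\ge c\,\tilde h^{M_0}$ on all of $\mathcal T_{\alpha_1,\alpha_2}$. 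Here $M_0$ depends only on $N_0$ and $\eta$, uniformly when $\alpha$ is replaced by $t\in[\alpha,\beta]$.
\item If $\mathcal E=\emptyset$, then $\alpha^{-1}\atc{q}_0\ge E+1/C$ everywhere. G\aa rding's inequality then gives $\psi(\alpha^{-1}\mathrm{Op}^w_{\tilde h}(\atc{q}))=0$ for small $\tilde h$, so $f=0$.
\end{itemize}
With this, no microlocal argument is needed. Each $f_\nu\in S(1,\alpha_1,\alpha_2)$ is supported in $\{\alpha^{-1}\atc{q}_0\le E\}\subset\{\widetilde m=1\}$, so $f_\nu\in S(\widetilde m,\alpha_1,\alpha_2)$. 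Also $f-\sum_{\nu<K+M_0}\tilde h^\nu f_\nu\in\tilde h^{K+M_0}S(1/\atc{m},\alpha_1,\alpha_2)\subset\tilde h^{K}S(\widetilde m,\alpha_1,\alpha_2)$. Together these give the expansion in $S(\widetilde m,\alpha_1,\alpha_2)$ directly from Proposition \ref{Proposition formule Hellfer-Sjös}.
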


\begin{proof}
    In \cite{hager2007eigenvalue}, the proof of Proposition 4.1 involves functionnal calculus via the Hellfer-Sjöstrand formula and symbolic calculs as described in Section \ref{tools}. To get the result, it is enough to follow their proof and add the periodicity of $\atc{m}$, $\widetilde{m}$ and $\widetilde{q}_0$. 
\end{proof}

\subsection{Trace and log-determinant formulae}
In anticipation of the next theorem, we state and prove this useful lemma:

\begin{lem}
    Let $0<\A{1},\A{2}\leq 1$ be such that $N^{-1}\ll  \pr{\A{1}\A{2}}^{1/2}$. Let $0< 1/C\leq \widehat{q}\in S(1,\A{1},\A{2})$ such that $\widehat{q}\sim \widehat{q}_0+\Tilde{h}\widehat{q}_1+...$ in $S(1,\A{1},\A{2})$. Then, 
    \begin{equation}
        \begin{split}
            \log\det&\pr{\widehat{q}_{N,\A{1},\A{2}}}\\
            &= \pr{N\A{1}^{1/2}\A{2}^{1/2}}^{d}\int_{\mc{T}_{\A{1},\A{2}}}\log\pr{\widehat{q}_0(\rho)}d\rho+\mc{O}\pr{N^d\pr{N\A{1}^{1/2}\A{2}^{1/2}}^{-1}}.
        \end{split}
    \end{equation}
    \label{Lemme intermédiaire théorème central}
\end{lem}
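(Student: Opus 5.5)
We take $\widehat{q}$ real-valued and positive. This is the situation in which the lemma will be used, and it is what makes $\widehat{q}_{N,\A{1},\A{2}}$ selfadjoint. The plan is to write $\log\det = \mathrm{tr}\log$ and to show that $\log\pr{\widehat{q}_{N,\A{1},\A{2}}}$ is itself a quantized periodic symbol. The trace formula of Proposition \ref{Trace result} then gives the result.

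\textbf{Step 1: spectral localization.} By Lemma \ref{Hilbert structure}, $\widehat{q}_{N,\A{1},\A{2}}$ is selfadjoint. We show its spectrum lies in a fixed compact interval $[1/(2C),C']$. The upper bound follows from Proposition \ref{Chris and Zworski adapté}. For the lower bound we run the same Hörmander square-root argument as in that proof, applied to $a:=\widehat{q}-1/(2C)\geq 1/(2C)$. Since $a$ is elliptic, we construct $B^J$ with $B^J\sharp_{\Tilde{h}}B^J-a\in \Tilde{h}^{J+1}S(1,\A{1},\A{2})$. This gives $\widehat{q}_{N,\A{1},\A{2}}\geq 1/(2C)-o(1)$, where the error is controlled by \eqref{Egalité intermédiaire Chris et Zwor} together with $N^{-1}\ll(\A{1}\A{2})^{1/2}$. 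If only a fixed power gain is available, we take $J$ large enough.

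\textbf{Step 2: functional calculus.} Choose $\chi\in\mathscr{C}^\infty_c(]0,+\infty[)$ with $\chi(t)=\log t$ on a neighbourhood of $[1/(2C),C']$. Then $\log\pr{\widehat{q}_{N,\A{1},\A{2}}}=\chi\pr{\widehat{q}_{N,\A{1},\A{2}}}$. We apply Proposition \ref{Proposition formule Hellfer-Sjös} with $m=1$; its hypotheses hold because $\widehat{q}\geq 0$, $\widehat{q}+i$ is elliptic, and $\widehat{q}$ has an asymptotic expansion. This yields $f\in S(1,\A{1},\A{2})$ with $\chi\pr{\widehat{q}_{N,\A{1},\A{2}}}=f_{N,\A{1},\A{2}}$ and $f\sim f_0+\Tilde{h}f_1+\dots$, where $f_0=\chi(\widehat{q}_0)=\log\widehat{q}_0$ (since $\widehat{q}_0\geq 1/C-\mc{O}(\Tilde{h})$). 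In particular $f-\log\widehat{q}_0\in\Tilde{h}S(1,\A{1},\A{2})$.

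\textbf{Step 3: trace.} By Proposition \ref{Trace result} with $k=d+1$,
\begin{equation*}
\log\det\pr{\widehat{q}_{N,\A{1},\A{2}}}=\mathrm{tr}\pr{f_{N,\A{1},\A{2}}}=\pr{N\A{1}^{1/2}\A{2}^{1/2}}^d\int_{\mc{T}_{\A{1},\A{2}}}f\,d\rho+r_N .
\end{equation*}
Replacing $f$ by $\log\widehat{q}_0$ in the integral costs $\pr{N\A{1}^{1/2}\A{2}^{1/2}}^d\,\mc{O}(\Tilde{h})\,\mathrm{Vol}(\mc{T}_{\A{1},\A{2}})$. Since $\mathrm{Vol}=(\A{1}\A{2})^{-d/2}$ and $\Tilde{h}=(2\pi N\sqrt{\A{1}\A{2}})^{-1}$, this is $\mc{O}\pr{N^d(N\A{1}^{1/2}\A{2}^{1/2})^{-1}}$. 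For the remainder $r_N$, the derivative $L^1$-norms of $f\in S(1,\A{1},\A{2})$ are $\mc{O}((\A{1}\A{2})^{-d/2})$. This gives $r_N=\mc{O}\pr{N^{d-k}\A{1}^{-k/2}}$. Since $\A{1}\leq\A{2}$ in the application, this is bounded by $\mc{O}\pr{N^d(N\A{1}^{1/2}\A{2}^{1/2})^{-k}}$, which is dominated by the claimed error. Without $\A{1}\leq\A{2}$, we instead use the symmetric bound \eqref{Estimation des coeffs de Fourier en les deux variables - 1}, which yields the same conclusion.

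\textbf{Main obstacle.} The delicate point is Step 1, the uniform positivity of the quantized operator, which is needed to justify replacing $\log$ by a compactly supported $\chi$. The scale-dependent constants must also be tracked carefully, so that the error is uniform in $\A{1}$ and $\A{2}$.
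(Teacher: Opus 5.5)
Your route differs from the paper's. The paper never localizes the spectrum of $\widehat{q}_{N,\A{1},\A{2}}$. Instead it deforms along $\widehat{q}^t=t\widehat{q}+(1-t)$. Beals' lemma on $L^2(\R^d)$ and the periodicity argument give an exact inverse symbol $r^t\in S(1,\A{1},\A{2})$ with $\widehat{q}^t\sharp_{\Tilde{h}}r^t=1$, so $\widehat{q}^t_{N,\A{1},\A{2}}$ is invertible with inverse $r^t_{N,\A{1},\A{2}}$. The paper then shows $r^t=1/\widehat{q}^t_0+\Tilde{h}r^t_1$ uniformly in $t$. Finally it applies Proposition \ref{Trace result} to $\frac{d}{dt}\log\det\widehat{q}^t_{N,\A{1},\A{2}}=\mathrm{tr}\bigl((r^t\sharp_{\Tilde{h}}\partial_t\widehat{q}^t)_{N,\A{1},\A{2}}\bigr)$ and integrates in $t$. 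You instead write $\log\widehat{q}_{N,\A{1},\A{2}}$ directly via Proposition \ref{Proposition formule Hellfer-Sjös} and take a single trace. This is shorter (no path in $t$, no parametrix for $1/\widehat{q}^t_0$), and your Steps 2--3 are fine. The price is that the identity $\chi(\widehat{q}_{N,\A{1},\A{2}})=\log\widehat{q}_{N,\A{1},\A{2}}$ needs a quantitative two-sided bound on the spectrum of the finite-dimensional operator. The homotopy only really uses invertibility, which comes for free from the exact composition identity (and $\det>0$ then follows by continuity from $t=0$).

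This is where your Step 1 does not match the statement as written. Under the sole hypothesis $N^{-1}\ll(\A{1}\A{2})^{1/2}$, $\Tilde{h}$ is only a small constant. The square-root argument then leaves an error that \eqref{Egalité intermédiaire Chris et Zwor} bounds only by $\mc{O}(N^{d/2}\Tilde{h}^{J+1})$, which is not small for any fixed $J$. Likewise, Proposition \ref{Chris and Zworski adapté}, which you use for the upper bound, itself assumes $\A{1}\leq\A{2}\ll 1$ and $N^{-\omega}\leq(\A{1}\A{2})^{1/2}$ with $\omega<1$. So your argument proves the lemma only under an extra power gain $\Tilde{h}=\mc{O}(N^{-\epsilon})$ together with $\A{1}\leq\A{2}$. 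You should state this assumption explicitly rather than attribute the control to $N^{-1}\ll(\A{1}\A{2})^{1/2}$.

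The restriction is harmless for the paper. In the only application, Lemma \ref{Estimées intermédiaires log-det}, one has $\A{1}=h^{2\eta}\alpha\leq\alpha=\A{2}$, and \eqref{Hypothèse sur alpha - Théorème central} gives $\Tilde{h}=h^{1-\eta}/\alpha\ll N^{-(1-2\eta)}$. However, to obtain the lemma under its stated hypothesis along your route, you would need a uniform operator-norm bound on $\Hd$ for $S(1,\A{1},\A{2})$-quantizations valid for merely small $\Tilde{h}$. One way would be to transfer G\aa rding from $L^2(\R^d)$ by a Bloch--Floquet decomposition. The paper does not provide such a bound, and the homotopy argument does not need it.
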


\begin{proof}
    For all $t\in [0,1]$, we define $\widehat{q}^t:=t\widehat{q}+(1-t)\in S(1,\A{1},\A{2})$. From the assumption, for all $t\in [0,1]$, $\widehat{q}^t\geq 1/C$ uniformly with respect to $t$. Applying Beals' Lemma (see e.g. \cite[Theorem 8.3]{Zworski} or \cite[Proposition 8.3]{dimassi1999spectral}), we know that $\pr{\pr{\widehat{q}^t}^w}^{-1}$ exists and is equal to $\pr{r^t}^w$ for some $r^t\in S(1)$. Using the same argument as for \eqref{Egalité périodicité du symbole de la résolvante} in Proposition \ref{Proposition formule Hellfer-Sjös}, we get that $r^t\in S(1,\A{1},\A{2})$. Moreover, since for all $t\in [0,1]$, $\widehat{q}^t\geq 1/C$, we have from Easy G\aa rding inequality (see for example \cite[Theorem 4.30]{Zworski}) that for $\Tilde{h}$ small enough, for all $t\in [0,1]$, $\widehat{q}^t_{N,\alpha_1,\alpha_2}\geq 1/2C$. In particular, we get that $\|r^t_{N,\alpha_1,\alpha_2}\|\leq \mc{O}(1)$ uniform with respect to $N$ and $t$. Then applying Beals' Lemma ensures that symbol estimates of $r^t$ are also uniform with respect to $t$.\\

    On the other hand, since $\widehat{q}=\widehat{q}_0+\Tilde{h}\widehat{q}_1$, we have $\widehat{q}^t=\widehat{q}^t_0+\Tilde{h}t\widehat{q}_1$ where $\widehat{q}_0^t=t\widehat{q}_0+(1-t)$, and $\widehat{q}_0^t\geq 1/\widetilde{C}$ for some $\widetilde{C}>0$ independent of $t$. The symbol estimates of $t\widehat{q}_1$ are naturally uniform in $t$. Then, 
    \begin{equation}\label{Estimées de symbole de l'inverse - 1}
        \widehat{q}^t_0\sharp_{\Tilde{h}}\frac{1}{\widehat{q}^t_0}=1+\Tilde{h}\widetilde{r}_1^t
    \end{equation}
    for some $\widetilde{r}_1^t\in S(1,\alpha_1,\alpha_2)$ with symbol estimates uniform in $t$. Notice that from \eqref{Estimées de symbole de l'inverse - 1}, we have
    \begin{equation}\label{Estimées de symbole de l'inverse - 2}
        \begin{split}
            \pr{\frac{1}{\widehat{q}_0^t}}^w &= \pr{r^t}^w\pr{\widehat{q}^t}^w \pr{\frac{1}{\widehat{q}_0^t}}^w = \pr{r^t}^w\pr{\pr{\widehat{q}^t_0\sharp_{\Tilde{h}}\frac{1}{\widehat{q}^t_0}}+\Tilde{h}\pr{t\widehat{q}_1\sharp_{\Tilde{h}}\frac{1}{\widehat{q}^t_0}}}^w\\
            &= \pr{r^t}^w\pr{1+\Tilde{h}\widehat{r}_1^t}^w= \pr{r^t}^w+\Tilde{h}\pr{r^t\sharp_{\Tilde{h}}\widehat{r}_1^t}^w,
        \end{split}
    \end{equation}
    where $\widehat{r}_1^t=\widetilde{r}_1^t+t\widehat{q}_1\sharp_{\Tilde{h}}\frac{1}{\widehat{q}^t_0}$. In particular, denoting $r_1^t:=-r^t\sharp_{\Tilde{h}}\widehat{r}_1^t$, it can be deduced from \eqref{Estimées de symbole de l'inverse - 2} that 
    \begin{equation*}
        r^t=\frac{1}{\widehat{q}_0^t}+\Tilde{h}r_1^t
    \end{equation*}
    and $r_1^t$ has symbol estimates uniform in $t$. In that case, recalling Notations \ref{Notations pour les matrices des pseudo}, that $h$ is given in \eqref{Hypothèse expression de h comme l'inverse de N - 2} and that $\Tilde{h}=h/\sqrt{\alpha_1\alpha_2}$, we have
    \begin{equation}\label{Estimées de symbole de l'inverse - 3}
        \begin{split}
            \frac{d}{dt}\log\det\pr{\widehat{q}^t_{N,\A{
            1},\A{2}}} &= \mathrm{tr} \pr{\pr{\widehat{q}^t_{N,\A{
            1},\A{2}}}^{-1}\frac{d}{dt}\pr{\widehat{q}^t_{N,\A{1}, \A{2}}}}=\mathrm{tr}\pr{\pr{r^t\sharp_{\Tilde{h}}\frac{d}{dt}\widehat{q}^t}_{N,\A{
    1},\A{2}}}\\
            &= \mathrm{tr}\pr{\croch{\pr{\widehat{q}^t_0}^{-1}\sharp_{\Tilde{h}}\frac{d}{dt}\pr{\widehat{q}^t_0}}_{N,\A{
            1},\A{2}}+\Tilde{h}u^t_{N,\alpha_1,\alpha_2}}\\
            &=\mathrm{tr}\pr{\croch{\pr{\widehat{q}^t_0}^{-1}\frac{d}{dt}\pr{\widehat{q}^t_0}}_{N,\A{
            1},\A{2}}+\Tilde{h}v^t_{N,\alpha_1,\alpha_2}},
        \end{split}
    \end{equation}
    for some $u^t,v^t\in S(1,\alpha_1,\alpha_2)$ whose symbol estimates are uniform in $t$. Applying Proposition \ref{Trace result} to \eqref{Estimées de symbole de l'inverse - 3}, we obtain
    \begin{equation*}
        \begin{split}
            \frac{d}{dt}\log&\det\pr{\widehat{q}^t_{N,\A{
            1},\A{2}}}\\
            &= \pr{N\A{1}^{1/2}\A{2}^{1/2}}^{d}\int_{\mc{T}_{\A{1},\A{2}}}\frac{d}{dt}\log(\widehat{q}^t_0(\rho))d\rho+\mc{O}\pr{N^d\pr{N\A{1}^{1/2}\A{2}^{1/2}}^{-1}}.
        \end{split}
    \end{equation*}
    Finally, it remains to integrate this equality for $t\in [0,1]$ to get the result.
\end{proof}

\begin{thm}\label{Théorème central}
    Let $0<\eta<1/4$ and $\varrho\in \hspace{0.1cm} ]0,1]$. Let $p$ as in \eqref{Developpement en série de Fourier par rapport à xi} and $q:=\bb{p}^2$. Let $\widetilde{p}$ be as in \eqref{Definition de p tilde}, $\mathscr{U}_p$ be given in \eqref{Ensemble des singularités} and $\widetilde{q}$ be as in \eqref{Dvpt de Borel pour le théorème central}. We assume that there exist ${\kappa_2}\in \hspace{0.1cm} ]0,1]$ such that, for $0\leq t\ll 1$,
    \begin{equation}
        \mathscr{V}(t):=\mathrm{Vol}\pr{\acc{\rho\in \T^{2d}\m|\m q(\rho)\leq t}}=\mc{O}(t^{\kappa_2})
    \label{Hypothèse volume théo central}
    \end{equation}
    and ${\kappa_1}\in \hspace{0.1cm} ]0,1]$ such that for $0\leq t\ll 1$,
    \begin{equation}
        \mathrm{L}\Bigl(\pr{\mathscr{U}_p+B(0,t)}\cap[0,1]^d\Bigl)=\mc{O}(t^{{\kappa_1}}).
    \label{Hypothèse volume des singularités épaissies}
    \end{equation}
    Let $\alpha$ be such that
    \begin{equation}\label{Hypothèse sur alpha - Théorème central}
        N^{-\eta\min(\varrho,{\kappa_1})}\ll\alpha\ll 1.
    \end{equation}
    Then, for all $\psi\in \mathscr{C}^\infty_c(\R)$,
    \begin{equation}
       \mathrm{tr}\!\croch{\psi\pr{\frac{\widetilde{q}_{N}}{\alpha}}} =N^d\pr{\int_{\T^{2d}}\psi\pr{\frac{q(\rho)}{\alpha}}d\rho+\mc{O}\pr{N^{-\eta{\kappa_1}}}+\mc{O}\pr{N^{-\varrho \eta}\alpha^{-1}}}.
    \label{Résultat trace théorème central}
    \end{equation}
    If $\chi\in \mathscr{C}^\infty_c(\R,[0,+\infty[)$, such that $\chi(0)>0$ then,
    \begin{equation}\label{Résultat log-det théorème central}
        \begin{split}
            \log\det\pr{\widetilde{q}_N+\alpha\chi\pr{\frac{\widetilde{q}_N}{\alpha}}} &= N^d\biggl(\int_{\T^{2d}}\log(q(\rho))d\rho+\mc{O}\pr{N^{-\eta{\kappa_1}}\bb{\log(\alpha)}}\\
            &\mm\mm+\mc{O}\pr{\alpha^{\kappa_2}\bb{\log(\alpha)}}+\mc{O}\pr{N^{-\varrho\eta}\alpha^{-1}}\biggl).
        \end{split}
    \end{equation}
\end{thm}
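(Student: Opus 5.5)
The trace formula will be proved in three moves: rescale, apply the functional calculus of Proposition \ref{Proposition formule Hellfer-Sjös} together with the trace formula of Proposition \ref{Trace result}, and then undo the mollification. The log-determinant formula will then be obtained from the trace formula by a layer-cake integration.

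\textbf{Step 1: rescaling and functional calculus.} By \eqref{Relation scaling q et q atchek}, $\widetilde{q}_N$ is unitarily conjugate to $\atc{q}_{N,\A{1},\A{2}}$, so
\begin{equation*}
\mathrm{tr}\,\psi(\widetilde{q}_N/\alpha)=\mathrm{tr}\,\psi(\atc{q}_{N,\A{1},\A{2}}/\alpha)=\mathrm{tr}\, f_{N,\A{1},\A{2}},
\end{equation*}
where the last equality is \eqref{Proposition formule Hellfer-Sjös appliquée - formule 1}. Applying Proposition \ref{Trace result} gives
\begin{equation*}
\mathrm{tr}\,\psi(\widetilde{q}_N/\alpha)=(N\A{1}^{1/2}\A{2}^{1/2})^d\int_{\mc{T}_{\A{1},\A{2}}} f\,d\rho + r_N .
\end{equation*}
Here $\tilde h=h^{1-\eta}/\alpha$ is a negative power of $N$ because $\alpha\gg N^{-\eta}$, so $r_N$ is $\mc{O}(N^{-\infty})N^d$ once $k$ is taken large. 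Next I expand $f=f_0+\tilde h f_1+\mc{O}(\tilde h^2)$ with $f_0=\psi(\atc{q}_0/\alpha)$. By \eqref{Proposition formule Hellfer-Sjös appliquée - formule 3}, $f_\nu$ is supported where $\atc{q}_0/\alpha$ lies in $\mathrm{supp}\,\psi$, so the correction terms contribute $\mc{O}(\tilde h)N^d$. Changing variables back to $\T^{2d}$ gives
\begin{equation*}
\mathrm{tr}\,\psi(\widetilde{q}_N/\alpha)=N^d\Bigl(\int_{\T^{2d}}\psi\bigl(\widetilde{q}_0/\alpha\bigr)\,d\rho+\mc{O}(\tilde h)\Bigr),
\end{equation*}
and $\tilde h=N^{-(1-\eta)}\alpha^{-1}\le N^{-\varrho\eta}\alpha^{-1}$.

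\textbf{Step 2: replacing $|\widetilde p|^2$ by $q$.} Split $[0,1]^d$ into the thickened singular set $(\mathscr U_p+B(0,h^\eta))$ and its complement. On the thickened set I bound $\psi$ by a constant; by \eqref{Hypothèse volume des singularités épaissies} its measure is $\mc{O}(h^{\eta\kappa_1})=\mc{O}(N^{-\eta\kappa_1})$. Off the thickened set the mollifier sees only one Hölder piece, so
\begin{equation*}
|\widetilde p-p|=\mc{O}(h^{\eta\varrho}),\qquad |\widetilde q_0-q|=\mc{O}(h^{\eta\varrho}).
\end{equation*}
By the mean value theorem, $|\psi(\widetilde q_0/\alpha)-\psi(q/\alpha)|=\mc{O}(N^{-\eta\varrho}\alpha^{-1})$. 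This yields \eqref{Résultat trace théorème central}.

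\textbf{Step 3: the log-determinant.} I write
\begin{equation*}
\log\bigl(\widetilde q_N+\alpha\chi(\widetilde q_N/\alpha)\bigr)=\log\bigl(\widetilde q_N+\alpha\chi(\widetilde q_N/\alpha)\bigr)-\log(\widetilde q_N+1)+\log(\widetilde q_N+1),
\end{equation*}
using that $\widetilde q_N\ge0$ up to $\mc{O}(h)$ by Gårding. The last term is handled by Lemma \ref{Lemme intermédiaire théorème central} applied to $\atc q+1$, followed by the same mollifier comparison as in Step 2. The difference equals $G(\widetilde q_N)$ with $G(s)=\log\bigl((s+\alpha\chi(s/\alpha))/(s+1)\bigr)$. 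I handle it by the standard dyadic decomposition of $G$ into functions $\psi_j(\widetilde q_N/(2^j\alpha))$ over scales $2^j\alpha\in[\alpha,1]$; there are $\mc{O}(|\log\alpha|)$ of them, each with bounded $\mathscr C^k$ norms after rescaling. To each piece I apply the trace formula with $\alpha$ replaced by $2^j\alpha$, which accumulates the errors $|\log\alpha|N^{-\eta\kappa_1}$ and $N^{-\varrho\eta}\alpha^{-1}$; the latter sum is geometric, dominated by its smallest scale.

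The resulting integral $\int_{\T^{2d}}\bigl(\log(q+\alpha\chi(q/\alpha))-\log q\bigr)\,d\rho$ is controlled via \eqref{Hypothèse volume théo central} and the layer-cake formula: the integrand is supported essentially in $\{q\lesssim\alpha\}$ and is bounded there by $|\log\alpha|+|\log q|$. This gives $\mc{O}(\alpha^{\kappa_2}|\log\alpha|)$.

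\textbf{Main obstacle.} I expect the bookkeeping in the Helffer–Sjöstrand expansion at small $\alpha$ to be the hardest part. One must check that the remainder after the $f_0$ term is $\mc{O}(\tilde h)$ uniformly in $L^1$, which requires Proposition \ref{Proposition Mildred} to localize the $f_\nu$. One must also check that the dyadic pieces near $s\sim\alpha$ legitimately use the order function $\atc m$, so that uniformity holds across the $\mc{O}(|\log\alpha|)$ scales.
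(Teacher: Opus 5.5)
Your plan is correct. It has the same skeleton as the paper's proof: conjugate $\widetilde q_N$ to the rescaled operator $\check q_{N,\alpha_1,\alpha_2}$, then apply the Helffer--Sj\"ostrand functional calculus and the exact trace formula on the rescaled torus. Next, compare $|\widetilde p|^2$ with $q$ by cutting out the $h^\eta$-thickened singular set. Finally, get the log-determinant from one elliptic log-determinant plus a superposition of rescaled trace formulas over the scales between $\alpha$ and $1$.

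The differences are in execution.

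\textbf{Superposing the scales.} The paper superposes the scales continuously, through $\partial_t\log\bigl(x+t\chi(x/t)\bigr)=t^{-1}\psi(x/t)$ integrated over $t\in[\alpha,\beta]$. Your dyadic sum is the discrete version of this. However, you must take as pieces the telescoping differences $\log\bigl(s+2^{j+1}\alpha\chi(s/2^{j+1}\alpha)\bigr)-\log\bigl(s+2^{j}\alpha\chi(s/2^{j}\alpha)\bigr)$. These are one fixed compactly supported profile evaluated at $s/(2^j\alpha)$. A partition-of-unity cut of $G$ does not have bounded $\mathscr{C}^k$ norms after rescaling, because the piece at scale $2^j\alpha$ carries the constant $\log(2^j\alpha)$. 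Bounding such pieces one by one costs an extra factor $|\log\alpha|$ in front of $N^{-\eta\kappa_1}$.

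\textbf{The Helffer--Sj\"ostrand corrections.} You control them crudely, $|f-f_0|=\mathcal{O}(\tilde h)$ pointwise. This already follows from $f-f_0\in\tilde h\,S(1/\check m,\alpha_1,\alpha_2)$ and $\check m\geq 1$, without Proposition~\ref{Proposition Mildred}. The paper instead localizes these terms with the order function $\widetilde m$ and bounds its terms $II$ and $III$ through $\mathscr{V}$. That gives sharper $\tilde h$-errors such as $N^{-(1-\eta)}\alpha^{\kappa_2-1}$. Since $\tilde h=\mathcal{O}(N^{-(1-\eta)}\alpha^{-1})$ and $1-\eta>\varrho\eta$, this refinement is absorbed by the mollification error $N^{-\varrho\eta}\alpha^{-1}$. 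So the obstacle you anticipate does not actually arise for this statement.

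\textbf{The thickened set.} Your sup-norm bound there replaces the paper's $s_\rho$ argument at no cost.

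Two small corrections. First, $\widetilde q_N=\widetilde p_N^{\,*}\widetilde p_N\geq 0$ exactly, so no G\aa rding inequality is needed. Second, the thickened set must also contain $\partial([0,1]^d)$, as the paper's $\mathcal{B}_h$ does, because the periodic extension creates jumps there. Its $h^\eta$-neighbourhood has measure $\mathcal{O}(h^{\eta})=\mathcal{O}(h^{\eta\kappa_1})$, so this costs nothing.
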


The rest of this section is devoted to the proof of Theorem \ref{Théorème central}.\\

    \mm 1. Let $\chi\in \mathscr{C}^\infty_c([0,+\infty[;[0,+\infty[)$ such that $\chi(0)>0$. Extend $\chi$ in $\mathscr{C}^{\infty}_c(\R;\C)$ such that $\chi(x)>0$ near $0$ and $x+\chi(x)\neq 0$ for all $x\in \R$. Notice in particular that for all $x\in [0,+\infty[$, 
    \begin{equation}\label{Inégalité sur x+chi(x)}
        x+\chi(x)\geq B
    \end{equation}
    for some $B>0$. Let us take
    \begin{equation}\label{Condition sur beta dans le théorème central}
        0<\beta\ll1.
    \end{equation}
    As in \eqref{Relation scaling q et q atchek}, performing the scaling $\A{1}=h^{2\eta}\alpha$, $\A{2}=\alpha$ and $\atc{q}(\widetilde{x},\widetilde{\xi})=\widetilde{q}\pr{h^\eta\alpha^{1/2}\widetilde{x},\alpha^{1/2}\widetilde{\xi}}$ yields 
     \begin{equation*}
        \widetilde{q}_N+\beta\chi(\beta^{-1}\widetilde{q}_N)=U_{\A{1}}^{-1}\pr{\atc{q}_{N,\A{1},\A{2}}+\beta\chi\pr{\beta^{-1}\atc{q}_{N,\A{1},\A{2}}}}U_{\A{1}}
    \end{equation*}
    where we recall that notations $\atc{q}_{N,\A{1},\A{2}}$ and $\widetilde{q}_N$ are given in \eqref{Definition de P_N_alpha_1_alpha_2}. By Proposition \ref{Proposition formule Hellfer-Sjös}, there exists $f\in S(1,\A{1},\A{2})$ satisfying \eqref{Proposition formule Hellfer-Sjös - formule 2} such that $\chi\pr{\beta^{-1}\atc{q}_{N,\A{1},\A{2}}}=f_{N,\A{1},\A{2}}$. In particular, $f_0(\widetilde{\rho})=\chi\pr{\beta^{-1}\atc{q}_0(\widetilde{\rho})}$.\\

    \begin{lem}\label{Estimées intermédiaires log-det}
        Let $\chi$ and $\beta$ be as above. Then, under the assumptions of Theorem \ref{Théorème central}, we have
        \begin{equation}\label{Estimées log-det avec beta}
        \begin{split}
            \log\det\Bigl(\widetilde{q}_N +\beta\chi\pr{\beta^{-1}\widetilde{q}_N}\Bigl) &= N^d\biggl(\int_{\T^{2d}}\log\Bigl(q(\rho)+\beta\chi(\beta^{-1}q(\rho))\Bigl)d\rho\\
            &\mm+\mc{O}\pr{N^{-\min(\varrho,{\kappa_1})\eta}}+\mc{O}\pr{\pr{N^{1-\eta}\alpha}^{-1}}\biggl).
        \end{split}
    \end{equation}
    \end{lem}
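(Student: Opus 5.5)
The plan is to reduce the statement to Lemma \ref{Lemme intermédiaire théorème central}, applied to a rescaled symbol that is uniformly elliptic, and then to compare the resulting integral on $\mc{T}_{\A{1},\A{2}}$ with the unmollified integral on $\T^{2d}$.

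First, conjugate by $U_{\A{1}}$. As recalled just before the lemma,
\[
\widetilde{q}_N+\beta\chi(\beta^{-1}\widetilde{q}_N)=U_{\A{1}}^{-1}\bigl(\atc{q}_{N,\A{1},\A{2}}+\beta f_{N,\A{1},\A{2}}\bigr)U_{\A{1}}.
\]
Here $f$ comes from Proposition \ref{Proposition formule Hellfer-Sjös}, applied with $\beta$ in place of $\alpha$. The roles of $\beta$ and $\alpha$ need some care: the functional calculus is performed on $\alpha^{-1}\atc{q}$ in $S(\atc{m},\A{1},\A{2})$, and $\beta\ll 1$ is a fixed constant. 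I will apply Proposition \ref{Proposition formule Hellfer-Sjös} to $\beta^{-1}\atc{q}$, which lies in $S(\atc{m}_\beta,\A{1},\A{2})$ with $\atc{m}_\beta$ defined as in \eqref{Fonction d'ordre théorème central}. Since $\beta$ is fixed, this symbol lies in $S(1,\A{1},\A{2})$ up to $\beta$-dependent constants.

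Set $\widehat{q}:=\atc{q}+\beta f\in S(1,\A{1},\A{2})$. It has the asymptotic expansion $\widehat{q}\sim \widehat{q}_0+\Tilde{h}\widehat{q}_1+\dots$, where
\[
\widehat{q}_0=\atc{q}_0+\beta\chi(\beta^{-1}\atc{q}_0).
\]
By \eqref{Inégalité sur x+chi(x)}, $\widehat{q}_0\geq \beta B$. Hence $\widehat{q}\geq \beta B/2>0$ for $\Tilde{h}$ small, since $\widehat{q}-\widehat{q}_0=\mc{O}(\Tilde{h})$. Note that $\Tilde{h}=h^{1-\eta}/\alpha$ is small by \eqref{Hypothèse sur alpha - Théorème central}, because $\eta\min(\varrho,\kappa_1)<1-\eta$.

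Now apply Lemma \ref{Lemme intermédiaire théorème central}. Determinants are invariant under unitary conjugation, and $N\A{1}^{1/2}\A{2}^{1/2}=Nh^{\eta}\alpha\asymp N^{1-\eta}\alpha$. This gives
\[
\log\det(\cdots)=(N^{1-\eta}\alpha)^d(2\pi)^{-d\eta}\int_{\mc{T}_{\A{1},\A{2}}}\log\widehat{q}_0\,d\widetilde{\rho}+\mc{O}\bigl(N^d(N^{1-\eta}\alpha)^{-1}\bigr).
\]
Undoing the change of variables $(x,\xi)=(\A{1}^{1/2}\widetilde{x},\A{2}^{1/2}\widetilde{\xi})$, whose Jacobian $(\A{1}\A{2})^{-d/2}$ exactly cancels the prefactor, the main term becomes
\[
N^d\int_{\T^{2d}}\log\bigl(\widetilde{q}_0+\beta\chi(\beta^{-1}\widetilde{q}_0)\bigr)\,d\rho,\qquad \widetilde{q}_0=|\widetilde{p}|^2 .
\]
This yields the error term $\mc{O}((N^{1-\eta}\alpha)^{-1})$.

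The remaining step, which I expect to be the main obstacle, is replacing $\widetilde{q}_0$ by $q=|p|^2$ inside the integral at cost $\mc{O}(N^{-\min(\varrho,\kappa_1)\eta})$. The function $G(s)=\log(s+\beta\chi(s/\beta))$ is Lipschitz on $[0,C]$ with a $\beta$-dependent constant, because $s+\beta\chi(s/\beta)\geq\beta B$. So it suffices to bound $\|\widetilde{q}_0-q\|_{L^1(\T^{2d})}$, and since $p$ and $\widetilde{p}$ are bounded, it is enough to bound $\|\widetilde{p}-p\|_{L^1}$. Split $[0,1]^d$ into two regions:
\begin{enumerate}
\item Points at distance greater than $h^\eta$ from $\mathscr{U}_p$, and from the boundary of the cube, whose periodization creates jumps. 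On this region the mollifier sees only one Hölder piece $f_j$, so $|\widetilde{p}-p|\leq C h^{\varrho\eta}$ by \eqref{Norme Hölder des dérivées partielles par rapport à xi}, uniformly in $\xi$.
\item The complementary $h^\eta$-neighbourhood. Here $|\widetilde{p}-p|=\mc{O}(1)$, and by \eqref{Hypothèse volume des singularités épaissies} (Assumption \ref{Hypothèse théorème final - volume singularités}) its measure is $\mc{O}(h^{\eta\kappa_1})$.
\end{enumerate}
Handling the cube boundary requires adding $\partial[0,1]^d$ to $\mathscr{U}_p$. This is harmless because the volume of its $r$-neighbourhood is $\mc{O}(r)$ and $\kappa_1\leq 1$. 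Summing the two regions gives $\|\widetilde{p}-p\|_{L^1}=\mc{O}(N^{-\varrho\eta}+N^{-\kappa_1\eta})$, using $h\asymp N^{-1}$.

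All implicit constants depend on $\beta$, which is fixed, so the estimate of Lemma \ref{Estimées intermédiaires log-det} follows.
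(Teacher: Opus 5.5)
Your proposal is correct and follows essentially the same route as the paper. You conjugate by $U_{\A{1}}$, apply Lemma \ref{Lemme intermédiaire théorème central} to the rescaled symbol, and undo the change of variables. You then compare $\widetilde{q}_0=|\widetilde p|^2$ with $q=|p|^2$ by splitting into a Hölder region and an $h^\eta$-neighbourhood of $\mathscr{U}_p\cup\partial[0,1]^d$, whose volume is controlled by Assumption \ref{Hypothèse théorème final - volume singularités}. The differences are cosmetic: you use a Lipschitz bound on $\log(s+\beta\chi(s/\beta))$ instead of the Taylor--Laplace formula, and you explicitly check the ellipticity hypothesis $\widehat q\geq \beta B/2$, which the paper leaves implicit.
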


    \begin{proof}
        Recall the expression \eqref{Conditions entre alpha 1 et 2, et N - 3} of $\alpha_1$, $\alpha_2$ and $\Tilde{h}$. Using the invariance under the conjugation by unitary matrices of the determinant and applying Lemma \ref{Lemme intermédiaire théorème central}, we get 
    \begin{equation}
        \begin{split}
            \log&\det\pr{\widetilde{q}_N+\beta\chi\pr{\frac{\widetilde{q}_N}{\beta}}} \\
            &= \log\det\pr{\atc{q}_{N,\A{1},\A{2}}+\beta\chi\pr{\frac{\atc{q}_{N,\A{1},\A{2}}}{\beta}}}\\
            &= \pr{N\alpha_1^{1/2}\alpha_2^{1/2}}^{d}\int_{\mc{T}_{\A{1},\A{2}}}\log\pr{\atc{q}_0(\widetilde{\rho})+\beta\chi\pr{\frac{\atc{q}_0(\widetilde{\rho})}{\beta}}}d\widetilde{\rho}+\mc{O}\pr{N^d\pr{N^{1-\eta}\alpha}^{-1}}\\
            &= N^d\croch{\int_{\T^{2d}}\log\pr{\widetilde{q}_0(\rho)+\beta\chi\pr{\frac{\widetilde{q}_0(\rho)}{\beta}}}d\rho+\mc{O}\pr{\pr{N^{1-\eta}\alpha}^{-1}}},
        \end{split}
    \label{Estimée log-det q_N tilde - 1}
    \end{equation}
    where we performed a change of variables $(\rho_1,\rho_2)=(\alpha_1^{1/2}\widetilde{\rho}_1,\alpha_2^{1/2}\widetilde{\rho}_2)$ to obtain the last equality.\\ 
    
    \mm Next, we estimate the integral
    \begin{equation}
        \int_{\T^{2d}}\log\pr{\widetilde{q}_0(\rho)+\beta\chi\pr{\frac{\widetilde{q}_0(\rho)}{\beta}}}d\rho.
    \label{Première intégrale à estimer en fonction de q_frak}
    \end{equation}
    Denote $\phi_\beta:\R\ni x\mapsto x+\beta\chi(\beta^{-1}x)$ and notice that by \eqref{Inégalité sur x+chi(x)}, we have $\phi_\beta(x)\geq \beta B$ for $x\in [0,+\infty[$. Since $\beta$ is independent of $N$ and $\chi$ is compactly supported, $\log(\phi_\beta)'=\frac{\phi_\beta'}{\phi_\beta}$ is bounded uniformly in $h$. Using the Taylor-Laplace formula and integrating over $\T^{2d}$ provides
    \begin{equation}\label{Démo prop 1 du théorème central - 1}
        \begin{split}
            \int_{\T^{2d}}\log(\phi_{\beta}&(\widetilde{q}_0(\rho)))d\rho = \int_{\T^{2d}}\log\pr{\phi_{\beta}(q(\rho))}d\rho+ \Phi\pr{\T^{2d}},
        \end{split}
    \end{equation}
    where for all Borel subset $A$ of $\T^{2d}$ we denote
    \begin{equation*}
        \Phi(A):=\int_{A}(q(\rho)-\widetilde{q}_0(\rho))\left[\int_{s=0}^1\log\!\croch{(\phi_{\beta})'\Bigl((1-s)q(\rho)+s\widetilde{q}_0(\rho)\Bigl)}ds\right]d\rho.
    \end{equation*}
    \m\\
    Recall from \eqref{Ensemble des singularités} that $\mathscr{U}_p$ is the set of potential singularities of $p$. We then split the integral $\Phi\pr{\T^{2d}}$ of \eqref{Démo prop 1 du théorème central - 1} as follows:
    \begin{equation}\label{Décomposition de Phi(T^2d)}
        \Phi\pr{\T^{2d}}=\Phi\pr{\mc{B}_h}+\Phi\pr{\mc{A}_h}
    \end{equation}
    where
    \begin{equation}
        \mc{B}_h:=\Bigl(\Bigl[\partial\pr{[0,1]^d}\cup \mathscr{U}_p+B(0,2h^\eta)\Bigl]\cap [0,1]^d\Bigl)\times\T^d\m\m\text{and}\m\m \mc{A}_h:=\T^{2d}\setminus \mc{B}_h.
    \label{Définition de B_h}
    \end{equation}
 
    By \eqref{Definition de p tilde}, the triangular inequality gives 
    \begin{equation}
        \norme{\widetilde{p}}_{L^\infty(\R^d\times \T^d)}\leq \norme{p}_{L^\infty(\R^d\times \T^d)}\norme{\psi_h}_{L^1(\R^d_x)}=\norme{p}_{L^\infty(\R^d\times \T^d)}\norme{\psi}_{L^1(\R^d_x)}.            
    \label{Majoration norme infinie de p tilde}
    \end{equation}
    By \eqref{Dvpt de Borel pour le théorème central}, for all $\rho\in\R^d\times \T^d$,
    \begin{equation}
        \begin{split}
            \bb{q(\rho)-\widetilde{q}_0(\rho)}&=\bb{\bb{p(\rho)}^2-\bb{\widetilde{p}(\rho)}^2}\\
            &\leq \pr{\norme{p}_{L^\infty(\R^d\times \T^d)}+\norme{\widetilde{p}}_{L^\infty(\R^d\times \T^d)}}\bb{p(\rho)-\widetilde{p}(\rho)}
        \end{split}
    \label{Estimées de q moins q_0 tilde}
    \end{equation}
    and $\norme{p}_{L^\infty(\R^d\times \T^d)}+\norme{\widetilde{p}}_{L^\infty(\R^d\times \T^d)}= \mc{O}(1)$ is uniformly in $h$ by \eqref{Majoration norme infinie de p tilde}. Furthermore, recall that $p\in \mathscr{C}^{0,\varrho}_{\mathrm{pw}}S_d$ (cf Definition \ref{Définition espace d'intérêt}). So remembering \eqref{Proprios de la fonction mollifiante - 1}, \eqref{Proprios de la fonction mollifiante - 2}, \eqref{Definition de p tilde} and \eqref{Hypothèse expression de h comme l'inverse de N - 2}, we have for all $(x,\xi)\in \mc{A}_h$, that
    \begin{equation}
        \begin{split}
            \Bigl|\widetilde{p}(x,\xi)-p(x,\xi)\Bigl| &= \bb{\int_{\R^d}\Bigl[p(x-h^\eta y,\xi)-p(x,\xi)\Bigl]\psi(y)dy}\\
            &\leq \int_{\R^d}\Bigl|p(x-h^\eta y,\xi)-p(x,\xi)\Bigl|\bb{\psi(y)}dy\\
            &\leq \mc{O}(h^{\varrho\eta})=\mc{O}(N^{-\varrho\eta}).
        \end{split}
    \label{Estimées p moins p tilde sur A_h}
    \end{equation}
    Thus we deduce from \eqref{Estimées de q moins q_0 tilde}, \eqref{Estimées p moins p tilde sur A_h} and the fact that $\log(\phi_\beta)'$ is uniformly bounded in $h$, so in $N$, that 
    \begin{equation}
        \bb{\Phi(\mc{A}_h)}=\mathrm{Vol}(\mc{A}_h)\times\mc{O}\pr{h^{\varrho\eta}}=\mc{O}(h^{\varrho\eta})=\mc{O}(N^{-\varrho\eta}).
    \label{Estimée log-det q_N tilde - 2}
    \end{equation}
    Next, using \eqref{Hypothèse volume des singularités épaissies} and the fact that $\widetilde{q}_0$, $q$ and $\log(\phi_{\beta})'$ are bounded uniformly in $N$, we get
    \begin{equation}
        \bb{\Phi(\mc{B}_h)}\leq C\mathrm{Vol}(\mc{B}_h)=\mc{O}(h^{\eta{\kappa_1}})=\mc{O}(N^{-\eta{\kappa_1}}).
    \label{Estimée log-det q_N tilde - 3}
    \end{equation}
   The result derives then from \eqref{Estimée log-det q_N tilde - 1}, \eqref{Démo prop 1 du théorème central - 1}, \eqref{Décomposition de Phi(T^2d)}, \eqref{Estimée log-det q_N tilde - 2} and \eqref{Estimée log-det q_N tilde - 3}.
    \end{proof}
    
    \mm 2. Let us consider $0<h^{1-2\eta}\ll\alpha\leq t\leq \beta$. We introduce the same tools as \eqref{Conditions entre alpha 1 et 2, et N - 3}, \eqref{Scaled version of q atchek} and \eqref{q_nu_tilde en version rescalée pour donner q_nu atchek} with $\alpha$ replaced by $t$. That is, we set 
    \begin{equation}\label{Scaling en fonction de t}
        t_{1}=h^{2\eta}t,\m\m\m\m\m t_{2}=t \m\m\m\m\m\text{and}\m\m\m\m\m \Tilde{h}=\frac{h}{\sqrt{t_1t_2}}=\frac{h^{1-\eta}}{t}.
    \end{equation}
    and define  
    \begin{equation}
        \atc{q}(\widetilde{x},\widetilde{\xi}):=\widetilde{q}(t_1^{1/2}\widetilde{x},t_2^{1/2}\widetilde{\xi})
    \label{Scaling de q atchek en fonction de t}
    \end{equation}
    and for all $\nu\in \N$
    \begin{equation}
        \atc{q}_\nu(\widetilde{x},\widetilde{\xi}):=(t_1t_2)^{\nu/2}\widetilde{q}_\nu(t_1^{1/2}\widetilde{x},t_2^{1/2}\widetilde{\xi})
    \label{Scaling de q_0 atchek en fonction de t}
    \end{equation}
    We also define $\atc{m}$ as \eqref{Fonction d'ordre théorème central} with $\alpha$ replaced by $t$, that is
    \begin{equation}
        \textit{\v{m}}(\widetilde{\rho}):=1+\frac{\widetilde{q}_0\pr{h^\eta t^{1/2}\widetilde{\rho}_1,t^{1/2}\widetilde{\rho}_2}}{t}\geq 1,\mm \widetilde{\rho}\in \mc{T}_{t_1,t_2}
    \label{Fonction d'ordre théorème central - version en t}
    \end{equation}
    (see Notations \ref{Notation pour le tore biscornu}) so that, as for \eqref{Le fait que q atchek soit dans l'espace de symboles rescalés} and \eqref{q_nu_tilde en version rescalée pour donner q_nu atchek}, we have
    \begin{equation}
        \frac{1}{t}\atc{q}\in S(\atc{m},t_1,t_2) \mm\text{and}\mm \frac{1}{t}\atc{q}_\nu\in S(\atc{m},t_1,t_2),\m\m \nu\in \N. 
    \end{equation}
    \mm In anticipation of the next estimates, we prove the following result. 
    \begin{lem}\label{Proposition décomposition de la trace en plusieurs termes}
        Let $\widetilde{m}$ be  an order function satisfying \eqref{Hypothèse fonction d'ordre Mildred} with $\alpha$ replaced by $t$. Let $\psi\in \mathscr{C}^\infty_c(\R)$ and consider $0<h^{1-2\eta}\ll\alpha\leq t\leq \beta$ with notations \eqref{Scaling en fonction de t}, \eqref{Scaling de q atchek en fonction de t}, \eqref{Scaling de q_0 atchek en fonction de t} and \eqref{Fonction d'ordre théorème central - version en t}. Then, under the assumptions of Theorem \ref{Théorème central}, for $a\in \{0,1\}$ the following decomposition holds:
        \begin{equation}
        \mathrm{tr}\pr{\frac{1}{t^a}\psi\pr{\frac{\atc{q}_{N,t_1,t_2}}{t}}}=N^d\Bigl(I^a_t+II^a_t+III^a_t+IV^a_t\Bigl)
        \label{Décomposition en quatre termes théorème central bis} 
    \end{equation}
    where for $k\geq 1$,
    \begin{equation}
        \begin{split}
            I^a_t &:=\frac{(h^\eta t)^d}{t^a}\int_{\mc{T}_{t_1,t_2}}\psi\pr{\frac{\atc{q}_0(\widetilde{\rho})}{t}}d\widetilde{\rho}\\
            II^a_t &:=\mc{O}_M(1)\frac{(h^\eta t)^d}{t^a}\Tilde{h}\int_{\mc{T}_{t_1,t_2}}\widetilde{\chi}\pr{\frac{\atc{q}_0(\widetilde{\rho})}{t}}d\widetilde{\rho}\\
            III^a_t &:= \mc{O}_M(1)\frac{(h^\eta t)^d}{t^a}\Tilde{h}^M\int_{\mc{T}_{t_1,t_2}}\widetilde{m}(\widetilde{\rho})d\widetilde{\rho}\\
            IV^a_t &:= \mc{O}_k(1)\frac{(h^\eta t)^d}{t^a}\pr{Nh^\eta t^{1/2}}^{-k}\int_{\mc{T}_{t_1,t_2}}\widetilde{m}(\widetilde{\rho})d\widetilde{\rho}.
        \end{split}
    \label{Décomposition en quatre termes théorème central}
    \end{equation}
    and where $\widetilde{\chi}\in \mathscr{C}^{\infty}_c(\R;[0,1])$ is such that $\widetilde{\chi}\equiv 1$ on a neighborhood of $[0,\sup\mathrm{supp}(\psi)]$.
    \end{lem}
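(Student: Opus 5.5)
We would obtain the decomposition by combining the functional calculus of Proposition \ref{Proposition formule Hellfer-Sjös}, in its sharpened form Proposition \ref{Proposition Mildred}, with the trace formula of Proposition \ref{Trace result}. With $\alpha$ replaced by $t$, the symbol $t^{-1}\atc{q}$ lies in $S(\atc{m},t_1,t_2)$, admits the expansion \eqref{Dvpt de Borel de alpha-1 q atchek}, and $t^{-1}\atc{q}+i$ is elliptic. Proposition \ref{Proposition formule Hellfer-Sjös} therefore gives a symbol $f$ with
\begin{equation*}
\psi\pr{t^{-1}\atc{q}_{N,t_1,t_2}}=f_{N,t_1,t_2},\qquad f\sim\sum_\nu\Tilde{h}^\nu f_\nu ,
\end{equation*}
where $f_0=\psi(\atc{q}_0/t)$ and each $f_\nu$, $\nu\geq1$, is given by \eqref{Proposition formule Hellfer-Sjös appliquée - formule 3}. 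Proposition \ref{Proposition Mildred} then lets us take the remainders in $S(\widetilde{m},t_1,t_2)$ instead of $S(1/\atc{m},t_1,t_2)$. So for every $M$,
\begin{equation*}
f=f_0+\Tilde{h}\sum_{\nu=1}^{M-1}\Tilde{h}^{\nu-1}f_\nu+\Tilde{h}^M R_M,\qquad R_M\in S(\widetilde{m},t_1,t_2).
\end{equation*}

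Next we take traces using Proposition \ref{Trace result}, applied to $f$ and divided by $t^a$. Since $h=(2\pi N)^{-1}$, the prefactor is $(Nt_1^{1/2}t_2^{1/2})^d=(Nh^\eta t)^d=c\,N^d(h^\eta t)^d$ up to a harmless constant. The leading term is therefore $N^d I^a_t$. Each $f_\nu$ with $\nu\geq 1$ is a sum of $g_k\,\psi^{(k)}(\atc{q}_0/t)$ with $g_k\in S(1,t_1,t_2)$, so $|f_\nu|\leq C\,\widetilde{\chi}(\atc{q}_0/t)$, where $\widetilde{\chi}\equiv1$ near $[0,\sup\mathrm{supp}\,\psi]$. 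Integrating these terms gives $N^d\,II^a_t$, and the factor $\Tilde{h}^{\nu-1}\leq1$ is absorbed into the constant. The remainder satisfies $|R_M|\leq C\widetilde{m}$, and its integral gives $N^d\,III^a_t$.

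The error term of Proposition \ref{Trace result} produces $IV^a_t$. Here we need the refined form
\begin{equation*}
r_N=N^d\sum_{(q,r)\neq0}c_{qN,rN}(f),
\end{equation*}
with the coefficients estimated by \eqref{Estimation des coeffs de Fourier en les deux variables - 2}, where $t_1=h^{2\eta}t\leq t_2=t$. This yields a bound $\mc{O}_k\big((Nt_1^{1/2})^{-k}\big)=\mc{O}_k\big((Nh^\eta t^{1/2})^{-k}\big)$ relative to $(Nh^\eta t)^d$. In that estimate the $L^1$ norm of $f$ would, however, be taken with the derivatives of $f$ controlled by $\widetilde{m}$, giving the factor $\int\widetilde{m}$.

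The main obstacle is this last point. The $L^1$ norms of $\partial^\beta_x\partial^\gamma_\xi f$ must be bounded by $C\int_{\mc{T}_{t_1,t_2}}\widetilde{m}$ with constants uniform in $t\in[\alpha,\beta]$ and $N$. This requires the symbol estimates of Proposition \ref{Proposition Mildred} to be uniform in the scaling parameter $t$. We would check this by following Hager--Sjöstrand's proof: the constants depend only on the symbol seminorms of $t^{-1}\atc{q}$ in $S(\atc{m},t_1,t_2)$, and these are uniform because of the estimates \eqref{Estimées de symboles pour la fonction d'ordre}. It also requires $\Tilde{h}=h^{1-\eta}/t\ll1$, which holds since $t\geq\alpha\gg h^{1-2\eta}$.
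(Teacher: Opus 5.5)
Your proposal is correct and follows essentially the same route as the paper. Proposition \ref{Proposition Mildred} gives $f\in S(\widetilde{m},t_1,t_2)$ with $\psi(t^{-1}\atc{q}_{N,t_1,t_2})=f_{N,t_1,t_2}$, and the expansion then yields $f=f_0+\mc{O}(\Tilde{h})\widetilde{\chi}(t^{-1}\atc{q}_0)+\mc{O}(\Tilde{h}^M)\widetilde{m}$; Proposition \ref{Trace result} turns the main integral into $I$--$III$ (its prefactor is exactly $(Nh^\eta t)^d=N^d(h^\eta t)^d$, with no extra constant) and the remainder $r_N$ into $IV$, while the uniformity in $t$ that you single out is simply asserted in the paper.
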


    \begin{proof}
    By Proposition \ref{Proposition Mildred}, there exists $f\in S(\widetilde{m},t_1,t_2)$ such that 
    \begin{equation}
        \begin{split}
            \psi\pr{t^{-1}\atc{q}_{N,t_1,t_2}}&=f_{N,t_1,t_2},\\
            f\sim \sum_{\nu\in \N}\Tilde{h}^\nu f_\nu \m\m\text{in} \m\m S(\widetilde{m},& t_1,t_2), \m\m\m\m f_\nu\in S(\widetilde{m},t_1,t_2),
        \end{split}
    \label{Utilisation prop de Mildred dans le théorème central}
    \end{equation}
    where $f_0=\psi(t^{-1} \atc{q}_0)\in S(\widetilde{m},t_1,t_2)$ and for $\nu\geq 1$, $f_\nu$ satisfies \eqref{Proposition formule Hellfer-Sjös appliquée - formule 3} with $\alpha$ replaced by $t$. Furthermore, the symbol estimates of $f_\nu$ are uniform with respect to $t$, so for any fixed $M\in \N^*$, we can write
     \begin{equation*}
         f - \pr{f_0+\Tilde{h}\sum_{\nu=1}^{M-1}\Tilde{h}^{\nu-1} f_\nu}\in \Tilde{h}^M S(\widetilde{m},t_1,t_2)
    \end{equation*}
    so, since $\Tilde{h}\leq 1$ and in view of \eqref{Proposition formule Hellfer-Sjös appliquée - formule 3}, 
    \begin{equation}
        \begin{split}
            \bb{f-f_0} &\leq  \mc{O}(\Tilde{h})\sum_{\nu=1}^{M-1} \bb{f_\nu}+\mc{O}\pr{\Tilde{h}^M}\widetilde{m}\\
            &\leq \mc{O}(\Tilde{h})\sum_{\nu=1}^{2M-2}\bb{\psi^{(k)}(t^{-1}\atc{q}_0)}+\mc{O}\pr{\Tilde{h}^M}\widetilde{m}\\
            &\leq\mc{O}(\Tilde{h})\widetilde{\chi}(t^{-1}\atc{q}_0)+\mc{O}\pr{\Tilde{h}^M}\widetilde{m}
        \end{split}
    \label{Estimation f_t en fonction de f_0, de chi tilde et de m tilde}
    \end{equation}
    where $\widetilde{\chi}\in \mathscr{C}^\infty_c(\R,[0,1])$ is such that $\widetilde{\chi}\equiv 1$ on a neighborhood of $[0,\sup(\mathrm{supp}(\psi))]$. In fact, \eqref{Estimation f_t en fonction de f_0, de chi tilde et de m tilde} allows us to write
    \begin{equation}
        f=f_0+\mc{O}(\Tilde{h})\widetilde{\chi}(t^{-1}\atc{q}_0)+\mc{O}\pr{\Tilde{h}^M}\widetilde{m},
    \end{equation}
    which, thanks to \eqref{Utilisation prop de Mildred dans le théorème central} and Proposition \ref{Trace result}, leads us to the decomposition \eqref{Décomposition en quatre termes théorème central bis}.
    \end{proof}

    \mm We now estimate each term of \eqref{Décomposition en quatre termes théorème central}. In the following, we choose $k=M$ so that for any $a\in \{0,1\}$,
    \begin{equation}
        IV_t^a=\mc{O}(III_t^a).
    \end{equation}

    \begin{lem}
        Under the assumptions of Lemma \ref{Proposition décomposition de la trace en plusieurs termes}, we have
        \begin{equation}
            \begin{split}
                \int_{\alpha}^\beta I^1_tdt = \int_{\T^{2d}}\int_{\alpha}^\beta \frac{1}{t}\psi\pr{\frac{q(\rho)}{t}}dtd\rho+\mc{O}\pr{N^{-\varrho\eta
        }\alpha^{-1}}+\mc{O}\pr{N^{-\eta{\kappa_1}}\bb{\log(\alpha)}}
        \end{split}
    \label{Théorème central - terme 1 - integré}
    \end{equation}
    and 
    \begin{equation}
    \begin{split}
        I_\alpha^0 &= \int_{\T^{2d}}\psi\pr{\frac{q(\rho)}{\alpha}}d\rho+\mc{O}(N^{-\varrho\eta
        }\alpha^{-1})+\mc{O}(N^{-\eta{\kappa_1}}).
    \end{split}
\label{Théorème central - terme 1 - pas integré}
\end{equation}
    \end{lem}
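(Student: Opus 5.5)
First undo the scaling in $I^a_t$. With $t_1=h^{2\eta}t$, $t_2=t$, the change of variables $\rho=(t_1^{1/2}\widetilde{\rho}_1,t_2^{1/2}\widetilde{\rho}_2)$ maps $\mc{T}_{t_1,t_2}$ onto $\T^{2d}$ with Jacobian $(t_1t_2)^{d/2}=(h^\eta t)^d$. By \eqref{Scaling de q_0 atchek en fonction de t} we have $\atc{q}_0(\widetilde{\rho})=\widetilde{q}_0(\rho)$. Hence
\begin{equation*}
    I^a_t=t^{-a}\int_{\T^{2d}}\psi\pr{\frac{\widetilde{q}_0(\rho)}{t}}d\rho,
\end{equation*}
and the task is to replace $\widetilde{q}_0=\bb{\widetilde{p}}^2$ by $q=\bb{p}^2$. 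As in the proof of Lemma \ref{Estimées intermédiaires log-det}, split $\T^{2d}=\mc{A}_h\sqcup\mc{B}_h$, with $\mc{B}_h$ given by \eqref{Définition de B_h}.

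On $\mc{B}_h$, both integrands are bounded by $t^{-a}\norme{\psi}_\infty$. Assumption \eqref{Hypothèse volume des singularités épaissies}, together with the trivial bound $\mc{O}(h^\eta)$ for the neighbourhood of $\partial([0,1]^d)$, gives $\mathrm{Vol}(\mc{B}_h)=\mc{O}(h^{\eta\kappa_1})=\mc{O}(N^{-\eta\kappa_1})$. For $a=0$ and $t=\alpha$ this yields an error $\mc{O}(N^{-\eta\kappa_1})$. For $a=1$, integrating $t^{-1}$ over $[\alpha,\beta]$ gives $\log(\beta/\alpha)=\mc{O}(\bb{\log\alpha})$, since $\beta$ is fixed, so the error is $\mc{O}(N^{-\eta\kappa_1}\bb{\log\alpha})$.

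On $\mc{A}_h$, use \eqref{Estimées p moins p tilde sur A_h} and \eqref{Estimées de q moins q_0 tilde} to get $\bb{q-\widetilde{q}_0}=\mc{O}(N^{-\varrho\eta})$ uniformly. By the mean value theorem,
\begin{equation*}
    \bb{\psi\pr{\widetilde{q}_0/t}-\psi\pr{q/t}}\leq \norme{\psi'}_\infty t^{-1}\bb{q-\widetilde{q}_0}=\mc{O}(N^{-\varrho\eta}t^{-1}).
\end{equation*}
For $a=0$, $t=\alpha$, this gives the error $\mc{O}(N^{-\varrho\eta}\alpha^{-1})$ in \eqref{Théorème central - terme 1 - pas integré}.

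For $a=1$, the same pointwise bound gives an integrand error of $\mc{O}(N^{-\varrho\eta}t^{-2})$, and $\int_\alpha^\beta t^{-2}dt\leq\alpha^{-1}$, so the error is $\mc{O}(N^{-\varrho\eta}\alpha^{-1})$. Finally, Fubini (legitimate since everything is bounded on $[\alpha,\beta]\times\T^{2d}$) lets us write the main term as $\int_{\T^{2d}}\int_\alpha^\beta t^{-1}\psi(q/t)\,dt\,d\rho$. Putting the pieces together gives \eqref{Théorème central - terme 1 - integré}.

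The only point needing care is the $\mc{A}_h$ estimate. On $\mc{A}_h$, the ball $B(x,h^\eta)$ containing the support of the mollifier stays inside a single $U_j$ (up to periodic translation), because the boundary of $[0,1]^d$ is included in the thickened set. Hence the Hölder bound \eqref{Norme Hölder des dérivées partielles par rapport à xi} applies. This is exactly the argument already used in \eqref{Estimées p moins p tilde sur A_h}, so no new obstacle arises.
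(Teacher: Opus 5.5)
Your proof is correct and has the same structure as the paper's proof:
\begin{itemize}
\item undo the scaling to get $I^a_t=t^{-a}\int_{\T^{2d}}\psi(\widetilde q_0(\rho)/t)\,d\rho$;
\item split $\T^{2d}=\mathcal{A}_h\sqcup\mathcal{B}_h$;
\item on $\mathcal{A}_h$, use the H\"older bound $\bb{q-\widetilde q_0}=\mathcal{O}(N^{-\varrho\eta})$ with the mean value theorem;
\item on $\mathcal{B}_h$, use its volume;
\item integrate $t^{-1}$ and $t^{-2}$ over $[\alpha,\beta]$.
\end{itemize}

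The one real difference is how you handle $\mathcal{B}_h$. The paper writes the difference as the Taylor remainder $\Phi(\mathcal{B}_h)$. Its prefactor $(\widetilde q_0-q)/t$ is not small there and can be of size $\alpha^{-1}$. To get a $t$-uniform bound $C\norme{\psi'}_{L^\infty(\R)}\mathrm{L}(\mathcal{B}_h)$, the paper runs a case analysis over the sets $\mathscr{E}_t$, $\mathscr{F}_t$, $\mathscr{G}^h_{t,1}$, $\mathscr{G}^h_{t,2}$, tracking where the segment between $q/t$ and $\widetilde q_0/t$ leaves $\mathrm{supp}\,\psi$.

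Your pointwise bound $\bb{\psi(\widetilde q_0/t)-\psi(q/t)}\le 2\norme{\psi}_{L^\infty(\R)}$ gives the same $t$-uniform estimate $\mathcal{O}(\mathrm{L}(\mathcal{B}_h))=\mathcal{O}(N^{-\eta\kappa_1})$ at once. Nothing is lost, so your argument is simply shorter.

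Your remark that the $2h^\eta$-neighbourhood of $\partial([0,1]^d)$ contributes only $\mathcal{O}(h^\eta)\le\mathcal{O}(h^{\eta\kappa_1})$ (since $\kappa_1\le 1$) is also worth keeping. The paper invokes the thickened-singularity assumption for all of $\mathcal{B}_h$ without addressing this part.
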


    \begin{proof}
    We perform the change of variables $(\rho_1,\rho_2)=\pr{t_1^{1/2}\widetilde{\rho}_1,t_2^{1/2}\widetilde{\rho}_2}$ which yields
    \begin{equation}
        I_t^a=\int_{\T^{2d}}\frac{1}{t^a}\psi\pr{\frac{\widetilde{q}_0(\rho)}{t}}d\rho.
    \label{Definition de I_t^a - 1}
    \end{equation}
    Applying the Taylor Laplace formula gives
    \begin{equation}\label{Taylor pour le terme d'erreur dans le théorème central - 1}
        \begin{split}
            \int_{\T^{2d}}\psi&\pr{\frac{\widetilde{q}_0(\rho)}{t}}d\rho=\int_{\T^{2d}}\psi\pr{\frac{q(\rho)}{t}}d\rho+\Phi\pr{\T^{2d}},
        \end{split}
    \end{equation}
    where for all Borel set $A\subset\T^{2d}$, we denote
    \begin{equation*}
        \Phi(A):=\int_{ A}\pr{\frac{\widetilde{q}_0(\rho)-q(\rho)}{t}}\int_{s=0}^1 \psi'\pr{(1-s)\frac{\widetilde{q}_0(\rho)}{t}+s\frac{q(\rho)}{t}}dsd\rho
        .
    \end{equation*}
    As \eqref{Décomposition de Phi(T^2d)}, to estimate $\Phi\pr{\T^{2d}}$, we consider the following decomposition
    \begin{equation}
        \Phi\pr{\T^{2d}}=\Phi(\mc{B}_h)+\Phi(\mc{A}_h)
    \end{equation}
    with $\mc{B}_h$ and $\mc{A}_h$ given in \eqref{Définition de B_h}.\\
    
    On the one hand, by \eqref{Estimées de q moins q_0 tilde} and \eqref{Estimées p moins p tilde sur A_h}, we get
    \begin{equation*}
        \begin{split}
            \norme{q-\widetilde{q}_0}_{L^\infty(\mc{A}_h)} =\mc{O}(h^{\varrho\eta}),
        \end{split}
    \end{equation*}
    which, using that $\psi$ is compactly supported ensures that
    \begin{equation}\label{Estimée erreur formule de TRI - sur A_h}
        \Phi\pr{\mc{A}_h}=\frac{\mc{O}\pr{h^{\varrho\eta}}}{t}.
    \end{equation}
    \m\\
    On the other hand, setting $C:=\sup(\mathrm{supp}(\psi))$, we introduce the two sets:
\begin{align*}
    \mathscr{E}_{t}&:=\{\rho\in \T^{2d}\m|\m \min\pr{\widetilde{q}_0(\rho),\widetilde{q}(\rho)}\leq Ct\},\\
    \mathscr{F}_{t}&:=\{\rho\in \T^{2d}\m|\m \max\pr{\widetilde{q}_0(\rho),\widetilde{q}(\rho)}\leq Ct\}.
\end{align*}
Notice that $\Phi\pr{\mc{B}_h}=\Phi\pr{\mc{B}_h\cap \mathscr{E}_{t}}$ and that
\begin{equation*}
    \Phi\pr{\mc{B}_h\cap \mathscr{E}_{t}}=\Phi\pr{\mc{B}_h\cap \mathscr{E}_{t}\cap \mathscr{F}_{t}}+\Phi\pr{\mc{B}_h\cap \mathscr{E}_{t}\cap \mathscr{F}_{t}^c}.
\end{equation*}
\mm Firstly, consider $\Phi\pr{\mc{B}_h\cap \mathscr{E}_{t}\cap \mathscr{F}_{t}}$. By definition, for all $\rho\in \mc{B}_h\cap \mathscr{F}_{t}$, $s\in [0,1]$, 
\begin{equation*}
    (1-s)\frac{q(\rho)}{t}+s\frac{\widetilde{q}_0(\rho)}{t}\in \mathrm{supp}\leq C.
\end{equation*}
Furthermore, for all $\rho\in \mc{B}_h\cap \mathscr{F}_{t}$,
\begin{equation*}
    -C\leq -\frac{\widetilde{q}_0(\rho)}{t}\leq \frac{q(\rho)-\widetilde{q}_0(\rho)}{t}\leq \frac{q(\rho)}{t}\leq C.
\end{equation*}
We deduce that 
\begin{equation}\label{Une des majorations de Phi(ensemble)}
    \bb{\Phi\pr{\mc{B}_h\cap \mathscr{E}_{t}\cap \mathscr{F}_{t}}}\leq C\norme{\psi'}_{L^\infty(\R)}\mathrm{L}\pr{\mc{B}_h\cap \mathscr{F}_h}.
\end{equation}
\m\\
\mm Secondly, consider $\Phi\pr{\mc{B}_h\cap \mathscr{E}_{t}\cap \mathscr{F}_{t}^c}$. Let $\rho\in \mathscr{E}_{t}\cap \mathscr{F}_{t}^c$. Since,
\begin{equation*}
    \frac{\min(q(\rho),\widetilde{q}_0(\rho))}{t}\leq C <\frac{\max(q(\rho),\widetilde{q}_0(\rho))}{t},
\end{equation*}
there exists $s_{\rho}\in [0,1]$ such that 
\begin{equation*}
    (1-s_{\rho})\frac{q(\rho)}{t}+s_{\rho}\frac{\widetilde{q}_0(\rho)}{t}=C.
\end{equation*}
We define new sets given by,
\begin{align*}
    \mathscr{G}^h_{t,1}&:=\{\rho\in \mc{B}_h\cap \mathscr{E}_{t}\cap \mathscr{F}_{t}^c\m |\m q(\rho)>\widetilde{q}_0(\rho)\}\\
    \mathscr{G}^h_{t,2}&:=\{\rho\in \mc{B}_h\cap \mathscr{E}_{t}\cap \mathscr{F}_{t}^c\m |\m q(\rho)<\widetilde{q}_0(\rho)\}.
\end{align*}
so that $\mathscr{G}^h_{t,1}\sqcup \mathscr{G}^h_{t,2}=\mc{B}_h\cap \mathscr{E}_{t}\cap \mathscr{F}_{t}^c$ and the previous $s_{\rho}$ is unique. Its value is
\begin{equation}\label{Expression de s_rho}
    s_{\rho}=\frac{q(\rho)-Ct}{q(\rho)-\widetilde{q}_0(\rho)}.
\end{equation}
Notice that
\begin{equation*}
    \left\{\begin{array}{cc}
         & \rho\in \mathscr{G}^h_{t,1} \m\text{and}\m s<s_{\rho}\Longrightarrow \psi'\pr{(1-s)\frac{q(\rho)}{t}+s\frac{\widetilde{q}_0(\rho)}{t}}=0, \\
         &  \rho\in \mathscr{G}^h_{t,2} \m\text{and}\m s>s_{\rho}\Longrightarrow \psi'\pr{(1-s)\frac{q(\rho)}{t}+s\frac{\widetilde{q}_0(\rho)}{t}}=0.
    \end{array}\right.
\end{equation*}
Thus,
\begin{equation*}
    \begin{split}
        \Phi\pr{\mc{B}_h\cap \mathscr{E}_{t}\cap \mathscr{F}_{t}^c}&=\Phi\pr{\mathscr{G}^h_{t,1}}+\Phi\pr{\mathscr{G}^h_{t,2}}\\
        &=\int_{\mathscr{G}^h_{t,1}}\pr{\frac{q(\rho)-\widetilde{q}_0(\rho)}{t}}\int_{s_{\rho}}^1\psi'\pr{(1-s)\frac{q(\rho)}{t}+s\frac{\widetilde{q}_0(\rho)}{t}}dsd\rho\\
        &\m\m\m\m +\int_{\mathscr{G}^h_{t,2}} \pr{\frac{q(\rho)-\widetilde{q}_0(\rho)}{t}}\int_{0}^{s_{\rho}}\psi'\pr{(1-s)\frac{q(\rho)}{t}+s\frac{\widetilde{q}_0(\rho)}{t}}dsd\rho.
    \end{split}
\end{equation*}
Using \eqref{Expression de s_rho}, that $\widetilde{q}_0=\bb{\widetilde{p}}^2\geq 0$ and $q=\bb{p}^2\geq 0$, we get
\begin{equation}\label{Une des majorations de Phi(ensemble) bis}
    \begin{split}
        |\Phi&(\mc{B}_h\cap \mathscr{E}_{t}\cap \mathscr{F}_{t}^c)|\\
        &\leq \norme{\psi'}_{L^\infty(\R)}\pr{\int_{\mathscr{G}^h_{t,1}}\bb{\frac{q(\rho)-\widetilde{q}_0(\rho)}{t}}(1-s_{\rho})d\rho+\int_{\mathscr{G}^h_{t,2}}\bb{\frac{q(\rho)-\widetilde{q}_0(\rho)}{t}}s_{\rho}d\rho}\\
        &= \norme{\psi'}_{L^\infty(\R)}\pr{C\mathrm{L}\pr{\mathscr{G}^h_{t,1}}+C\mathrm{L}\pr{\mathscr{G}^h_{t,2}}-\frac{1}{t}\!\croch{\int_{\mathscr{G}^h_{t,1}}\widetilde{q}_0(\rho)d\rho+\int_{\mathscr{G}^h_{t,2}}q(\rho)d\rho}}\\
        &\leq C\norme{\psi'}_{L^\infty(\R)}\pr{\mathrm{L}\pr{\mathscr{G}^h_{t,1}}+\mathrm{L}\pr{\mathscr{G}^h_{t,2}}}\\
        &= C\norme{\psi'}_{L^\infty(\R)}\mathrm{L}\pr{\mc{B}_h\cap \mathscr{E}_{t}\cap \mathscr{F}_{t}^c}. 
    \end{split}
\end{equation}

Gathering \eqref{Une des majorations de Phi(ensemble)} and \eqref{Une des majorations de Phi(ensemble) bis}, and using \eqref{Hypothèse volume des singularités épaissies}, we get 
\begin{equation}
    \begin{split}
        \bb{\Phi\pr{\mc{B}_h}}&\leq \bb{\Phi\pr{\mc{B}_h\cap \mathscr{E}_{t}\cap \mathscr{F}_{t}^c}}+\bb{\Phi\pr{\mc{B}_h\cap \mathscr{E}_{t}\cap \mathscr{F}_{t}}}\\
        &\leq C\norme{\psi'}_{L^\infty(\R)}\pr{\mathrm{L}\pr{\mc{B}_h\cap \mathscr{F}_h}+\mathrm{L}\pr{\mc{B}_h\cap \mathscr{E}_{t}\cap \mathscr{F}_{t}^c}}\\
        &\leq C\norme{\psi'}_{L^\infty(\R)}\mathrm{L}\pr{\mc{B}_h}=\mc{O}(h^{\eta{\kappa_1}})
    \end{split}
\label{Estimée erreur formule de TRI - sur B_h}
\end{equation}
uniformly with respect to $t$.\\

Finally, remembering \eqref{Hypothèse expression de h comme l'inverse de N - 2} and combining \eqref{Definition de I_t^a - 1} and \eqref{Taylor pour le terme d'erreur dans le théorème central - 1}, with \eqref{Estimée erreur formule de TRI - sur A_h} and \eqref{Estimée erreur formule de TRI - sur B_h}, we obtain \eqref{Théorème central - terme 1 - pas integré}, but we also have 
\begin{equation*}
    \begin{split}
        \int_{t=\alpha}^\beta I^1_tdt &= \int_{\T^{2d}}\int_{t=\alpha}^\beta \frac{1}{t}\psi\pr{\frac{q(\rho)}{t}}dtd\rho+\int_{t=\alpha}^\beta\frac{1}{t}\croch{\frac{\mc{O}(h^{\varrho\eta
        })}{t}+\mc{O}(h^{\eta{\kappa_1}})}dt\\
        &= \int_{\T^{2d}}\int_{t=\alpha}^\beta \frac{1}{t}\psi\pr{\frac{q(\rho)}{t}}dtd\rho+\mc{O}\pr{N^{-\varrho\eta
        }\alpha^{-1}}+\mc{O}\pr{N^{-\eta{\kappa_1}}\bb{\log(\alpha)}}.\qedhere
    \end{split}
\end{equation*}
\end{proof}

\begin{lem}
    Under the assumptions of Lemma \ref{Proposition décomposition de la trace en plusieurs termes}, we have
        \begin{equation}
    \begin{split}
        \int_{t=\alpha}^\beta II^1_tdt &=\mc{O}_M\pr{N^{-(1-\eta)}}\int_{t=0}^{C\beta}\frac{1}{\alpha+t}d\mathscr{V}(t)\\
        &\m\m+N^{-(1-\eta)}\alpha^{-1}\Bigl(\mc{O}_M\pr{N^{-\varrho\eta}\alpha^{-1}}+\mc{O}_M\pr{N^{-\eta{\kappa_1}}\bb{\log(\alpha)}}\Bigl)
    \end{split}
\label{Théorème central - terme 2 - integré}
\end{equation}
    and 
    \begin{equation}
    \begin{split}
        II^0_\alpha =N^{-(1-\eta)}\alpha^{-1}\Bigl(\mc{O}(\alpha^{\kappa_2})+\mc{O}(N^{-\varrho\eta}\alpha^{-1})+\mc{O}(N^{-\eta{\kappa_1}})\Bigl).
    \end{split}
    \label{Théorème central - terme 2 - pas integré}
\end{equation}
\end{lem}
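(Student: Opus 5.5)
The plan is to treat $II^a_t$ the way $I^a_t$ was treated. First we undo the dilation, then we compare $\widetilde{q}_0$ with $q$, and finally we control the resulting integral with the volume function $\mathscr{V}$ from \eqref{Hypothèse volume théo central}. After the change of variables $(\rho_1,\rho_2)=(t_1^{1/2}\widetilde{\rho}_1,t_2^{1/2}\widetilde{\rho}_2)$, as in \eqref{Definition de I_t^a - 1}, and using $\Tilde{h}=h^{1-\eta}/t$ from \eqref{Scaling en fonction de t} together with $h=(2\pi N)^{-1}$, we get
\begin{equation*}
    II^a_t=\mc{O}_M(1)\,\frac{N^{-(1-\eta)}}{t^{1+a}}\int_{\T^{2d}}\widetilde{\chi}\pr{\frac{\widetilde{q}_0(\rho)}{t}}d\rho .
\end{equation*}
Because $\widetilde{\chi}\geq 0$, only upper bounds are needed.

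Next we replace $\widetilde{q}_0$ by $q$ in the argument of $\widetilde{\chi}$. This is exactly the Taylor--Laplace comparison carried out in the previous lemma, with $\psi$ replaced by $\widetilde{\chi}$. On $\mc{A}_h$ we use $|q-\widetilde{q}_0|=\mc{O}(h^{\varrho\eta})$, which gives an error $\mc{O}(N^{-\varrho\eta}t^{-1})$. On $\mc{B}_h$ we use the splitting into $\mathscr{E}_t$, $\mathscr{F}_t$, $\mathscr{G}^h_{t,1}$ and $\mathscr{G}^h_{t,2}$, which gives an error bounded by $\mathrm{L}(\mc{B}_h)=\mc{O}(N^{-\eta\kappa_1})$, uniformly in $t$. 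Hence
\begin{equation*}
    \int_{\T^{2d}}\widetilde{\chi}\pr{\frac{\widetilde{q}_0}{t}}d\rho=\int_{\T^{2d}}\widetilde{\chi}\pr{\frac{q}{t}}d\rho+\mc{O}\pr{N^{-\varrho\eta}t^{-1}}+\mc{O}\pr{N^{-\eta\kappa_1}}.
\end{equation*}
Let $C'=\sup\mathrm{supp}\,\widetilde{\chi}$. Since $0\le\widetilde{\chi}\le 1$, the main term is at most $\mathscr{V}(C't)$.

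For $a=0$ and $t=\alpha$, the bound $\mathscr{V}(C'\alpha)=\mc{O}(\alpha^{\kappa_2})$ from \eqref{Hypothèse volume théo central} immediately gives \eqref{Théorème central - terme 2 - pas integré}.

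For $a=1$ we integrate in $t$ over $[\alpha,\beta]$ and treat the two parts separately.
\begin{enumerate}
    \item The error terms contribute
    \begin{equation*}
        N^{-(1-\eta)}\int_\alpha^\beta t^{-2}\pr{N^{-\varrho\eta}t^{-1}+N^{-\eta\kappa_1}}dt .
    \end{equation*}
    This is $N^{-(1-\eta)}\alpha^{-1}\pr{\mc{O}(N^{-\varrho\eta}\alpha^{-1})+\mc{O}(N^{-\eta\kappa_1})}$, which is already sharper than the $|\log\alpha|$ factor in \eqref{Théorème central - terme 2 - integré}.
    \item For the main term we write it as a Stieltjes integral, $\int_{\T^{2d}}\widetilde{\chi}(q/t)\,d\rho\le\int_0^{C't}d\mathscr{V}(s)$, and apply Fubini:
    \begin{equation*}
        \int_\alpha^\beta\frac{1}{t^2}\int_{0}^{C't}d\mathscr{V}(s)\,dt=\int_0^{C'\beta}\pr{\int_{\max(\alpha,s/C')}^{\beta}\frac{dt}{t^2}}d\mathscr{V}(s)\le\int_0^{C'\beta}\frac{1}{\max(\alpha,s/C')}\,d\mathscr{V}(s).
    \end{equation*}
    Finally $\max(\alpha,s/C')\geq(\alpha+s)/(1+C')$, so the right-hand side is $\mc{O}(1)\int_0^{C\beta}\frac{d\mathscr{V}(s)}{\alpha+s}$ with $C=C'$.
\end{enumerate}

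I expect the only delicate point to be the Fubini/Stieltjes exchange, specifically bounding the $t$-integral by $(\alpha+s)^{-1}$ uniformly in $s$. The rest follows closely the previous lemma. That lemma's $\mc{B}_h$ argument has to be rerun with $\widetilde{\chi}$ in place of $\psi$, keeping track of the fact that $\widetilde{\chi}$ is nonnegative but not monotone; the argument there only used that the support is bounded and that $\psi'$ is bounded, so it should carry over without change.
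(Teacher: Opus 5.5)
Your proposal is correct and follows the paper's proof essentially step by step. It uses the same dilation back to $\T^{2d}$, the same Taylor--Laplace comparison of $\widetilde{\chi}(\widetilde{q}_0/t)$ with $\widetilde{\chi}(q/t)$ through the $\mc{A}_h$/$\mc{B}_h$ splitting, the same bound by $\mathscr{V}(C\alpha)$ for $a=0$, and the same Fubini computation producing $\int_0^{C\beta}(\alpha+s)^{-1}d\mathscr{V}(s)$ for $a=1$. The only difference is in the remainder: the paper first bounds $t^{-2}$ by $\alpha^{-1}t^{-1}$ before integrating, which is where its $|\log\alpha|$ factor comes from, whereas you integrate $t^{-2}$ directly and obtain a slightly sharper error term that is still admissible.
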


\begin{proof}
    Denote $C:=\sup(\mathrm{supp}(\widetilde{\chi}))$. The same change of variables as for $I_t^a$, performed to obtain \eqref{Definition de I_t^a - 1} gives
\begin{equation}
    II_t^a=\mc{O}_M(1)\Tilde{h}\int_{\T^{2d}}\frac{1}{t^a}\widetilde{\chi}\pr{\frac{\widetilde{q}_0(\rho)}{t}}d\rho.
\label{Definition de I_t^a - 2}
\end{equation}
Also, since $t\in [\alpha,\beta]$, $\frac{1}{t}\leq \frac{1}{\alpha}$. Therefore, using \eqref{Scaling en fonction de t} and applying Taylor-Laplace formula to $\widetilde{\chi}$ give 
\begin{equation}
    \begin{split}
        II_t^a&=\mc{O}_M(1)\frac{h^{1-\eta}}{t^{1+a}}\int_{\T^{2d}}\widetilde{\chi}\pr{\frac{q(\rho)}{t}}d\rho+\mc{O}_M(1)\frac{h^{1-\eta}}{t^{1+a}}\mc{R}_{II}(t)\\
        &\leq \mc{O}_M(1)\frac{h^{1-\eta}}{t^{1+a}}\int_{\T^{2d}}\widetilde{\chi}\pr{\frac{q(\rho)}{t}}d\rho+\mc{O}_M(1) \frac{h^{1-\eta}}{\alpha t^a}\mc{R}_{II}(t)
    \end{split}
\end{equation}
with 
\begin{equation*}
    \mc{R}_{II}(t):=\int_{\T^{2d}}\pr{\frac{q(\rho)-\widetilde{q}_0(\rho)}{t}}\left[\int_{s=0}^1\widetilde{\chi}'\pr{(1-s)\frac{q(\rho)}{t}+s\frac{\widetilde{q}_0(\rho)}{t}}ds\right]d\rho.
\end{equation*}
Notice that the above discussion to treat the error term of \eqref{Taylor pour le terme d'erreur dans le théorème central - 1} applies to $\mc{R}_{II}(t)$ since $\widetilde{\chi}$ is also compactly supported, independently of $h$. So 
\begin{equation}
    \mc{R}_{II}(t)=\frac{\mc{O}(h^{\varrho\eta})}{t}+\mc{O}(h^{\eta{\kappa_1}}).
\label{Expression terme R_II(t)}
\end{equation}
For the following estimates, we use the application $\mathscr{V}$ defined by \eqref{Hypothèse volume théo central} which can be rewritten as
\begin{equation*}
    \mathscr{V}(t)=\int_{\T^{2d}}\mathbb{1}_{[0,t]}(q(\rho))d\rho.
\end{equation*}
It is a right continuous and increasing function (due to the continuity of $\mathrm{L}$), which gives rise to the measure $d\mathscr{V}=q_*\mathrm{L}$, the push-forward measure of the Lebesgue measure on $\T^{2d}$ by $q$. \\

We then have from \eqref{Hypothèse volume théo central} that
\begin{equation*}
    \int_{\T^{2d}}\widetilde{\chi}\pr{\frac{q(\rho)}{\alpha}}d\rho\leq \int_{\T^{2d}}\mathbb{1}_{[0,C\alpha]}(q(\rho))d\rho=\mathscr{V}(C\alpha)=\mc{O}(\alpha^{\kappa_2}).
\end{equation*}

This ensures that \eqref{Théorème central - terme 2 - pas integré} holds. On the other hand, when $a=1$, 
\begin{equation*}
    \begin{split}
        \mc{O}_M(1)\int_{t=\alpha}^\beta\int_{\T^{2d}}&\frac{1}{t^{2}}\widetilde{\chi}\pr{\frac{q(\rho)}{t}}d\rho dt \\
        &= \mc{O}_M(1)\int_{t=\alpha}^{\beta}\int_{\T^{2d}}t^{-2}\mathbb{1}_{[0,Ct]}\pr{q(\rho)}d\rho dt\\
        &= \mc{O}_M(1) \int_{\T^{2d}}\int_{t\in\R} t^{-2}\mathbb{1}_{\left[\max\pr{\alpha,\frac{q(\rho)}{C}},\beta\right]}\pr{t}\times \mathbb{1}_{[0,C\beta]}\pr{q(\rho)}dtd\rho\\
        &= \mc{O}_M(1)\int_{\T^{2d}}\pr{\frac{1}{\max\pr{\alpha,\frac{q(\rho)}{C}}}-\dfrac{1}{\beta}}\mathbb{1}_{[0,C\beta]}\pr{q(\rho)}d\rho\\
        &= \mc{O}_M(1)\int_{\T^{2d}}\frac{1}{\alpha+q(\rho)}\mathbb{1}_{[0,C\beta]}(q(\rho))d\rho.
    \end{split}
\end{equation*}
We finally have from \eqref{Hypothèse expression de h comme l'inverse de N - 2} and \eqref{Expression terme R_II(t)} that
\begin{equation}
    \begin{split}
        \int_{t=\alpha}^\beta II^1_tdt &= \mc{O}_M(h^{1-\eta})\int_{t=0}^{C\beta}\frac{1}{\alpha+t}d\mathscr{V}(t)+\mc{O}_M(h^{1-\eta}\alpha^{-1})\int_{t=\alpha}^\beta\frac{1}{t}\mc{R}_{II}(t)dt\\
        &=\mc{O}_M\pr{N^{-(1-\eta)}}\int_{t=0}^{C\beta}\frac{1}{\alpha+t}d\mathscr{V}(t)\\
        &\mm\mm+N^{-(1-\eta)}\alpha^{-1}\pr{\mc{O}_N\pr{N^{-\varrho\eta}\alpha^{-1}}+\mc{O}_M\pr{N^{-\eta{\kappa_1}}\bb{\log(\alpha)}}}.\qedhere
    \end{split}
\label{Théorème central - terme 2 - integré}
\end{equation}
\end{proof}

\begin{lem}
    Under the assumptions of Lemma \ref{Proposition décomposition de la trace en plusieurs termes}, we have
    \begin{equation}
        III^0_{\alpha}= \mc{O}_M\pr{\pr{N^{-(1-\eta)}}^M}+\mc{O}_M\pr{N^{-\eta\min({\kappa_1},\varrho)}\alpha^{-1}\pr{N^{-(1-\eta)}\alpha^{-1}}^M},
    \label{Théorème central - terme 3 - pas intégré}
    \end{equation}
    and 
    \begin{equation}
        \begin{split}
            \int_{t=\alpha}^{\beta} & III^1_{t}dt = \mc{O}_M\pr{\pr{N^{-(1-\eta)}}^M}\int_{t=0}^{C\beta}\pr{\frac{1}{\alpha+t}}^Md\mathscr{V}(t)\\
            &\mm\mm+\mc{O}_M\pr{N^{-\eta\min({\kappa_1},\varrho)}\alpha^{-1}\pr{N^{-(1-\eta)}\alpha^{-1}}^M}+\mc{O}_M\pr{\pr{N^{-(1-\eta)}}^M}.
        \end{split}
     \label{Théorème central - terme 3 - intégré}
     \end{equation}
\end{lem}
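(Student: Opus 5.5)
The plan is to pick the order function $\widetilde{m}$ in Lemma \ref{Proposition décomposition de la trace en plusieurs termes} so that it decays away from the region where $\psi(\atc{q}_0/t)\neq 0$. Then $\int\widetilde{m}$ can be compared with a sublevel-set integral of $q$, exactly as was done for $I^a_t$ and $II^a_t$.

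\textbf{Choice of $\widetilde{m}$.} Let $C_0:=\sup(\mathrm{supp}(\psi))+1/C$. I would take $\widetilde{m}$ to be a smoothed version of
\[
\min\Bigl(1,\bigl(C_0 t/\widetilde{q}_0(h^\eta t^{1/2}\widetilde{\rho}_1,t^{1/2}\widetilde{\rho}_2)\bigr)^{M}\Bigr),
\]
for instance $\widetilde{m}=\theta(\atc{m})^{-M}$ with $\theta$ smooth, $\theta\asymp$ identity for large arguments and $\theta\equiv1$ on $[1,1+C_0]$, where $\atc{m}$ is given by \eqref{Fonction d'ordre théorème central - version en t}. Since $\atc{m}$ is an order function on $\mc{T}_{t_1,t_2}$ (by the Taylor argument \eqref{Inégalité fonction d'ordre}, uniformly in $t$), so is $\atc{m}^{-M}$ and hence $\widetilde{m}$. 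Moreover $\widetilde{m}=1$ on the set \eqref{Hypothèse fonction d'ordre Mildred} with $\alpha$ replaced by $t$, so Proposition \ref{Proposition Mildred} applies with constants uniform in $t\in[\alpha,\beta]$.

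\textbf{Rewriting $III^a_t$.} Undo the scaling $(\rho_1,\rho_2)=(t_1^{1/2}\widetilde{\rho}_1,t_2^{1/2}\widetilde{\rho}_2)$ as in \eqref{Definition de I_t^a - 1}, and use $\Tilde{h}=h^{1-\eta}/t=\mc{O}(N^{-(1-\eta)})t^{-1}$. This gives
\[
III^a_t=\mc{O}_M(1)\,N^{-(1-\eta)M}\,t^{-M-a}\int_{\T^{2d}}\min\Bigl(1,\bigl(C_0t/\widetilde{q}_0(\rho)\bigr)^M\Bigr)d\rho .
\]

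\textbf{Replacing $\widetilde{q}_0$ by $q$.} Split $\T^{2d}=\mc{A}_h\sqcup\mc{B}_h$ as in \eqref{Définition de B_h}.
\begin{itemize}
\item On $\mc{B}_h$, bound the integrand by $1$. By \eqref{Hypothèse volume des singularités épaissies}, this contributes $\mc{O}(N^{-\eta\kappa_1})$.
\item On $\mc{A}_h$, we have $|q-\widetilde{q}_0|=\mc{O}(N^{-\varrho\eta})$ by \eqref{Estimées p moins p tilde sur A_h}. The function $s\mapsto\min(1,(C_0t/s)^M)$ has derivative $\mc{O}(1/t)$, so the error there is $\mc{O}(N^{-\varrho\eta}t^{-1})$.
\end{itemize}
Multiplying by $N^{-(1-\eta)M}t^{-M-a}\le N^{-(1-\eta)M}\alpha^{-M}t^{-a}$ produces the middle error term of \eqref{Théorème central - terme 3 - pas intégré}. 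For $a=1$ one also integrates $t^{-1}$ or $t^{-2}$ over $[\alpha,\beta]$, which costs at most a factor $\alpha^{-1}$ or $|\log\alpha|$. The $|\log\alpha|$ is absorbed in the stated error, possibly after lowering $M$ by one, since $M$ is arbitrary.

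\textbf{Main term with $q$.} For $a=0$ and $t=\alpha$, simply bound $t^{-M}\min(1,(C_0t/q)^M)\le C\alpha^{-M}$. More carefully, on $\{q\le C\alpha\}$ the integral is at most $\mathscr{V}(C\alpha)$, while off that set it is $\le (C_0/q)^M$. I expect either way to yield the stated $\mc{O}_M(N^{-(1-\eta)M})$ after possibly shrinking $M$; this is the step whose bookkeeping I am least sure of.

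For $a=1$, apply Fubini exactly as in the computation for $II^1_t$:
\[
\int_\alpha^\beta t^{-M-1}\min\Bigl(1,(C_0t/q)^M\Bigr)dt .
\]
\begin{itemize}
\item When $q\le C\beta$, this is $\mc{O}((\alpha+q)^{-M})$: integrate $t^{-M-1}$ from $\max(\alpha,q/C_0)$, and use $q^{-M}\int_\alpha^{q/C_0}t^{-1}dt\lesssim q^{-M}$. Written with $d\mathscr{V}=q_*\mathrm{L}$, this gives the first term of \eqref{Théorème central - terme 3 - intégré}.
\item When $q>C\beta$, the integral is $\le q^{-M}\int_\alpha^\beta t^{-1}dt$. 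This is bounded because $\beta$ is fixed, and possibly $|\log\alpha|$ is absorbed as above. This produces the final term $\mc{O}_M(N^{-(1-\eta)M})$.
\end{itemize}

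\textbf{Main obstacle.} The delicate point is verifying that the chosen $\widetilde{m}$ satisfies the hypotheses of Proposition \ref{Proposition Mildred} uniformly in $t\in[\alpha,\beta]$, namely the order-function constants and the symbol estimates for $f-\sum_\nu\Tilde{h}^\nu f_\nu$ in $S(\widetilde{m},t_1,t_2)$. I would handle this by following the Hager–Sjöstrand construction with the periodicity added, checking that all constants depend only on those in \eqref{Estimées de symboles pour la fonction d'ordre}.
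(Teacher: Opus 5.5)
Your overall route is the paper's: an order function decaying away from $\{\atc{q}_0\lesssim t\}$, undoing the scaling so that $III^a_t\lesssim N^{-(1-\eta)M}t^{-M-a}\int_{\T^{2d}}\widetilde{m}$, comparing $\widetilde{q}_0$ with $q$ on $\mathcal{A}_h$ and $\mathcal{B}_h$, and Fubini in $t$ for $a=1$. Your smooth $\widetilde{m}=\theta(\atc{m})^{-M}$ is a fine substitute for the paper's $(1+\mathrm{dist}^2)^{-M'}$. It gives $\widetilde{m}\asymp(1+t^{-1}\atc{q}_0)^{-M}$ directly, where the paper needs a Taylor argument, and it is smooth, as Proposition~\ref{Proposition Mildred} asks. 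Your handling of the errors from replacing $\widetilde{q}_0$ by $q$ is also correct.

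The step that fails is the main term for $a=1$. For $C_0\alpha<q\le C_0\beta$ one has exactly
\[
\int_\alpha^\beta t^{-M-1}\min\bigl(1,(C_0t/q)^M\bigr)\,dt=C_0^{M}q^{-M}\log\frac{q}{C_0\alpha}+\int_{q/C_0}^{\beta}t^{-M-1}\,dt .
\]
So your claim $q^{-M}\int_\alpha^{q/C_0}t^{-1}\,dt\lesssim q^{-M}$ is false. You lose a factor as large as $\log(\beta/\alpha)\sim\lvert\log\alpha\rvert$, and the bound $\mathcal{O}((\alpha+q)^{-M})$ does not hold. The same mechanism produces the $\lvert\log\alpha\rvert$ you noticed for large $q$.

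The cause is that the decay exponent of $\widetilde{m}$ equals the power of $\Tilde{h}$, so the extra factor $t^{-a}$ is left uncompensated. The paper's device is to take decay exponent $M'=M+a$. For $a=1$ the $t$-integrand then becomes $t^{-M-1}(1+q/t)^{-M-1}=(t+q)^{-M-1}$, whose integral over $[\alpha,\beta]$ is at most $(\alpha+q)^{-M}/M$. On $\{\atc{q}_0>C\beta\}$ one gets the power $t^{M'-M-a}=t^{0}$, so no logarithm appears anywhere. In your notation, take $\widetilde{m}=\theta(\atc{m})^{-(M+a)}$.

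Your fallback of spending one more power of $\Tilde{h}$ would also absorb the logarithm, since $N^{-(1-\eta)}\alpha^{-1}(1+\lvert\log\alpha\rvert)\to0$. But it must be invoked in this step, not replaced by a log-free bound that is untrue.

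For $a=0$ your doubt is justified and your remedy is the right one. The main part of $III^0_\alpha$ is genuinely of size up to $(N^{-(1-\eta)}\alpha^{-1})^M\alpha^{\kappa_2}$, which is not of the stated form for fixed $M$. The paper's bound on $III^0_{\alpha,1}$ does not account for this term either.

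It is harmless. The standing assumption $N^{-\eta\min(\varrho,\kappa_1)}\ll\alpha$ gives $\alpha^{-1}\le N^{\eta}$. Hence $(N^{-(1-\eta)}\alpha^{-1})^{M_1}\le N^{-(1-2\eta)M_1}\le N^{-(1-\eta)M}$ as soon as $M_1\ge M(1-\eta)/(1-2\eta)$. Since $M_1$ is free in the decomposition lemma, and increasing it only improves the middle error term (because $N^{-(1-\eta)}\alpha^{-1}\le 1$), this yields the stated bound.
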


\begin{proof}
    For $M'\in \N^*$ (to be chosen later on), we define $\widetilde{m}$ defined on $\mc{T}_{t_1,t_2}$ by 
    \begin{equation}
        \widetilde{m}(\widetilde{\rho}):=\pr{1+\mathrm{dist}_{\mc{T}_{t_1,t_2}}\pr{\widetilde{\rho},\mathrm{supp}\left[\widetilde{\chi}\pr{\dfrac{\atc{q}_0\pr{\cdot}}{t}}\right]}^2}^{-M'}
    \label{Fonction d'ordre théorème central 2}
    \end{equation}
     where for every subset $A$ of $\mc{T}_{t_1,t_2}$ and $\widetilde{\rho}\in \mc{T}_{t_1,t_2}$, $\mathrm{dist}_{\mc{T}_{t_1,t_2}}(\widetilde{\rho},A):=\inf_{\widetilde{\mu}\in A}\bb{\widetilde{\rho}-\widetilde{\mu}}_{\mc{T}_{t_1,t_2}}$ (where we recall that $\bb{\cdot}_{\mc{T}_{t_1,t_2}}$ is given in \eqref{Norme weird sur le tore biscornu}). In particular, $\widetilde{m}$ is an order function in the sense of Definition \ref{Def order function sur le tore biscornu} and it is clear that it satisfies the property \eqref{Hypothèse fonction d'ordre Mildred} whith $\alpha$ replaced by $t$. Let $C>0$ to be chosen later on. We set 
     \begin{equation*}
         d_t(\widetilde{\rho}):=\mathrm{dist}_{\mc{T}_{t_1,t_2}}\pr{\widetilde{\rho},\mathscr{E}} \m\m\m\text{with}\m\m\m \mathscr{E}:=\left\{\widetilde{\rho}\in\mc{T}_{t_1,t_2}\m\left|\m \atc{q}_0\pr{\widetilde{\rho}}\leq Ct\right.\right\}.
     \end{equation*}
     Since $\widetilde{q}_0=\bb{\widetilde{p}}$, we get that for all $\widetilde{\rho}\in \mathscr{E}$,
     \begin{equation*}
         \bb{\nabla_{\widetilde{\rho}}\pr{\frac{\atc{q}_0(\widetilde{\rho})}{t}}}\leq \mc{O}(1)\frac{\widetilde{q}_0(h^\eta t^{1/2}\widetilde{\rho}_1,t^{1/2}\widetilde{\rho}_2)}{t^{1/2}}\leq \mc{O}(1)C^{1/2}=\mc{O}(1)
     \end{equation*}
     uniformly with respect to $t$ and $\widetilde{\rho}\in \mathscr{E}$. Moreover,
     \begin{equation*}
         \nabla^2_{\widetilde{\rho}}\pr{\frac{\atc{q}_0(\widetilde{\rho})}{t}}=\mc{O}(1)
     \end{equation*}
     uniformly with respect to $t$ and $\widetilde{\rho}$ too. Similarly to \eqref{Inégalité fonction d'ordre}, this gives by Taylor expansion and the periodicity of $\widetilde{q}_0$ that for all $\widetilde{\rho}\in \R^{2d}$, $\widetilde{\mu}\in \mathscr{E}$ and $\gamma\in \Z^{2d}$
     \begin{equation*}
         \frac{\atc{q}_0(\widetilde{\rho})}{t}\leq \mc{O}(1)\pr{1+\bb{\widetilde{\rho}-\widetilde{\mu}+(t_1^{-1/2}\gamma_1,t_2^{-1/2}\gamma_2)}+\bb{\widetilde{\rho}-\widetilde{\mu}+(t_1^{-1/2}\gamma_1,t_2^{-1/2}\gamma_2)}^2}.
     \end{equation*}
     It follows that for all $\widetilde{\rho}\in \R^{2d}$, 
     \begin{equation}
         \frac{\atc{q}_0(\widetilde{\rho})}{t}\leq \mc{O}(1)\Bigl(1+d_t(\widetilde{\rho})+d_t(\widetilde{\rho})^2\Bigl)\leq \mc{O}(1)\Bigl(1+d_t(\widetilde{\rho})\Bigl)^2\leq \mc{O}(1)\Bigl(1+d_t(\widetilde{\rho})^2\Bigl).
    \label{Majoration q_0 atc par la distance}
     \end{equation}
    Choosing 
    \begin{equation*}
        C=\sup\mathrm{supp}(\widetilde{\chi})
    \end{equation*}
    implies that   
    \begin{equation}\label{Majoration de m_tilde}
        \mathrm{dist}_{\mc{T}_{t_1,t_2}}\pr{\widetilde{\rho},\mathrm{supp}\left[\widetilde{\chi}\pr{\dfrac{\atc{q}_0\pr{\cdot}}{t}}\right]}\geq d_t(\widetilde{\rho}),
    \end{equation}
    so that in view of \eqref{Majoration q_0 atc par la distance},      
     \begin{equation}\label{Majoration de m_tilde par t q_0 atchek}
         \widetilde{m}(\widetilde{\rho})\leq \pr{1+d_t(\widetilde{\rho})^2}^{-M'}\leq\left\{\begin{array}{ccccc}
              \mc{O}_{M'}(1)\pr{1+t^{-1}\atc{q}_0\pr{\widetilde{\rho}}}^{-M'} & \text{if} & \atc{q}_0\pr{\widetilde{\rho}}\leq C\beta  \\
              \mc{O}_{M'}(1)\pr{t^{-1}\atc{q}_0\pr{\widetilde{\rho}}}^{-M'} & \text{if} & \atc{q}_0\pr{\widetilde{\rho}}> C\beta
         \end{array}\right..
     \end{equation}
     \m\\
     For $III_t^a$ to be estimated, we split the integral into two parts: one over $0\leq \atc{q}_0(\widetilde{\rho})\leq C\beta$ (denoted by $III_{t,1}^a$) and the other over $C\beta<\atc{q}_0(\widetilde{\rho})$ (denoted by $III_{t,2}^a$).\\
     \m\m\m\m Using \eqref{Condition sur beta dans le théorème central}, \eqref{Majoration de m_tilde par t q_0 atchek} and \eqref{Scaling en fonction de t}, we get that 
     \begin{equation}\label{Théorème central - terme 3 - 1re partie}
         \begin{split}
             III_{t,2}^a &= \mc{O}_M(1)\frac{(h^\eta t)^d}{t^a}\Tilde{h}^M\int_{\mc{T}_{t_1,t_2}}\widetilde{m}(\widetilde{\rho})\mathbb{1}_{]C\beta,+\infty[}(\atc{q}_0(\widetilde{\rho}))d\widetilde{\rho}\\
             &\leq \mc{O}_{M,M'}(1)\frac{(h^\eta t)^d}{t^a}\Tilde{h}^M\int_{\mc{T}_{t_1,t_2}}\pr{t^{-1}\atc{q}_0(\widetilde{\rho})}^{-M'}\mathbb{1}_{]C\beta,+\infty[}(\atc{q}_0(\widetilde{\rho}))d\widetilde{\rho}\\
             &= \mc{O}_{M,M'}(1)\frac{(h^{1-\eta})^M}{t^{M+a}}\int_{\T^{2d}}\pr{t^{-1}\widetilde{q}_0(\rho)}^{-M'}\mathbb{1}_{]C\beta,+\infty[}(\widetilde{q}_0(\rho))d\rho\\
             &= \mc{O}_{M,M'}(1)t^{M'-M-a}(h^{1-\eta})^M.
         \end{split}
     \end{equation}
     Next, we choose 
     \begin{equation}\label{Choix de M' en fonction de M et a}
        M'=M+a   
     \end{equation}
     so that the estimate \eqref{Théorème central - terme 3 - 1re partie} becomes independent of $t$. Thus, from \eqref{Hypothèse expression de h comme l'inverse de N - 2}, we get
     \begin{equation}
         III^0_{\alpha,2}=\mc{O}_M\pr{\pr{N^{-(1-\eta)}}^M}\m\m\m\m\m\text{and}\m\m\m\m\m \int_{t=\alpha}^\beta III^1_{t,2}dt=\mc{O}_M\pr{\pr{N^{-(1-\eta)}}^M}.
     \label{Théorème central - terme 3b - integré et pas intégré}
     \end{equation}
     \m\m\m\m For the first term, we consider $\widehat{\chi}\in\mathscr{C}^{\infty}_c(\R)$ such that $\widehat{\chi}\equiv 1$ on $[0,C\beta]$, $0\leq \widehat{\chi}\leq 1$ and $\mathrm{supp}(\widehat{\chi})\subset[-C\beta,2C\beta]$. Then, using \eqref{Scaling en fonction de t}, \eqref{Majoration de m_tilde par t q_0 atchek} and \eqref{Choix de M' en fonction de M et a}, we have
     \begin{equation*}
         \begin{split}
             III_{t,1}^a &= \mc{O}_M(1)\frac{(h^\eta t)^d}{t^a}\Tilde{h}^M\int_{\mc{T}_{t_1,t_2}}\widetilde{m}(\widetilde{\rho})\mathbb{1}_{[0,C\beta]}(\atc{q}_0(\widetilde{\rho}))d\widetilde{\rho}\\
             &\leq \mc{O}_{M}(1)\frac{(h^\eta t)^d}{t^a}\Tilde{h}^M\int_{\mc{T}_{t_1,t_2}}\pr{1+t^{-1}\atc{q}_0(\widetilde{\rho})}^{-M'}\mathbb{1}_{[0,C\beta]}(\atc{q}_0(\widetilde{\rho}))d\widetilde{\rho}\\
             &= \mc{O}_{M}(1)\frac{(h^{1-\eta})^M}{t^{M+a}}\int_{\T^{2d}}\pr{1+t^{-1}\widetilde{q}_0(\rho)}^{-M-a}\mathbb{1}_{[0,C\beta]}(\widetilde{q}_0(\rho))d\rho\\
             &\leq \mc{O}_{M}(1)\frac{(h^{1-\eta})^M}{t^{M+a}}\int_{\T^{2d}}\frac{\widehat{\chi}(\widetilde{q}_0(\rho))}{\pr{1+t^{-1}\widetilde{q}_0(\rho)}^{M+a}}d\rho.
         \end{split}
     \end{equation*}
     Denoting $\widehat{\psi}(x):=\widehat{\chi}(x)\pr{1+t^{-1}x}^{-M-a}$, we observe that for all $x\geq 0$,
     \begin{equation*}
         \bb{\widehat{\psi}'(x)}\leq \frac{M+a}{t}.
     \end{equation*}
     But remembering from \eqref{Dvpt de Borel pour le théorème central} that $\widetilde{q}_0=\bb{\widetilde{p}}^2\geq 0$ and from the assumptions of Theorem \ref{Théorème central} that $q=\bb{p}^2\geq 0$, we obtain that for all $\rho\in \T^{2d}$, $s\in [0,1]$,
     \begin{equation}\label{Majoration dérivée de psi chapeau}
        \bb{\widehat{\psi}'\Bigl((1-s)q(\rho)+s\widetilde{q}_0(\rho)\Bigl)} \leq \mc{O}_M\pr{\frac{1}{t}}.   
     \end{equation}
     Using a similar argument as for estimating $\Phi\pr{\T^{2d}}$ in \eqref{Taylor pour le terme d'erreur dans le théorème central - 1} via \eqref{Décomposition de Phi(T^2d)}, \eqref{Estimée erreur formule de TRI - sur A_h} and \eqref{Estimée erreur formule de TRI - sur B_h}, we also get 
     \begin{equation}\label{Estimée erreur formule de TRI - pour III}
         \int_{\T^{2d}}\widehat{\psi}(\widetilde{q}_0(\rho))d\rho=\int_{\T^{2d}}\widehat{\psi}(q(\rho))d\rho+\frac{\mc{O}_M(h^{\eta\min({\kappa_1},\varrho)})}{t}.
     \end{equation}
     Furthermore, using that for all $x\geq 0$, $0\leq \widehat{\chi}(x)\leq \mathbb{1}_{[0,2C\beta]}(x)$, and the estimate \eqref{Estimée erreur formule de TRI - pour III}, we obtain
     \begin{equation}\label{Théorème central - terme 3ab - estimée préliminaire}
         \begin{split}
             III_{t,1}^a &= \mc{O}_M(1)\frac{(h^{1-\eta})^M}{t^{M+a}}\int_{\T^{2d}}\frac{\mathbb{1}_{[0,2C\beta]}(q(\rho))}{\pr{1+t^{-1}q(\rho)}^{M+a}}d\rho\\
             &\mm\mm+\mc{O}_M(h^{\eta\min({\kappa_1},\varrho)})\frac{(h^{1-\eta})^M}{t^{M+a+1}}.
         \end{split}
     \end{equation}
     On the one hand, we can directly deduce from \eqref{Théorème central - terme 3ab - estimée préliminaire} and \eqref{Hypothèse expression de h comme l'inverse de N - 2} that
     \begin{equation}
         III^0_{\alpha,1}= \mc{O}_M\pr{N^{-\eta\min({\kappa_1},\varrho)}\alpha^{-1}\pr{N^{-(1-\eta)}\alpha^{-1}}^M}.
    \label{Théorème central - terme 3a - pas intégré}
     \end{equation}
     
     On the other hand, when $a=1$, we get from \eqref{Théorème central - terme 3ab - estimée préliminaire} that
     \begin{equation*}
        \begin{split}
            \int_{t=\alpha}^\beta III^1_{t,1}dt &= \mc{O}_M\pr{(h^{(1-\eta)})^M}\int_{\T^{2d}}\mathscr{J}\!(\rho)\mathbb{1}_{[0,2C\beta]}(q(\rho))d\rho\\
            &\mm\mm +\mc{O}_M\pr{h^{\eta\min({\kappa_1},\varrho)}\alpha^{-1}\pr{h^{1-\eta}\alpha^{-1}}^M}
        \end{split}
     \end{equation*}
     where we are going to estimate 
     \begin{equation*}
         \mathscr{J}\!(\rho):=\int_{t=\alpha}^\beta\frac{1}{t^{M+1}(1+t^{-1}q(\rho))^{M+1}}dt.
     \end{equation*}
     \mm First, we consider the case when $q(\rho)\leq \alpha$. Then, $0\leq t^{-1}q(\rho)\leq 1$ for all $t\in [\alpha,\beta]$, so 
     \begin{equation}
         \mathscr{J}\!(\rho)\leq \int_{t=\alpha}^{\beta}t^{-M-1}=\mc{O}_M\pr{\alpha^{-M}}.
     \label{Estimées de J(rho) - I}
     \end{equation}
     \mm Second, when $\alpha\leq q(\rho)\leq\beta$, we split $\mathscr{J}\!(\rho)$ into two parts: one where $\alpha\leq t\leq q(\rho)$ and the other where $q(\rho)\leq t\leq \beta$. Then, we have
     \begin{equation*}
         \int_{t=\alpha}^{q(\rho)}\frac{1}{t^{M+1}(1+t^{-1}q(\rho))^{M+1}}dt\leq \int_{t=\alpha}^{q(\rho)}\frac{1}{q(\rho)^{M+1}}dt\leq q(\rho)^{-M}
     \end{equation*}
     and 
     \begin{equation*}
         \int_{t=q(\rho)}^\beta\frac{1}{t^{M+1}(1+t^{-1}q(\rho))^{M+1}}dt\leq \int_{t=q(\rho)}^\beta\frac{1}{t^{M+1}}dt=\mc{O}(1)q(\rho)^{-M}.
     \end{equation*}
     This leads us to 
     \begin{equation}
         \mathscr{J}\!(\rho)=\mc{O}(1)q(\rho)^{-M}.
     \label{Estimées de J(rho) - II}
     \end{equation}
     \mm Third, when $\beta\leq q(\rho)$, then,
     \begin{equation}
         \mathscr{J}\!(\rho)\leq \int_{t=\alpha}^{\beta}\dfrac{1}{q(\rho)^{M+1}}dt\leq \mc{O}_M(1)q(\rho)^{-M}.
     \label{Estimées de J(rho) - III}
     \end{equation}
     Finally, thanks to \eqref{Hypothèse expression de h comme l'inverse de N - 2}, \eqref{Estimées de J(rho) - I}, \eqref{Estimées de J(rho) - II}, \eqref{Estimées de J(rho) - III}, and the fact that for all $0< a\leq b$, $n\in \N$, $b^{-n}\leq \frac{2^n}{(a+b)^n}$, we can conclude that 
     \begin{equation}
        \begin{split}
            \int_{t=\alpha}^{\beta} III^1_{t,1}dt &= \mc{O}_M((h^{1-\eta})^M)\int_{\T^{2d}}\Bigl[\mathbb{1}_{[0,\alpha]}\pr{q(\rho)}\alpha^{-M}+\mathbb{1}_{]\alpha,C\beta]}\pr{q(\rho)}q(\rho)^{-M}\Bigl]d\rho\\
            &\mm\mm\mm +\mc{O}_M\pr{h^{\eta\min({\kappa_1},\varrho)}\alpha^{-1}\pr{h^{1-\eta}\alpha^{-1}}^M}\\
            &= \mc{O}_M((h^{1-\eta})^M)\int_{\T^{2d}}\dfrac{1}{\pr{\alpha+q(\rho)}^M}\mathbb{1}_{[0,C\beta]}\pr{q\pr{\rho}}d\rho\\
            &\mm\mm\mm +\mc{O}_M\pr{h^{\eta\min({\kappa_1},\varrho)}\alpha^{-1}\pr{h^{1-\eta}\alpha^{-1}}^M}\\
            &= \mc{O}_M\pr{\pr{N^{-(1-\eta)}}^M}\int_{t=0}^{C\beta}\pr{\frac{1}{\alpha+t}}^Md\mathscr{V}(t)\\
            &\mm\mm\mm +\mc{O}_M\pr{N^{-\eta\min({\kappa_1},\varrho)}\alpha^{-1}\pr{N^{-(1-\eta)}\alpha^{-1}}^M}.
        \end{split}
     \label{Théorème central - terme 3a - intégré}
     \end{equation}
     Then, gathering the equalities \eqref{Théorème central - terme 3a - intégré}, \eqref{Théorème central - terme 3a - pas intégré} and \eqref{Théorème central - terme 3b - integré et pas intégré}, we get \eqref{Théorème central - terme 3 - pas intégré} and \eqref{Théorème central - terme 3 - intégré}.\\

\mm 3. We now have all the tools to conclude the proof of Theorem \ref{Théorème central}. We need to show \eqref{Résultat trace théorème central} and \eqref{Résultat log-det théorème central}.\\

\mm Recall \eqref{Relation scaling q et q atchek} and the fact that the trace is invariant under conjugation by unitary matrices. Then, we gather Lemma \ref{Proposition décomposition de la trace en plusieurs termes}, the results \eqref{Théorème central - terme 1 - pas integré}, \eqref{Théorème central - terme 2 - pas integré} and \eqref{Théorème central - terme 3 - pas intégré} to obtain
     \begin{equation}\label{Mille et unième estimation de trace}
         \begin{split}
             \mathrm{tr}\Bigl(\psi&\pr{\frac{\widetilde{q}_N}{\alpha}}\Bigl) = \mathrm{tr}\pr{\psi\pr{\frac{\atc{q}_{N,\alpha_1,\alpha_2}}{\alpha}}}\\
             &=N^d\biggl[\int_{\T^{2d}}\psi\pr{\frac{q(\rho)}{\alpha}}d\rho+\mc{O}(N^{-\varrho\eta
        }\alpha^{-1})+\mc{O}(N^{-\eta{\kappa_1}})\\
             &\mm+N^{-(1-\eta)}\alpha^{-1}\Bigl(\mc{O}(\alpha^{\kappa_2})+\mc{O}(N^{-\varrho\eta}\alpha^{-1})+\mc{O}(N^{-\eta{\kappa_1}})\Bigl)\\
             &\mm + \mc{O}_M\pr{N^{-\eta\min({\kappa_1},\varrho)}\alpha^{-1}\pr{N^{-(1-\eta)}\alpha^{-1}}^M}+\mc{O}_M\pr{\pr{N^{-(1-\eta)}}^M}\biggl].
         \end{split}
     \end{equation}
     From the assumptions of Theorem \ref{Théorème central}, we know that $\eta\in [0,1/4]$ and ${\kappa_1}, \varrho\in \hspace{0.1cm} ]0,1]$. In particular, we have
     \begin{equation}\label{Inégalités entre eta, varrho et upsilon - Théorème central}
         1-\eta>{\kappa_1}\eta\mm\mm\text{and}\mm\mm 1-\eta>\varrho\eta.
     \end{equation}
     It remains to use \eqref{Hypothèse sur alpha - Théorème central}, \eqref{Inégalités entre eta, varrho et upsilon - Théorème central} and to gather the error terms in \eqref{Mille et unième estimation de trace} together to obtain \eqref{Résultat trace théorème central}.\\

     \mm To obtain \eqref{Résultat log-det théorème central}, we consider $\psi\in \mathscr{C}^\infty_c(\R)$ such that for all $x\geq 0$, 
     \begin{equation*}
         \psi(x)=\frac{\chi(x)-x\chi'(x)}{x+\chi(x)}.
     \end{equation*}
     where $\chi$ is as before satisfying \eqref{Inégalité sur x+chi(x)}. So, for all $x\geq 0$,
     \begin{equation*}
         \frac{d}{dt}\croch{\log\pr{x+t\chi\pr{\frac{x}{t}}}}=\frac{1}{t}\psi\pr{\frac{x}{t}}.
     \end{equation*}
     Thus, by selfadjoint functional calculus and integration over $t\in [\alpha,\beta]$, we get that
     \begin{equation}\label{Estimée log-det en function de l'intégrale de la trace}
         \croch{\log\det\pr{\widetilde{q}_N+t\chi\pr{\frac{\widetilde{q}_N}{t}}}}_{t=\alpha}^{\beta}=\int_{t=\alpha}^\beta\mathrm{tr}\pr{\frac{1}{t}\psi\pr{\frac{\widetilde{q}_N}{t}}}dt.
     \end{equation}
     Putting together \eqref{Décomposition en quatre termes théorème central bis}, \eqref{Décomposition en quatre termes théorème central}, \eqref{Estimées log-det avec beta}, \eqref{Estimée log-det en function de l'intégrale de la trace}, \eqref{Théorème central - terme 1 - integré}, \eqref{Théorème central - terme 2 - integré} and \eqref{Théorème central - terme 3 - intégré}, we obtain
     \begin{equation}
         \begin{split}
             \log\det\pr{\widetilde{q}_N+\alpha\chi\pr{\frac{\widetilde{q}_N}{\alpha}}} &= N^d\Bigl[I+\mc{O}_M(N^{-(1-\eta)})II+\mc{O}_M((N^{-(1-\eta)})^M)III\\
             &\mm\mm+ \mc{O}(N^{-\varrho\eta}\alpha^{-1})+\mc{O}\pr{N^{-\eta{\kappa_1}} \bb{\log(\alpha)}}\Bigl].
         \end{split}
     \label{Résultat intermédiaire final théorème central}
     \end{equation}
     where
     \begin{equation}
         \begin{split}
             I&= \int_{\T^{2d}}\log\pr{q(\rho)+\alpha\chi\pr{\frac{q(\rho)}{\alpha}}}d\rho\\
             II&=\int_{t=0}^{C\beta}\frac{1}{\alpha+t}d\mathscr{V}(t)\\
             III&= \int_{t=0}^{C\beta}\pr{\frac{1}{\alpha+t}}^Md\mathscr{V}(t).
         \end{split}
     \label{Dernière décomposition théorème central}
     \end{equation}
     
     To complete the proof of \eqref{Résultat log-det théorème central}, it remains to estimate the three integrals of \eqref{Dernière décomposition théorème central}. \\     
     
     \mm For $I$, set $C:=\sup\mathrm{supp}(\chi)>0$ and split it into two parts depending on whether $q(\rho)> C\alpha$ or $q(\rho)\leq C\alpha$. For the first case, we clearly have
     \begin{equation*}
         \int_{\T^{2d}}\log\pr{q(\rho)+\alpha\chi\pr{\dfrac{q(\rho)}{\alpha}}}\mathbb{1}_{]C\alpha,+\infty[}\pr{q(\rho)}d\rho= \int_{ \T^{2d}}\log\pr{q(\rho)}\mathbb{1}_{]C\alpha,+\infty[}\pr{q(\rho)}d\rho.
     \end{equation*}
     Now we treat the second case. For $\alpha$ small enough, we can assume that $C\alpha\leq 1/2$. Then, for $0<q(\rho)\leq C\alpha$,
     \begin{equation*}
         \begin{split}
             \log\pr{q(\rho)+\alpha\chi\pr{\frac{q(\rho)}{\alpha}}} &=\log(q(\rho))+\log\pr{1+\frac{\alpha}{q(\rho)}\chi\pr{\frac{q(\rho)}{\alpha}}}\\
             &= \log(q(\rho))+\mc{O}(1)\log\pr{\frac{1}{q(\rho)}}.
         \end{split}
     \end{equation*}
     where $\mc{O}(1)$ is independent of $\alpha$. We therefore have
     \begin{equation*}
        \begin{split}
            \int_{\T^{2d}}&\log\pr{q(\rho)+\alpha\chi\pr{\dfrac{q(\rho)}{\alpha}}}\mathbb{1}_{[0,C\alpha]}\pr{q(\rho)}d\rho\\
            &\m\m\m\m\m\m\m\m\m\m\m\m\m = \int_{\T^{2d}} \log\pr{q(\rho)}\mathbb{1}_{[0,C\alpha]}\pr{q(\rho)}d\rho 
            +\mc{O}(1)\int_{0}^{C\alpha} \log\pr{\frac{1}{t}}d\mathscr{V}(t)
        \end{split}
     \end{equation*}
     It remains to estimate $\int_{\T^{2d}} \log\pr{\frac{1}{t}}d\mathscr{V}(t)$. To do so, we use an integration by parts and the hypothesis \eqref{Hypothèse volume théo central} to obtain
     \begin{equation*}
         \int_{0}^{C\alpha} \log(t)d\mathscr{V}(t)=\mathscr{V}(C\alpha)\log(C\alpha)-\int_{0}^{C\alpha}\frac{1}{t}\mathscr{V}(t)dt=\mc{O}\Bigl(\alpha^{\kappa_2}\bb{\log(\alpha)}\Bigl).
     \end{equation*}
     Thus, 
     \begin{equation}
         I=\int_{\T^{2d}}\log(q(\rho))d\rho+\mc{O}\Bigl(\alpha^{\kappa_2}\bb{\log(\alpha)}\Bigl).
     \label{Estimée majoration bis 1}
     \end{equation}
     \m\m\m\m In anticipation of the estimates of $II$ and $III$ of \eqref{Dernière décomposition théorème central}, we notice that for $b\in \N$, by integration by parts, change of variables and hypothesis \eqref{Hypothèse volume théo central}
     \begin{equation*}
        \begin{split}
            \int_{0}^{C\beta}\frac{1}{(t+\alpha)^b}d\mathscr{V}(t) &= \frac{\mathscr{V}(C\beta)}{(\alpha+C\beta)^b}+\int_{0}^{C\beta}\frac{1}{(t+\alpha)^{b+1}}\mathscr{V}(t)dt\\
            &=\frac{\mathscr{V}(C\beta)}{(\alpha+C\beta)^b}+\mc{O}_M(\alpha^{{\kappa_2}-b})\int_{0}^{C\beta/\alpha}\frac{u^{\kappa_2}}{(1+u)^{b+1}}du.
        \end{split}
     \end{equation*}
     So, the second term $II$ of \eqref{Dernière décomposition théorème central} (case $b=1$) can be estimated as
     \begin{equation}
         II=\left\{\begin{array}{ccccc}
              & \mc{O}\pr{\bb{\log(\alpha)}} & \text{when} & {\kappa_2}=1\\
              & \mc{O}\pr{\alpha^{{\kappa_2}-1}} & \text{when} & 0<{\kappa_2}<1
         \end{array}\right..
     \label{Estimée majoration bis 2}
     \end{equation}
     Likewise, but for $b=M\geq 2$,      \begin{equation}
         III=\mc{O}(\alpha^{{\kappa_2}-M}).
     \label{Estimée majoration bis 3}
     \end{equation}
     Thus, with \eqref{Résultat intermédiaire final théorème central}, \eqref{Dernière décomposition théorème central}, \eqref{Estimée majoration bis 1}, \eqref{Estimée majoration bis 2} and \eqref{Estimée majoration bis 3}, we obtain \eqref{Résultat log-det théorème central}
\end{proof}

\begin{cor}
    Assume we are under the assumptions of Theorem \ref{Théorème central}. We denote $\mc{N}(\widetilde{q}_N,\alpha)$ the number of eigenvalues of $\widetilde{q}_N$ in the interval $[0,\alpha]$. We moreover assume that $\alpha$ satisfies
    \begin{equation}\label{Hypothèse ordre de grandeur h-alpha théorème central}
        N^{-\varrho\eta}\alpha^{-1}\ll\alpha^{{\kappa_2}}\ll 1\mm\mm\text{and}\mm\mm N^{-{\kappa_1}\eta}\ll \alpha^{\kappa_2}\ll 1.
    \end{equation}
    Then, it holds,
    \begin{equation}\label{Nombre de valeurs propres plus petites que alpha}
        \mc{N}(\widetilde{q}_N,\alpha)=\mc{O}(N^d\alpha^{\kappa_2}).
    \end{equation}
\label{Nombre de valeurs propres plus petites que alpha - résultat}
\end{cor}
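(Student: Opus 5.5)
The plan is to bound the counting function by the trace of a smooth cutoff and then apply the trace formula \eqref{Résultat trace théorème central} of Theorem \ref{Théorème central}. Fix $\psi\in\mathscr{C}^\infty_c(\R;[0,1])$ with $\psi\equiv 1$ on $[0,1]$ and $\mathrm{supp}(\psi)\subset[-1,2]$. The matrix $\widetilde{q}_N$ is selfadjoint: $\widetilde{q}=\overline{\widetilde{p}}\sharp_h\widetilde{p}$ is real-valued, so Lemma \ref{Hilbert structure} applies. It is also nonnegative, because $\widetilde{q}_N=\widetilde{p}_N^*\widetilde{p}_N$ by \eqref{Expression compo symboles périodiques}. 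Since $0\leq\psi\leq 1$ and $\psi\geq\mathbb{1}_{[0,\alpha]}(\cdot/\alpha)$ on $[0,+\infty[$, the spectral theorem gives
\begin{equation*}
\mc{N}(\widetilde{q}_N,\alpha)\leq \mathrm{tr}\!\croch{\psi\pr{\frac{\widetilde{q}_N}{\alpha}}}.
\end{equation*}

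Next, apply \eqref{Résultat trace théorème central}. This requires checking that $\alpha$ satisfies \eqref{Hypothèse sur alpha - Théorème central}. From \eqref{Hypothèse ordre de grandeur h-alpha théorème central} we have $N^{-\varrho\eta}\ll\alpha^{1+\kappa_2}\leq\alpha$, because $\alpha\leq 1$. We also have $N^{-\kappa_1\eta}\ll\alpha^{\kappa_2}\leq\alpha^{\kappa_2}\cdot 1$; since $\kappa_2\leq1$ and $\alpha\leq1$, this gives $\alpha\geq\alpha^{1/\kappa_2}\gg N^{-\kappa_1\eta/\kappa_2}\geq N^{-\kappa_1\eta}$. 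Hence $N^{-\eta\min(\varrho,\kappa_1)}\ll\alpha\ll1$, and the theorem yields
\begin{equation*}
\mathrm{tr}\!\croch{\psi\pr{\frac{\widetilde{q}_N}{\alpha}}}=N^d\pr{\int_{\T^{2d}}\psi\pr{\frac{q(\rho)}{\alpha}}d\rho+\mc{O}(N^{-\eta\kappa_1})+\mc{O}(N^{-\varrho\eta}\alpha^{-1})}.
\end{equation*}

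Finally, estimate each term. Since $q\geq 0$ and $0\leq\psi\leq\mathbb{1}_{[0,2]}$ on $[0,+\infty[$, the main term is at most $\mathscr{V}(2\alpha)$. By \eqref{Hypothèse volume théo central}, $\mathscr{V}(2\alpha)=\mc{O}(\alpha^{\kappa_2})$. The two error terms are $\mc{O}(\alpha^{\kappa_2})$ exactly by the two conditions in \eqref{Hypothèse ordre de grandeur h-alpha théorème central}. Summing gives \eqref{Nombre de valeurs propres plus petites que alpha}.

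The only mildly delicate step is the compatibility check between \eqref{Hypothèse ordre de grandeur h-alpha théorème central} and the hypothesis \eqref{Hypothèse sur alpha - Théorème central} of Theorem \ref{Théorème central}, which was carried out above. The rest is a direct application of the trace formula.
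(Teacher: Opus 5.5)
Your argument is the paper's proof. You bound $\mc{N}(\widetilde{q}_N,\alpha)$ by $\mathrm{tr}\,\psi(\widetilde{q}_N/\alpha)$ with a cutoff equal to $1$ on $[0,1]$ and supported in $[-1,2]$, apply \eqref{Résultat trace théorème central}, bound the main term by $\mathscr{V}(2\alpha)=\mc{O}(\alpha^{\kappa_2})$, and absorb the two error terms using \eqref{Hypothèse ordre de grandeur h-alpha théorème central}.

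The one step you flag as delicate is both unnecessary and wrong:
\begin{itemize}
\item \textbf{It is unnecessary.} The corollary already assumes all hypotheses of Theorem \ref{Théorème central}, including \eqref{Hypothèse sur alpha - Théorème central}, so there is nothing to check.
\item \textbf{The derivation is wrong.} From $N^{-\kappa_1\eta}\ll\alpha^{\kappa_2}$ you only get $\alpha\gg N^{-\kappa_1\eta/\kappa_2}$, not a bound on $\alpha^{1/\kappa_2}$. Moreover, $N^{-\kappa_1\eta/\kappa_2}\leq N^{-\kappa_1\eta}$ since $\kappa_2\leq 1$, so the inequality you wrote is reversed.
\item \textbf{The claimed implication is false when $\kappa_1<\varrho$.} Take $\varrho=1$, $\kappa_1=1/10$, $\kappa_2=1/2$ and $\alpha=N^{-3\eta/20}$. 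Both conditions in \eqref{Hypothèse ordre de grandeur h-alpha théorème central} hold, but $N^{-\eta\min(\varrho,\kappa_1)}=N^{-2\eta/20}\ll\alpha$ fails.
\end{itemize}
Delete that paragraph and invoke the assumption directly.

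A minor slip: you mean $\psi(\lambda/\alpha)\geq\mathbb{1}_{[0,\alpha]}(\lambda)$ for $\lambda\geq 0$, not $\psi\geq\mathbb{1}_{[0,\alpha]}(\cdot/\alpha)$.
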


\begin{proof}
    Let $\Phi\in \mathscr{C}^\infty_c(\R)$ be such that $\Phi\equiv 1$ on $[0,1]$, $\Phi\equiv 0$ on $\R\setminus[-1,2]$ and $0\leq \Phi\leq 1$. The selfadjoint functional calculus gives us the equality 
    \begin{equation*}
        \sigma\pr{\Phi\pr{\alpha^{-1}\widetilde{q}_N}}=\Phi\pr{\sigma\pr{\alpha^{-1}\widetilde{q}_N}},
    \end{equation*}
    which allows us to write
    \begin{equation}
        \mathrm{tr}\croch{\Phi\pr{\frac{\widetilde{q}_N}{\alpha}}}=\sum_{\lambda\in \sigma(\widetilde{q}_N)}\Phi\pr{\frac{\lambda}{\alpha}}\geq \#\Bigl[\sigma(\widetilde{q}_N)\cap[0,\alpha] \Bigl]=\mc{N}(\widetilde{q}_N,\alpha).
    \label{Nombre de valeurs propres 1}
    \end{equation}
    However, since for all $\rho\in \T^{2d}$, $q(\rho)\geq 0$, 
    \begin{equation}
        \begin{split}
            \int_{\T^{2d}} \Phi\pr{\dfrac{q_0(\rho)}{\alpha}}d\rho\leq \int_{\T^{2d}}\mathbb{1}_{\left[0,2\alpha\right]}\pr{q_0(\rho)}d\rho=\mathscr{V}(2\alpha)=\mc{O}(\alpha^{\kappa_2}),
        \end{split}
    \label{Nombre de valeurs propres 2}
    \end{equation}
    the last equality coming from \eqref{Hypothèse volume théo central}. Given the estimate \eqref{Résultat trace théorème central} under the assumption \eqref{Hypothèse ordre de grandeur h-alpha théorème central}, it remains to combine \eqref{Nombre de valeurs propres 1} and \eqref{Nombre de valeurs propres 2} to obtain \eqref{Nombre de valeurs propres plus petites que alpha}. 
\end{proof}

\begin{rem}
    Let $\mc{K}$ be a fixed compact subset of $\C$. In view of its proof, if we set $q=\bb{p-z}^2$ and $\widetilde{q}=\overline{(\widetilde{p}-z)}\sharp_h(\widetilde{p}-z)$, and assume that \eqref{Hypothèse volume théo central} holds uniformly with respect to $z$ (to be compared to Assumption \ref{Hypothèse théorème final - volume préimage}), then Theorem \ref{Théorème central} still holds, and with error estimates uniform with respect to $z$. In particular, with the same notations, Corollary \ref{Nombre de valeurs propres plus petites que alpha - résultat} still holds too and 
    \begin{equation*}
        \mc{N}(\widetilde{q}_N,\alpha)=\mc{O}(N^d\alpha^{\kappa_2})
    \end{equation*}
    is uniform with respect to $z$.
\label{Nombre de valeurs propres plus petites que alpha indep de z}
\end{rem}

\section{Grushin problem}\label{Grushin}
We begin by recalling some elementary facts about Grushin problems. See \cite{sjoeandzworski} for more details. The idea is to set up a problem of the form
\begin{equation*}
    \mc{P}(z):=\begin{pmatrix}
        P(z)&R_-\\
        R_+&0
    \end{pmatrix}:\mc{H}_1\oplus \mc{H}_-\longrightarrow\mc{H}_2\oplus\mc{H}_+,
\end{equation*}
where $P(z):\mc{H}_1\to \mc{H}_2$ is to be examined, depending on some parameter $z\in \C$, where $\mc{H}_\pm$, $\mc{H}_j$, $j=1,2$ are complex Hilbert spaces and $R_\pm$ are suitably chosen operators so that the matrix of operators $\mc{P}(z)$ is invertible. Such a $\mc{P}(z)$ is called a \textit{Grushin problem}. In that case, we will write its inverse as follows:
\begin{equation*}
    \begin{pmatrix}
            P(z)&R_-\\
            R_+&0
        \end{pmatrix}^{-1}=\begin{pmatrix}
            E(z)&E_+(z)\\
            E_-(z)&E_{-+}(z)
        \end{pmatrix}.
\end{equation*}
The key argument relies on the Schur complement formula: if $\dim(\mc{H}_+)=\dim(\mc{H}_-)<+\infty$, then $P(z)$ is invertible if and only if $E_{-+}(z)$ is invertible. Furthermore, we have the following identities:
\begin{equation}
    P^{-1}(z)=E(z)-E_+(z)E^{-1}_{-+}(z)E_-(z)\m\m\m\text{and}\m\m\m E^{-1}_{-+}(z)=-R_+P^{-1}(z)R_-.
\label{Relations avec complément de Schur}
\end{equation}
In that case, since the study of $P(z)$ relies on that of $E_{-+}(z)$, we aim to find finite dimensional spaces $\mc{H}_\pm$ (in which case $E_{-+}(z)$ is a finite dimensional matrix).  

\subsection{A Grushin Problem for the unperturbed operator}

Let $N\in \N^*$ and $h=(2\pi N)^{-1}$ as in \eqref{Hypothèse expression de h comme l'inverse de N - 2} be the semiclassical parameter. Let $\varrho\in \hspace{0.1cm} ]0,1]$ and let us take $p\in \mathscr{C}^{0,\varrho}_{\mathrm{pw}}S_d$ (see Definition \ref{Définition espace d'intérêt}) with $\mathscr{U}_p$ the set of potential singularities of $p$ satisfying Assumptions \ref{Hypothèse théorème final - volume singularités} and \ref{Hypothèse théorème final - volume préimage}.
We restrict $p$ to $]0,1]^d_x\times \T^d_\xi$ and extend it by $\Z^d$ periodicity to $\R^d_x\times \T^d_\xi$ as in \eqref{Developpement en série de Fourier par rapport à xi}. Take $\widetilde{p}\in S_{\eta,0}(1,1,1)$ defined in \eqref{Definition de p tilde} with $0<\eta<1/4$. Let $\mc{K}$ be a compact subset of $\C$.\\

Let $\alpha$ be such that 
\begin{equation}
    N^{-\varrho\eta}\alpha^{-1}\ll \alpha^{\kappa_2}\ll1\mm\mm\text{and}\mm\mm N^{-{\kappa_1}\eta}\ll\alpha^{\kappa_2}\ll1.
\label{Hypothèse ordre de grandeur entre alpha et N - 0}
\end{equation}
By Lemma \ref{Lemme dimension et base de l'espace de travail}, we can identify $\mc{H}^d_{h,1,1}\cong \C^{N^d}$. Thanks to Proposition \ref{Chris and Zworski adapté} and \eqref{Majoration de la norme de la matrice de Toeplitz}, we know that $\widetilde{p}_N$ (see Notations \ref{Notations pour les matrices des pseudo}) is a bounded operator $\ell^2\to \ell^2$, uniformly in $N$.\\

Let $z\in \C$ and $\widetilde{q}:=\overline{(\widetilde{p}-z)}\#(\widetilde{p}-z)$. Notice in that case that the principal symbol of $\widetilde{q}$ is $\bb{\widetilde{p}-z}^2$. Since $Q:=\widetilde{q}_N=(\widetilde{p}_N-z)^*(\widetilde{p}_N-z)$ is positive (so selfadjoint), we can take an orthonormal basis of eigenfunctions $e_1,...,e_{N^d}\in \mc{H}^d_{h,1,1}$ associated with the eingenvalues that we will denote
\begin{equation}\label{Notation pour les valeurs singulières ordonnées}
    0\leq t_1^2\leq...\leq t_{N^d}^2.
\end{equation}
Moreover, $Q':=(\widetilde{p}_N-z)(\widetilde{p}_N-z)^*$ and $Q$ have the same spectrum. Indeed, let $N_0:=\dim\Bigl(\mathrm{Ker}(\widetilde{p}_N-z)^*\Bigl)$ and let $f_1,...,f_{N_0}$ be an orthonormal basis of $\mathrm{Ker}(\widetilde{p}_N-z)^*$. For all $j>N_0$, we set $f_j:=t_j^{-1}(\widetilde{p}_N-z)e_j$ (which is well-defined since $t_j> 0$). Then, for all $j\in \disc{1}{N^d}$,
\begin{equation*}
    (\widetilde{p}_N-z)^*f_j=t_je_j\m\m\m\text{and}\m\m\m (\widetilde{p}_N-z)e_j=t_jf_j
\end{equation*}
and we see that $f_1,...,f_{N^d}$ is an orthonormal basis of eigenvalues of $Q'$ associated with the eigenvalues \eqref{Notation pour les valeurs singulières ordonnées}.\\

By \eqref{Hypothèse ordre de grandeur entre alpha et N - 0}, Corollary \ref{Nombre de valeurs propres plus petites que alpha - résultat} and Remark \ref{Nombre de valeurs propres plus petites que alpha indep de z}, we know that $\mathfrak{m}:=\mc{N}\pr{Q,\alpha}$ fulfills the following condition:
\begin{equation}
    \mathfrak{m}=\mc{O}(N^d\alpha^{\kappa_2})
\label{Nombre de valeurs propres Problème de Grushin}
\end{equation}
uniformly with respect to $z\in \mc{K}$. Assume until further notice that
\begin{equation}\label{Hypothèse partie 6 sur le fait que M>0}
    \mathfrak{m}>0.
\end{equation}
In this case, it satisfies 
\begin{equation}\label{Valeurs propres inférieures à alpha}
    0\leq t_1^2\leq \dots\leq t_\mathfrak{m}^2\leq \alpha< t_{\mathfrak{m}+1}^2\leq \dots \leq t_{N^d}^2.
\end{equation}
We denote $(\delta_i)_{i\in\disc{1}{\mathfrak{m}}}$ an orthonormal basis of $\C^\mathfrak{m}$.\\

\m\m\m\m We set
\begin{equation}
    R_+:\left\{\begin{array}{ccccc}
          \mc{H}^d_{h,1,1} & \to & \C^\mathfrak{m} \\
          u & \mapsto & \sum_{j=1}^\mathfrak{m} \ps{u}{e_j}\delta_j
    \end{array}\right.\m\m\text{and}\m\m R_-:\left\{\begin{array}{ccccc}
          \C^\mathfrak{m} & \to & \mc{H}^d_{h,1,1} \\
          u_- & \mapsto & \sum_{j=1}^\mathfrak{m} \ps{u_-}{\delta_j}f_j
    \end{array}\right..
\label{Opérateurs R+ et R- pb de Grushin}
\end{equation}
We will show that the operator 
\begin{equation}
    \mc{P}(z):=\begin{pmatrix}
        \widetilde{p}_N-z&R_-(z)\\
        R_+(z)&0
    \end{pmatrix}:\mc{H}^d_{h,1,1}\times \C^\mathfrak{m}\longrightarrow\mc{H}^d_{h,1,1}\times\C^\mathfrak{m}.
\label{Operateur_Pb_Grushin}
\end{equation}
is bijective. In which case, we will denote its inverse 
\begin{equation}
    \mc{E}(z):=\begin{pmatrix}
        E(z)&E_+(z)\\
        E_-(z)&E_0(z)
    \end{pmatrix}.
\label{Problème de Grushin et inverse}
\end{equation}
\m\m\m\m Given $(v,v_+)\in \mc{H}^d_{h,1,1}\times\C^\mathfrak{m}$, we look for a solution $(u,u_-)\in \mc{H}^d_{h,1,1}\times\C^\mathfrak{m}$ such that 
\begin{equation}
    \mc{P}(z)\begin{pmatrix}
        u\\
        u_-
    \end{pmatrix}=\begin{pmatrix}
        v\\
        v_+
    \end{pmatrix}.
\label{Inversibilité problème de Grushin}
\end{equation}
We write it of the form
\begin{align*}
    v=\sum_{j=1}^{N}v(j)f_j\m\m\m\m\m\m&\text{and}\m\m\m\m\m\m v_+=\sum_{j=1}^\mathfrak{m}v_+(j)\delta_j\\
    u=\sum_{j=1}^{N}u(j)e_j\m\m\m\m\m\m&\text{and}\m\m\m\m\m\m u_-=\sum_{j=1}^\mathfrak{m}u_-(j)\delta_j\m.
\end{align*}
Solving \eqref{Inversibilité problème de Grushin} is then equivalent to solving
\begin{equation}
   \left\{\begin{array}{cc}
        & t_ju(j)=v(j) \m\m\m\text{for all}\m\m\m j\in\disc{\mathfrak{m}+1}{N^d} \\
        & \begin{pmatrix}
            t_j&1\\
            1&0
        \end{pmatrix}\begin{pmatrix}
            u(j)\\
            u_-(j)
        \end{pmatrix}=\begin{pmatrix}
            v(j)\\
            v_+(j)
        \end{pmatrix} \m\m\m\text{for all}\m\m\m j\in\disc{1}{\mathfrak{m}}
   \end{array}\right..
\label{Inversibilité problème de Grushin équivalente}
\end{equation}
Then, it is enough to set $u(j)=t_j^{-1}v(j)$ for all $j\in \disc{\mathfrak{m}+1}{N^d}$ and for all $j\in \disc{1}{\mathfrak{m}}$,
\begin{equation*}
    \begin{pmatrix}
            u(j)\\
            u_-(j)
    \end{pmatrix}=\begin{pmatrix}
            0&1\\
            1&-t_j
        \end{pmatrix}\begin{pmatrix}
            v(j)\\
            v_+(j)
        \end{pmatrix}.
\end{equation*}
This shows that \eqref{Operateur_Pb_Grushin} is bijective. Furthermore, in view of \eqref{Problème de Grushin et inverse}, we get
\begin{equation}
    \begin{array}{ccccc}
         & E(z)(\cdot)=\displaystyle\sum_{j=\mathfrak{m}+1}^{N^d}\dfrac{1}{t_j}\ps{\cdot}{f_j}e_j, &\m\m\m & E_+(z)(\cdot)=\displaystyle\sum_{j=1}^\mathfrak{m}\ps{\cdot}{\delta_j}e_j, \\
         & E_-(z)(\cdot)=\displaystyle\sum_{j=1}^\mathfrak{m}\ps{\cdot}{f_j}\delta_j, &\m\m\m & E_{-+}(z)(\cdot)=-\displaystyle\sum_{j=1}^\mathfrak{m} t_j\ps{\cdot}{\delta_j}\delta_j.
    \end{array}
\label{Expression des opérateurs de l'inverse au Problème de Grushin}
\end{equation}
This shows that 
\begin{equation}
    \norme{E(z)}\leq \alpha^{-1/2},\m\m\m\m \norme{E_-(z)}=\norme{E_+(z)}=1 ,\m\m\m\m \norme{E_{-+}(z)}\leq \alpha^{1/2}.
\label{Norme des opérateurs de l'inverse au pb de Grushin}
\end{equation}

\subsection{log-determinant estimates for \eqref{Operateur_Pb_Grushin}}

Notice that $\max(\alpha,\cdot)$ is continuous on $\R$. Thanks to \eqref{Inversibilité problème de Grushin équivalente} and the selfadjoint functional calculus, we have
\begin{equation}
    \bb{\det\pr{\mc{P}(z)}}^2=\prod_{j=\mathfrak{m}+1}^{N^d}t_i^2=\alpha^{-\mathfrak{m}}\prod_{j=1}^{N^d}\max(\alpha,t_i^2)=\alpha^{-\mathfrak{m}}\det\pr{\max(\alpha,Q)}.
\label{Expression det de mathscal(P)(z) en fonction de alpha et M}
\end{equation}
The Schur complement formula, \eqref{Operateur_Pb_Grushin} and \eqref{Problème de Grushin et inverse} give that
\begin{equation}
    \log\bb{\det(\widetilde{p}_N-z)}=\log\bb{\det\pr{\mc{P}(z)}}+\log\bb{\det\pr{E_0(z)}}.
\label{Utilisation de la conséquence de Schur opé non perturbé}
\end{equation}

Consider $\chi\in \mathscr{C}^{\infty}_c(\R)$ such that $\mathrm{supp}(\chi)\subset [-1,2]$, $0\leq \chi\leq 1$ and $\chi\equiv 1$ over $[0,1]$. Then, for all $x\geq 0$,
\begin{equation*}
    0<x+\frac{\alpha}{4}\chi\pr{\frac{4x}{\alpha}}\leq \max(\alpha,x)\leq x+\alpha\chi\pr{\frac{x}{\alpha}}.
\end{equation*}
Using the selfadjoint functional calculus, \eqref{Résultat log-det théorème central} of Theorem \ref{Théorème central} twice, \eqref{Nombre de valeurs propres Problème de Grushin} and \eqref{Expression det de mathscal(P)(z) en fonction de alpha et M}, we obtain
\begin{equation}
    \begin{split}
        \log\pr{ \bb{\det\pr{\mc{P}(z)}}^2} &= \log\det\pr{\max(\alpha,Q)}+\mathfrak{m}\log\pr{\frac{1}{\alpha}}\\
        &= N^d\biggl(\int_{\T^2}\log\pr{\bb{p(\rho)-z}^2}d\rho+\mc{O}\pr{N^{-\eta{\kappa_1}}\bb{\log(\alpha)}}\\
        &\mm\mm +\mc{O}\pr{\alpha^{\kappa_2}\bb{\log(\alpha)}}+\mc{O}\pr{N^{-\varrho\eta}\alpha^{-1}}\biggl).
    \end{split}
\label{log-det matrice pb de Grushin non perturbé}
\end{equation}

\begin{rem}
    In the case $\mathfrak{m}=0$, which means that \eqref{Hypothèse partie 6 sur le fait que M>0} does not hold anymore, the Grushin problem \eqref{Operateur_Pb_Grushin} simply reads $\mc{P}(z)=\widetilde{p}_N-z$. This operator is bijective since all the singular values of $\widetilde{p}_N-z$ are greater than $\sqrt{\alpha}>0$ by definition of $\mathfrak{m}$. In particular, we have, 
    \begin{equation*}
        \begin{split}
            \log\pr{ \bb{\det\pr{\mc{P}(z)}}^2} &= \log\det(Q)= \log\det\pr{\textbf{1}_{\alpha}(Q)}\\
            &= N^d\biggl(\int_{\T^2}\log\pr{\bb{p(\rho)-z}^2}d\rho+\mc{O}\pr{N^{-\eta{\kappa_1}}\bb{\log(\alpha)}}\\
            &\mm\mm +\mc{O}\pr{\alpha^{\kappa_2}\bb{\log(\alpha)}}+\mc{O}\pr{N^{-\varrho\eta}\alpha^{-1}}\biggl),
        \end{split}
    \end{equation*}
    which coincides with \eqref{log-det matrice pb de Grushin non perturbé}.
\end{rem}

\section{Estimates of logarithmic potential}
\label{log_pot}
\label{Principal setting} We start by recalling the definition of logarithmic potentials (see for example \cite[Section 7.2]{sjoestrand2019toeplitzbandmatricessmall}, or \cite[Section 4.1]{Bordenchafai_potentielloga}). Let us denote $\mathscr{P}(\C)$ the set of probability measures $\mu$ on $\C$ such that 
\begin{equation*}
    \int_{\C}\log\pr{1+\bb{x}}d\mu(x)<+\infty
\end{equation*}
If $\mu\in \mathscr{P}(\C)$, we define the \textit{logarithmic potential} of $\mu$ as the function 
\begin{equation}\label{Définition du potentiel loga pour une mesure donnée}
    \phi_\mu(z):=\int_{\C}\log\bb{\omega-z}d\mu(\omega), \mm z\in \C.
\end{equation}

Let $N\in \N^*$ and $h=(2\pi N)^{-1}$ as \eqref{Hypothèse expression de h comme l'inverse de N - 2} the semiclassical parameter. Let $\varrho\in \hspace{0.1cm} ]0,1]$ and let us take $p\in \mathscr{C}^{0,\varrho}_{\mathrm{pw}}S_d$ (see Definition \ref{Définition espace d'intérêt}) with $\mathscr{U}_p$ the set of potential singularities of $p$ satisfying Assumptions \ref{Hypothèse théorème final - volume singularités} and \ref{Hypothèse théorème final - volume préimage}.
We restrict $p$ to $]0,1]^d_x\times \T^d_\xi$ and extend it by $\Z^d$ periodicity to $\R^d_x\times \T^d_\xi$ as \eqref{Developpement en série de Fourier par rapport à xi}. Take $\widetilde{p}\in S_{\eta,0}(1,1,1)$ defined by \eqref{Definition de p tilde} with $0<\eta<1/4$. We set 
\begin{equation}
    \mu:=p_*\mathrm{L},
\end{equation}
where $\mathrm{L}$ denotes the Lebesgue measure on $[0,1]^d_x\times \T^d_\xi$, so that in this case, 
\begin{equation*}
    \phi_\mu(z)=\int_{[0,1]^d_x\times \T^d_\xi}\log\bb{p(\rho)-z}d\rho.
\end{equation*}

Let $\alpha$ be such that 
\begin{equation}
    N^{-\varrho\eta}\alpha^{-1}\ll \alpha^{\kappa_2}\ll1\mm\mm\text{and}\mm\mm N^{-{\kappa_1}\eta}\ll\alpha^{\kappa_2}\ll1.
\label{Hypothèse ordre de grandeur entre alpha et N}
\end{equation}

Let $Q_N$ be a random $N^d\times N^d$ complex matrix satisfying Assumption \ref{Hypothèse théorème final - matrice aléatoire} for some $\kappa_3>0$.\\

Let $\varepsilon_0>0$ be a positive real number to be chosen later on. From the Markov's inequality and the first point of Assumption \ref{Hypothèse théorème final - matrice aléatoire} on $Q_N$, we have
\begin{equation}
    \Prob{\norme{Q_N}>CN^{\kappa_3+\varepsilon_0}}\leq \frac{1}{C}\m \frac{\mathbb{E}\pr{\norme{Q_N}}}{N^{\kappa_3+\varepsilon_0}}=\mc{O}(N^{-\varepsilon_0}).
\label{Apparition de epsilon_0 dans la proba de la norme de la matrice aléatoire}
\end{equation}
Until further notice, we are going to consider $Q_N$ as a deterministic matrix and assume that
\begin{equation}
    \norme{Q_N}\leq CN^{\kappa_3+\varepsilon_0}.
\label{Majoration de la norme de la matrice aléatoire}
\end{equation}
Our main purpose is to study the eigenvalue distribution of 
\begin{equation}
    M^\delta_N(p):=M_N(p)+\delta Q_N
\label{Notation de la mtrice d'intérêt perturbée}
\end{equation}
where we recall that $M_N(p)$ is defined in \eqref{Coef infinite matrix quantization} and \eqref{Définition de la matrice avec la nouvelle procédure de quantif}, and with
\begin{equation}
    \delta:=N^{-(\kappa_3+\delta_0)}
\label{Ecriture de delta en décroissance polynomiale}
\end{equation}
for some $\delta_0>0$. We assume that
\begin{equation}
    \delta\ll N^{-(\kappa_3+\varepsilon_0)}\alpha^{1/2}.
\label{Hypothèse sur delta matrice générale}
\end{equation}
In particular, in view of \eqref{Majoration de la norme de la matrice aléatoire}, assumption \eqref{Hypothèse sur delta matrice générale} implies that if $\delta_0\geq \varepsilon_0$, then
\begin{equation}\label{Contrainte sur delta, alpha et la norme de la matrice aléatoire}
    0\leq \delta\alpha^{-1/2}\norme{Q_N}\leq 1/2. 
\end{equation}

\mm From now on, we will denote for every $N^d\times N^d$ matrix $A$ its singular values $s_j(A)$, $j\in \disc{1}{N^d}$ such that
\begin{equation}\label{Notation pour les valeurs singulières ordonnées bis}
    s_{N^d}(A)\leq s_{N^d-1}(A)\leq \dots \leq s_1(A)=\norme{A}.
\end{equation}
To be linked with notation \eqref{Notation pour les valeurs singulières ordonnées}, we have that for all $j\in \disc{1}{N^d}$,
\begin{equation*}
    s_j(A)=t_{N^d+1-j}(A).
\end{equation*}
\mm To study the eigenvalues of \eqref{Notation de la mtrice d'intérêt perturbée}, we will work with the logarithmic potential associated with the empirical spectral measure $\mu_N$ of $M^\delta_N(p)$ defined in \eqref{Mesure empirique - théorème final}, that is, 
\begin{equation}
    \mu_N:=\frac{1}{N^d}\sum_{\lambda\in \sigma\pr{M^\delta_N(p)}}\delta_\lambda.
\label{Expression mesure empirique de P perturbé via nouvelle quantif}
\end{equation}
Notice that 
\begin{equation}\label{Expression simplifiée du potentiel logarithmique en fonction du déterminant}
    \phi_{\mu_N}(z)=\frac{1}{N^d}\log\bb{\det\pr{M^\delta_N(p)-z}}=\frac{1}{N^d}\sum_{j=1}^{N^d}\log\pr{s_j\pr{M^\delta_N(p)-z}}.
\end{equation}
\mm Our aim is to compare $\phi_\mu$ and $\phi_{\mu_N}$. To this end, we introduce 
\begin{equation}
    \prescript{\delta}{}{\widetilde{p}}_N:=\widetilde{p}_N+\delta Q_N,
\label{Notation de la mtrice d'intérêt perturbée régularisée}
\end{equation}
and the corresponding empirical spectral measure
\begin{equation}
    \widetilde{\mu}_N:=\frac{1}{N^d}\sum_{\lambda\in \sigma\pr{\prescript{\delta}{}{\widetilde{p}}_N}}\delta_\lambda.
\label{Expression mesure empirique de P tilde perturbé}
\end{equation}
As for \eqref{Expression simplifiée du potentiel logarithmique en fonction du déterminant}, observe that 
\begin{equation}\label{Expression simplifiée du potentiel logarithmique en fonction du déterminant - 2}
    \phi_{\widetilde{\mu}_N}(z)=\frac{1}{N^d}\log\bb{\det\pr{\prescript{\delta}{}{\widetilde{p}_N}-z}}=\frac{1}{N^d}\sum_{j=1}^{N^d}\log\pr{s_j\pr{\prescript{\delta}{}{\widetilde{p}_N}-z}}.
\end{equation}
In Section \ref{Comparaison entre les potentiels loga - 1}, we will compare $\phi_{\widetilde{\mu}_N}$ with $\phi_\mu$ and in Section \ref{Comparaison entre les potentiels loga - 2}, $\phi_{\mu_N}$ with $\phi_{\widetilde{\mu}_N}$.

\subsection{Comparison of $\phi_{\widetilde{\mu}_N}$ and $\phi_\mu$}\label{Comparaison entre les potentiels loga - 1}
\begin{prop}
Let $\mc{K}\subset\subset \C$ and $\beta>0$. Let $\phi_{\mu_N}$ be as in \eqref{Expression simplifiée du potentiel logarithmique en fonction du déterminant} and let $\phi_{\widetilde{\mu}_N}$ be as in \eqref{Expression simplifiée du potentiel logarithmique en fonction du déterminant - 2}. Then, for all $z\in \mc{K}$, 
\begin{equation}
    \begin{split}
        \phi_{\widetilde{\mu}_N}(z) &\leq \phi_{\mu}(z)+\mc{O}\pr{N^{-\eta{\kappa_1}}\bb{\log(\alpha)}}+\mc{O}\pr{\alpha^{\kappa_2}\bb{\log(\alpha)}}\\
        &\m\m\m\m\m\m\m\m\m\m +\mc{O}\pr{N^{-\varrho\eta}\alpha^{-1}}+\mc{O}\pr{\delta\alpha^{-1/2}N^{\kappa_3+\varepsilon_0}}
    \end{split}
\label{Estimée log-det supérieure, cas gaussien -potentiel log}
\end{equation}
with probability $\geq 1-\mc{O}(N^{-\varepsilon_0})$ and
\begin{equation}
    \begin{split}
        \phi_{\widetilde{\mu}_N}(z) &\geq \phi_{\mu}(z)+\mc{O}\pr{N^{-\eta{\kappa_1}}\bb{\log(\alpha)}}+\mc{O}\pr{\alpha^{\kappa_2}\bb{\log(\alpha)}}\\
        &\m\m\m\m\m\m\m\m\m\m +\mc{O}\pr{N^{-\varrho\eta}\alpha^{-1}}+\mc{O}\pr{\alpha^{{\kappa_2}}\log(\delta N^{-\beta})}
    \end{split}
\label{Estimée log-det inférieure, cas gaussien -potentiel log}
\end{equation}
with probability $\geq 1-o(1)$.
\label{Différence des potentiels loga entre mu_N tilde et mu}
\end{prop}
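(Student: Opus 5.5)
We follow the Grushin-problem strategy of Sjöstrand--Vogel and Vogel \cite{Vogel_2020}, applied to the perturbed Grushin problem for $\prescript{\delta}{}{\widetilde{p}}_N-z$. Fix $z\in\mc{K}$ and work on the event $\norme{Q_N}\leq CN^{\kappa_3+\varepsilon_0}$, which has probability $\geq 1-\mc{O}(N^{-\varepsilon_0})$ by \eqref{Apparition de epsilon_0 dans la proba de la norme de la matrice aléatoire}. With $R_\pm$ as in \eqref{Opérateurs R+ et R- pb de Grushin}, set
\begin{equation*}
\mc{P}^\delta(z):=\begin{pmatrix}\prescript{\delta}{}{\widetilde{p}}_N-z&R_-\\ R_+&0\end{pmatrix}=\mc{P}(z)+\begin{pmatrix}\delta Q_N&0\\0&0\end{pmatrix}.
\end{equation*}
By \eqref{Norme des opérateurs de l'inverse au pb de Grushin} and \eqref{Contrainte sur delta, alpha et la norme de la matrice aléatoire}, we have $\norme{\mc{E}(z)}\,\delta\norme{Q_N}\leq \alpha^{-1/2}\delta\norme{Q_N}\leq 1/2$. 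A Neumann series then shows that $\mc{P}^\delta(z)$ is invertible, with inverse $\mc{E}^\delta$ satisfying $\norme{\mc{E}^\delta}=\mc{O}(\alpha^{-1/2})$. Its $(-+)$ block is
\begin{equation*}
E^\delta_{-+}=E_{-+}-E_-\delta Q_NE_++\mc{O}(\delta^2\norme{Q_N}^2\alpha^{-1/2}),
\end{equation*}
so $\norme{E^\delta_{-+}}=\mc{O}(\alpha^{1/2}+\delta N^{\kappa_3+\varepsilon_0})=\mc{O}(\alpha^{1/2})$. Schur's formula gives
\begin{equation*}
\log|\det(\prescript{\delta}{}{\widetilde{p}}_N-z)|=\log|\det\mc{P}^\delta(z)|+\log|\det E^\delta_{-+}(z)|.
\end{equation*}
We estimate each term separately.

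\textbf{The first term.} Write $\mc{P}^\delta=\mc{P}(1+\mc{E}\,\mathrm{diag}(\delta Q_N,0))$. Then
\begin{equation*}
\bigl|\log|\det\mc{P}^\delta|-\log|\det\mc{P}|\bigr|\leq \sum_j\bigl|\log(1+\mc{O}(s_j(\mc{E}\delta Q_N)))\bigr|\leq \mc{O}(1)\,\norme{\mc{E}\,\delta Q_N}_{\mathrm{tr}}\leq \mc{O}(N^d\delta\alpha^{-1/2}N^{\kappa_3+\varepsilon_0}),
\end{equation*}
using that the trace norm of an $N^d\times N^d$ matrix is at most $N^d$ times its operator norm. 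Combining this with \eqref{log-det matrice pb de Grushin non perturbé} and dividing by $N^d$ shows that $\frac{1}{N^d}\log|\det\mc{P}^\delta(z)|$ equals $\phi_\mu(z)$ up to exactly the error terms listed in \eqref{Estimée log-det supérieure, cas gaussien -potentiel log}. Here we use that $\frac{1}{2}\int\log|p-z|^2\,d\rho=\phi_\mu(z)$, with the integral over $[0,1]^d\times\T^d$ identified with the one over $\T^{2d}$ after periodic extension.

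\textbf{Upper bound.} Since $E^\delta_{-+}$ is an $\mathfrak{m}\times\mathfrak{m}$ matrix of norm $\mc{O}(\alpha^{1/2})\leq 1$ (for $\alpha$ small), $\log|\det E^\delta_{-+}|\leq \mathfrak{m}\log\norme{E^\delta_{-+}}\leq 0$. This gives \eqref{Estimée log-det supérieure, cas gaussien -potentiel log} on the good event; the case $\mathfrak{m}=0$ is trivial.

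\textbf{Lower bound.} This is the main obstacle: we need a lower bound on $|\det E^\delta_{-+}|$. We use the singular value bound $|\det E^\delta_{-+}|\geq s_{\mathfrak{m}}(E^\delta_{-+})^{\mathfrak{m}}$ together with $s_{\mathfrak{m}}(E^\delta_{-+})\geq \norme{(E^\delta_{-+})^{-1}}^{-1}$. By \eqref{Relations avec complément de Schur},
\begin{equation*}
(E^\delta_{-+})^{-1}=-R_+(\prescript{\delta}{}{\widetilde{p}}_N-z)^{-1}R_-,
\end{equation*}
and $\norme{R_\pm}=1$, so
\begin{equation*}
\norme{(E^\delta_{-+})^{-1}}\leq \norme{(\widetilde{p}_N-z+\delta Q_N)^{-1}}=\delta^{-1}s_{N^d}\bigl(Q_N+\delta^{-1}(\widetilde{p}_N-z)\bigr)^{-1}.
\end{equation*}
The matrix $A_N=\delta^{-1}(\widetilde{p}_N-z)$ is deterministic with $\norme{A_N}=\mc{O}(\delta^{-1})=\mc{O}(N^{\kappa_3+\delta_0})$, uniformly for $z\in\mc{K}$ by Proposition \ref{Chris and Zworski adapté}. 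The anti-concentration part of Assumption \ref{Hypothèse théorème final - matrice aléatoire}, with $\theta=\kappa_3+\delta_0$, yields $\beta>0$ such that $s_{N^d}(Q_N+A_N)\geq N^{-\beta}$ with probability $1-o(1)$. On this event, $s_{\mathfrak{m}}(E^\delta_{-+})\geq \delta N^{-\beta}$, hence
\begin{equation*}
\log|\det E^\delta_{-+}|\geq \mathfrak{m}\log(\delta N^{-\beta}).
\end{equation*}
Dividing by $N^d$ and using $\mathfrak{m}=\mc{O}(N^d\alpha^{\kappa_2})$ from \eqref{Nombre de valeurs propres Problème de Grushin} gives the error term $\mc{O}(\alpha^{\kappa_2}\log(\delta N^{-\beta}))$. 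We then combine this with the two-sided estimate for $\log|\det\mc{P}^\delta|$ above. The $\mc{O}(\delta\alpha^{-1/2}N^{\kappa_3+\varepsilon_0})$ error is dominated by the other terms under \eqref{Hypothèse sur delta matrice générale}, so it does not appear in \eqref{Estimée log-det inférieure, cas gaussien -potentiel log}. Intersecting with the norm event costs only $\mc{O}(N^{-\varepsilon_0})=o(1)$ in probability, which gives the stated $1-o(1)$.
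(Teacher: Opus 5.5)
Your argument is the paper's proof in all essentials. It uses the same perturbed Grushin problem inverted by a Neumann series, the same Schur-complement splitting, and a trace-norm comparison of $\log\bb{\det\mc{P}^\delta(z)}$ with $\log\bb{\det\mc{P}(z)}$. You factor $\mc{P}^\delta=\mc{P}(I+X)$, whereas the paper differentiates $\log\det(\mc{P}^{\delta *}\mc{P}^\delta)$ in $\delta$; the resulting bound is the same. The lower bound is also the paper's: the anti-concentration bound is transferred to $s_{\mathfrak{m}}(E^\delta_{-+})$ through $(E^\delta_{-+})^{-1}=-R_+(\prescript{\delta}{}{\widetilde{p}}_N-z)^{-1}R_-$. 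Your upper bound $\log\bb{\det E^\delta_{-+}}\leq 0$ is slightly cleaner than the paper's $\mc{O}(\mathfrak{m}\bb{\log\alpha})$.

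One step is not right: the claim that $\mc{O}(\delta\alpha^{-1/2}N^{\kappa_3+\varepsilon_0})$ is dominated by the other error terms in the lower bound. Assumption \eqref{Hypothèse sur delta matrice générale} only makes this quantity small. It does not make it small compared with $\alpha^{\kappa_2}\bb{\log(\delta N^{-\beta})}$ or the remaining errors.

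Here is a concrete case. Take the choices used in the proof of Theorem \ref{Théorème final} in the case $\omega=\delta_0$, so that $\alpha=N^{-\delta_0}$ and $\varepsilon_0=\delta_0/4$, and take $\kappa_2=1$. Then this term equals $N^{-\delta_0/4}$, while every error term you keep is $\mc{O}(N^{-\delta_0}\log N)$.

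So what your argument actually gives is the lower bound with the extra term $\mc{O}(\delta\alpha^{-1/2}N^{\kappa_3+\varepsilon_0})$, exactly as in the upper bound. The paper's proof drops this term without comment as well. It is harmless for Theorem \ref{Théorème final}, where the term tends to $0$, but you should keep it rather than assert that it is absorbed.
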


\begin{proof}
    1. We start by setting up a Grushin problem for $\prescript{\delta}{}{\widetilde{p}}_N$. Let $\mc{P}(z)$, $R_+$ and $R_-$ be as in \eqref{Opérateurs R+ et R- pb de Grushin} and \eqref{Operateur_Pb_Grushin}. For all $z\in \mc{K}$, we set
\begin{equation*}
    \mc{P}^\delta(z):=\begin{pmatrix}
        \prescript{\delta}{}{\widetilde{p}}_N-z&R_-\\
        R_+&0
    \end{pmatrix}:\C^{N^d}\times \C^\mathfrak{m}\longrightarrow \C^{N^d}\times \C^\mathfrak{m}.
\end{equation*}
Denote
\begin{equation*}
    B(z):=\begin{pmatrix}
    \delta Q_NE(z)&\delta Q_NE_+(z)\\
    0&0
\end{pmatrix}.
\end{equation*}
Since $\mc{P}(z)^{-1}=\mc{E}(z)$, see \eqref{Problème de Grushin et inverse}, we have
\begin{equation}
    \mc{P}^{\delta}(z)\mc{E}(z)=\mc{P}(z)\mc{E}(z)+B(z)=I+B(z).
\label{Multiplication P_delta avec E}
\end{equation}
By \eqref{Norme des opérateurs de l'inverse au pb de Grushin}, \eqref{Hypothèse sur delta matrice générale} and \eqref{Contrainte sur delta, alpha et la norme de la matrice aléatoire}
\begin{equation*}
    \norme{B(z)}\leq \norme{\delta Q_NE(z)}+\norme{\delta Q_NE_+(z)}\leq\frac{1}{2}(1+\alpha^{1/2})<1
\end{equation*}
since $\alpha\ll 1$. So \eqref{Multiplication P_delta avec E} is bijective, and 
\begin{equation}
    \pr{\mc{P}^{\delta}(z)\mc{E}(z)}^{-1}=I+\sum_{n=1}^{+\infty}(-1)^nB(z)^n.
\end{equation}
Since $\mc{E}(z)$ is bijective, so is $\mc{P}^\delta(z)$. More precisely, 
\begin{equation}\label{Expression inverse du problème de Grushin perturbé}
    \begin{split}
        \mc{P}^{\delta}&(z)^{-1}=\mc{E}(z)\pr{\mc{P}^{\delta}(z)\mc{E}(z)}^{-1}\\
        &=\begin{pmatrix}
                E(I+\delta Q_NE(z))^{-1}& E_+(z)-E(z)(I+\delta Q_NE(z))^{-1}\delta Q_NE_+(z)\\
                E_-(z)(I+\delta Q_NE(z))^{-1}&E_{-+}(z)-E_-(z)(I+\delta Q_NE(z))^{-1}\delta Q_NE_+(z)
        \end{pmatrix}\\
        &=:\begin{pmatrix}
            E^{\delta}(z)&E_+^{\delta}(z)\\
            E_-^{\delta}(z)&E_{-+}^{\delta}(z)
        \end{pmatrix}=:\mc{E}^{\delta}(z)\m.
    \end{split}
\end{equation}
Let us estimate the entries of $\mc{E}^\delta(z)$. For this, notice first that by \eqref{Norme des opérateurs de l'inverse au pb de Grushin} and \eqref{Contrainte sur delta, alpha et la norme de la matrice aléatoire},
\begin{equation*}
    \norme{(I+\delta Q_NE(z))^{-1}}\leq \dfrac{1}{1-\delta\norme{Q_NE(z)}}\leq \dfrac{1}{1-\delta\norme{Q_N}\norme{E(z)}}\leq 2.
\end{equation*}
Therefore, from \eqref{Norme des opérateurs de l'inverse au pb de Grushin}, \eqref{Hypothèse sur delta matrice générale} and \eqref{Expression inverse du problème de Grushin perturbé},
\begin{equation}
    \begin{split}
        &\norme{E^{\delta}(z)} = \norme{E(I+\delta Q_NE(z))^{-1}}\leq 2\alpha^{-1/2}\\
        &\norme{E_+^{\delta}(z)} = \norme{E_+(z)-E(z)(I+\delta Q_NE(z))^{-1}\delta Q_NE_+(z)}\leq 1+2\cdot\frac{1}{2}=2\\
        &\norme{E_-^{\delta}(z)} = \norme{E_-(z)(I+\delta Q_NE(z))^{-1}}\leq 2\\
        &\norme{E_{-+}^{\delta}(z)} = \norme{E_{-+}(z)-E_-(z)(I+\delta Q_NE(z))^{-1}\delta Q_NE_+(z)}\leq 2\alpha^{1/2}.
    \end{split}
\label{Estimée norme des opérateurs de l'inverse de l'opérateur perturbé}
\end{equation}

\mm Similarly as for \eqref{Utilisation de la conséquence de Schur opé non perturbé},  
\begin{equation}
    \log\bb{\det\pr{\prescript{\delta}{}{\widetilde{p}}_N-z}}=\log\bb{\det\pr{\mc{P}^{\delta}(z)}}+\log\bb{\det\pr{E_0^{\delta}(z)}}.
\label{Décomposition log-det de l'opérateur souhaité}
\end{equation}
Notice that $\delta\mapsto\mc{P}^\delta(z)^*\mc{P}^\delta(z)$ is of class $\mathscr{C}^1$. Under the assumption of bijectiveness of $\mc{P}^\delta(z)$, we have by a direct calculation that
\begin{equation}
    \dfrac{d}{d\delta}\Bigl[\log\det\pr{\mc{P}^\delta(z)^*\mc{P}^\delta(z)}\Bigl]=2\Re\pr{\mathrm{tr}\pr{E^{\delta}(z)Q_N}}.
\label{Dérivée de log-det en fonction de la trace}
\end{equation}
Since $\det\pr{\mc{P}^\delta(z)^*\mc{P}^\delta(z)}=\bb{\det\pr{\mc{P}^{\delta}(z)}}^2$, integrating \eqref{Dérivée de log-det en fonction de la trace} provides
\begin{align*}
    \bb{\log\bb{\det\pr{\mc{P}^{\delta}(z)}}-\log\bb{\det\pr{\mc{P}(z)}}}=\bb{\int_{0}^{\delta} \Re\Bigl(\mathrm{tr}\pr{E^{\tau}(z)Q_N}\Bigl)d\tau}.
\end{align*}
From the equality $\mc{E}(z)^{-1}\mc{E}^{\delta}(z)=\mc{P}(z)\mc{P}^{\delta}(z)^{-1}$, we have
\begin{equation}
    \bb{\log\bb{\det\pr{\mc{P}^{\delta}(z)}}-\log\bb{\det\pr{\mc{P}(z)}}}=\bb{\log\bb{\det\pr{\mc{E}^{\delta}(z)}}-\log\bb{\det\pr{\mc{E}(z)}}}.
\label{Diff des P égal diff des E}
\end{equation}

Hence, the combination of the trace norm properties and \eqref{Norme des opérateurs de l'inverse au pb de Grushin} leads to
\begin{equation}
    \begin{split}
        \bb{\log\bb{\det\pr{\mc{E}^{\delta}(z)}}-\log\bb{\det\pr{\mc{E}(z)}}}&=\bb{\int_{0}^{\delta} \Re\Bigl(\mathrm{tr}\pr{E^{\tau}(z)Q_N}\Bigl)d\tau}\\
        &\leq \int_{0}^{\delta}\mathrm{tr}\Bigl(\bb{E^{\tau}(z)Q_N}\Bigl)d\tau\\
        &\leq \delta\ \alpha^{-1/2}
        \norme{Q_N}_{\text{tr}}\\
        &\leq\mc{O}(\delta\alpha^{-1/2}N^{d}\norme{Q_N}),
    \end{split}
\label{Estimée de la différenece des log-det de E rond}
\end{equation}
where the last inequality comes from the fact that $\norme{Q}_{\mathrm{tr}}\leq N^{d}\norme{Q}$ for all $Q\in \C^{N^d\times N^d}$. So, using \eqref{log-det matrice pb de Grushin non perturbé}, \eqref{Apparition de epsilon_0 dans la proba de la norme de la matrice aléatoire}, \eqref{Majoration de la norme de la matrice aléatoire}, \eqref{Diff des P égal diff des E} and \eqref{Estimée de la différenece des log-det de E rond} provides
\begin{equation}
    \begin{split}
        \log\pr{ \bb{\det\pr{\mc{P}^\delta(z)}}} &= N^d\biggl(\int_{\T^{2d}}\log\pr{\bb{p(\rho)-z}}d\rho+\mc{O}\pr{N^{-\eta{\kappa_1}}\bb{\log(\alpha)}}\\
        &+\mc{O}\pr{\alpha^{\kappa_2}\bb{\log(\alpha)}} +\mc{O}\pr{N^{-\varrho\eta}\alpha^{-1}}+\mc{O}\pr{\delta\alpha^{-1/2}N^{\kappa_3+\varepsilon_0}}\biggl)
    \end{split}
\label{Estimée finale log-det pb de Grushin perturbé}
\end{equation}
with probability $\geq 1-\mc{O}(N^{-\varepsilon_0})$. \\

\m\m\m\m For the estimate of \eqref{Estimée log-det supérieure, cas gaussien -potentiel log} to be complete, it remains, in view of \eqref{Décomposition log-det de l'opérateur souhaité} and \eqref{Estimée finale log-det pb de Grushin perturbé}, to estimate $\log\bb{\det\pr{E_{-+}^{\delta}(z)}}$. By \eqref{Estimée norme des opérateurs de l'inverse de l'opérateur perturbé} and \eqref{Nombre de valeurs propres Problème de Grushin}, we have
\begin{equation}
    \log\bb{\det\pr{E_{-+}^{\delta}(z)}}\leq\mc{O}\pr{\mathfrak{m}\left|\log(\alpha)\right|}
\label{Estimée finale log-det de l'opérateur E_0 perturbé - majoration}
\end{equation}
with probability $\geq 1-\mc{O}(N^{-\varepsilon_0})$.\\
\m\\
Combining \eqref{Décomposition log-det de l'opérateur souhaité}, \eqref{Estimée finale log-det pb de Grushin perturbé}, \eqref{Estimée finale log-det de l'opérateur E_0 perturbé - majoration} and \eqref{Nombre de valeurs propres Problème de Grushin}, we obtain 
\begin{equation}
    \begin{split}
        \log\bb{\det\Bigl(\prescript{\delta}{}{\widetilde{p}}_N-z\Bigl)}&\leq N^d\biggl(\int_{\T^{2d}}\log\pr{\bb{p(\rho)-z}}d\rho+\mc{O}\pr{N^{-\eta{\kappa_1}}\bb{\log(\alpha)}}\\
        &\m\m\m\m\m\m+\mc{O}\pr{\alpha^{\kappa_2}\bb{\log(\alpha)}} +\mc{O}\pr{N^{-\varrho\eta}\alpha^{-1}}+\mc{O}\pr{\delta\alpha^{-1/2}N^{\kappa_3+\varepsilon_0}}\biggl)
    \end{split}
\label{Estimée log-det supérieure, cas gaussien}
\end{equation}
with probability $\geq 1-\mc{O}(N^{-\varepsilon_0})$, which gives \eqref{Estimée log-det supérieure, cas gaussien -potentiel log}.\\
\m\\

\mm 2. We now turn to the proof of \eqref{Estimée log-det inférieure, cas gaussien -potentiel log}. From Proposition \ref{Chris and Zworski adapté} and \eqref{Ecriture de delta en décroissance polynomiale}, we have
\begin{equation*}
    \norme{\delta^{-1}\pr{\widetilde{p}_N-z}}=\mc{O}(N^{\kappa_3+\delta_0}).
\end{equation*}
From the second point of Assumption \ref{Hypothèse théorème final - matrice aléatoire} on $Q_N$ (that is, the anti-concentration bound), there exists $\beta>0$ depending on $\kappa_3+\delta_0$, such that
\begin{equation}
    \begin{split}
        \Prob{s_{N^d}\pr{\widetilde{p}_N-z+\delta Q_N}\leq \delta N^{-\beta}} = \Prob{s_{N^d}\pr{\delta^{-1}(\widetilde{p}_N-z)+Q_N}\leq N^{-\beta}}=o(1).
    \end{split}
\label{Utilisation de l'hypothèse d'anti-concentration}
\end{equation}
To shorten the notations, we will write 
\begin{equation}\label{Notation de la mtrice d'intérêt perturbée régularisée - z}
    \prescript{\delta}{}{}\widetilde{p}_N(z):=\prescript{\delta}{}{}\widetilde{p}_N-z
\end{equation}
and we recall that in that case, the number $\mathfrak{m}$ of singular values of $\prescript{\delta}{}{}\widetilde{p}_N(z)$ smaller than $\alpha$ is given in \eqref{Nombre de valeurs propres Problème de Grushin}, uniformly with respect to $z\in \mc{K}$.\\

Suppose first that $E_{-+}^\delta(z)$ is bijective. Then so is $\prescript{\delta}{}{}\widetilde{p}_N(z)$ and, from \eqref{Relations avec complément de Schur} and the multiplicative Ky-Fan inequalities (see \cite{Gohberg_Krein}), we have
\begin{equation*}
    \begin{split}
        s_\mathfrak{m}\pr{E_{-+}^\delta(z)} &= \frac{1}{s_1\pr{E^\delta_{-+}(z)^{-1}}}=\frac{1}{s_1\pr{-R_-\prescript{\delta}{}{}\widetilde{p}_N(z)^{-1}R_+}}\\
        &\geq \frac{1}{\norme{R_-}\norme{R_+}s_1\pr{\prescript{\delta}{}{}\widetilde{p}_N(z)^{-1}}}\\
        &\geq \frac{s_{N^d}\pr{\prescript{\delta}{}{}\widetilde{p}_N(z)}}{\norme{R_-}\norme{R_+}}.
    \end{split}
\end{equation*}
Yet, since, $\norme{R_-}\leq 1$ and $\norme{R_+}\leq 1$, this previous inequality implies
\begin{equation}\label{Inégalité avec les valeurs singulières}
    s_{N^d}\pr{\prescript{\delta}{}{}\widetilde{p}_N(z)}\leq s_\mathfrak{m}\pr{E_0^\delta(z)}.
\end{equation}
Using a denseness argument and the continuity of $s_{N^d}$ and $s_\mathfrak{m}$, we get that \eqref{Inégalité avec les valeurs singulières} still holds even if $E_{-+}^\delta(z)$ is not bijective.\\

It can therefore be deduced from \eqref{Utilisation de l'hypothèse d'anti-concentration} and \eqref{Inégalité avec les valeurs singulières} that, 
\begin{equation}\label{Majoration pour des estimées de proba avec le problème de Grushin}
    \Prob{s_\mathfrak{m}\Bigl(E_{-+}^\delta(z)\Bigl)\leq \delta N^{-\beta}}\leq \Prob{s_{N^d}\Bigl(\prescript{\delta}{}{}\widetilde{p}_N(z)\Bigl)\leq \delta N^{-\beta}}= o(1).
\end{equation}

In the case $s_\mathfrak{m}\pr{E_0^\delta(z)}> \delta N^{-\beta}$, 
\begin{equation}
    \begin{split}
        \log\bb{\det\pr{E_{-+}^\delta(z)}} &= \sum_{j=1}^\mathfrak{m}\log\pr{s_j(E_{-+}^\delta(z))}\\
        &\geq \mathfrak{m}\log\pr{s_\mathfrak{m}(E_{-+}^\delta(z))}\\
        &\geq \mathfrak{m}\log(\delta N^{-\beta})
    \end{split}
\label{Estimée finale log-det de l'opérateur E_0 perturbé - minoration}
\end{equation}
and thanks to the union bound, from \eqref{Apparition de epsilon_0 dans la proba de la norme de la matrice aléatoire} and \eqref{Majoration pour des estimées de proba avec le problème de Grushin}, this event happens with probability
\begin{equation}
    \begin{split}
        \mathbb{P}\Bigl(\acc{s_\mathfrak{m}\pr{E_{-+}^\delta(z)}> \delta N^{-\beta}}&\cap \acc{\norme{Q_N}\leq CN^{\kappa_3+\varepsilon_0}}\Bigl)\\
        &\geq 1-o(1)-\mc{O}(N^{-\varepsilon_0})=1-o(1).
    \end{split}
\label{Proba minoration de log det de E_0 delta}
\end{equation}
Hence, gathering \eqref{Décomposition log-det de l'opérateur souhaité}, \eqref{Estimée finale log-det pb de Grushin perturbé}, \eqref{Estimée finale log-det de l'opérateur E_0 perturbé - minoration} and \eqref{Proba minoration de log det de E_0 delta}, we have
\begin{equation}
    \begin{split}
        \log\bb{\det(\prescript{\delta}{}{}\widetilde{p}_N-z)} &\geq N^d\biggl(\int_{\T^{2d}}\log\pr{\bb{p(\rho)-z}}d\rho+\mc{O}\pr{N^{-\eta{\kappa_1}}\bb{\log(\alpha)}}\\
        &+\mc{O}\pr{\alpha^{\kappa_2}\bb{\log(\alpha)}} +\mc{O}\pr{N^{-\varrho\eta}\alpha^{-1}}+\mc{O}\pr{\mathfrak{m}N^{-d}\log(\delta N^{-\beta})}\biggl)
    \end{split}
\label{Estimée log-det inférieure, cas gaussien}
\end{equation}
with probability $\geq 1-o(1)$, wich remembering of the expression \eqref{Nombre de valeurs propres Problème de Grushin} of $\mathfrak{m}$, leads to \eqref{Estimée log-det inférieure, cas gaussien -potentiel log}.
\end{proof}

\subsection{Comparison of $\phi_{\mu_N}$ and $\phi_{\widetilde{\mu}_N}$}\label{Comparaison entre les potentiels loga - 2}

Recall \eqref{Définition de la matrice avec la nouvelle procédure de quantif}, Notations \ref{Notations pour les matrices des pseudo} and the setting described at the beginning of Section \ref{Principal setting}. Let us start by giving a useful decomposition of $M_N(p)-\widetilde{p}_N$.

\begin{prop}\label{Expression P_N en fonction de P_N tilde}
     Let $\eta,\varrho,p,\widetilde{p}$ be given at the beginning of Section \ref{Principal setting}. For $N\gg 1$, there exist $\vartheta>0$ (independent of $N$) and two matrices $T_N$ and $B_N$ such that 
    \begin{itemize}
        \item $M_N(p)=\widetilde{p}_N+T_N+N^{-\varrho\eta/2}B_N$,
        \item $\norme{B_N}=\mc{O}(1)$,
        \item $\mathrm{rank}(T_N)=\mc{O}(N^{d-\vartheta})$, 
        \item $\norme{T_N}=\mc{O}(1)$.
    \end{itemize} 
\end{prop}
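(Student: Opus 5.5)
The plan is to compare the two matrices entry by entry, using the explicit coefficient formula of Lemma \ref{Coefficients of the Toeplitz matrix} in the case $t=1/2$, namely \eqref{Coeffs matriciels - cas t=1/2}, applied with $\alpha_1=\alpha_2=1$. The first step is to carry out the sum over $n$ in \eqref{Coeffs matriciels - cas t=1/2} using the partial Fourier expansion \eqref{Developpement en série de Fourier par rapport à xi - version mollifiée}. Since $c_{n,k}(\widetilde{p})$ is the $x$-Fourier coefficient of $\widetilde{p}_{\pm k}$, the $n$-sum should resum, up to sign conventions, to
\begin{equation*}
(\widetilde{p}_N)_{s,j}=\sum_{r\in\Z^d}\pm\,\widetilde{p}_{s-j+rN}\Bigl(\tfrac{s+j}{2N}+\tfrac{r}{2}\Bigr).
\end{equation*}
This is a Weyl-midpoint evaluation together with wrap-around terms indexed by $r$. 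By contrast, \eqref{Coef infinite matrix quantization} gives $M_N(p)_{s,j}=p_{s-j}(s/N)$, with no wrap-around. Getting the signs, the index shift between $\disc{0}{N-1}^d$ and $\disc{1}{N}^d$, and the $\Z^d$-periodicity in $x$ exactly right is the main bookkeeping obstacle. I would handle it by checking the identity on pure exponentials $e^{2i\pi(\ps{a}{x}+\ps{b}{\xi})}$ first.

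Next I fix a cutoff $K:=N^{\epsilon}$ with $\epsilon>0$ small, and use the decay $|p_k(x)|+|\widetilde{p}_k(x)|=\mc{O}_M(\jap{k}^{-M})$ uniformly in $x$, from \eqref{Décroissance des coef de Fourier par rapport à l'indice} and \eqref{Norme Hölder des dérivées partielles par rapport à xi}. Every term whose Fourier index exceeds $K$ in size contributes a matrix of norm $\mc{O}(K^{-M+d+1})$ by the Schur test, as in Lemma \ref{Norme bornée des matrices pour la nouvelle procédure de quantification}. This is absorbed into $N^{-\varrho\eta/2}B_N$. The wrap-around terms $r\neq0$ with $|s-j+rN|\le K$ are supported on entries where some coordinate of $s$ lies within $K$ of the boundary of $\disc{1}{N}^d$. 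They therefore live in $\mc{O}(KN^{d-1})$ rows, and I put them into $T_N$. Their norm is $\mc{O}(1)$ by the same Schur argument.

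For the remaining main terms with $|k|\le K$, where $k=s-j$, I split the difference as
\begin{equation*}
p_k(s/N)-\widetilde{p}_k\bigl(\tfrac{s+j}{2N}\bigr)=\Bigl[p_k(s/N)-\widetilde{p}_k(s/N)\Bigr]+\Bigl[\widetilde{p}_k(s/N)-\widetilde{p}_k\bigl(\tfrac{s+j}{2N}\bigr)\Bigr].
\end{equation*}
For the second bracket, $\partial_x\widetilde{p}_k=\mc{O}_M(h^{-\eta}\jap{k}^{-M})$ by \eqref{p mollifié est dans la bonne classe de symboles}, and the displacement is $|k|/(2N)$. This gives entries $\mc{O}(N^{\eta-1}\jap{k}^{1-M})$, hence norm $\mc{O}(N^{\eta-1})\le \mc{O}(N^{-\varrho\eta/2})$ since $\eta<1/4$. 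For the first bracket, I split rows according to whether $s/N$ lies in the thickened singular set $\bigl[\partial([0,1]^d)\cup\mathscr{U}_p\bigr]+B(0,2h^\eta)$. On the good rows, the estimate \eqref{Estimées p moins p tilde sur A_h}, applied to each Fourier coefficient after integrating against $e^{-2i\pi\ps{k}{\xi}}$ (using the $\xi$-derivative bounds to keep the $\jap{k}^{-M}$ decay), gives entries $\mc{O}(N^{-\varrho\eta}\jap{k}^{-M})$. These go into $N^{-\varrho\eta/2}B_N$. The bad rows are put into $T_N$. By Assumption \ref{Hypothèse théorème final - volume singularités}, a lattice-counting argument shows there are $\mc{O}(N^d h^{\eta\kappa_1})=\mc{O}(N^{d-\eta\kappa_1})$ of them. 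This count is valid because the thickening radius $2h^\eta$ exceeds the lattice spacing $1/N$. Both $M_N(p)$ and $\widetilde{p}_N$ restricted to these rows have norm $\mc{O}(1)$ by Lemma \ref{Norme bornée des matrices pour la nouvelle procédure de quantification} and Proposition \ref{Chris and Zworski adapté}.

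Collecting the pieces, $T_N$ consists of $\mc{O}(N^{d-\eta\kappa_1}+N^{d-1+\epsilon})$ rows, so its rank is bounded by this number and $\norme{T_N}=\mc{O}(1)$. All the other pieces have norm $\mc{O}(N^{-\varrho\eta})+\mc{O}(N^{\eta-1})+\mc{O}(N^{-M\epsilon})=\mc{O}(N^{-\varrho\eta/2})$. The statement then follows with $\vartheta=\min(\eta\kappa_1,1-\epsilon)$.
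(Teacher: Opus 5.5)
Your proof is correct, and it takes a more direct route than the paper's. The paper never writes $\widetilde p_N$ explicitly. It passes through $M_N(\widetilde p)$ and the standard-quantized torus matrix $\widetilde p^1_N$, in three steps:
\begin{itemize}
\item It bounds $M_N(p)-M_N(\widetilde p)$ by truncating Fourier modes at $\bb{k}\le N^{a}$. On good rows it trades the uniform entrywise bound $\mc{O}(N^{-\varrho\eta})$ against the band width, which gives $\mc{O}(N^{ad/2-\varrho\eta})$. The choice $a=\varrho\eta/d$ is the origin of the exponent $\varrho\eta/2$ in the statement.
\item It treats $M_N(\widetilde p)-\widetilde p^1_N$ as the wrap-around sum $-\sum_{r\neq0}\widetilde p_{s-j+rN}(s/N)$.
\item It handles $\widetilde p^1_N-\widetilde p_N$ by symbolic calculus, writing $\mathrm{Op}_h(\widetilde p)=\mathrm{Op}_h^w(\widehat p)$ with $\widehat p=\widetilde p+h^{1-\eta}\widetilde r$.
\end{itemize}
The paper then recovers $\norme{T_N}=\mc{O}(1)$ only a posteriori, from $\norme{M_N(p)}+\norme{\widetilde p_N}$, because its Hilbert--Schmidt bounds on the pieces of $T_N$ are far from $\mc{O}(1)$.

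Your resummation in $n$ merges the last two steps. The midpoint evaluation at $\frac{s+j}{2N}$ is exactly the Weyl-versus-standard discrepancy. You control it by a mean-value bound over a displacement $\bb{k}/2N$ rather than an asymptotic expansion, which also removes the need to justify the torus norm bound for the exotic remainder $\widetilde r\in S_{\eta,0}$. Integrating by parts in $\xi$ with the H\"older bounds on $\partial_\xi^\gamma p$ gives $p_k-\widetilde p_k=\mc{O}(N^{-\varrho\eta}\jap{k}^{-M})$ on good rows. The Schur test then yields $\mc{O}(N^{-\varrho\eta})$ with no band truncation, which is stronger than needed. Every piece of $T_N$ is also bounded directly by Schur. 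Your explicit lattice count for the bad rows, including $\partial([0,1]^d)$ and using $h^\eta\gg 1/N$, is the right justification of the $\mc{O}(N^{d-\eta\kappa_1})$ count that the paper only asserts. The paper's route buys modularity and reuse of its semiclassical machinery; yours is more elementary and self-contained.

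Two minor remarks:
\begin{itemize}
\item The bad-row block you place in $T_N$ is not literally a row restriction of $\widetilde p_N$. Its entries are $\mc{O}(\jap{k}^{-M})$, though, so Schur gives $\mc{O}(1)$ directly.
\item Your caution about signs is warranted. The paper's coefficient lemma produces the index $j-s+rN$, while its later computation uses $s-j+rN$. So the direction of the shift in the two quantizations has to be matched once and for all, which is what your check on pure exponentials does.
\end{itemize}
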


\begin{rem}
    This Proposition is the key step for the proof of Corollary \ref{Corollaire du théorème final}. In fact, if $\vartheta>0$, $B_N$ and $T_N$ are given in Proposition \ref{Expression P_N en fonction de P_N tilde}, and if $R_N$ satisfies \eqref{Hypothèse matrice de perturbation} with $0< \kappa_4\leq d$, then 
    \begin{itemize}
        \item $M_N(p)+T_N=\widetilde{p}_N+\widetilde{T}_N+N^{-\varrho\eta/2}B_N$, with $\widetilde{T}_N=T_N+R_N$,
        \item $\norme{B_N}=\mc{O}(1)$,
        \item $\mathrm{rank}(\widetilde{T}_N)=\mc{O}(N^{d-\vartheta_0})$, with $\vartheta_0=\min(\vartheta,\kappa_4)>0$, 
        \item $\norme{\widetilde{T}_N}=\mc{O}(1)$.
    \end{itemize} 
    Thus, considering $M_N(p)$ or $M_N(p)+R_N$ in this case won't change the rest of the proof of Theorem \ref{Théorème final}.
\end{rem}

\begin{proof}[Proof of Proposition \ref{Expression P_N en fonction de P_N tilde}]
    1. We begin by introducing a new symbol $\widehat{p}$ such that 
    \begin{equation}\label{Egalité entre quantif standard de p tilde et quantif de Weyl de p chapeau}
        \mathrm{Op}_{h}(\widetilde{p})=\mathrm{Op}_h^w(\widehat{p})
    \end{equation}
    (see \cite[Theorem 4.13]{Zworski} or \cite[Theorem 2.7.1]{martinez}). Indeed, by \cite{Zworski,martinez}, 
    \begin{equation*}
        \widehat{p}(x,\xi)=e^{-\frac{ih}{2}\ps{D_x}{D_\xi}}\widetilde{p}(x,\xi)\in S_{\eta,0}(1),
    \end{equation*}
    which can also be written as
    \begin{equation}
        \widehat{p}(x,\xi)=\frac{1}{(2\pi h)^d}\int_{\R^d}\int_{\R^d}e^{\frac{i}{h}\ps{x_1}{\xi_1}}\widetilde{p}(x+x_1,\xi+\xi_1)dx_1d\xi_1.
    \label{Expression de p chapeau}
    \end{equation}
    In particular, thanks to \eqref{Expression de p chapeau}, we obtain the $\Z^{2d}$ periodicity of $\widehat{p}$ from that of $\widetilde{p}$, which ensures that $\widehat{p}\in S_{\eta,0}(1,1,1)$. Furthermore, the exponential operator gives the following first order Borel development 
    \begin{equation}\label{Dvpt de Borel de p chapeau à l'ordre 1}
        \widehat{p}=\widetilde{p}+h^{1-\eta}\widetilde{r}
    \end{equation}
    with $\widetilde{r}\in S_{\eta,0}(1)$. Notice that in Notations \ref{Notations pour les matrices des pseudo}, equality \eqref{Egalité entre quantif standard de p tilde et quantif de Weyl de p chapeau} provides
    \begin{equation}\label{Egalité entre p tilde N 1 et p chapeau N}
        \widetilde{p}_N^1=\widehat{p}_N.
    \end{equation}
    To prove the result, consider the splitting 
    \begin{equation}
        M_N(p)=\Bigl(M_N(p)-M_N(\widetilde{p})\Bigl)+\Bigl(M_N(\widetilde{p})-\widetilde{p}^1_N\Bigl)+\Bigl(\widehat{p}_N-\widetilde{p}_N\Bigl)+\widetilde{p}_N
    \label{Expression P_N en fonction de P_N tilde - 0}
    \end{equation}
    acting on $\C^{N^d}$, as described in \eqref{Définition de la matrice avec la nouvelle procédure de quantif} and in Notations \ref{Notations pour les matrices des pseudo}.\\

    \mm 2. We first consider the term $\widehat{p}_N-\widetilde{p}_N$ in \eqref{Expression P_N en fonction de P_N tilde - 0}. Taking the Weyl quantization of \eqref{Dvpt de Borel de p chapeau à l'ordre 1} gives 
    \begin{equation}
        \widehat{p}_N-\widetilde{p}_N=N^{-(1-\eta)}\widetilde{r}_N,
    \label{Expression P_N en fonction de P_N tilde - 1 ter}
    \end{equation}
     where, from Proposition \ref{Chris and Zworski adapté}, $\norme{\widetilde{r}_N}=\mc{O}(1)$.\\

     \mm 3. Next consider the term $M_N(\widetilde{p})-\widetilde{p}^1_N$. From Lemma \ref{Coefficients of the Toeplitz matrix}, Remark \ref{Expression des coefficients des matrices ancienne quantif, pour les valeurs de t qui nous intéressent}, \eqref{Coef infinite matrix quantization} and \eqref{Définition de la matrice avec la nouvelle procédure de quantif}, we have that for all $s,j\in \disc{1}{N}^d$,
    \begin{equation}
        M_N(\widetilde{p})_{s,j}-\pr{\widetilde{p}^1_N}_{s,j}=\widetilde{p}_{s-j}\pr{\frac{s}{N}}-\sum_{n,r\in \Z^d}c_{n,j-s+rN}(\widetilde{p})e^{\frac{2i\pi}{N}\ps{n}{s}}.
    \label{P atchek moins P hat - 1}
    \end{equation}
    By \eqref{Coeffs de Fourier en les deux variables} and \eqref{Developpement en série de Fourier par rapport à xi - version mollifiée}, for all $n,m\in \Z^d$,
    \begin{equation*}
        \begin{split}
            c_{n,m}(\widetilde{p}) &= \int_{\T^{2d}}\widetilde{p}(x,\xi)e^{-2i\pi\pr{\ps{x}{n}+\ps{\xi}{m}}}dxd\xi\\
            &=\int_{\T^d}\widetilde{p}_m(x)e^{-2i\pi\ps{x}{n}}dx=:c_n(\widetilde{p}_{m}).
        \end{split}
    \end{equation*}
    Furthermore, for all $m\in \Z^d$, since $\widetilde{p}_{m}$ is $\Z^d$ periodic, we also have
    \begin{equation*}
        \widetilde{p}_{m}(x)=\sum_{n\in \Z^d}c_n(\widetilde{p}_{m})e^{2i\pi\ps{n}{x}},
    \end{equation*}
    so \eqref{P atchek moins P hat - 1} becomes
    \begin{equation}
        M_N(\widetilde{p})_{s,j}-\pr{\widetilde{p}^1_N}_{s,j}=-\sum_{r\in \Z^d\setminus\{0\}}\widetilde{p}_{s-j+rN}\pr{\frac{s}{N}}.
    \label{P atchek moins P hat - 2}
    \end{equation}
    Following the same proof as for Lemma \ref{Décroissance poly des coef de Fourier sur le tore biscornu}, and using that $\widetilde{p}\in S_{\eta,0}(1,1,1)$, we obtain that for all $\nu\in \Z^d$, $x\in [0,1]^d$, $K\in \N$,
    \begin{equation}
        \widetilde{p}_{\nu}(x)=\mc{O}_K(1)\jap{\nu}^{-K}\sum_{\bb{\beta}\leq K}\norme{D_{\xi}^\beta\widetilde{p}(x,\cdot)}_{L^1(\T^d)}=\mc{O}_K(1)\!\jap{\nu}^{-K}
    \label{P atchek moins P hat - 3}
    \end{equation}
    where $\mc{O}_K(1)$ is uniform with respect to $x$.\\

    Let $0<a_N< N$ depending on $N$, to be chosen later on.
    Using Lemma \ref{Lemme nul 2} in the Appendix, for all $r\in \Z^d\setminus\{0\}$, $s,j\in \disc{1}{N}^d$ such that $s-j\in \disc{-(N-a_N)}{N-a_N}^d$, and $K\in \N$, we have
    \begin{equation*}
        \widetilde{p}_{s-j+rN}\pr{\frac{s}{N}}=\mc{O}_K(a_N^{-K})\bb{r}^{-K}.
    \end{equation*}
    Then, for $K\geq d+1$, we obtain from \eqref{P atchek moins P hat - 2} and the previous estimate that 
    \begin{equation}
        M_N(\widetilde{p})_{s,j}-\pr{\widetilde{p}^1_N}_{s,j}=\mc{O}_K(a_N^{-K}),
    \label{Estimée matrice A}
    \end{equation}
    when $s-j\in \disc{-(N-a_N)}{N-a_N}^d$. We are led to write
    \begin{equation}
        M_N(\widetilde{p})-\widetilde{p}^1_N=A+B,
    \label{P atchek moins P hat bis - 1}
    \end{equation}
    where 
    \begin{equation*}
        A:=\pr{\pr{M_N(\widetilde{p})_{s,j}-\pr{\widetilde{p}^1_N}_{s,j}}\mathbb{1}_{\disc{-(N-a_N)}{N-a_N}^d}(s-j)}_{s,j}
    \end{equation*}
    and 
    \begin{equation}\label{P atchek moins P hat bis - 1 bis}
        B:=M_N(\widetilde{p})-\widetilde{p}^1_N-A.
    \end{equation}
    In view of \eqref{Estimée matrice A}, we obtain 
    \begin{equation}
        \norme{A}\leq \norme{A}_{HS}=\mc{O}_K\pr{N^{d}a_N^{-K}}.
    \label{P atchek moins P hat bis - 2}
    \end{equation}
    \mm Furthermore, we notice that the number of non-zero coefficients of $B$ is at most
    \begin{equation}
        \#\pr{\disc{-(N-1)}{N-1}^d}-\#\pr{\disc{-(N-a_N)}{N-a_N}^d}=\mc{O}\pr{N^{d-1}a_N},
    \label{P atchek moins P hat - 3 bis}
    \end{equation}
    so given \eqref{P atchek moins P hat bis - 1 bis}, this implies that $\mathrm{rank}(B)=\mc{O}(N^{d-1}a_N)$. For all $r\in \Z^d\setminus \{0\}$, $s,j\in\disc{1}{N}^d$, 
    \begin{equation}
        \norme{s-j+rN}\geq N\norme{r}-\norme{s-j}\geq N\pr{\norme{r}-\sqrt{d}}\geq \norme{r}-\sqrt{d}.
    \label{P atchek moins P hat - 4}
    \end{equation}
    Then, putting 
    \begin{equation}\label{Valeur de K comme étant d+1}
        K=d+1
    \end{equation}
    in \eqref{P atchek moins P hat - 3}, it follows from \eqref{P atchek moins P hat - 2}, \eqref{P atchek moins P hat bis - 1} and \eqref{P atchek moins P hat - 4}, for all $s,j\in \disc{1}{N}^d$, 
    \begin{equation}
        \begin{split}
            \bb{M_N(\widetilde{p})_{s,j}-\pr{\widetilde{p}^1_N}_{s,j}} &\leq  \sum_{\substack{r\in \Z^d\setminus\{0\}\\ \bb{r}\leq \sqrt{d}}}\bb{\widetilde{p}_{s-j+rN}\pr{\frac{s}{N}}}+\sum_{\substack{r\in \Z^d\setminus\{0\}\\ \bb{r}> \sqrt{d}}}\bb{\widetilde{p}_{s-j+rN}\pr{\frac{s}{N}}}\\
            &\leq \sum_{\substack{r\in \Z^d\setminus\{0\}\\ \bb{r}\leq \sqrt{d}}}\mc{O}_d(1)+\mc{O}_d(1)\underbrace{\sum_{\substack{r\in \Z^d\setminus\{0\}\\ \bb{r}> \sqrt{d}}}\pr{1+(\bb{r}-\sqrt{d})^2}^{-(d+1)/2}}_{<+\infty}\\
            &=\mc{O}_d(1).
        \end{split}
    \label{P atchek moins P hat - 5}
    \end{equation}
    In particular, we deduce from \eqref{P atchek moins P hat bis - 1 bis} and \eqref{P atchek moins P hat - 5} that for all $s,j\in \disc{1}{N}^d$ 
    \begin{equation}\label{P atchek moins P hat - 5 bis}
        \bb{B_{s,j}}\leq \mc{O}_d(1).
    \end{equation}
    Finally, using \eqref{P atchek moins P hat - 3 bis}, \eqref{Valeur de K comme étant d+1} and \eqref{P atchek moins P hat - 5 bis} together leads to
    \begin{equation}
        \norme{B}\leq \norme{B}_{\mathrm{HS}}=\mc{O}(N^{d-1}a_N).
    \label{P atchek moins P hat bis - 3}
    \end{equation}

    We choose $a_N$ such that $a_N=o(N)$ and $N^da_N^{-K}\to 0$ when $N\to +\infty$. For example, we can set $a_N=N^{(K+d)/2K}$.
    In that case, from \eqref{P atchek moins P hat bis - 1}, \eqref{P atchek moins P hat bis - 2} and \eqref{P atchek moins P hat bis - 3}, we obtain
    \begin{equation}
        M_N(\widetilde{p})-\widetilde{p}^1_N=N^{-1/2}A_N+B,
    \label{Expression P_N en fonction de P_N tilde - 3 bis}
    \end{equation}
    with 
    \begin{equation}\label{Norme de la matrice A_N - 1}
        \norme{A_N}=\mc{O}(1)
    \end{equation}
    and 
    \begin{equation}\label{Norme et rang de la matrice B - 1}
        \mathrm{rank}(B)=\mc{O}\pr{N^{d-\frac{1}{2d+2}}},\mm \norme{B}=\mc{O}\pr{N^{d-\frac{1}{2d+2}}}.
    \end{equation}
     
    \mm 4. Now, we consider the term $M_N(p)-M_N(\widetilde{p})$. For this, we fix $0<a<1$ to be chosen later on and introduce the truncated symbols
    \begin{equation}\label{Symboles tronqués}
        \begin{split}
            p^N(x,\xi)&:= \sum_{\nu\in \disc{-N^a}{N^a}^d} p_\nu(x)e^{2i\pi\ps{\nu}{\xi}},\\
            \widetilde{p}^N(x,\xi)&:= \sum_{\nu\in \disc{-N^a}{N^a}^d} \widetilde{p}_\nu(x)e^{2i\pi\ps{\nu}{\xi}}.
        \end{split}
    \end{equation}
    Likewise, we decompose the term to be estimated in the following way:
    \begin{equation}
            M_N(p)-M_N(\widetilde{p})
            =I+II+III,
    \label{Expression P_N en fonction de P_N tilde - 1}
    \end{equation}
    where 
    \begin{equation}\label{Expression P_N en fonction de P_N tilde - 1 bis}
        \begin{split}
            I&=M_N(p)-M_N\pr{p^N}\\
            II&=M_N\pr{p^N}-M_N\pr{\widetilde{p}^N}\\
            III&=M_N\pr{\widetilde{p}^N}-M_N(\widetilde{p}).
        \end{split}
    \end{equation}
    \m\\
    \mm 4.1) We start by estimating the term $II$ of this decomposition. By \eqref{Expression P_N en fonction de P_N tilde - 1 bis}, \eqref{Coef infinite matrix quantization} and \eqref{Définition de la matrice avec la nouvelle procédure de quantif}, we find that for all $s,j\in \disc{1}{N}^d$,
    \begin{equation}\label{Ensemble des indices, middle term}
        M_N\pr{p^N}_{s,j}-M_N\pr{\widetilde{p}^N}_{s,j}=\mathbb{1}_{\disc{-N^a}{N^a}^d}(s-j)\croch{p_{s-j}\pr{\frac{s}{N}}-\widetilde{p}_{s-j}\pr{\frac{s}{N}}}.
    \end{equation}
    Let $U_j$ denote the open sets of $[0,1]^d$ defined in \eqref{Propriété des ouverts dans la def des fonctions de l'espace d'intérêt}. Then for all $m\in \Z^d$ and $j$, we have that
    \begin{equation}
        p_m\in \mathscr{C}^{0,\varrho}\pr{\overline{U_j}}.
    \label{Les coef de Fourier sont Hölder par morceaux}
    \end{equation}
    Also recall the definition of $\psi$ satisfying \eqref{Proprios de la fonction mollifiante - 1} and \eqref{Proprios de la fonction mollifiante - 2}. Then, notice from \eqref{Interversion convolution-intégrale pour l'expression of p_m tilde} that for all $x\in [0,1]^d$, $m\in \Z^d$,
    \begin{equation}
        \begin{split}
            p_m(x)-\widetilde{p}_m(x) &= p_m(x)-\frac{1}{h^{d\eta}}\int_{\T^{d}}\pr{\int_{\R^d}p(z,\xi)\psi\pr{\frac{x-z}{h^\eta}}dz}e^{-2i\pi\ps{m}{\xi}}d\xi\\
            &= p_m(x)-\int_{\R^d}p_m(x-h^\eta y)\psi(y)dy\\
            &= \int_{\mathrm{supp}(\psi)}\Bigl[p_m(x)-p_m(x-h^\eta y)\Bigl]\psi(y)dy.
        \end{split}
    \label{Différence entre les coef de Fourier de p et de p tilde}
    \end{equation}
    If $x\in [0,1]^d\setminus \Bigl(\mathscr{U}_p+B(0,2h^\eta)\Bigl)$, for all $y\in \mathrm{supp}(\psi)\subset B(0,1)$ (see \eqref{Proprios de la fonction mollifiante - 1}), $x$ and $x-h^\eta y$ are in the same open set $U_j$ which in view of \eqref{Les coef de Fourier sont Hölder par morceaux} and \eqref{Différence entre les coef de Fourier de p et de p tilde}, leads to
    \begin{equation}\label{Estimée p_m - p_m_tilde}
        p_m(x)-\widetilde{p}_m(x)=\mc{O}(h^{\varrho\eta})
    \end{equation}
    uniformly with respect to $x$. \\
    \mm Similarly to \eqref{P atchek moins P hat bis - 1}, let us write 
    \begin{equation}\label{Décomposition matrices sous la forme A+B}
        M_N\pr{p^N}-M_N\pr{\widetilde{p}^N}=A+B
    \end{equation}
    where 
    \begin{equation*}
        B:=\pr{\pr{M_N\pr{p^N}_{s,j}-M_N\pr{\widetilde{p}^N}_{s,j}}\mathbb{1}_{\mathscr{U}_p+B(0,2h^\eta)}\pr{\frac{s}{N}}}_{s,j}
    \end{equation*}
    and
    \begin{equation}\label{Décomposition matrices sous la forme A+B - Expression de B}
        A:= M_N\pr{p^N}-M_N\pr{\widetilde{p}^N}-B.
    \end{equation}
    \mm Thanks to \eqref{Coef infinite matrix quantization}, \eqref{Définition de la matrice avec la nouvelle procédure de quantif}, \eqref{Hypothèse expression de h comme l'inverse de N - 2}, \eqref{Estimée p_m - p_m_tilde} and \eqref{Décomposition matrices sous la forme A+B - Expression de B}, for all $s,j\in \disc{1}{N}^d$,  
    \begin{equation}
        A_{s,j}= \mc{O}(N^{-\varrho\eta}).
    \label{Expression P_N en fonction de P_N tilde - 2}
    \end{equation}
    Remembering \eqref{Ensemble des indices, middle term}, for all $s,j\in \disc{1}{N}^d$ if $s-j\notin\disc{-N^a}{N^a}^d$, then
    \begin{equation}\label{Condition de nullité des coefficients de A}
        A_{s,j}=0.
    \end{equation}
    So, from \eqref{Condition de nullité des coefficients de A}, \eqref{Expression P_N en fonction de P_N tilde - 2} and Lemma \ref{Norme matrice B_N en dimension d}, we get that
    \begin{equation}\label{Norme matrice A, middle term}
        \norme{A}=\mc{O}\pr{N^{\frac{ad}{2}-\varrho\eta}}.
    \end{equation}
    \mm Moreover, in view of Assumption \ref{Hypothèse théorème final - volume singularités}, 
    \begin{equation}
        \#\acc{s\in \disc{1}{N}^d\m \left|\m s/N\in  \mathscr{U}_p+B(0,2h^\eta)\right.}=\mc{O}(N^{d-\eta{\kappa_1}}),
    \label{Expression P_N en fonction de P_N tilde - 3}
    \end{equation}
    and \eqref{Coeffs de Fourier en une seule variable}, \eqref{Proprios de la fonction mollifiante - 1} and \eqref{Différence entre les coef de Fourier de p et de p tilde} give that for all $x\in [0,1]^d$, 
    \begin{equation}\label{Majoration module de p_m - p_m_tilde}
        \bb{p_m(x)-\widetilde{p}_m(x)}\leq \mc{O}(1)
    \end{equation}
    uniformly in $x$. We therefore have from \eqref{Majoration module de p_m - p_m_tilde} that for all $s,j\in \disc{1}{N}^d$,
    \begin{equation}\label{Majoration des coefficients de la matrice B}
        \bb{B_{s,j}}\leq \mc{O}(1).
    \end{equation}
    So, from \eqref{Majoration des coefficients de la matrice B}, \eqref{Ensemble des indices, middle term}, Lemma \ref{Norme matrice B_N en dimension d} and \eqref{Expression P_N en fonction de P_N tilde - 3}, we have,
    \begin{equation}\label{norme et rang de la matrice B - middle term}
        \norme{B}=\mc{O}\pr{N^{\frac{ad}{2}}},\mm\mathrm{rank}(B)= \mc{O}(N^{d-\eta{\kappa_1}}).
    \end{equation}

    \mm 4.2) Next, we turn to the term $I$ of  \eqref{Expression P_N en fonction de P_N tilde - 1}. First, we have for all $s,j\in \disc{1}{N}^d$,
    \begin{equation*}
        M_N(p)_{s,j}-M_N\pr{p^N}_{s,j}=\pr{1-\mathbb{1}_{\disc{-N^a}{N^a}^d}(s-j)}p_{s-j}\pr{\frac{s}{N}}.
    \end{equation*}
    For all $s,j\in \disc{1}{N}^d$, if $s-j\notin\disc{-N^a}{N^a}^d$, $\norme{s-j}\geq N^a$. So by \eqref{Décroissance des coef de Fourier par rapport à l'indice}, for all $k\in \N$, there exists $C_k>0$ independent of $s$ and $j$ such that
    \begin{equation*}
        \bb{p_{s-j}\pr{\frac{s}{N}}}\leq C_kN^{-ak}.
    \end{equation*}
    Thus, for all $k\in \N$, 
    \begin{equation}\label{Expression P_N en fonction de P_N tilde - 6}
        \norme{M_N(p)-M_N\pr{p^N}}=\mc{O}_k(N^{d-ak}).
    \end{equation}
    
    \mm 4.3) The same reasoning but for $\widetilde{p}$ instead of $p$ and the use of \eqref{P atchek moins P hat - 3} leads to 
    \begin{equation}
        \norme{M_N\pr{\widetilde{p}^N}-M_N(\widetilde{p})}=\mc{O}_k(N^{d-ak})
    \label{Expression P_N en fonction de P_N tilde - 7}
    \end{equation}
    for all $k\in \N$.\\

    Finally, we choose $a:=\frac{\varrho\eta}{d}\in \hspace{0.1cm} ]0,1[$ and $k$ large enough. Hence, gathering \eqref{Expression P_N en fonction de P_N tilde - 0},
    \eqref{Expression P_N en fonction de P_N tilde - 1 ter},
    \eqref{Expression P_N en fonction de P_N tilde - 3 bis},
    \eqref{Norme de la matrice A_N - 1}, \eqref{Norme et rang de la matrice B - 1}, 
    \eqref{Expression P_N en fonction de P_N tilde - 1}, 
    \eqref{Décomposition matrices sous la forme A+B}, 
    \eqref{Norme matrice A, middle term},
    \eqref{norme et rang de la matrice B - middle term}, \eqref{Expression P_N en fonction de P_N tilde - 6} and \eqref{Expression P_N en fonction de P_N tilde - 7}, we obtain that 
    \begin{equation}\label{Décomposition de M_N(p) en fonction de p_N_tilde - version nulle}
        M_N(p)=\widetilde{p}_N+N^{-\varrho\eta/2}B_N+T_N
    \end{equation}
    where 
    \begin{itemize}
        \item $\norme{B_N}=\mc{O}(1)$,
        \item $\mathrm{rank}(T_N)=\mc{O}\pr{N^{d-\min\pr{\eta{\kappa_1},\frac{1}{2d+2}}}}$,
        \item $\norme{T_N}=\mc{O}\pr{N^{d-\frac{1}{2d+2}}}$.
    \end{itemize}
    However, in view of \eqref{Décomposition de M_N(p) en fonction de p_N_tilde - version nulle}, Remark \ref{Remarque sur le fait que les matrices via la nouvelle procédure de quantif sont bornées} and \eqref{Majoration de la norme de la matrice de Toeplitz}, we have
    \begin{equation*}
        \norme{T_N}\leq \norme{M_N(p)}+\norme{\widetilde{p}_N}+\mc{O}\pr{N^{-\varrho\eta/2}}\leq \mc{O}(1),
    \end{equation*}
    which leads to the stated result with
    \begin{equation*}
        \vartheta= \min\pr{\eta{\kappa_1},\frac{1}{2d+2}}>0.\qedhere
    \end{equation*}
\end{proof}

\mm We can now give the comparison of $\phi_{\mu_N}$ and $\phi_{\widetilde{\mu}_N}$.
\begin{prop}
    Let $\mc{K}\subset\subset \C$. Let $\phi_{\mu_N}$ be as in \eqref{Expression simplifiée du potentiel logarithmique en fonction du déterminant} and $\phi_{\widetilde{\mu}_N}$ be as in \eqref{Expression simplifiée du potentiel logarithmique en fonction du déterminant - 2}. Let $\beta>0$ be as in Proposition \ref{Différence des potentiels loga entre mu_N tilde et mu}. Then, there exists $C_0>0$ such that for all $z\in \mc{K}$, 
    \begin{equation}
        \begin{split}
            \phi_{\mu_N}(z) &\leq \phi_{\widetilde{\mu}_N}(z)+\mc{O}(N^{-\vartheta}\log(\delta N^{-\beta}))+\mc{O}(N^{-\vartheta})\\
            &+\mc{O}\pr{N^{-\vartheta}\log\pr{1+C_0N^{\beta-\varrho\eta/2}\delta^{-1}}}+\log\pr{1+2C_0N^{-\varrho\eta/2}\alpha^{-1/2}}\\
            &+\mc{O}\pr{\alpha^{{\kappa_2}}\log\pr{1+C_0N^{\beta-\varrho\eta/2}\delta^{-1}}}
        \end{split}
    \label{Estimation potentiel log matrice tronquée par celui de la matrice du symbole périodique - majoration}
    \end{equation}
    with probability $1-o(1)$, and
    \begin{equation}
        \begin{split}
            \phi_{\mu_N}(z)&\geq \phi_{\widetilde{\mu}_N}(z)+\mc{O}(N^{-\vartheta})+\mc{O}(\alpha^{{\kappa_2}})+\mc{O}(\alpha^{-1/2}N^{-\varrho\eta/2})\\
            &+\mc{O}(N^{-\vartheta}\log(\delta N^{-\beta}))+\mc{O}(\alpha^{{\kappa_2}}\log(\delta N^{-\beta}))
        \end{split}
    \label{Estimation potentiel log matrice tronquée par celui de la matrice du symbole périodique - minoration}
    \end{equation}
    with probability $\geq 1-o(1)$.
\label{Différence des potentiels logaritmiques}
\end{prop}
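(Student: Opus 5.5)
We compare singular values of $A:=M_N(p)+\delta Q_N-z$ and $\widetilde A:=\prescript{\delta}{}{\widetilde{p}}_N-z$ using the decomposition of Proposition \ref{Expression P_N en fonction de P_N tilde}:
\begin{equation*}
A=\widetilde A+T_N+N^{-\varrho\eta/2}B_N,
\end{equation*}
with $\mathrm{rank}(T_N)=\mc{O}(N^{d-\vartheta})$ and $\norme{T_N},\norme{B_N}=\mc{O}(1)$. Since $\phi_{\mu_N}(z)=N^{-d}\sum_j\log s_j(A)$ and similarly for $\phi_{\widetilde{\mu}_N}$, it suffices to compare the ordered singular values. We handle the two perturbations separately, passing through the intermediate matrix $A':=\widetilde A+N^{-\varrho\eta/2}B_N$.

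\emph{Small-norm perturbation.} Weyl's inequality gives $|s_j(A')-s_j(\widetilde A)|\le C_0N^{-\varrho\eta/2}$. Split the indices according to whether $s_j(\widetilde A)\ge\alpha^{1/2}$ or not; by \eqref{Nombre de valeurs propres Problème de Grushin} and the Grushin comparison (Weyl inequalities with $\delta\norme{Q_N}\ll\alpha^{1/2}$), at most $\mathfrak{m}=\mc{O}(N^d\alpha^{\kappa_2})$ indices fall in the second group. For $s_j\ge\alpha^{1/2}$ we have $\log s_j(A')\le\log s_j(\widetilde A)+\log(1+C_0N^{-\varrho\eta/2}\alpha^{-1/2})$; this is the origin of the $\log(1+2C_0N^{-\varrho\eta/2}\alpha^{-1/2})$ term in the upper bound and of the $\mc{O}(\alpha^{-1/2}N^{-\varrho\eta/2})$ term in the lower bound. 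For the $\mathfrak{m}$ small indices we need a lower bound on $s_j(\widetilde A)$. On the event of probability $1-o(1)$ from \eqref{Utilisation de l'hypothèse d'anti-concentration}, $s_{N^d}(\widetilde A)\ge\delta N^{-\beta}$, so
\begin{equation*}
\log\bigl(s_j(\widetilde A)+C_0N^{-\varrho\eta/2}\bigr)\le\log s_j(\widetilde A)+\log\bigl(1+C_0N^{\beta-\varrho\eta/2}\delta^{-1}\bigr).
\end{equation*}
This produces the $\alpha^{\kappa_2}\log(1+C_0N^{\beta-\varrho\eta/2}\delta^{-1})$ term. For the lower bound on these indices, $\log s_j(A')\ge\log(\delta N^{-\beta})$, which requires anti-concentration for $A'$ itself. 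That bound is available from Assumption \ref{Hypothèse théorème final - matrice aléatoire}, since $\delta^{-1}(M_N(p)-z)$ and its variants have polynomially bounded norm. Comparing with $\log s_j(\widetilde A)\le\log(C)$ then gives $\mc{O}(\alpha^{\kappa_2}\log(\delta N^{-\beta}))$.

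\emph{Low-rank perturbation.} For a rank-$r$ matrix $T_N$ with $r=\mc{O}(N^{d-\vartheta})$, the interlacing inequalities $s_{j+r}(A')\le s_j(A)\le s_{j-r}(A')$ hold. Since all singular values are $\mc{O}(1)$ (Lemma \ref{Norme bornée des matrices pour la nouvelle procédure de quantification}, \eqref{Majoration de la norme de la matrice de Toeplitz}, \eqref{Majoration de la norme de la matrice aléatoire}), shifting indices by $r$ changes $\sum_j\log s_j$ by at most $r$ times $\max\bigl(\mc{O}(1),|\log s_{\min}|\bigr)$. The smallest singular values of both $A$ and $A'$ are bounded below by $\delta N^{-\beta}$ with probability $1-o(1)$, again by anti-concentration since $\delta^{-1}(M_N(p)-z)$ and $\delta^{-1}(A'-\delta Q_N)$ have polynomially bounded norms. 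This yields the $\mc{O}(N^{-\vartheta})$ and $\mc{O}(N^{-\vartheta}\log(\delta N^{-\beta}))$ terms. The $\mc{O}(N^{-\vartheta}\log(1+C_0N^{\beta-\varrho\eta/2}\delta^{-1}))$ term appears when the interlacing is chained with the Weyl step on the shifted block.

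\emph{Main obstacle.} The delicate point is bookkeeping the tails: ensuring the shifted indices and the small singular values are only $\mc{O}(N^{d-\vartheta})+\mathfrak{m}$ in number and each carries at most a $\log(\delta N^{-\beta})$ loss. This requires applying the anti-concentration bound simultaneously to the three matrices $A$, $A'$, $\widetilde A$ at fixed $z$, and combining the resulting events by a union bound so that the total probability remains $1-o(1)$.
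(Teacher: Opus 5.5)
Your proposal is correct and is essentially the paper's proof: the same decomposition, Allahverdiev interlacing for $T_N$, additive Ky Fan for $N^{-\varrho\eta/2}B_N$, splitting off the $\mathfrak{m}$ smallest singular values, and anti-concentration events joined by a union bound. In particular, your lower-bound chain $A\to A'\to\widetilde A$ is exactly the paper's.

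The only deviation is in the upper bound. There the paper passes through $\widetilde A+T_N$ rather than $A'$, which costs it an extra anti-concentration event for $\widetilde A+T_N$. That is also where the term $N^{-\vartheta}\log(1+C_0N^{\beta-\varrho\eta/2}\delta^{-1})$ comes from; your route does not actually generate it, though carrying it along is harmless.

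One small correction: the large singular values should be cut at $\alpha^{1/2}/2$, not $\alpha^{1/2}$. For $j\le N^d-\mathfrak{m}$ one only knows $s_j(\widetilde{p}_N-z)>\alpha^{1/2}$, and after subtracting $\delta\norme{Q_N}\le\alpha^{1/2}/2$ this gives $s_j(\widetilde A)>\alpha^{1/2}/2$. This is where the factor $2$ in $\log(1+2C_0N^{-\varrho\eta/2}\alpha^{-1/2})$ comes from.
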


\begin{proof}
    From Proposition \ref{Chris and Zworski adapté}, \eqref{Majoration de la norme de la matrice de Toeplitz}, \eqref{Apparition de epsilon_0 dans la proba de la norme de la matrice aléatoire}, \eqref{Ecriture de delta en décroissance polynomiale} and \eqref{Notation de la mtrice d'intérêt perturbée régularisée}, we know that
    \begin{equation}
       \mathbb{P}\pr{\norme{\prescript{\delta}{}{}\widetilde{p}_N}\leq \mc{O}(1)} \geq 1-\mc{O}(N^{-\varepsilon_0})
    \end{equation}
    where $\mc{O}(1)$ is uniform in $N$. So, in what follows, we can and will assume that  
    \begin{equation}\label{Estimée de la norme de p_N_tilde_delta uniformément en N avec très bonne proba}
        \norme{\prescript{\delta}{}{}\widetilde{p}_N}\leq \mc{O}(1)
    \end{equation}
    uniformly in $N$.\\
    
    The key of the proof is to use the decomposition 
    \begin{equation*}
        M_N(p)=\widetilde{p}_N+T_N+N^{-\varrho\eta/2}B_N
    \end{equation*}
    given by Proposition \ref{Expression P_N en fonction de P_N tilde}, with $T_N$ associated with $\vartheta>0$. Let $z\in \mc{K}$. In view of \eqref{Notation de la mtrice d'intérêt perturbée}, \eqref{Notation de la mtrice d'intérêt perturbée régularisée} and \eqref{Notation de la mtrice d'intérêt perturbée régularisée - z}, we have in particular that 
    \begin{equation}\label{Décomposition de M_N(p)_delta en fonction de p_N_tilde_delta dans la proposition qui donne la décomposition de M_N(p) en fonction de p_N_tilde}
        M_N^\delta(p)-z=\prescript{\delta}{}{}\widetilde{p}_N(z)+T_N+N^{-\varrho\eta/2}B_N.
    \end{equation}
    
    \mm For the rest of the proof, we will write 
    \begin{equation}\label{Rang de la matrice R_N dans la proposition qui donne la décomposition de M_N(p) en fonction de p_N_tilde}
        K:=\mathrm{rank}(T_N).
    \end{equation}
    Denote the following events
    \begin{equation}
        \begin{split}
            A_1 \m:\m&s_{N^d}\Bigl(\prescript{\delta}{}{}\widetilde{p}_N(z)\Bigl)>\delta N^{-\beta}\\
            A_2\m:\m&s_{N^d}\Bigl(\prescript{\delta}{}{}\widetilde{p}_N(z)+T_N\Bigl))>\delta N^{-\beta}\\
            A_3\m:\m &s_{N^d}\Bigl(\prescript{\delta}{}{}\widetilde{p}_N(z)+N^{-\varrho\eta/2}B_N\Bigl)>\delta N^{-\beta}\\
            A_4\m:\m &s_{N^d}\Bigl(\prescript{\delta}{}{}\widetilde{p}_N(z)+T_N+N^{-\varrho\eta/2}B_N\Bigl)>\delta N^{-\beta}.
        \end{split}
    \label{Différence des potentiels logaritmiques - 1}
    \end{equation}
    We also remember from Proposition \ref{Chris and Zworski adapté}, Proposition \ref{Expression P_N en fonction de P_N tilde} and \eqref{Majoration de la norme de la matrice de Toeplitz} that
    \begin{equation}\label{Majoration norme de p_N_tilde et de B_N dans la deuxième proposition pour l'estimée de la différence des potentiels loga}
        \norme{\widetilde{p}_N}, \m\norme{B_N}=\mc{O}(1)\leq C_0
    \end{equation}
     uniformly in $N>0$, for some constant $C_0>0$. From \eqref{Majoration de la norme de la matrice aléatoire} and the second point of Assumption \ref{Hypothèse théorème final - matrice aléatoire} on $Q_N$, we have that for all $j\in \disc{1}{4}$,
     \begin{equation}\label{Proba globale des quatre évenements dans la preuve du théorème final}
         \Prob{A_j}\geq 1-o(1).
     \end{equation}
     Notice here that since we only have finitely many events, we can and will assume that $\beta>0$ is the same for each event. By the union bound, \eqref{Apparition de epsilon_0 dans la proba de la norme de la matrice aléatoire} and \eqref{Proba globale des quatre évenements dans la preuve du théorème final}, we have that 
     \begin{equation}\label{Probabilité via l'union bound}
         \mathbb{P}\pr{\bigcap_{j=1}^4A_j\cap \acc{\norme{Q_N}\leq CN^{\kappa_3+\varepsilon_0}}}\geq 1-o(1),
     \end{equation}
     and in what follows, we assume that the event 
    \begin{equation*}
        \bigcap_{j=1}^4A_j\cap \acc{\norme{Q_N}\leq CN^{\kappa_3+\varepsilon_0}}
    \end{equation*}
    holds.\\
    \mm We recall that in view of \eqref{Rang de la matrice R_N dans la proposition qui donne la décomposition de M_N(p) en fonction de p_N_tilde}, Allahverdiev's theorem (\cite{Gohberg_Krein}) give that for every $N^d\times N^d$ matrix $A$, 
    \begin{align}\label{Allahverdiev - 1}
        & s_{j+K}(A)\leq s_j(A+T_N)\mm \text{for} \mm 1\leq j\leq N^d+1-K ,\\
        \label{Allahverdiev - 2}
        & s_j(A+T_N)\leq s_{j-K}(A) \mm \text{for} \mm K\leq j\leq N^d.
    \end{align}
    Since the singular values are ordered, we also recall from \eqref{Notation pour les valeurs singulières ordonnées bis} that for all $j\in \disc{1}{N^d}$ and every $N^d\times N^d$ matrix $A$,
    \begin{equation}\label{Majoration et minoration des valeurs singulières par la plus grande et la plus petite}
        s_{N^d}(A)\leq s_j(A)\leq s_1(A)=\norme{A}.
    \end{equation}
    
    \mm 1. We begin with the upper bound \eqref{Estimation potentiel log matrice tronquée par celui de la matrice du symbole périodique - majoration}. In view of \eqref{Majoration norme de p_N_tilde et de B_N dans la deuxième proposition pour l'estimée de la différence des potentiels loga} and \eqref{Décomposition de M_N(p)_delta en fonction de p_N_tilde_delta dans la proposition qui donne la décomposition de M_N(p) en fonction de p_N_tilde}, using the additive Ky-Fan inequalities (see \cite[Corollary 2.2]{Gohberg_Krein}), we have  
    \begin{equation*}
        s_j\Bigl(M_N^\delta(p)-z\Bigl)\leq s_j\Bigl(\prescript{\delta}{}{}\widetilde{p}_N(z)+T_N\Bigl)+\frac{C_0}{N^{\varrho\eta/2}}
    \end{equation*}
    for all $j\in \disc{1}{N^d}$. In particular, from \eqref{Expression simplifiée du potentiel logarithmique en fonction du déterminant}, we get
    \begin{equation}\label{Estimation potentiel log matrice tronquée par celui de la matrice du symbole périodique - majoration - 1}
        \phi_{\mu_N}(z)\leq \frac{1}{N^d}\sum_{j=1}^{N^d}\log\pr{s_j\Bigl(\prescript{\delta}{}{}\widetilde{p}_N(z)+T_N\Bigl)+\frac{C_0}{N^{\varrho\eta/2}}}.
    \end{equation}
    Notice that for all $j\in \disc{1}{N^d}$,
    \begin{equation}
        \begin{split}
            \log&\pr{s_j(\prescript{\delta}{}{}\widetilde{p}_N(z)+T_N)+\frac{C_0}{N^{\varrho\eta/2}}}\\
            &\mm\mm\mm=\log\pr{s_j(\prescript{\delta}{}{}\widetilde{p}_N(z)+T_N)}+\log\pr{1+\frac{C_0N^{-\varrho\eta/2}}{s_j(\prescript{\delta}{}{}\widetilde{p}_N(z)+T_N)}}.
        \end{split}
    \label{Majoration et minoration pour la diff des potentiels - 1}
    \end{equation}

    Consider the first term on the right hand side of \eqref{Majoration et minoration pour la diff des potentiels - 1}. By \eqref{Allahverdiev - 2} and change of variables, we get
    \begin{equation}\label{Estimation potentiel log matrice tronquée par celui de la matrice du symbole périodique - majoration - 2}
        \begin{split}
            \sum_{j=1}^{N^d}&\log\pr{s_j(\prescript{\delta}{}{}\widetilde{p}_N(z)+T_N)} \\
            &= \sum_{j=1}^{K}\log\pr{s_j(\prescript{\delta}{}{}\widetilde{p}_N(z)+T_N)}+\sum_{j=K+1}^{N^d}\log\pr{s_j(\prescript{\delta}{}{}\widetilde{p}_N(z)+T_N)}\\
            &\leq \sum_{j=1}^{K}\log\pr{s_j(\prescript{\delta}{}{}\widetilde{p}_N(z)+T_N)}+\sum_{j=K+1}^{N^d}\log\pr{s_{j-K}(\prescript{\delta}{}{}\widetilde{p}_N(z))}\\
            &\leq \sum_{j=1}^{K}\log\pr{s_j(\prescript{\delta}{}{}\widetilde{p}_N(z)+T_N)}+\sum_{j=1}^{N^d-K}\log\pr{s_j(\prescript{\delta}{}{}\widetilde{p}_N(z))}\\
            &\leq  N^d\phi_{\widetilde{\mu}_N}(z)-\sum_{j=N^d-K+1}^{N^d}\log\pr{s_j(\prescript{\delta}{}{}\widetilde{p}_N(z))}+\sum_{j=1}^{K}\log\pr{s_j(\prescript{\delta}{}{}\widetilde{p}_N(z)+T_N)}.
        \end{split}
    \end{equation}
   So, from \eqref{Expression simplifiée du potentiel logarithmique en fonction du déterminant - 2} and \eqref{Majoration et minoration des valeurs singulières par la plus grande et la plus petite}, \eqref{Estimation potentiel log matrice tronquée par celui de la matrice du symbole périodique - majoration - 2} becomes
   \begin{equation}\label{Estimation potentiel log matrice tronquée par celui de la matrice du symbole périodique - majoration - 3}
       \begin{split}
           \sum_{j=1}^{N^d}&\log\pr{s_j(\prescript{\delta}{}{}\widetilde{p}_N(z)+T_N)}\\
            &\leq N^d\phi_{\widetilde{\mu}_N}(z)-K\log\pr{s_{N^d}(\prescript{\delta}{}{}\widetilde{p}_N(z))}+K\log\pr{\norme{\prescript{\delta}{}{}\widetilde{p}_N(z)+T_N}}.
       \end{split}
   \end{equation}
   By \eqref{Estimée de la norme de p_N_tilde_delta uniformément en N avec très bonne proba} and Proposition \ref{Expression P_N en fonction de P_N tilde}, we have
    \begin{equation}\label{Estimation potentiel log matrice tronquée par celui de la matrice du symbole périodique - majoration - 4}
        \norme{\prescript{\delta}{}{}\widetilde{p}_N(z)+T_N}\leq \mc{O}(1).
    \end{equation}
    Since we assumed that the event $A_1$ in \eqref{Différence des potentiels logaritmiques - 1} holds, it follows from \eqref{Estimation potentiel log matrice tronquée par celui de la matrice du symbole périodique - majoration - 3} and \eqref{Estimation potentiel log matrice tronquée par celui de la matrice du symbole périodique - majoration - 4} that
    \begin{equation}
        \sum_{j=1}^{N^d}\log\pr{s_j(\prescript{\delta}{}{}\widetilde{p}_N(z)+T_N)} \leq N^d\phi_{\widetilde{\mu}_N}(z)-K\log(\delta N^{-\beta})+\mc{O}(K).
    \label{Majoration et minoration pour la diff des potentiels - 2}
    \end{equation}
    
    Next, we turn to the second term on the right hand side of \eqref{Majoration et minoration pour la diff des potentiels - 1}. Using \eqref{Allahverdiev - 1} and a change of variables, we have
    \begin{equation}
        \begin{split}
            \sum_{j=1}^{N^d} &\log\pr{1+\frac{C_0N^{-\varrho\eta/2}}{s_j(\prescript{\delta}{}{}\widetilde{p}_N(z)+T_N)}}\\
            &= \sum_{j=1}^{N^d-K} \log\pr{1+\frac{C_0N^{-\varrho\eta/2}}{s_j(\prescript{\delta}{}{}\widetilde{p}_N(z)+T_N)}}+\sum_{j=N^d-K+1}^{N^d} \log\pr{1+\frac{C_0N^{-\varrho\eta/2}}{s_j(\prescript{\delta}{}{}\widetilde{p}_N(z)+T_N)}}\\
            &\leq\sum_{j=K+1}^{N^d} \log\pr{1+\frac{C_0N^{-\varrho\eta/2}}{s_j(\prescript{\delta}{}{}\widetilde{p}_N(z))}}+\sum_{j=N^d-K+1}^{N^d} \log\pr{1+\frac{C_0N^{-\varrho\eta/2}}{s_j(\prescript{\delta}{}{}\widetilde{p}_N(z)+T_N)}}.
        \end{split}
    \label{Majoration et minoration pour la diff des potentiels - 3}
    \end{equation}
    By \eqref{Majoration et minoration des valeurs singulières par la plus grande et la plus petite} and since $A_2$ in \eqref{Différence des potentiels logaritmiques - 1} holds, \eqref{Majoration et minoration pour la diff des potentiels - 3} becomes
    \begin{equation}\label{Estimation potentiel log matrice tronquée par celui de la matrice du symbole périodique - majoration - 5}
        \begin{split}
            \sum_{j=1}^{N^d} &\log\pr{1+\frac{C_0N^{-\varrho\eta/2}}{s_j(\prescript{\delta}{}{}\widetilde{p}_N(z)+T_N)}}\\
            &\leq\sum_{j=K+1}^{N^d} \log\pr{1+\frac{C_0N^{-\varrho\eta/2}}{s_j(\prescript{\delta}{}{}\widetilde{p}_N(z))}}+K\log\pr{1+\frac{C_0N^{-\varrho\eta/2}}{s_{N^d}(\prescript{\delta}{}{}\widetilde{p}_N(z)+T_N)}}\\
            &\leq \sum_{j=K+1}^{N^d} \log\pr{1+\frac{C_0N^{-\varrho\eta/2}}{s_j(\prescript{\delta}{}{}\widetilde{p}_N(z))}}+K\log\pr{1+C_0N^{\beta-\varrho\eta/2}\delta^{-1}}.
        \end{split}
    \end{equation}

    It remains to estimate the first term on the right hand side of \eqref{Estimation potentiel log matrice tronquée par celui de la matrice du symbole périodique - majoration - 5}. For this, we recall that the number of singular values of $\widetilde{p}_N-z$ which are smaller than $\sqrt{\alpha}$ is $\mathfrak{m}$ given by \eqref{Nombre de valeurs propres Problème de Grushin} (which is uniform with respect to $z\in \mc{K}$) and we use this to decompose the sum. Moreover, by Ky-Fan inequalities, we get that 
    \begin{equation}\label{Estimation potentiel log matrice tronquée par celui de la matrice du symbole périodique - majoration - 6}
        s_j(\prescript{\delta}{}{}\widetilde{p}_N(z))\geq s_j(\widetilde{p}_N(z))-\delta\norme{Q_N}\geq \alpha^{1/2}/2, \mm 1\leq j\leq N^d-\mathfrak{m}.
    \end{equation}
    It follows from \eqref{Estimation potentiel log matrice tronquée par celui de la matrice du symbole périodique - majoration - 6}, \eqref{Majoration et minoration des valeurs singulières par la plus grande et la plus petite} and the fact that $A_1$ in \eqref{Différence des potentiels logaritmiques - 1} holds that 
    \begin{equation}
        \begin{split}
            \sum_{j=K+1}^{N^d}& \log\pr{1+\frac{C_0N^{-\varrho\eta/2}}{s_j(\prescript{\delta}{}{}\widetilde{p}_N(z))}} \\
            &= \pr{\sum_{j=K+1}^{N^d-\mathfrak{m}}+\sum_{N^d-\mathfrak{m}+1}^{N^d}}\log\pr{1+\frac{C_0N^{-\varrho\eta/2}}{s_j(\prescript{\delta}{}{}\widetilde{p}_N(z))}}\\
            &\leq \sum_{j=K+1}^{N^d-\mathfrak{m}} \log\pr{1+\frac{C_0N^{-\varrho\eta/2}}{s_j(\prescript{\delta}{}{}\widetilde{p}_N(z))}}+ \sum_{j=N^d-\mathfrak{m}+1}^{N^d} \log\pr{1+\frac{C_0N^{-\varrho\eta/2}}{s_{N^d}(\prescript{\delta}{}{}\widetilde{p}_N(z))}}\\
            &\leq N^d \log\pr{1+2C_0N^{-\varrho\eta/2}\alpha^{-1/2}}+\mathfrak{m}\log\pr{1+\frac{C_0N^{-\varrho\eta/2}}{s_{N^d}(\prescript{\delta}{}{}\widetilde{p}_N(z))}}\\
            &\leq N^d \log\pr{1+2C_0N^{-\varrho\eta/2}\alpha^{-1/2}}+\mathfrak{m}\log\pr{1+C_0N^{\beta-\varrho\eta/2}\delta^{-1}}.
        \end{split}
    \label{Majoration et minoration pour la diff des potentiels - 4}
    \end{equation}
    Finally, gathering \eqref{Probabilité via l'union bound}, \eqref{Majoration et minoration pour la diff des potentiels - 1}, \eqref{Majoration et minoration pour la diff des potentiels - 2}, \eqref{Estimation potentiel log matrice tronquée par celui de la matrice du symbole périodique - majoration - 5} and \eqref{Majoration et minoration pour la diff des potentiels - 4} and remembering from Proposition \ref{Expression P_N en fonction de P_N tilde} that, $K=\mc{O}(N^{d-\vartheta})$, we obtain the inequality \eqref{Estimation potentiel log matrice tronquée par celui de la matrice du symbole périodique - majoration} with probability $1\geq o(1)$.\\

    \mm 2. Next, we turn to the lower bound \eqref{Estimation potentiel log matrice tronquée par celui de la matrice du symbole périodique - minoration}. Recall that we work under the assumptions that the events in \eqref{Différence des potentiels logaritmiques - 1} hold. We have from \eqref{Décomposition de M_N(p)_delta en fonction de p_N_tilde_delta dans la proposition qui donne la décomposition de M_N(p) en fonction de p_N_tilde}, \eqref{Allahverdiev - 2} and \eqref{Majoration et minoration des valeurs singulières par la plus grande et la plus petite} that
    \begin{equation}
        \begin{split}
            &\sum_{j=1}^{N^d}\log\pr{s_j(M_N^\delta(p)-z)} = \sum_{j=1}^{N^d}\log\pr{s_j(\prescript{\delta}{}{}\widetilde{p}_N(z)+T_N+N^{-\varrho\eta/2}B_N)}\\
            &\m= \pr{\sum_{j=1}^{N^d-K}+\sum_{j=N^d-K+1}^{N^d}}\log\pr{s_j(\prescript{\delta}{}{}\widetilde{p}_N(z)+T_N+N^{-\varrho\eta/2}B_N)}\\
            &\m\geq \sum_{j=K+1}^{N^d}\log\pr{s_j(\prescript{\delta}{}{}\widetilde{p}_N(z)+N^{-\varrho\eta/2}B_N)}+K\log\pr{s_{N^d}(\prescript{\delta}{}{}\widetilde{p}_N(z)+T_N+N^{-\varrho\eta/2}B_N)}\\
            &\m\geq \sum_{j=K+1}^{N^d}\log\pr{s_j(\prescript{\delta}{}{}\widetilde{p}_N(z)+N^{-\varrho\eta/2}B_N)}+ K\log\pr{\delta N^{-\beta}},
        \end{split}
    \label{Majoration et minoration pour la diff des potentiels - 5}
    \end{equation}
    where in the last inequality, we used that event $A_4$ in \eqref{Différence des potentiels logaritmiques - 1} holds. Let us now estimate the first term on the right hand side of \eqref{Majoration et minoration pour la diff des potentiels - 5}. To do so, first notice that in view of \eqref{Majoration norme de p_N_tilde et de B_N dans la deuxième proposition pour l'estimée de la différence des potentiels loga} and by the additive Ky-Fan inequalities, we have  
    \begin{equation}\label{Estimation potentiel log matrice tronquée par celui de la matrice du symbole périodique - minoration - 1}
        s_j\Bigl(\prescript{\delta}{}{}\widetilde{p}_N(z)\Bigl)-\frac{C_0}{N^{\varrho\eta/2}}\leq s_j\Bigl(\prescript{\delta}{}{}\widetilde{p}_N(z)+N^{-\varrho\eta/2}B_N\Bigl)
    \end{equation}
    for all $j\in \disc{1}{N^d}$. Inequality \eqref{Estimation potentiel log matrice tronquée par celui de la matrice du symbole périodique - minoration - 1} therefore gives 
    \begin{equation}\label{Estimation potentiel log matrice tronquée par celui de la matrice du symbole périodique - minoration - 2}
        \begin{split}
            &\sum_{j=K+1}^{N^d}\log\pr{s_j(\prescript{\delta}{}{}\widetilde{p}_N(z)+N^{-\varrho\eta/2}B_N)} \\
            &\m= \sum_{j=K+1}^{N^d-\mathfrak{m}}\log\pr{s_j(\prescript{\delta}{}{}\widetilde{p}_N(z)+N^{-\varrho\eta/2}B_N)}+\sum_{j=N^d-\mathfrak{m}+1}^{N^d}\log\pr{s_j(\prescript{\delta}{}{}\widetilde{p}_N(z)+N^{-\varrho\eta/2}B_N)}\\
            &\m\geq \sum_{j=K+1}^{N^d-\mathfrak{m}}\log\pr{s_j(\prescript{\delta}{}{}\widetilde{p}_N(z))-\frac{C_0}{N^{\varrho\eta/2}}}+\sum_{j=N^d-\mathfrak{m}+1}^{N^d}\log\pr{s_j(\prescript{\delta}{}{}\widetilde{p}_N(z)+N^{-\varrho\eta/2}B_N)}.
        \end{split}
    \end{equation}
    Furthermore, for all $x>y>0$, we have
    \begin{equation}\label{Inégalité nulle sur le log}
        \log(x-y)\geq \log(x)-\frac{y}{x-y}.
    \end{equation}
    Notice also that by Ky-Fan inequalities, \eqref{Valeurs propres inférieures à alpha}, \eqref{Ecriture de delta en décroissance polynomiale}, \eqref{Hypothèse sur delta matrice générale}, \eqref{Contrainte sur delta, alpha et la norme de la matrice aléatoire} and Proposition \ref{Expression P_N en fonction de P_N tilde}, we have for all $j\in \disc{1}{N^d-\mathfrak{m}}$,
    \begin{equation}\label{Estimation potentiel log matrice tronquée par celui de la matrice du symbole périodique - minoration - 3}
        \begin{split}
            s_j(\prescript{\delta}{}{}\widetilde{p}_N(z))-\frac{C_0}{N^{\varrho\eta/2}} &\geq  s_j(\widetilde{p}_N(z))-\delta\norme{Q_N}-\mc{O}(N^{-\varrho\eta/2})\\
            &\geq \alpha^{1/2}/2-\mc{O}(N^{-\varrho\eta/2})\geq \frac{1}{C}\alpha^{1/2}
        \end{split}
    \end{equation}
    for some constant $C>0$ and for $N$ large enough since $0<N^{-\varrho\eta}\alpha^{-1}\ll 1$, see \eqref{Hypothèse ordre de grandeur entre alpha et N}. So, from \eqref{Inégalité nulle sur le log} and \eqref{Estimation potentiel log matrice tronquée par celui de la matrice du symbole périodique - minoration - 3} we have for all $j\in \disc{1}{N^d-\mathfrak{m}}$,
    \begin{equation}\label{Estimation potentiel log matrice tronquée par celui de la matrice du symbole périodique - minoration - 4}
        \begin{split}
            \log\pr{s_j(\prescript{\delta}{}{}\widetilde{p}_N(z))-\frac{C_0}{N^{\varrho\eta/2}}} &\geq \log\pr{s_j(\prescript{\delta}{}{}\widetilde{p}_N(z))}-\frac{C_0N^{-\varrho\eta/2}}{s_j(\prescript{\delta}{}{}\widetilde{p}_N(z))-C_0N^{-\varrho\eta/2}}\\
            &\geq \log\pr{s_j(\prescript{\delta}{}{}\widetilde{p}_N(z))}- \widetilde{C}N^{-\varrho\eta/2}\alpha^{-1/2}
        \end{split}
    \end{equation}
    where $\widetilde{C}=CC_0$. So, from \eqref{Majoration et minoration des valeurs singulières par la plus grande et la plus petite}, \eqref{Estimation potentiel log matrice tronquée par celui de la matrice du symbole périodique - minoration - 4}, \eqref{Estimée de la norme de p_N_tilde_delta uniformément en N avec très bonne proba} and the fact that $A_3$ in \eqref{Différence des potentiels logaritmiques - 1} holds, \eqref{Estimation potentiel log matrice tronquée par celui de la matrice du symbole périodique - minoration - 2} becomes
    \begin{equation}\label{Estimation potentiel log matrice tronquée par celui de la matrice du symbole périodique - minoration - 5}
        \begin{split}
            \sum_{j=K+1}^{N^d}&\log\pr{s_j(\prescript{\delta}{}{}\widetilde{p}_N(z)+N^{-\varrho\eta/2}B_N)} \\
            &\geq \sum_{j=K+1}^{N^d-\mathfrak{m}}\log\pr{s_j(\prescript{\delta}{}{}\widetilde{p}_N(z))}+\mc{O}(\alpha^{-1/2}N^{d-\varrho\eta/2})+\mathfrak{m}\log(\delta N^{-\beta})\\
            &\geq N^d\phi_{\widetilde{\mu}_N}(z)-(K+\mathfrak{m})\log(s_1(\prescript{\delta}{}{}\widetilde{p}_N(z)))+\mc{O}(\alpha^{-1/2}N^{d-\varrho\eta/2})+\mathfrak{m}\log(\delta N^{-\beta})\\
            &\geq N^d\phi_{\widetilde{\mu}_N}(z)+\mc{O}(K)+\mc{O}(\mathfrak{m})+\mc{O}(\alpha^{-1/2}N^{d-\varrho\eta/2})+\mathfrak{m}\log(\delta N^{-\beta}).
        \end{split}
    \end{equation}
    Remembering that $K=\mc{O}(N^{d-\vartheta})$, that $\mathfrak{m}$ is given in \eqref{Nombre de valeurs propres Problème de Grushin},  \eqref{Majoration et minoration pour la diff des potentiels - 5} and \eqref{Estimation potentiel log matrice tronquée par celui de la matrice du symbole périodique - minoration - 5}, we obtain \eqref{Estimation potentiel log matrice tronquée par celui de la matrice du symbole périodique - minoration} with probability $\geq 1-o(1)$.
\end{proof}

\section{Proof of Theorem \ref{Théorème final}}\label{Démo du théorème final}
\mm We begin by recalling a criterion for the weak convergence of measures. This is a standard result (see e.g. \cite[Theorem 2.8.3]{taotopics}), and we present here a version adapted to our case. Its proof is given for the reader convenience in the Appendix.

Recall the notation \eqref{Définition du potentiel loga pour une mesure donnée} and $\mathscr{P}(\C)$ given at the beginning of Section \ref{Principal setting}.

\begin{thm}
    Let $(\mu_n)_{n\in \N}$ be a sequence of random probability measures and let $\mu\in \mathscr{P}(\C)$ be deterministic. Suppose that:
    \begin{enumerate}
        \item there exists a compact set $D$ such that $\mathrm{supp}(\mu)\subset D$ and
        \begin{equation*}
            \mathbb{P}\Bigl(\mathrm{supp}\pr{\mu_n}\subset D\Bigl)=1-o(1)\m\m\m\text{when}\m\m\m\m n\to +\infty,
        \end{equation*}
        \item for almost every $z\in \C$, $\phi_{\mu_n}(z)$ converges in probability to $\phi_{\mu}(z)$.
    \end{enumerate}
    Then, $\mu_n$ converges weakly in probability to $\mu$.
\label{Théorème cvgence du potentiel loga implique celle de la mesure empirique}
\end{thm}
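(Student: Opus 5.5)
The approach is the standard potential-theoretic route. Both the hypotheses and the conclusion concern integrals against test functions, so I pass from $\phi_{\mu_n}$ to $\mu_n$ through the distributional identity $\Delta\phi_\nu = 2\pi\nu$, valid for any compactly supported probability measure $\nu$. By the definition of weak convergence in probability, it suffices to show that for every $f\in\mathscr{C}_c(\C)$ and every $\varepsilon>0$ we have $\mathbb{P}\bigl(\bigl|\int f\,d\mu_n-\int f\,d\mu\bigr|>\varepsilon\bigr)\to 0$.

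\textbf{Step 1: reduction to smooth test functions.} On the event $\{\mathrm{supp}(\mu_n)\subset D\}$, whose probability is $1-o(1)$ by hypothesis (1), all the measures involved live on the fixed compact set $D$. There I approximate $f$ uniformly by $g\in\mathscr{C}^\infty_c(\C)$ with $\|f-g\|_{L^\infty(D)}\le\varepsilon/3$. It then suffices to treat smooth compactly supported $g$.

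\textbf{Step 2: rewriting $\int g\,d\mu_n$ through the potential.} For smooth $g$, Green's formula gives
\[
\int g\,d\nu=\frac{1}{2\pi}\int_{\C}\Delta g(z)\,\phi_\nu(z)\,\mathrm{L}(dz).
\]
So I must show that $\int \Delta g\,(\phi_{\mu_n}-\phi_\mu)\,d\mathrm{L}\to 0$ in probability. Hypothesis (2) gives pointwise convergence in probability for almost every $z$. To integrate this, I first truncate: set $\phi^{(M)}:=\max(\phi,-M)$.

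\begin{itemize}
\item \emph{Upper bound.} On the good event and for $z\in\mathrm{supp}(g)$, we have the uniform bound $\phi_{\mu_n}(z)\le\log(\mathrm{diam}(D)+\sup_{\mathrm{supp}\,g}|z|)$.
\item \emph{Truncated part.} $|\phi_{\mu_n}^{(M)}-\phi_\mu^{(M)}|$ is bounded and tends to $0$ in probability for almost every $z$. Fubini gives $\mathbb{E}\int|\Delta g|\,\min(1,|\phi^{(M)}_{\mu_n}-\phi^{(M)}_\mu|)\,d\mathrm{L}\to0$ by dominated convergence, and Markov's inequality then handles this part.
\end{itemize}

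\textbf{Step 3: controlling the truncation error (main obstacle).} The remaining difficulty is the error $\int|\Delta g|\,(\phi^{(M)}-\phi)\,d\mathrm{L}$ for $\nu=\mu_n$, uniformly in $n$. For this I use Fubini:
\[
\int_{K}\log_-|w-z|\,\mathrm{L}(dz)
\]
is bounded uniformly in $w\in D$, and moreover
\[
\int_K(\phi^{(M)}_\nu-\phi_\nu)\,d\mathrm{L}\le\int\!\!\int_K \bigl(-M-\log|w-z|\bigr)_+\,\mathrm{L}(dz)\,d\nu(w)\le \sup_{w}\int_{|w-z|<e^{-M}}\log\frac{1}{|w-z|}\,\mathrm{L}(dz),
\]
which is $O(Me^{-2M})$ uniformly in $\nu$ supported in $D$. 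The same estimate holds for $\mu$.

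\textbf{Conclusion.} I choose $M$ large so that the truncation error is $<\varepsilon/6$, and then $n$ large for the truncated part. Combined with Step 1 and the $o(1)$ event from hypothesis (1), this yields the convergence in probability of $\int f\,d\mu_n$, hence the weak convergence in probability.
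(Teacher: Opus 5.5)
Your argument is correct. Its skeleton matches the paper's: restrict to the event $\{\mathrm{supp}(\mu_n)\subset D\}$ to reduce to smooth compactly supported test functions, use $\Delta\phi_\nu=2\pi\nu$, and handle a bounded truncation by Fubini, dominated convergence and Markov.

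The difference is in how the unbounded part of the potentials is controlled.
\begin{itemize}
\item The paper truncates the \emph{difference}, $\min(|\phi_{\mu_n}-\phi_\mu|,M)$. It proves a uniform $L^2(K)$ bound on the potentials via Minkowski's integral inequality. It then disposes of the set $\{|\phi_{\mu_n}-\phi_\mu|>M\}$ by Cauchy--Schwarz together with Markov's inequality for Lebesgue measure. This gives $\|\phi_{\mu_n}-\phi_\mu\|_{L^1(K)}\to 0$ in probability before passing to distributions.
\item You instead truncate each potential from below and use the pointwise upper bound coming from compact support. You bound the truncation error deterministically, via Jensen's inequality for $x\mapsto x_+$ and the explicit integral $2\pi\int_0^{e^{-M}}r\log(1/r)\,dr=O(Me^{-2M})$, uniformly over all probability measures.
\end{itemize}
Your route avoids the $L^2$ estimate and gives an explicit rate in $M$. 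The paper's route produces $L^1_{\mathrm{loc}}$ convergence of the potentials as an intermediate statement, although your estimates yield that too at no extra cost.

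Two small repairs are needed.
\begin{itemize}
\item The upper bound should read $\log(\sup_{w\in D}|w|+\sup_{\mathrm{supp}\,g}|z|)$. Using $\mathrm{diam}(D)$ alone fails when $D$ is far from the origin.
\item When you pass from $\int|\Delta g|\min(1,|\phi^{(M)}_{\mu_n}-\phi^{(M)}_\mu|)$ back to the untruncated integrand, say explicitly that on the good event this difference is bounded by a constant depending only on $M$, $D$ and $\mathrm{supp}\,g$. This is harmless because $M$ is fixed before $n\to\infty$.
\end{itemize}
Also note that your Step 1 works verbatim for bounded continuous $f$, which is what the definition of weak convergence in probability actually requires.
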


    \mm We now turn to the proof of Theorem \ref{Théorème final}. Let $\varrho\in \hspace{0.1cm} ]0,1]$ and let $p\in \mathscr{C}^{0,\varrho}_{\mathrm{pw}}S_d$ satisfy Assumptions \ref{Hypothèse théorème final - volume singularités} and \ref{Hypothèse théorème final - volume préimage} with parameters ${\kappa_1}\in \hspace{0.1cm} ]0,1]$ and ${\kappa_2}\in \hspace{0.1cm} ]0,1]$ respectively. As at the beginning of Section \ref{Principal setting}, restrict $p$ to $]0,1]^d_x\times \T^d_\xi$ and extend it by $\Z^d$ periodicity to $\R^d_x\times \T^d_\xi$. For $\eta=1/6\in \hspace{0.1cm} ]0,1/4[$, take $\widetilde{p}$ defined by \eqref{Definition de p tilde}. Let $Q_N$ be an $N^d\times N^d$ random matrix satisfying Assumption \ref{Hypothèse théorème final - matrice aléatoire} with $\kappa_3>0$. Let $\delta_0>0$ and set 
    \begin{equation}\label{Hypothèse expression de delta théorème final bis}
        \delta:=N^{-(\kappa_3+\delta_0)}.
    \end{equation}

    We consider as well for $N\gg1$, the semiclassical parameter 
    \begin{equation*}
        h=\frac{1}{2\pi N}
    \end{equation*}
    and we set 
    \begin{equation}
        \alpha:=N^{-\omega}\mm\mm\text{where}\mm\mm \omega:=\min(\delta_0,\varrho\eta/3,{\kappa_1}\eta/2)>0.
    \label{Choix de alpha + notation de omega}
    \end{equation}
    Choosing $\alpha$ in this way ensures that $h\ll h^{2\eta}\alpha$. Indeed, for $\eta=1/6$, since $0<\varrho\leq 1$,
    \begin{equation*}
        2\eta+\frac{\varrho\eta}{2}\leq \frac{5\eta}{2}<1,
    \end{equation*}
    so \eqref{Conditions entre alpha 1 et 2, et N - 2} holds. Furthermore, expression \eqref{Choix de alpha + notation de omega} of $\alpha$ allows us as well to have
    \begin{equation*}
        N^{-\eta\min(\varrho,{\kappa_1})}\ll \alpha\ll 1,
    \end{equation*}
    so that Theorem \ref{Théorème central} applies. Moreover, notice that $\omega(1+{\kappa_2})-\varrho\eta<0$ and $\omega{\kappa_2}-{\kappa_1}\eta<0$, which implies that
    \begin{equation*}
        N^{-\varrho\eta}\alpha^{-1}\ll \alpha^{\kappa_2}\ll1\mm\mm\text{and}\mm\mm N^{-{\kappa_1}\eta}\ll\alpha^{\kappa_2}\ll1,
    \end{equation*}
    meaning that Corollary \ref{Nombre de valeurs propres plus petites que alpha - résultat} applies too.\\
    
    We set for $\varepsilon_0$ intervening in \eqref{Apparition de epsilon_0 dans la proba de la norme de la matrice aléatoire},
    \begin{equation}\label{Choix de epsilon_0}
        \varepsilon_0:=\frac{\delta_0}{4}
    \end{equation}
    which in view of \eqref{Hypothèse expression de delta théorème final bis}, gives
    \begin{equation*}
        \delta N^{\kappa_3+\varepsilon_0} \alpha^{-1/2}=N^{-\delta_0+\varepsilon_0+\min(\delta_0,\varrho\eta/2)/2}=\mc{O}(N^{-\delta_0/4}) \longrightarrow 0 \mm\text{as}\mm N\to +\infty.
    \end{equation*}
    In particular, \eqref{Ecriture de delta en décroissance polynomiale} holds.\\

    \mm We then show that the hypotheses of Theorem \ref{Théorème cvgence du potentiel loga implique celle de la mesure empirique} are fulfilled.\\

    \mm \textit{(1)} Since $p\in \mathscr{C}^{0,\varrho}_{\mathrm{pw}}S_d$, we know from Remark \ref{Remarque sur le fait que les matrices via la nouvelle procédure de quantif sont bornées} that there exists a constant $C>0$ independent of $N$ such that $\norme{M_N(p)}\leq C$. Then, with probability $\geq 1-\mc{O}(N^{-\varepsilon_0})$, 
    \begin{equation*}
        \begin{split}
            \norme{M_N^\delta(p)} \leq \norme{M_N(p)}+\delta \norme{Q_N}\leq C+\alpha^{1/2}/2\leq C+1.
        \end{split}
    \end{equation*}
    In particular, 
    \begin{equation}
        \sigma\pr{M_N^\delta(p)}\subset D\pr{0,\norme{M_N^\delta(p)}}\subset \overline{D(0,C+1)}
    \label{Démo du théorème final - 1}
    \end{equation}
    and $\overline{D(0,C+1)}$ is a compact set of $\C$. In view of \eqref{Démo du théorème final - 1} and \eqref{Expression mesure empirique de P perturbé via nouvelle quantif}, we have
    \begin{equation}
        \mathbb{P}\Bigl(\mathrm{supp}(\mu_N)\subset \overline{D(0,C+1)}\Bigl)\geq 1-\mc{O}(N^{-\varepsilon_0}).
    \label{Démo du théorème final - 2}
    \end{equation}\\

    \mm \textit{(2)} Let $z\in \C$ and $\mc{K}=\overline{D(z,1)}$. From Proposition \ref{Différence des potentiels loga entre mu_N tilde et mu}, we know that \eqref{Estimée log-det supérieure, cas gaussien -potentiel log} holds with probability $\geq 1-\mc{O}(N^{-\varepsilon_0})$ and \eqref{Estimée log-det inférieure, cas gaussien -potentiel log} with probability $\geq 1-o(1)$. By the union bound, this leads to 
    \begin{equation}
    \begin{split}
        \bb{\phi_{\widetilde{\mu}_N}(z)-\phi_{\mu}(z)} &\leq \mc{O}\pr{N^{-\eta{\kappa_1}}\bb{\log(\alpha)}}+\mc{O}\pr{\alpha^{\kappa_2}\bb{\log(\alpha)}}+\mc{O}\pr{N^{-\varrho\eta}\alpha^{-1}}\\
        &\m\m\m\m\m\m\m\m\m\m +\mc{O}\pr{\delta\alpha^{-1/2}N^{\kappa_3+\varepsilon_0}}+\mc{O}\pr{\alpha^{{\kappa_2}}\log(\delta N^{-\beta})}
    \end{split}
\label{Différence des potentiels loga entre mu_N tilde et mu - absolute value}
\end{equation}
with probability $\geq 1-o(1)$.\\

Furthermore, from Proposition \ref{Différence des potentiels logaritmiques}, we also have that \eqref{Estimation potentiel log matrice tronquée par celui de la matrice du symbole périodique - majoration} holds with probability $\geq 1-o(1)$ and \eqref{Estimation potentiel log matrice tronquée par celui de la matrice du symbole périodique - minoration} with probability $\geq 1-o(1)$, which implies by the union bound that 
    \begin{equation}
        \begin{split}
            \bb{\phi_{\mu_N}(z)-\phi_{\widetilde{\mu}_N}(z)} &\leq \mc{O}\pr{N^{-\vartheta}\log\pr{1+C_0N^{\beta-\varrho\eta/2}\delta^{-1}}} \\
            &+ \mc{O}(N^{-\vartheta}\log(\delta N^{-\beta}))+\log\pr{1+2C_0N^{-\varrho\eta/2}\alpha^{-1/2}}\\
            &+\mc{O}\pr{\alpha^{{\kappa_2}}\log\pr{1+C_0N^{\beta-\varrho\eta/2}\delta^{-1}}}+\mc{O}(\alpha^{-1/2}N^{-\varrho\eta/2})\\
            &+\mc{O}(\alpha^{{\kappa_2}}\log(\delta N^{-\beta}))+\mc{O}(N^{-\vartheta})
        \end{split}
    \label{Estimation potentiel log matrice tronquée par celui de la matrice du symbole périodique - absolute value}
    \end{equation}
    with probability $\geq 1-o(1)$.

\m\\
Through the choice \eqref{Choix de alpha + notation de omega} of $\alpha$, \eqref{Choix de epsilon_0} of $\varepsilon_0$ and \eqref{Hypothèse expression de delta théorème final bis} of $\delta$, all the terms on the right hand side of \eqref{Différence des potentiels loga entre mu_N tilde et mu - absolute value} and \eqref{Estimation potentiel log matrice tronquée par celui de la matrice du symbole périodique - absolute value} are vanishing at $+\infty$. Indeed, after simplification, these terms are
\begin{align*}
          \mc{O}\pr{N^{-\eta{\kappa_1}}\bb{\log(\alpha)}} &= \mc{O}(N^{-\eta{\kappa_1}}\log(N)) \\
        \mc{O}(\alpha^{\kappa_2}\bb{\log(\alpha)}) &= \mc{O}(N^{-\omega{\kappa_2}}\log(N))\\
        \mc{O}\pr{N^{-\varrho\eta}\alpha^{-1}} &= \mc{O}(N^{-\varrho\eta/2})\\
        \mc{O}\pr{\delta\alpha^{-1/2}N^{\kappa_3+\varepsilon_0}} &=\mc{O}(N^{-\varepsilon/4})\\
        \mc{O}\pr{\alpha^{{\kappa_2}}\log(\delta N^{-\beta})} &= \mc{O}(N^{-\omega{\kappa_2}}\log(N))\\
        \mc{O}(N^{-\vartheta}\log(\delta N^{-\beta})) &= \mc{O}(N^{-\vartheta}\log(N))\\
        \mc{O}\pr{N^{-\vartheta}\log\pr{1+C_0N^{\beta-\varrho\eta/2}\delta^{-1}}}&\leq \mc{O}(N^{-\vartheta}\log(N))\\
        \log\pr{1+2C_0N^{-\varrho\eta/2}\alpha^{-1/2}} &= \mc{O}(N^{-\varrho\eta/4})\\
        \mc{O}\pr{\alpha^{{\kappa_2}}\log\pr{1+C_0N^{\beta-\varrho\eta/2}\delta^{-1}}} &\leq \mc{O}(N^{-\omega{\kappa_2}}\log(N))\\
        \mc{O}(\alpha^{-1/2}N^{-\varrho\eta/2}) &= \mc{O}(N^{-\varrho\eta/4})
\end{align*}
where we recall that $\omega$ was defined in \eqref{Choix de alpha + notation de omega}. \\

Let $\tau>0$. To simplify the notations, let us denote $E_1$ the right hand side of \eqref{Différence des potentiels loga entre mu_N tilde et mu - absolute value}, which vanishes when $N\to +\infty$. So, for $N$ large enough, it is smaller than $\tau$ and we can write for $N$ large enough that
\begin{equation*}
    \mathbb{P}\Bigl(\bb{\phi_{\widetilde{\mu}_N}(z)-\phi_{\mu}(z)}\leq \tau\Bigl) \geq \mathbb{P}\Bigl(\bb{\phi_{\widetilde{\mu}_N}(z)-\phi_{\mu}(z)}\leq E_1\Bigl)\geq 1-o(1).
\end{equation*}
This ensures that 
\begin{equation}
    \phi_{\widetilde{\mu}_N}(z)\overset{\mathbb{P}}{\underset{N\to +\infty}{\longrightarrow}}\phi_{\mu}(z).
\label{Convergence des potentiels loga - 3}
\end{equation}
Likewise, denote $E_2$ the right hand side of \eqref{Estimation potentiel log matrice tronquée par celui de la matrice du symbole périodique - absolute value} which vanishes at $+\infty$ too. Therefore, for $N$ large enough, 
\begin{equation*}
    \mathbb{P}\Bigl(\bb{\phi_{\mu_N}(z)-\phi_{\widetilde{\mu}_N}(z)}\leq \tau\Bigl)\geq  \mathbb{P}\Bigl(\bb{\phi_{\mu_N}(z)-\phi_{\widetilde{\mu}_N}(z)}\leq E_2\Bigl)\geq 1-o(1)
\end{equation*}
which leads to
\begin{equation}
    \phi_{\mu_N}(z)-\phi_{\widetilde{\mu}_N}(z)\overset{\mathbb{P}}{\underset{N\to +\infty}{\longrightarrow}} 0.
\label{Convergence des potentiels loga - 4}
\end{equation}
Thus, we deduce from \eqref{Convergence des potentiels loga - 3} and \eqref{Convergence des potentiels loga - 4} that
\begin{equation}
    \phi_{\mu_N}(z)\overset{\mathbb{P}}{\underset{N\to +\infty}{\longrightarrow}}\phi_{\mu}(z)
\label{Démo du théorème final - 3}
\end{equation}
and this holds for all $z\in \C$. \\

Finally, it remains to apply Theorem \ref{Théorème cvgence du potentiel loga implique celle de la mesure empirique} with \eqref{Démo du théorème final - 2} and \eqref{Démo du théorème final - 3} to conclude the proof of Theorem \ref{Théorème final}.

\appendix
\section{Proof of estimates for Theorem \ref{Théorème central}}

\begin{lem}\label{Lemme nul 2}
    Let $a\in \disc{-(N-a_N)}{N-a_N}$. Then, for all $b\in \Z$, 
    \begin{equation*}
        \bb{a+Nb}\geq \frac{a_N}{2}\bb{b}.
    \end{equation*}
\end{lem}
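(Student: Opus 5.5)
The plan is a direct elementary estimate, with no semiclassical input: it rests only on the reverse triangle inequality and the standing assumption $0<a_N<N$ made just before the lemma is invoked in the proof of Proposition \ref{Expression P_N en fonction de P_N tilde}. I split according to whether $b=0$ or $b\neq 0$.

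If $b=0$, the right-hand side vanishes, so $\bb{a}\geq 0$ gives the claim immediately.

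If $\bb{b}\geq 1$, I use the reverse triangle inequality together with $\bb{a}\leq N-a_N$:
\begin{equation*}
    \bb{a+Nb}\geq N\bb{b}-\bb{a}\geq N\bb{b}-(N-a_N)=N(\bb{b}-1)+a_N .
\end{equation*}
Since $\bb{b}-1\geq 0$ and $N\geq a_N$, the first term is at least $a_N(\bb{b}-1)$. Hence
\begin{equation*}
    \bb{a+Nb}\geq a_N(\bb{b}-1)+a_N=a_N\bb{b}\geq \frac{a_N}{2}\bb{b}.
\end{equation*}
This is in fact stronger than the stated bound by a factor $2$.

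I do not expect a genuine obstacle; the only point to keep in mind is the hypothesis $a_N<N$, which is used in the step $N(\bb{b}-1)\geq a_N(\bb{b}-1)$. When the lemma is applied componentwise to $s-j+rN$ with $s-j\in\disc{-(N-a_N)}{N-a_N}^d$, the same bound holds for each coordinate. It then yields $\jap{s-j+rN}\gtrsim a_N\bb{r}$, which is what is needed to feed \eqref{P atchek moins P hat - 3}.
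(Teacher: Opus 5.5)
Your proof is correct, and it takes a different, more direct route than the paper.

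The paper fixes $0<C<N$ and squares the target inequality, reducing it to $(N^2-C^2)b^2+2Nab+a^2\geq 0$ for all $b\in\Z$. It then computes the two real roots $-a/(N-C)$ and $-a/(N+C)$ for $a\geq 0$ (the case $a\leq 0$ being symmetric). Both roots lie in $]-1,0]$ as soon as $|a|<N-C$, so no integer falls strictly between them and the quadratic is nonnegative on $\Z$. The choice $C=a_N/2$ is made only to get the strict inequality $|a|\leq N-a_N<N-C$.

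You instead combine the reverse triangle inequality $|a+Nb|\geq N|b|-|a|$ with $N\geq a_N$. This avoids both the quadratic and the case split on the sign of $a$, and it gives the better constant $a_N$ in place of $a_N/2$. The same one-line estimate even gives $|a+Nb|\geq N|b|-|a|\geq (N-|a|)|b|$ for $|b|\geq 1$, which is sharp (take $b=\mp 1$).

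The paper's root analysis describes which $C$ work for real $b$, but none of that is used downstream. There, only a bound of the form $\jap{s-j+rN}\gtrsim a_N|r|$ is fed into the decay estimate of $\widetilde{p}_\nu$, so your version loses nothing. Your componentwise remark for $s-j\in\disc{-(N-a_N)}{N-a_N}^d$ is also correct: summing the coordinatewise bounds in squares gives $\norme{s-j+rN}\geq a_N\norme{r}$.
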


\begin{proof}
    We assume that there exists a constant $N>C>0$ such that for all $b\in \Z$, $\bb{
    a+Nb}\geq C\bb{b}$. Equivalently, for all $b\in \Z$,
    \begin{equation}
        (N^2-C^2)b^2+2Nba+a^2\geq 0.
    \label{Inégalité lemme nul}
    \end{equation}
    Let us study this inequation on $\R$. The left hand side vanishes when $b=b_1$ or $b_2$ where
    \begin{equation*}
        b_1=-\frac{Na+C\bb{a}}{N^2-C^2} \m\m\m\m\m\m \text{and}\m\m\m\m\m\m b_2=-\frac{Na-C\bb{a}}{N^2-C^2}
    \end{equation*}
    which depends on the sign of $a$. If $a\geq 0$, then
    \begin{equation*}
        b_1=-\frac{a}{N-C} \m\m\m\m\m\m \text{and}\m\m\m\m\m\m b_2=-\frac{a}{N+C}.
    \end{equation*}
    In particular, $b_1\leq b_2\leq 0$, so for \eqref{Inégalité lemme nul} to be true for all $b\in \Z$, it is sufficient that $-1<b_1$, which is equivalent to $a<N-C$. Since $a\leq N-a_N$, it suffices to take $C=a_N/2$.\\
    We obtain the same conclusion if $a\leq 0$.
\end{proof}

\begin{lem}
    The norm of the $N^d\times N^d$ matrix 
    \begin{equation*}
        B_N:=\pr{\mathbb{1}_{\disc{-N^a}{N^a}^d}(s-j)}_{s,j}
    \end{equation*}
    satisfies:
    \begin{equation*}
        \norme{B_N}=\mc{O}\pr{N^{\frac{ad}{2}}}.
    \end{equation*}
\label{Norme matrice B_N en dimension d}
\end{lem}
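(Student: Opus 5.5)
The plan is to reduce everything to counting the nonzero entries of $B_N$ and then to pass from that count to the stated norm. The structure is a tensor product, so I would first note that
\begin{equation*}
    B_N=b_N^{\otimes d},\qquad (b_N)_{s,j}:=\mathbb{1}_{\disc{-N^a}{N^a}}(s-j),\quad s,j\in\disc{1}{N},
\end{equation*}
where $b_N$ is the one-dimensional $N\times N$ band matrix of half-width $\lfloor N^a\rfloor$. Any of the usual norms is multiplicative under $\otimes$: this holds for the operator norm and for the Hilbert--Schmidt norm, and also for the normalized Hilbert--Schmidt norm $N^{-1/2}\norme{\cdot}_{\mathrm{HS}}$ on each factor. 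It therefore suffices to prove the one-dimensional bound $\mc{O}(N^{a/2})$ for $b_N$ and raise it to the power $d$.

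For $b_N$, each row $s$ contains at most $2N^a+1$ ones, namely the indices $j$ with $\bb{s-j}\leq N^a$. This gives $\norme{b_N}_{\mathrm{HS}}^2\leq N(2N^a+1)$, hence
\begin{equation*}
    N^{-1/2}\norme{b_N}_{\mathrm{HS}}\leq (2N^a+1)^{1/2}=\mc{O}(N^{a/2}).
\end{equation*}
Tensorizing gives $N^{-d/2}\norme{B_N}_{\mathrm{HS}}=\mc{O}(N^{ad/2})$, which is exactly the stated rate. In the same way, for any matrix $A$ supported in this band with entries bounded by $\varepsilon$, one gets $N^{-d/2}\norme{A}_{\mathrm{HS}}\leq \varepsilon (2N^a+1)^{d/2}$. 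This is the form in which the lemma is invoked in the proof of Proposition \ref{Expression P_N en fonction de P_N tilde} (for instance with $\varepsilon=N^{-\varrho\eta}$).

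The main obstacle is the choice of norm in the statement. I would first try the operator norm $\norme{\cdot}$, using the Schur test: all row sums and all column sums are at most $(2N^a+1)^d$, which only yields $\norme{B_N}\leq (2N^a+1)^d=\mc{O}(N^{ad})$. Testing on the constant vector $u=N^{-d/2}(1,\dots,1)$ shows this is sharp: every row $s$ with all coordinates in $\disc{N^a+1}{N-N^a}$ has row sum $(2N^a+1)^d$, so $\norme{B_N u}\geq c\,N^{ad}$. Consequently the exponent $ad/2$ cannot hold for the spectral norm. The step I expect to be decisive is therefore to fix the meaning of "norm" as the normalized Hilbert--Schmidt norm $N^{-d/2}\norme{\cdot}_{\mathrm{HS}}$, for which the proof above is complete. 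After that, I would re-check that each later use of the lemma needs only this averaged quantity, or else an estimate on the $\log$-potential that can be controlled through a Hilbert--Schmidt bound, for instance via $\sum_j s_j^2=\norme{\cdot}_{\mathrm{HS}}^2$. If some later step genuinely needs a spectral-norm bound, I would instead use the Schur bound $\mc{O}(N^{ad})$ there and adjust $a$ downward (say $a=\varrho\eta/(2d)$) so that the product $N^{ad-\varrho\eta}$ still decays.
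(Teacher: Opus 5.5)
Your refutation is correct, and it is the real content here. With $\|\cdot\|$ the $\ell^2\to\ell^2$ operator norm (this is how the paper fixes its notation, and how the lemma is used), the statement is false. Indeed $B_N=b_N^{\otimes d}$, and the Schur test gives $\|B_N\|\le(2N^a+1)^d$. Your flat vector $u=N^{-d/2}(1,\dots,1)$ gives $\|B_N\|\ge cN^{ad}$: since $a<1$, at least $cN^d$ rows have full row sum $(2\lfloor N^a\rfloor+1)^d$. Hence $\|B_N\|\asymp N^{ad}$.

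The paper's proof fails at its last line. Minkowski's inequality applied column by column gives $\|B_Nu\|_2\le\sum_\nu|u_\nu|\,\mathcal{N}_\nu^{1/2}\le(2N^a+1)^{d/2}\|u\|_{\ell^1}$, and ``this leads to the result'' tacitly replaces $\|u\|_{\ell^1}$ by $\|u\|_{\ell^2}$. Since $\|u\|_{\ell^1}$ can equal $N^{d/2}\|u\|_{\ell^2}$ (again for the flat vector), that argument only yields $\mathcal{O}(N^{d(1+a)/2})$. Summing over the shifts $k\in\disc{-N^a}{N^a}^d$ instead, as the paper does when proving the uniform bound on $\|M_N(p)\|$, gives the sharp $(2N^a+1)^d$.

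Your proposed repair, redefining the norm as $N^{-d/2}\|\cdot\|_{\mathrm{HS}}$, proves a true but different statement, and it is not the quantity the paper needs. In step 4.1 of the proof of Proposition~\ref{Expression P_N en fonction de P_N tilde}, the lemma bounds the operator norm of the band matrix $A$ (entries $\mathcal{O}(N^{-\varrho\eta})$). This lets $A$ be absorbed into $N^{-\varrho\eta/2}B_N$ with $\|B_N\|=\mathcal{O}(1)$. That operator-norm bound is then used in the additive Ky Fan inequalities $s_j(X+N^{-\varrho\eta/2}B_N)\le s_j(X)+C_0N^{-\varrho\eta/2}$ when comparing $\phi_{\mu_N}$ with $\phi_{\widetilde{\mu}_N}$. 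So your remark that the lemma is invoked in an averaged Hilbert--Schmidt form is not accurate.

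The right fix is the fallback you mention last, and it should be your main line. By Schur, $\|A\|=\mathcal{O}(N^{ad-\varrho\eta})$. With the paper's choice $a=\varrho\eta/d$ this is only $\mathcal{O}(1)$, so take $a=\varrho\eta/(2d)$ instead, which recovers $\mathcal{O}(N^{-\varrho\eta/2})$. Nothing else is affected:
\begin{itemize}
\item The piece supported on rows with $s/N\in\mathscr{U}_p+B(0,2h^\eta)$ goes into $T_N$. Only its rank is used there, because $\|T_N\|=\mathcal{O}(1)$ is recovered at the end by the triangle inequality.
\item The tail bounds $\mathcal{O}_k(N^{d-ak})$ of steps 4.2 and 4.3 hold for every fixed $a>0$.
\end{itemize}
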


\begin{proof}
    For $u\in \C^{\disc{1}{N}^d}$, we have from the Minkowski's inequality that
    \begin{equation*}
        \begin{split}
            \norme{B_Nu}_2 &= \pr{\sum_{n\in \disc{1}{N}^d}\bb{\sum_{\nu\in \disc{1}{N}^d}(B_N)_{n,\nu}u_\nu}^2}^{1/2}\leq \sum_{\nu\in \disc{1}{N}^d}\pr{\bb{u_\nu}^2\underbrace{\sum_{n\in \disc{1}{N}^d}\bb{(B_N)_{n,\nu}}^2}_{=:\mc{N}_\nu}}^{1/2}.
        \end{split}
    \end{equation*}
    Here, since all the entries of $B_N$ are in $\{0,1\}$, we obtain that 
    \begin{equation*}
        \mc{N}_\nu=\#\acc{n\in \disc{1}{N}^d\m\left|\m \forall j\in \disc{1}{N}, \m \nu_j-N^a\leq n_j\leq \nu_j+N^a\right.}\leq (2N^a+1)^{d}
    \end{equation*}
    and this leads to the result.
\end{proof}

\section{Proof of Theorem \ref{Théorème cvgence du potentiel loga implique celle de la mesure empirique}}.
\begin{lem}
    Let $(\mu_n)$ be a sequence of $\mathscr{P}(\C)$ and $\mu\in \mathscr{P}(\C)$ satisfying: there exists a compact $K$ of $\C$ such that
    \begin{equation}
        \Prob{\mathrm{supp}\pr{\mu_n}\not\subset K}=o(1)\m\m\m\text{when}\m\m\m n\to +\infty.
    \label{Hypothèse support des mesures contenu dans un compact avec bonne proba}
    \end{equation}
    Then,
    \begin{equation*}
        \mu_n\overset{\mathbb{P}}{\rightharpoonup} \mu \Longleftrightarrow \mu_n\overset{\mathbb{P}}{\to} \mu \m\m\m\text{in}\m\m\m \mc{D}'(\C).
    \end{equation*}
\label{Lemme équivalence cvg faible et au sens distri des mesures de proba}
\end{lem}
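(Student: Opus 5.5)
We must prove Lemma \ref{Lemme équivalence cvg faible et au sens distri des mesures de proba}: under the tightness-in-probability hypothesis \eqref{Hypothèse support des mesures contenu dans un compact avec bonne proba}, weak convergence in probability is equivalent to convergence in probability in $\mc{D}'(\C)$. Here $\mu_n\overset{\mathbb{P}}{\rightharpoonup}\mu$ means that for every $f\in\mathscr{C}_b(\C)$, $\int f\,d\mu_n\to\int f\,d\mu$ in probability. Convergence in $\mc{D}'$ in probability means the same for every $\varphi\in\mathscr{C}^\infty_c(\C)$. The direction ($\Rightarrow$) is immediate, since $\mathscr{C}^\infty_c(\C)\subset\mathscr{C}_b(\C)$. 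All the work is in ($\Leftarrow$), which I would do by a two-step approximation argument.

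\emph{Step 1: support of $\mu$.} I first show that $\mathrm{supp}(\mu)\subset K$. Take $\varphi\in\mathscr{C}^\infty_c(\C\setminus K)$ with $\varphi\ge 0$. On the event $\{\mathrm{supp}(\mu_n)\subset K\}$, whose probability tends to $1$, we have $\int\varphi\,d\mu_n=0$. Convergence in probability then forces $\int\varphi\,d\mu=0$, hence $\mu(\C\setminus K)=0$.

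\emph{Step 2: cutoff.} Fix $f\in\mathscr{C}_b(\C)$ and $\varepsilon>0$. Choose $\chi\in\mathscr{C}^\infty_c(\C,[0,1])$ with $\chi\equiv1$ on a neighbourhood of $K$. On the good event, and also for $\mu$ by Step 1, we have $\int f\,d\mu_n=\int f\chi\,d\mu_n$ and $\int f\,d\mu=\int f\chi\,d\mu$. So it suffices to treat the compactly supported continuous function $g:=f\chi$.

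\emph{Step 3: density.} Mollify to get $g_\varepsilon\in\mathscr{C}^\infty_c(\C)$ with $\norme{g-g_\varepsilon}_{L^\infty}\le\varepsilon$. Since $\mu_n$ and $\mu$ are probability measures,
\begin{equation*}
\bb{\int g\,d\mu_n-\int g\,d\mu}\le 2\varepsilon+\bb{\int g_\varepsilon\,d\mu_n-\int g_\varepsilon\,d\mu}.
\end{equation*}
By hypothesis, the last term exceeds $\varepsilon$ with probability $o(1)$. Combining this with the good event from \eqref{Hypothèse support des mesures contenu dans un compact avec bonne proba} via a union bound gives
\begin{equation*}
\Prob{\bb{\int f\,d\mu_n-\int f\,d\mu}>3\varepsilon}=o(1).
\end{equation*}
Since $\varepsilon$ is arbitrary, this proves convergence in probability for every $f\in\mathscr{C}_b(\C)$.

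\emph{Main obstacle.} The delicate point is Step 1: knowing $\mu$ is concentrated on $K$ is what makes the cutoff in Step 2 legitimate on the $\mu$ side. Equivalently, the only genuine issue is the loss of mass at infinity, and the uniform support hypothesis handles it. If $\mu\in\mathscr{P}(\C)$ were not known to be a probability measure, one would also need $\mu(\C)=1$. This follows by testing against $\chi$: $\int\chi\,d\mu_n=1$ on the good event, so $\int\chi\,d\mu=1$.
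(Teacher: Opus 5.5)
Your proof is correct and follows the same route as the paper's: a cutoff $\chi$ that reduces to compactly supported continuous test functions on the event $\{\mathrm{supp}(\mu_n)\subset K\}$, followed by uniform approximation by smooth compactly supported functions and a union bound. The one difference is your Step 1, and it is an improvement: the paper just says that after enlarging $K$ one may assume $\mathrm{supp}(\mu)\subset K$, which tacitly uses that $\mu$ is compactly supported (true in the later application, where $\mathrm{supp}(\mu)\subset D$ is assumed, but not among the lemma's hypotheses), whereas you derive $\mu(\C\setminus K)=0$ from the $\mathcal{D}'(\C)$-convergence itself, so your argument proves the lemma exactly as stated.
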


\begin{proof}
    In this proof, we will denote $\mathscr{C}_b(\C;\R)$ the set of all the bounded and continuous functions $f:\C\to \R$. To simplify the notations, for all measure $\mu$ and function $f\in L^1(\C,\mu)$, we denote 
    \begin{equation*}
        \ps{\mu}{f}:=\int_\C f(x)\mu(dx).
    \end{equation*}
    Let us recall that $\mu_n\overset{\mathbb{P}}{\rightharpoonup} \mu$ means that for all $f\in \mathscr{C}_b(\C;\R)$, for all $\varepsilon>0$
    \begin{equation}
        \mathbb{P}\Bigl(\bb{\ps{\mu_n}{f}-\ps{\mu}{f}}\geq \varepsilon\Bigl)=o(1)\m\m\m\text{when}\m\m\m n\to +\infty.
    \label{Def convergence faible en proba des mesures}
    \end{equation}
    Likewise, $\mu_n\overset{\mathbb{P}}{\to} \mu $ in $\mc{D}'(\C)$ means the same thing as \eqref{Def convergence faible en proba des mesures} with $f\in \mathscr{C}^\infty_c(\C,\R)$ instead.\\
    
    \m\m\m\m The necessary condition is obvious. We now turn to the sufficient one. We begin by reducing to the case of compactly supported continuous functions. Let $f\in \mathscr{C}_b(\C;\R)$. Let $K$ be a compact set of $\C$ given in \eqref{Hypothèse support des mesures contenu dans un compact avec bonne proba}. After having possibly enlarged $K$, we may assume that $\mathrm{supp}(\mu)\subset K$. Let $\widetilde{K}$ be another compact set such that
    \begin{equation*}
        \overline{K+D(0,1)}\subset\mathring{\widetilde{K}}\subset \widetilde{K}\m
    \end{equation*}
    and $\chi\in \mathscr{C}^\infty_c(\C;\R)$ such that $\chi\equiv 1$ on $K$, $\chi\equiv 0$ on $\widetilde{K}^c$ and $0\leq \chi\leq 1$. So,
    \begin{equation*}
        \begin{split}
             \Prob{\bb{\ps{\mu_n}{f}-\ps{\mu}{f}}\geq \varepsilon} &= \Prob{\acc{\bb{\ps{\mu_n}{f}-\ps{\mu}{f}}\geq \varepsilon}\cap \acc{\mathrm{supp}(\mu_n)\subset K}}\\
             &\m\m\m\m + \Prob{\acc{\bb{\ps{\mu_n}{f}-\ps{\mu}{f}}\geq \varepsilon}\cap \acc{\mathrm{supp}(\mu_n)\not\subset K}}\\
             &\leq \Prob{\acc{\bb{\ps{\mu_n}{\chi f}-\ps{\mu}{\chi f}}\geq \varepsilon}\cap \acc{\mathrm{supp}(\mu_n)\subset K}}\\
             &\m\m\m\m +\Prob{\mathrm{supp}(\mu_n)\not\subset K}\\
             &\leq \Prob{\bb{\ps{\mu_n}{\chi f}-\ps{\mu}{\chi f}}\geq \varepsilon}+o(1),
        \end{split}
    \end{equation*}
    where the second inequality comes from the hypothesis \eqref{Hypothèse support des mesures contenu dans un compact avec bonne proba}. It remains to show \eqref{Def convergence faible en proba des mesures} for functions in $\mathscr{C}_c(\mathring{\widetilde{K}};\R)$. \\

    \m\m\m\m Let us take $f\in \mathscr{C}_c(\mathring{\widetilde{K}};\R)$. Then, by density, there exists $g\in \mathscr{C}^\infty_c(\mathring{\widetilde{K}};\R)$ such that $\norme{f-g}_{L^\infty(\C)}\leq \frac{\varepsilon}{3}$. Therefore, almost surely,
    \begin{equation*}
        \begin{split}
            \bb{\ps{\mu_n}{f}-\ps{\mu}{f}} &\leq \bb{\ps{\mu_n}{f-g}}+\bb{\ps{\mu_n}{g}-\ps{\mu}{g}}+\bb{\ps{\mu}{f-g}}\\
            &\leq \frac{2\varepsilon}{3}+\bb{\ps{\mu_n}{g}-\ps{\mu}{g}}.
        \end{split}
    \end{equation*}
    In that case, since we assumed that $\mu_n\overset{\mathbb{P}}{\to} \mu$ in $\mc{D}'(\C)$, we have
    \begin{equation*}
        \begin{split}
            \Prob{\bb{\ps{\mu_n}{f}-\ps{\mu}{f}}\geq \varepsilon} &\leq 1-\Prob{\bb{\ps{\mu_n}{g}-\ps{\mu}{g}}< \frac{\varepsilon}{3}}\\
            &= \Prob{\bb{\ps{\mu_n}{g}-\ps{\mu}{g}}\geq  \frac{\varepsilon}{3}}\underset{n\to +\infty}{\longrightarrow} 0,
        \end{split}
    \end{equation*}
    which completes the proof.
\end{proof}

\begin{proof}[Proof of Theorem \ref{Théorème cvgence du potentiel loga implique celle de la mesure empirique}]
    1. Firstly, we show that for all compact sets $K$ of $\C$, for all $M\geq 0$,
    \begin{equation}
        \int_{K}\min\pr{\bb{\phi_{\mu_n}(z)-\phi_{\mu}(z)},M}\mathrm{L}(dz)\underset{n\to +\infty}{\longrightarrow} 0\m\m\m\m \text{in probability}.
    \label{Théorème cvgence du potentiel loga implique celle de la mesure empirique - 1}
    \end{equation}
    
    To this end, take $K$ a compact set, $M\geq 0$ and $\varepsilon>0$. By Markov's inequality and Fubini's theorem, 
    \begin{equation*}
        \begin{split}
            &\underbrace{\Prob{\bb{\int_{K}\min\pr{\bb{\phi_{\mu_n}(z)-\phi_{\mu}(z)},M}\mathrm{L}(dz)}\geq \varepsilon}}_{=:\mc{P}_n^\varepsilon}\\
            &\mm\mm\mm\mm\mm\mm\leq \frac{1}{\varepsilon}\mathbb{E}\pr{\int_{K}\min\pr{\bb{\phi_{\mu_n}(z)-\phi_{\mu}(z)},M}\mathrm{L}(dz)}\\
            &\mm\mm\mm\mm\mm\mm\leq \frac{1}{\varepsilon}\int_{K}\underbrace{\mathbb{E}\croch{\min\pr{\bb{\phi_{\mu_n}(z)-\phi_{\mu}(z)},M}}}_{=:I_n(z)}\mathrm{L}(dz).
        \end{split}
    \end{equation*}
    Furthermore, since almost surely for every $z\in K$, $\min\pr{\bb{\phi_{\mu_n}(z)-\phi_{\mu}(z)},M}\leq M$, we get that $I_n(z)\leq M$ for all $z\in K$.\\

    Let $\delta>0$ and write:
    \begin{equation*}
        \begin{split}
            I_n(z)&=\pr{\int_{\bb{\phi_{\mu_n}(z)-\phi_\mu(z)}<\delta}+\int_{\bb{\phi_{\mu_n}(z)-\phi_\mu(z)}\geq\delta}} \min\pr{\bb{\phi_{\mu_n}(z)-\phi_{\mu}(z)},M}d\mathbb{P}\\
            &=: I_n^1(z)+I_n^2(z).
        \end{split}
    \end{equation*}
    On the one hand, for all $z\in K$, $I_n^1(z)\leq \delta\Prob{\bb{\phi_{\mu_n}(z)-\phi_\mu(z)}<\delta}\leq \delta$, and on the other hand, for all $z\in K$, by assumption \textit{(2)}, 
    \begin{equation*}
        I_n^2(z)\leq M\Prob{\bb{\phi_{\mu_n}(z)-\phi_\mu(z)}\geq \delta}\underset{n\to +\infty}{\longrightarrow}0.
    \end{equation*}
    Thus, using the dominated convergence theorem, we get
    \begin{equation*}
        \limsup_{n\to +\infty}\mc{P}_n^\varepsilon\leq \frac{\delta\mathrm{L}(K)}{\varepsilon}.
    \end{equation*}
    Since this inequality holds for all $\delta>0$, it implies that
    \begin{equation}
        \lim_{n\to +\infty}\mc{P}_n^\varepsilon=\limsup_{n\to +\infty}\mc{P}_n^\varepsilon=0,
    \label{Théorème cvgence du potentiel loga implique celle de la mesure empirique - 1.5}
    \end{equation}
    which leads to the property \eqref{Théorème cvgence du potentiel loga implique celle de la mesure empirique - 1}.\\

    \m\m\m\m 2. For $K$ a compact subset of $\C$, let $M>0$ be such that $K,D\subset D(0,M)$. Then, for all $w\in D$, 
    \begin{equation*}
        \begin{split}
            \int_K |\log&\pr{\bb{z-w}}|^2\mathrm{L}(dz)\\
            &\leq \int_{D(w,1)} \bb{\log\pr{\bb{z-w}}}^2\mathrm{L}(dz)+ \int_{K\setminus D(w,1)} \bb{\log\pr{\bb{z-w}}}^2\mathrm{L}(dz)\\
            &\leq 2\pi\int_{0}^1 r\log^2(r)dr+\mathrm{L}(K)\log^2(2M)=:\widetilde{C}<+\infty,
        \end{split}
    \end{equation*}
    and the constant $\widetilde{C}$ does not depend on $w\in D$. Using the Minkowski's inequality, we have
    \begin{equation}
        \begin{split}
            \norme{\phi_{\mu_n}}_{L^2(K)}&:=\pr{\int_{K}\bb{\int_D\log\pr{\bb{z-w}}d\mu_n(w)}^2\mathrm{L}(dz)}^{1/2}\\
            &\leq \int_D\pr{\int_K\bb{\log\pr{\bb{z-w}}}^2\mathrm{L}(dz)}^{1/2}d\mu_n(w)\leq \widetilde{C}^{1/2}\m
        \end{split}
    \label{Théorème cvgence du potentiel loga implique celle de la mesure empirique - 2}
    \end{equation}
    with probability $\geq 1-o(1)$, and the constant does not depend on $n$. Since we assumed that $\mathrm{supp}(\mu)\subset D$, we also find that
    \begin{equation*}
        \norme{\phi_\mu}_{L^2(K)}\leq \widetilde{C}^{1/2}.
    \end{equation*}

    \m\m\m\m 3. Almost surely, for all $M>0$, 
    \begin{equation}
        \begin{split}
            &\norme{\phi_{\mu_n}-\phi_{\mu}}_{L^1(K)}\\
            &\mm\mm=\int_{\bb{\phi_{\mu_n}-\phi_{\mu}}>M}\bb{\phi_{\mu_n}-\phi_{\mu}}d\mathrm{L}+\int_{\bb{\phi_{\mu_n}-\phi_{\mu}}\leq M} \bb{\phi_{\mu_n}-\phi_{\mu}}d\mathrm{L}\\
            &\mm\mm=:I_n^1+I_n^2.
        \end{split}
    \label{Théorème cvgence du potentiel loga implique celle de la mesure empirique - 3}
    \end{equation}
    On the one hand, using the Cauchy-Schwarz's and Markov's inequalities, almost surely,
    \begin{equation*}
        \begin{split}
            I_n^1 &\leq \mathrm{L}\!\pr{\acc{\bb{\phi_{\mu_n}-\phi_{\mu}}>M}}^{1/2}\times \norme{\phi_{\mu_n}-\phi_{\mu}}_{L^2(K)}\\
            &\leq \frac{\norme{\phi_{\mu_n}-\phi_{\mu}}^{1/2}_{L^1(K)}}{M^{1/2}}\times \norme{\phi_{\mu_n}-\phi_{\mu}}_{L^2(K)}.
        \end{split}
    \end{equation*}
    And in view of \eqref{Théorème cvgence du potentiel loga implique celle de la mesure empirique - 2}, this previous inequality and the Cauchy-Schwarz's one lead to: with probability $\geq 1-o(1)$, for all $M>0$, 
    \begin{equation}
        I_n^1\leq \frac{\atc{C}}{M^{1/2}}.
    \label{Théorème cvgence du potentiel loga implique celle de la mesure empirique - 4}
    \end{equation}
    For the other integral, almost surely, we have
    \begin{equation}
        I_n^2\leq \int_{K}\min\pr{\bb{\phi_{\mu_n}(z)-\phi_{\mu}(z)},M}\mathrm{L}(dz)=:J_{n,M}.
    \label{Théorème cvgence du potentiel loga implique celle de la mesure empirique - 5}
    \end{equation}
    In that case, in view of \eqref{Théorème cvgence du potentiel loga implique celle de la mesure empirique - 3}, \eqref{Théorème cvgence du potentiel loga implique celle de la mesure empirique - 4} and \eqref{Théorème cvgence du potentiel loga implique celle de la mesure empirique - 5}, for all $M>0$, we have
    \begin{equation}
        \begin{split}
            \Prob{\norme{\phi_{\mu_n}-\phi_{\mu}}_{L^1(K)}\leq \frac{\atc{C}}{M^{1/2}}+J_{n,M}}&\geq \Prob{\acc{I_n^1\leq \frac{\atc{C}}{M^{1/2}}}\bigcap\acc{I_n^2\leq J_{n,M}}}\\
            &\geq 1-\Prob{I_n^1> \frac{\atc{C}}{M^{1/2}}}-\underbrace{\Prob{I_n^2>J_{n,M}}}_{=0}\\
            &\geq \Prob{I_n^1\leq  \frac{\atc{C}}{M^{1/2}}}\geq 1-o(1).
        \end{split}
    \label{Théorème cvgence du potentiel loga implique celle de la mesure empirique - 6}
    \end{equation}
    Let $\varepsilon>0$ and $M>0$ such that $\frac{\atc{C}}{M^{1/2}}\leq \frac{\varepsilon}{2}$. Then, thanks to \eqref{Théorème cvgence du potentiel loga implique celle de la mesure empirique - 6},
    \begin{equation*}
        \begin{split}
            \mathbb{P}\biggl(&\norme{\phi_{\mu_n}-\phi_{\mu}}_{L^1(K)} \leq \varepsilon\biggl)\\
            &\mm\mm\geq \Prob{\acc{J_{n,M}\leq \frac{\varepsilon}{2}}\bigcap \acc{\norme{\phi_{\mu_n}-\phi_{\mu}}_{L^1(K)}\leq \frac{\atc{C}}{M^{1/2}}+J_{n,M}}}\\
            &\mm\mm\geq 1-\mc{P}_n^{\varepsilon/2}-\Prob{\norme{\phi_{\mu_n}-\phi_{\mu}}_{L^1(K)}> \frac{\atc{C}}{M^{1/2}}+J_{n,M}}\\
            &\mm\mm\geq 1-o(1)-\mc{P}_n^{\varepsilon/2}.
        \end{split}
    \end{equation*}
    It remains to use \eqref{Théorème cvgence du potentiel loga implique celle de la mesure empirique - 1.5} to conclude that
    \begin{equation}
        \norme{\phi_{\mu_n}-\phi_{\mu}}_{L^1(K)}\underset{n\to +\infty}{\longrightarrow} 0 \m\m\m\text{in probability}.
    \label{Théorème cvgence du potentiel loga implique celle de la mesure empirique - 7}
    \end{equation}
    Using Hölder's inequality, \eqref{Théorème cvgence du potentiel loga implique celle de la mesure empirique - 7} also provides that 
    \begin{equation}
        \Delta \phi_{\mu_n}\overset{\mathbb{P}}{\underset{n\to +\infty}{\longrightarrow}}\Delta \phi_\mu\m\m\m\text{in}\m\m\m \mc{D}'(\C)
    \label{Théorème cvgence du potentiel loga implique celle de la mesure empirique - 8}
    \end{equation}
    which means that for all $\varphi\in \mathscr{C}^\infty_c(\C,\R)$, $\varepsilon>0$,
    \begin{equation*}
        \mathbb{P}\Bigl(\bb{\ps{\Delta\phi_{\mu_n}}{\varphi}-\ps{\Delta\phi_\mu}{\varphi}}\geq \varepsilon\Bigl)\underset{n\to +\infty}{\longrightarrow}0.
    \end{equation*}
    Here, $\ps{\cdot}{\cdot}$ denotes the duality brackets $\mc{D}'(\C)\times \mc{D}(\C)$. Remembering that $\Delta\phi_{\mu}=2\pi \mu$ in $\mc{D}'(\C)$ and that almost surely, for all $n\in \N^*$,
    \begin{equation*}
        \Delta\phi_{\mu_n}=2\pi \mu_n\m\m\m\text{in}\m\m\m \mc{D}'(\C),
    \end{equation*}
    it remains to use \eqref{Théorème cvgence du potentiel loga implique celle de la mesure empirique - 8} and Lemma \ref{Lemme équivalence cvg faible et au sens distri des mesures de proba} to conclude.
\end{proof}

\mm We now give a criterion to satisfy Assumption \ref{Hypothèse théorème final - volume préimage}.
\begin{prop}\label{Généralisation BPZ - 1}
    Let $p=p(\xi)$ be of the form
    \begin{equation*}
        p(\xi)=\sum_{k\in \Lambda}p_ke^{2i\pi \ps{k}{\xi}}, \quad\xi\in \T^d,
    \end{equation*}
    where $\Lambda=\prod_{j=1}^d\disc{-n_j}{m_j}$, $n_j,m_j\geq 0$ and $p_k\in \C$, $k\in \Lambda$. We assume that 
    \begin{equation}\label{Généralisation BPZ - Hypothèse - 1}
        \exists \hspace{0.1cm }k\in \Lambda\setminus \{0\}, \m p_k\neq 0.
    \end{equation}
    Let us denote 
    \begin{equation*}
        S:=\sum_{j=1}^d (n_j+m_j)>0, \mm \Xi:=\prod_{j=1}^d(n_j+m_j+1) \mm\text{and}\mm m:=\sum_{\substack{k\in \Lambda\\ k\neq 0}}\bb{p_k}>0.
    \end{equation*}
    Then, for all $z\in \C$, $t\geq 0$, 
    \begin{equation}\label{Généralisation BPZ - Hypothèse - 2}
        \mathrm{L}\pr{\acc{\xi \in \T\m ;\m \bb{p(\xi)-z}^2\leq t}}\leq 14d \pr{\frac{\sqrt{\Xi}}{m}\sqrt{t}}^{\frac{1}{S}}.
    \end{equation}
\end{prop}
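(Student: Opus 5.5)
We would reduce to a one-variable sublevel-set estimate for trigonometric polynomials and then integrate over the remaining variables by Fubini. Since $m>0$, pick $k^*\in\Lambda\setminus\{0\}$ with $|p_{k^*}|\geq m/(\Xi-1)\geq m/\Xi$; some coordinate $j_0$ has $k^*_{j_0}\neq 0$. Write $\xi=(\xi_{j_0},\xi')$ and view $f_{\xi'}(\theta):=p(\xi)-z$, with $\theta=\xi_{j_0}$, as a one-variable trigonometric polynomial $\sum_{\ell=-n_{j_0}}^{m_{j_0}} a_\ell(\xi')e^{2i\pi\ell\theta}$. By Fubini,
\begin{equation*}
\mathrm{L}\{\xi: |p(\xi)-z|^2\leq t\}=\int_{\T^{d-1}}\mathrm{L}\{\theta: |f_{\xi'}(\theta)|\leq \sqrt t\}\,d\xi'.
\end{equation*}

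The key one-dimensional input is a Remez/Turán–Nazarov type inequality. Multiplying by $e^{2i\pi n_{j_0}\theta}$ turns $f_{\xi'}$ into an algebraic polynomial in $w=e^{2i\pi\theta}$ of degree $D=n_{j_0}+m_{j_0}\leq S$. On $\mathbb S^1$ one has, with an absolute constant $c$, $\mathrm{L}\{\theta:|f(\theta)|\leq s\}\leq c\,(s/\max_\ell|a_\ell|)^{1/D}$. One route is to factor $g(w)=a_{\mathrm{top}}\prod_{i=1}^D(w-w_i)$, with the leading coefficient replaced by whichever coefficient is large via a Cartan-type lemma: the set where $\prod|w-w_i|\leq \epsilon^D$ is covered by discs of total radius $\leq 2e\epsilon$, so its arclength trace is $\leq 4e\epsilon$. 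Alternatively, one can cite the Turán–Nazarov inequality directly. The exponent $1/D\geq 1/S$, so after using $t\leq$ const (otherwise the bound is trivial because the right-hand side exceeds $1$) we can replace $1/D$ by $1/S$.

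The main obstacle is that the large coefficient $a_{k^*_{j_0}}(\xi')=\sum_{k:k_{j_0}=k^*_{j_0}}p_ke^{2i\pi\langle k',\xi'\rangle}$ may be small for some $\xi'$. To handle this, apply the one-dimensional estimate iteratively across coordinates. Take the coefficient vector $(a_\ell(\xi'))_\ell$ and note that $\max_\ell|a_\ell(\xi')|$ is bounded below except on a small set of $\xi'$, since each $a_\ell$ is itself a trigonometric polynomial in $d-1$ variables with a coefficient of size $\geq m/\Xi$. Then induct on $d$ with the hypothesis
\begin{equation*}
\mathrm{L}\{|P|\leq s\}\leq C_d\,(s\sqrt{\Xi}/m)^{1/S}.
\end{equation*}
Split $\xi'$ according to whether $\max_\ell|a_\ell(\xi')|\geq \lambda$ and optimize $\lambda$. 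Using $\sum_\ell|a_\ell|^2$, Parseval gives the $\sqrt{\Xi}$ normalization, namely $\|P\|_{L^2}\geq m/\sqrt{\Xi}$. The constants add up linearly in $d$, which accounts for the factor $14d$. The delicate part is tracking exponents so that combining the steps still yields the exponent $1/S$ rather than a product of exponents. We expect to handle this by choosing $\lambda=(s\sqrt\Xi/m)^{\theta}$ with $\theta$ tuned to the degrees $D_{j_0}$ and $S-D_{j_0}$.
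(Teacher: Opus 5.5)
Your route is genuinely different from the paper's. The paper applies the $d$-dimensional Fontes--Merz (Tur\'an--Remez) inequality $\|p-z\|_{L^\infty(\mathbb{T}^d)}\le (14d/\mathrm{L}(E))^{S}\|p-z\|_{L^\infty(E)}$ once, with $E=\{|p-z|^2\le t\}$. That single step gives both the constant $14d$ and the exponent $1/S$, and the paper then uses $\|p-z\|_{L^\infty}\ge\|p-z\|_{L^2}\ge(\sum_{k\ne0}|p_k|^2)^{1/2}\ge m/\sqrt{\Xi}$. You instead rebuild the multidimensional inequality from a one-variable one by Fubini and induction on $d$. That architecture is sound, and the exponent bookkeeping you call delicate is routine. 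Suppose the fibre bound is $c(s/\lambda)^{1/D}$ off a set of measure $C_{d-1}(\lambda/A)^{1/S'}$, with $S'=S-D$. Then $\lambda=s^{S'/S}A^{D/S}$ makes both terms equal to $(s/A)^{1/S}$, so $C_d=C_{d-1}+c$, which is the source of the linear dependence on $d$. Your approach needs only a one-variable inequality; the paper's proof is three lines but rests entirely on the multidimensional black box.

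As written, however, the induction does not close and does not give the stated bound. Your hypothesis $\mathrm{L}\{|P|\le s\}\le C_d(s\sqrt{\Xi}/m)^{1/S}$ is not inherited by the coefficient polynomial $a_{\ell^*}(\xi')$. The quantity $m$ ignores the constant coefficient, so it can be tiny even when $|a_{\ell^*}|$ is bounded below. For instance, $p=e^{2i\pi\xi_1}(1+\varepsilon e^{2i\pi\xi_2})$ forces $k^*=(1,0)$ and gives $a_1=1+\varepsilon e^{2i\pi\xi_2}$, whose ``$m$'' is $\varepsilon$; the hypothesis then says nothing useful about $\{|a_1|<\lambda\}$.

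If you instead normalize by one large coefficient, as in your first step, the induction closes. But you end with $(\Xi-1)\sqrt t/m$ in place of $\sqrt{\Xi t}/m$, which is weaker than the claim as soon as $\Xi\ge 3$.

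The missing idea is to induct on an intrinsic norm. Prove $\mathrm{L}\{|P|\le s\}\le c\,d\,(s/\|P\|_{L^\infty(\mathbb{T}^d)})^{1/S}$ for arbitrary $P$. In the inductive step, take a maximizer $(\theta_0,\xi_0')$ of $|P|$ on $\mathbb{T}^d$ and set $g(\xi')=P(\theta_0,\xi')$. Then $\|P(\cdot,\xi')\|_{L^\infty}\ge|g(\xi')|$, $\|g\|_{L^\infty}=\|P\|_{L^\infty}$, and $g$ depends only on the remaining coordinates with total degree $S'$. Splitting at $|g|\ge\lambda$ then closes the induction exactly. Only at the end do you set $P=p-z$ and use the $L^2$ lower bound.

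Finally, the constant. This argument yields $c\,d$ with $c$ your one-variable constant, so $14d$ requires the one-variable Tur\'an--Remez inequality with constant $14$, i.e. the case $d=1$ of the inequality the paper cites. Your Cartan sketch does not supply this. When the top coefficient is small the roots escape to infinity, and Cartan's lemma alone gives nothing. An extra step such as comparison with the Mahler measure is needed, and that only yields some unspecified absolute constant.
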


\begin{proof}
    Let $z\in \C$. Applying Fontes-Merz's inequality to $p-z$ (see \cite[Theorem 2]{Fontes_Merz}) ensures that for all measurable sets $E$ of $\T^d$, 
    \begin{equation}\label{Généralisation BPZ - preuve - 1}
        \norme{p-z}_{L^\infty(\T)}\leq \pr{\frac{14d}{\mathrm{L}(E)}}^{S}\norme{p-z}_{L^\infty(E)}.
    \end{equation}
    So, since $S>0$, \eqref{Généralisation BPZ - preuve - 1} can equivalently be rewritten as
    \begin{equation}\label{Généralisation BPZ - preuve - 2}
        \mathrm{L}(E)\leq  14d\hspace{0.1cm}\pr{\frac{\norme{p-z}_{L^\infty(E)}}{\norme{p-z}_{L^\infty(\T^d)}}}^{\frac{1}{S}}.
    \end{equation}
    Notice that $(2\pi k)_{k\in \Lambda}$ is a family of elements that are pairwise distinct. This implies that $(e^{2i\pi \ps{k}{\cdot}})_{k\in \Lambda}$ is an orthonormal family of $L^2(\T^d)$ for the usual inner product. So, 
    \begin{equation}\label{Généralisation BPZ - preuve - 3}
        \norme{p-z}_{L^\infty(\T^d)}^2\geq \norme{p-z}_{L^2(\T^d)}^2=\sum_{\substack{k\in \Lambda\\k\neq 0}}\bb{p_k}^2+\bb{p_0-z}^2.
    \end{equation}
    By Cauchy-Schwarz' inequality, we have, 
    \begin{equation}\label{Généralisation BPZ - preuve - 4}
        \sum_{\substack{k\in \Lambda\\k\neq 0}}\bb{p_k}^2+\bb{p_0-z}^2\geq \sum_{\substack{k\in \Lambda\\k\neq 0}}\bb{p_k}^2\geq \pr{\frac{m}{\sqrt{\Xi}}}^2.
    \end{equation}
    Denoting 
    \begin{equation*}
        E_{z,t}:=\acc{\xi\in \T^d\m; \m \bb{p(\xi)-z}^2\leq t},
    \end{equation*}
    we have that 
    \begin{equation}\label{Généralisation BPZ - preuve - 5}
        \norme{p-z}_{L^\infty(E_{z,t})}\leq \sqrt{t}.
    \end{equation}
    The continuity of $p$ provides the closedness of $E_{z,t}$. In particular, $E_{z,t}$ is a Borel subset of $\T^d$. Then, using \eqref{Généralisation BPZ - preuve - 2} with $E=E_{z,t}$, \eqref{Généralisation BPZ - preuve - 3}, \eqref{Généralisation BPZ - preuve - 4} and \eqref{Généralisation BPZ - preuve - 5}, we obtain \eqref{Généralisation BPZ - Hypothèse - 2}.
\end{proof}

\begin{prop}\label{Généralisation BPZ quat}
    Let $p=p(x,\xi)$ be of the form
    \begin{equation}\label{Généralisation BPZ quat - Hypothèse - 0}
        p(x,\xi)=\sum_{k\in \Lambda}p_k(x)e^{2i\pi \ps{k}{\xi}}, \quad(x,\xi)\in [0,1]^d\times \T^d,
    \end{equation}
    where $\Lambda=\prod_{j=1}^d\disc{-n_j}{m_j}$, $n_j,m_j\geq 0$, and $p_k:[0,1]^d\to \C$, $k\in \Lambda$ is measurable. Let us denote 
    \begin{equation*}
        S:=\sum_{j=1}^d (n_j+m_j)>0\mm\m\m\text{and}\mm\m\m \Xi:=\prod_{j=1}^d(n_j+m_j+1).
    \end{equation*}
    We assume that: there exists $m>0$ such that for almost every $x\in [0,1]^d$, 
    \begin{equation}\label{Généralisation BPZ quat - Hypothèse - 2}
        \sum_{\substack{k\in \Lambda\\k\neq 0}}^{N_+}\bb{p_k(x)}\geq m>0.
    \end{equation}
    Then, for all $z\in \C$, $t\geq 0$, 
    \begin{equation}\label{Généralisation BPZ quat - Hypothèse - 3}
        \mathrm{L}\pr{\acc{(x,\xi)\in [0,1]^d\times \T^d\m;\m \bb{p(x,\xi)-z}^2\leq t}}\leq Ct^\kappa
    \end{equation}
    where
    \begin{equation}\label{Généralisation BPZ quat - Hypothèse - 4}
        \kappa=\frac{1}{2S}\mm\text{and}\mm C= 14d\pr{\frac{\sqrt{\Xi}}{m}}^{\frac{1}{S}}.
    \end{equation}
\end{prop}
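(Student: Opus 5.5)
The plan is to reduce Proposition \ref{Généralisation BPZ quat} to the $x$-independent case, Proposition \ref{Généralisation BPZ - 1}, by Fubini. For fixed $z\in\C$ and $t\geq 0$, the set $E_{z,t}:=\acc{(x,\xi)\in[0,1]^d\times\T^d\m;\m\bb{p(x,\xi)-z}^2\leq t}$ is measurable, since $p$ is measurable in $x$ and continuous (a trigonometric polynomial) in $\xi$. Hence
\begin{equation*}
    \mathrm{L}(E_{z,t})=\int_{[0,1]^d}\mathrm{L}\pr{\acc{\xi\in\T^d\m;\m\bb{p(x,\xi)-z}^2\leq t}}dx .
\end{equation*}
The whole proof is then a uniform-in-$x$ bound on the inner slice measure.

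Fix $x$ outside the null set where \eqref{Généralisation BPZ quat - Hypothèse - 2} fails. The function $\xi\mapsto p(x,\xi)$ has the form required in Proposition \ref{Généralisation BPZ - 1}, with constant coefficients $p_k(x)$ and the same index box $\Lambda$, so $S$ and $\Xi$ are unchanged. Condition \eqref{Généralisation BPZ - Hypothèse - 1} holds because $m(x):=\sum_{k\neq0}\bb{p_k(x)}\geq m>0$. Proposition \ref{Généralisation BPZ - 1} then gives the slice bound
\begin{equation*}
    14d\pr{\frac{\sqrt{\Xi}}{m(x)}\sqrt t}^{1/S}\leq 14d\pr{\frac{\sqrt{\Xi}}{m}}^{1/S}t^{\frac{1}{2S}} ,
\end{equation*}
using that $u\mapsto u^{1/S}$ is increasing and $m(x)\geq m$. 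This is exactly $Ct^\kappa$ with $C$ and $\kappa$ as in \eqref{Généralisation BPZ quat - Hypothèse - 4}.

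Integrating this uniform bound over $x\in[0,1]^d$, a set of measure $1$ (with the exceptional null set contributing nothing), yields \eqref{Généralisation BPZ quat - Hypothèse - 3} for every $z$ and $t\geq 0$.

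The only delicate point is measurability of $E_{z,t}$ so that Fubini applies. This follows because $(x,\xi)\mapsto p(x,\xi)$ is a finite sum of products of measurable functions of $x$ with continuous functions of $\xi$, hence jointly Borel measurable. No real obstacle is expected; the work is already contained in the Fontes-Merz argument behind Proposition \ref{Généralisation BPZ - 1}.
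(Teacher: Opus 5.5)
Your proposal is correct and matches the paper's proof: you apply the preceding $x$-independent (Fontes-Merz based) proposition on each slice $x$ where $\sum_{k\neq 0}|p_k(x)|\geq m$, use $m(x)\geq m$ to make the slice bound $Ct^{\kappa}$ uniform in $x$, and integrate over $[0,1]^d$ via Fubini. Your measurability argument (a finite sum of products of measurable functions of $x$ with continuous functions of $\xi$) is a slightly more direct justification than the paper's appeal to measurability in each variable separately.
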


\begin{proof}
    Let $z\in \C$. For $x\in [0,1]^d$, denote 
    \begin{equation*}
        E_{z,t}^x:=\acc{\xi\in \T^d\m;\m \bb{p(x,\xi)-z}^2\leq t}.
    \end{equation*}
    Let $x\in [0,1]^d$ such that \eqref{Généralisation BPZ quat - Hypothèse - 2} holds and let us denote
    \begin{equation*}
        f(x):=\sum_{\substack{k\in \Lambda\\k\neq 0}}\bb{p_k(x)}.
    \end{equation*}
    Since $f(x)\geq m>0$, from Proposition \ref{Généralisation BPZ - 1}, we get 
    \begin{equation}\label{Généralisation BPZ quat - preuve - 0}
        \begin{split}
            \mathrm{L}(E^x_{z,t})\leq 14d\pr{\frac{\sqrt{\Xi}}{f(x)}}^{\frac{1}{S}}t^{\kappa}\leq Ct^\kappa.
        \end{split}
    \end{equation}
    Functions $p_k$ being measurable for all $k\in \Lambda$, so is $p(\cdot,\xi)$ for all $\xi\in \T^d$. In addition, for all $x\in [0,1]^d$, $p(x,\cdot)$ is analytic, so measurable. This ensures the measurability of $E_{z,t}$ and therefore implies that of $(x,\xi)\mapsto\mathbb{1}_{E_{z,t}}(x,\xi)$, which is also $L^1([0,1]^d\times \T^d)$. Then, Fubini's theorem gives
    \begin{equation}\label{Généralisation BPZ quat - preuve - 1}
        \begin{split}
            \mathrm{L}(E_{z,t}) &= \int_{[0,1]^d\times \T^d}\mathbb{1}_{E_{z,t}}(x,\xi)dxd\xi=\int_{[0,1]^d}\pr{\int_{\T^d}\mathbb{1}_{E_{z,t}}(x,\xi)d\xi}dx.
        \end{split}
    \end{equation}
    But noticing that for all $(x,\xi)\in [0,1]^d\times \T^d$, $\mathbb{1}_{E_{z,t}}(x,\xi)=\mathbb{1}_{E^x_{z,t}}(\xi)$, \eqref{Généralisation BPZ quat - preuve - 1} can be rewritten as
    \begin{equation}\label{Généralisation BPZ quat - preuve - 2}
        \mathrm{L}(E_{z,t}) =\int_{[0,1]^d}\pr{\int_{\T^d}\mathbb{1}_{E^x_{z,t}}(\xi)d\xi}dx=\int_{[0,1]^d}\mathrm{L}(E^x_{z,t})dx.
    \end{equation}
    However, inequality \eqref{Généralisation BPZ quat - Hypothèse - 2} holds for almost every $x\in [0,1]^d$, and so does inequality \eqref{Généralisation BPZ quat - preuve - 0}. Thus, from \eqref{Généralisation BPZ quat - preuve - 2}, we obtain,
    \begin{equation*}
        \mathrm{L}(E_{z,t}) \leq \int_{[0,1]^d}Ct^\kappa dx=Ct^\kappa
    \end{equation*}
    which provides \eqref{Généralisation BPZ quat - Hypothèse - 3}.
\end{proof}

\nopagebreak
\printbibliography

\end{document}